\tikzset{smalltext/.style={"\textup{\small #1}" description}}
\definecolor{gr}{rgb}   {0.,   0.69,   0.23 }
\definecolor{bl}{rgb}   {0.,   0.5,   1. }
\definecolor{mg}{rgb}   {0.85,  0.,    0.85}
\definecolor{yl}{rgb}   {0.8,  0.7,   0.}
\definecolor{or}{rgb}  {0.7,0.2,0.2}
\newtheorem{theorem}{Theorem}[section]
\newtheorem{lemma}[theorem]{Lemma}
\newtheorem{proposition}[theorem]{Proposition}
\newtheorem{definition}[theorem]{Definition}
\newtheorem{remark}[theorem]{Remark}
\DeclareMathOperator{\Law}{Law}
\newcommand{\1}{\hspace{0.5mm}\text{I}\hspace{0.2mm}}
\newcommand{\II}{\text{I \hspace{-2.8mm} I} }
\newcommand{\noi}{\noindent}
\newcommand{\R}{\mathbb{R}}
\newcommand{\T}{\mathbb{T}}
\newcommand{\Z}{\mathbb{Z}}
\newcommand{\N}{\mathbb{N}}
\newcommand{\E}{\mathbb{E}}
\newcommand{\PP}{\mathbb{P}}
\newcommand{\D}{\mathcal{D}}
\renewcommand{\H}{\mathcal{H}}
\newcommand{\Id}{\textbf{Id}}
\newcommand{\al}{\alpha}
\newcommand{\be}{\beta}
\newcommand{\ta}{\theta}
\newcommand{\s}{\sigma}
\newcommand{\eps}{\varepsilon}
\newcommand{\om}{\omega}
\newcommand{\Dl}{\Delta}
\newcommand{\dl}{\delta}
\newcommand{\ld}{\lambda}
\newcommand{\dt}{\partial_t}
\newcommand{\nb}{\nabla}
\newcommand{\ind}{\mathbf 1}
\newcommand{\ft}{\widehat}
\newcommand{\wt}{\widetilde}
\newcommand{\les}{\lesssim}
\newcommand{\cj}{\overline}
\newcommand{\G}{\Gamma}
\newcommand{\g}{\gamma}
\renewcommand{\P}{\mathbf{P}}
\newcommand{\A}{\mathcal{A}}
\newcommand{\plan}{\mathfrak{p}}
\newcommand{\rhoo}{\vec \rho_N}
\newcommand{\muN}{\mu_1^{\otimes N}}
\newcommand{\EE}{\mathcal{E}}
\newcommand{\B}{\mathcal{B}}
\newcommand{\F}{\mathcal{F}}
\newcommand{\HLS}{$\text{HLSM}_N$ }
\newcommand{\HLSS}{$\text{HLSM}_N$}
\newcommand{\mf}{\mathfrak{m}}
\newcommand{\EN}{E_N}
\newcommand{\ENM}{E_{N, M}}
\newcommand{\ZZ}{\mathcal{Z}}
\newcommand{\Y}{\mathcal{Y}}
\newcommand{\jb}[1]{\langle #1 \rangle}
\newcommand{\jbb}[1]{[\hspace{-0.6mm}[ #1 ]\hspace{-0.6mm}]}
\newcommand{\wick}[1]{:\!{#1}\!:}
\newcommand{\deff}{\stackrel{\textup{def}}{=}}
\newcommand{\too}{\longrightarrow}
\newcommand{\I}{\mathcal{I}}
\newcommand{\vv}{\mathbf{v}}
\newcommand{\ww}{\mathbf{w}}
\newcommand{\uu}{\mathbf{u}}
\renewcommand{\O}{\Omega}
\renewcommand{\o}{\omega}
\renewcommand{\l}{\ell}
\DeclareRobustCommand\widecheck[1]{{\mathpalette\@widecheck{#1}}}
\def\@widecheck#1#2{%
   \setbox\z@\hbox{\m@th$#1#2$}%
   \setbox\tw@\hbox{\m@th$#1%
      \widehat{%
         \vrule\@width\z@\@height\ht\z@
         \vrule\@height\z@\@width\wd\z@}$}%
   \dp\tw@-\ht\z@
   \@tempdima\ht\z@ \advance\@tempdima2\ht\tw@ \divide\@tempdima\thr@@
   \setbox\tw@\hbox{%
      \raise\@tempdima\hbox{\scalebox{1}[-1]{\lower\@tempdima\box\tw@}}}%
   {\ooalign{\box\tw@ \cr \box\z@}}}
\newtheorem*{ackno}{Acknowledgements}
\numberwithin{equation}{section}
\numberwithin{theorem}{section}
\title[Hyperbolic $O (N)$ linear sigma model]{Hyperbolic $O (N)$ linear sigma model\\
and its mean-field limit}
\author[R.~Liu, S.~Liu, and T.~Oh]{Ruoyuan Liu, Shao Liu, and Tadahiro Oh}
\address{
Ruoyuan Liu, Mathematical Institute\\
University of Bonn\\
Endenicher Allee 60\\
53115\\
Bonn\\
Germany}
\email{ruoyuanl@math.uni-bonn.de}
\address{
Shao Liu, Mathematical Institute\\
University of Bonn\\
Endenicher Allee 60\\
53115\\
Bonn\\
Germany}
\email{shaoliu@math.uni-bonn.de}
\address{
Tadahiro Oh, School of Mathematics\\
The University of Edinburgh\\
and The Maxwell Institute for the Mathematical Sciences\\
James Clerk Maxwell Building\\
The King's Buildings\\
Peter Guthrie Tait Road\\
Edinburgh\\ 
EH9 3FD\\
 United Kingdom, 
 and 
 School of Mathematics and Statistics, Beijing Institute of Technology,
Beijing 100081, China
}
\email{hiro.oh@ed.ac.uk}
\subjclass[2020]{35L71, 60H15,  35R60}
\begin{document}

\baselineskip = 14pt

\keywords{$O (N)$ linear sigma model; stochastic nonlinear wave equation; 
wave equation;
Gibbs measure; 
mean-field limit;
stochastic quantization; canonical stochastic quantization}

\begin{abstract} 

We study large $N$ limits of 
the hyperbolic $O(N)$ linear sigma model
(\HLSS) on the two-dimensional torus $\T^2$, 
namely, 
a  system of
$N$ interacting 
stochastic damped nonlinear wave equations (SdNLW) with 
coupled cubic nonlinearities.
After establishing (pathwise) global well-posedness
of \HLS and the limiting equation, called the mean-field SdNLW, 
we first establish global-in-time convergence of 
\HLS  to 
the mean-field SdNLW 
with general initial data
(under a suitable assumption).
In particular, for the local-in-time convergence, 
we obtain 
an optimal  convergence rate of order $N^{- \frac 12}$
under an additional integrability assumption on initial data.
We then show that
 the invariant Gibbs dynamics for \HLS  converges to 
 that for the mean-field SdNLW
 with 
a convergence rate of order $N^{- \frac 12}$ 
on any large time intervals.

\end{abstract}

%\date{\today}

\maketitle

%\vspace{-mm}

\tableofcontents

\section{Introduction}
\label{SEC:1}

\subsection{Hyperbolic $O (N)$ linear sigma model} 
\label{SUBSEC:1.1}
In this paper, we consider the following system of 
$N$ interacting 
stochastic damped nonlinear wave equations (SdNLW)
with coupled cubic nonlinearities, posed on the two-dimensional torus $\T^2 = (\R / 2 \pi \Z)^2$:\footnote{In this  formal discussion, we ignore the issue of renormalization.}
\begin{align}
    (\dt^2 + \dt + m - \Dl) u_{N, j} = - \frac 1N \sum_{k = 1}^N u_{N, k}^2 u_{N, j} + \sqrt{2} \xi_j,
\qquad j = 1, \dots, N, 
\label{NLW1}
\end{align}

\noi
where $m > 0$ and $\{\xi_j\}_{j \in \N}$ is a family of  independent space-time white noises on 
$\R_+ \times \T^2$.
Our main goal is to 
study the large $N$ limit of solutions to \eqref{NLW1}
and show that (under suitable assumptions on initial data), for each fixed $j \in \N$, 
 a solution $u_{N, j}$ to \eqref{NLW1}
 converges in probability to a solution  $u_j$  to the following mean-field SdNLW on $\T^2$
 as $N \to \infty$:
\begin{align}
    (\dt^2 + \dt + m - \Dl) u_j = - \E [ u_j^2 ] \,u_j + \sqrt{2} \xi_j.
\label{MF1}
\end{align}

\noi
See
Theorems \ref{THM:conv1}
and \ref{THM:conv2} below for the main statements.

In quantum field theory, there are natural  models where the field $\vec u_N = (u_{N, 1}, \dots, u_{N, N}$) takes values in an
$N$-dimensional space, 
and it is of importance to investigate
their asymptotic  behavior as $N \to \infty$.
Such a large $N$ limit often
leads to 
a substantial 
simplification and has been studied for many models.
In \cite{Wilson}, 
Wilson considered the following
$N$-component generalization of the $\Phi^4_2$-measure:
\begin{align}
\begin{split}
d \rho_N  
= Z_N^{-1} & \exp \bigg( - \int_{\T^2} \frac 12 \sum_{j = 1}^N |\nb u_{N, j}|^2 
+ \frac{m}{2} \sum_{j = 1}^N u_{N, j}^2 \,dx \\
& \hphantom{XXX}- \frac{1}{4N} \int_{\T^2} \Big( \sum_{j = 1}^N u_{N, j}^2 \Big)^2 dx \bigg) 
    d \vec u_N, 
\end{split}
\label{Gibbs1}
\end{align}

\noi
known as
 the $O (N)$ linear sigma model.
 Here,  the word ``linear'' refers to the fact that the target space $\R^N$ is a linear space.
In \cite{SSZZ}, Shen, Smith, Zhu, and Zhu
studied the large $N$ limit
of the $O (N)$ linear sigma model $\rho_N$
via the method of stochastic quantization.
Namely, they studied the following
 coupled system of stochastic nonlinear heat equations (SNLH) on $\T^2$:
\begin{align}
    (\dt + m - \Dl) u_{N, j} = - \frac{1}{N} \sum_{k = 1}^N u_{N, k}^2 u_{N, j} + \sqrt{2} \xi_j.
\label{NLH1}
\end{align}

\noi
It is easy to see that 
 the measure $\rho_N$
 in \eqref{Gibbs1} is 
formally invariant under the dynamics
of~\eqref{NLH1} 
and, for this reason, 
we refer to \eqref{NLH1}
as the parabolic $O(N)$ linear sigma model.
In \cite{SSZZ},
the authors 
 showed that a solution $u_{N, j}$ to \eqref{NLH1} converges
to the following mean-field SNLH:
\begin{align}
    (\dt + m - \Dl) u_j = - \E [ u_j^2 ] \,u_j + \sqrt{2} \xi_j
\label{NLH2}
\end{align}

\noi
(under suitable assumptions on initial data).
This dynamical result in turn implied
convergence   of the invariant measure $\rho_N$ in \eqref{Gibbs1}
to the limiting Gaussian measure $\mu_1^{\otimes \N}$
(in a suitable sense), 
provided that $m > 0$ is sufficiently large.
Here, 
$\mu_1$
denotes the massive
Gaussian free field measure with mass $m > 0$:
\begin{align}
    d \mu_{1} = Z^{-1} \exp\bigg(-\frac 12  \| (m - \Dl)^\frac 12 u \|_{L^2}^2 \bigg) du.
\label{gauss0}
\end{align}

\noi
See \cite[Theorem 1.2]{SSZZ} for a precise statement.
In a recent work \cite{DS}, 
Delgadino and Smith 
extended the latter result on the measure convergence to 
arbitrarily small masses $m > 0$
by making use of 
Talagrand's inequality
\cite{Tala, OV, FU}.
See \cite{SZZ1, SZZ2}
for further results 
on the (parabolic) $O (N)$ linear sigma model.
See also a nice survey paper 
 \cite{Shen} 
 on the subject, including 
physical and mathematical backgrounds on the large $N$ methods
and $\frac 1N$ expansions,
and the references therein.

As in the scalar case ($N = 1$)
studied in \cite{GKOT}, 
the SdNLW system \eqref{NLW1} corresponds to  the hyperbolic 
counterpart of the 
parabolic $O(N)$ linear sigma model \eqref{NLH1}.
In particular, 
 the SdNLW system~\eqref{NLW1} is the so-called canonical stochastic quantization of the $O(N)$ linear sigma model $\rho_N$ in \eqref{Gibbs1}; see \cite{RSS} and also Subsection~\ref{SUBSEC:1.3} below. For this reason, we  refer to the SdNLW system~\eqref{NLW1} as the {\it hyperbolic $O(N)$ linear sigma model}
 (\HLSS)
 in the following

 Starting from the work \cite{OTh2, GKO}, 
there has been a significant progress on the study 
of singular stochastic (damped) nonlinear wave equations, 
broadly interpreted with stochastic forcing and\,/\,or random initial data
in recent years; 
see
 \cite{
Deya, GKO2, OPTz, GKOT,  Tolo1, OOR, OO,  OOT1, Bring2,  OOT, ORTz,  OOTz, OWZ, 
STzX, BLL,  OTWZ, BDNY,
Zine}.
Nevertheless, to the best of our knowledge, 
there is no well-posedness result  on a wave equation with a mean-field nonlinearity,  
in particular in the singular setting
(such as the mean-field SdNLW \eqref{MF1}), 
 not to mention the large $N$ limit (= the mean-field limit) of a system of nonlinear wave equations. 
Thus, the current paper presents the first such results on the hyperbolic $O(N)$ linear sigma model 
\eqref{NLW1} and its mean-field limit \eqref{MF1}.

\subsection{Large $N$ limit of the hyperbolic $O (N)$ linear sigma model}
\label{SUBSEC:1.2}

We now take a closer look at \HLS  \eqref{NLW1} and its (formal) mean-field limit \eqref{MF1}. Given  $j \in \N$, let $\Psi_j$ 
denote the stochastic convolution, satisfying  the following linear stochastic damped wave equation with zero initial data:
\begin{align}
\begin{cases}
(\dt^2 + \dt + m - \Dl) \Psi_j = \sqrt{2} \xi_j\\
(\Psi_j, \dt \Psi_j) |_{t = 0} = (0, 0), 
\end{cases}    
\label{psi1}
\end{align}

\noi
where $\{\xi_j \}_{j \in \N}$
is the family of independent space-time white noises as in \eqref{NLW1}.
As seen in~\cite{GKO, GKOT}, 
 $\Psi_j(t)$ is 
 almost surely a distribution of regularity\footnote{In describing regularities of functions and distributions, 
we use $\eps > 0$ to denote an arbitrarily small constant.}
 $- \eps$ and thus we need to introduce renormalization
 in considering its powers.

Let $\uu_N = \{u_{N, j}\}_{j = 1}^N$ be a solution to \HLS \eqref{NLW1}.
 Proceeding  with the  first order expansion \cite{McK, BO96, DPD}:
\begin{align}
    u_{N, j} = \Psi_j + v_{N, j},
\qquad j = 1, \dots, N, 
\label{exp}
\end{align}

\noi
we see that  the residual term $\vv_N = \{v_{N, j}\}_{j = 1}^N$ satisfies the following SdNLW system:
\begin{align}
\begin{split}
(\dt^2 + \dt + m - \Dl) v_{N, j} 
& = - \frac{1}{N} \sum_{k = 1}^N  \Big( 
v_{N, k}^2 v_{N, j}
+ 2 \Psi_k v_{N, k} v_{N, j}
  + v_{N, k}^2 \Psi_j \\
& 
\hphantom{XXXXX}
 + \wick{\Psi_k^2} v_{N, j} + 2 v_{N, k} \wick{ \Psi_k \Psi_j } 
+ \wick{\Psi_k^2 \Psi_j} \Big), 
\end{split}
\label{NLW3}
\end{align}

\noi
where we have already applied Wick renormalization 
to the powers of the stochastic convolution~$\Psi_j$:
\begin{align}
\begin{split}
 \wick{ \Psi_k \Psi_j } & = 
\begin{cases}
\wick{\Psi_k^2}\,, & \text{if } k = j, \\
\Psi_k \Psi_j, & \text{if } k \ne j, 
\end{cases}\\
 \wick{\Psi_k^2 \Psi_j} 
&  = 
\begin{cases}
\wick{\Psi_k^3}\,, & \text{if } k = j, \\
\wick{\Psi_k^2} \Psi_j, & \text{if } k \ne j, 
\end{cases}
\end{split}
\label{Wick0}
\end{align}

\noi
where 
$\wick{\Psi_k^\l}$ on the right-hand side denotes
the standard Wick power.
See 
\eqref{psi3} and \eqref{psi4}
for the precise definition
via a regularization and limiting procedure.
See Subsection \ref{SUBSEC:2.4} for a further discussion.
Then, by setting
\begin{align*}
\wick{ u_{N, k}^2 u_{N, j} } 
& \deff 
v_{N, k}^2 v_{N, j}
+ 2 \Psi_k v_{N, k} v_{N, j}
  + v_{N, k}^2 \Psi_j \\
& 
\quad \,
 + \wick{\Psi_k^2} v_{N, j} + 2 v_{N, k} \wick{ \Psi_k \Psi_j } 
+ \wick{\Psi_k^2 \Psi_j} , 
\end{align*}

\noi
we obtain the renormalized version of
\HLS \eqref{NLW1}:
\begin{align}
    (\dt^2 + \dt + m - \Dl) u_{N, j} = - \frac 1N \sum_{k = 1}^N \wick{u_{N, k}^2 u_{N, j}} + \, \sqrt{2} \xi_j.
\label{NLW4}
\end{align}

\noi
As usual in practice
\cite{GKO, GKOT}, 
we say that 
 $\uu_N = \{u_{N, j}\}_{j = 1}^N$ is a solution to \eqref{NLW4} if $u_{N, j}$, $j = 1, \dots, N$,  has the structure \eqref{exp}, 
 where  $\vv_N =  \{v_{N, j}\}_{j = 1}^N$ is  a solution to \eqref{NLW3}.

Before proceeding further, we introduce 
some notations; see also Subsection \ref{SUBSEC:2.1}.
Given $s \in \R$, we set 
\begin{align*}
    \H^s (\T^2) = H^s (\T^2) \times H^{s - 1} (\T^2).
\end{align*}

\noi
 For a function space $B$, we denote by $B^{\otimes N}$ as the $N$-fold product of the space $B$ with itself: $B^{\otimes N} = \underbrace{B \times \cdots \times B}_{N \text{ times}} $.
Given 
a Banach space $B$ and 
$N \in \N$, we define 
the $\A_N B$-norm, denoting the $\l^2$-average, by setting
\begin{align}
    \|  a \|_{\A_N B}^2 =  \frac{1}{N} \sum_{j = 1}^N \| a_j \|_B^2
\label{AN0} 
\end{align} 

\noi
for $ a = \{ a_j \}_{j = 1}^N \in B^{\otimes N}$. 
We may write $\A_{N, j}$ to emphasize 
that the $\l^2$-average is taken over the index $j$.
When $B = \R$, we simply set
$\A_N = \A_N \R$.

We  state pathwise local and global well-posedness of \HLS \eqref{NLW4}.

\begin{theorem} %[well-posedness for \HLSS]
\label{THM:GWP1}
\textup{(i)}
Let $m > 0$ and $\frac 12 \le s < 1$.
Then, given $N \in \N$, \HLS \eqref{NLW4} is locally well-posed
in $\big(\H^s (\T^2)\big)^{\otimes N}$.
More precisely, 
given $\big(\vv_N^{(0)}, \vv_N^{(1)}\big)  = \big\{\big(v^{(0)}_{N, j}, v^{(1)}_{N, j}\big) \big\}_{j = 1}^N \in 
\big(\H^s (\T^2)\big)^{\otimes N}$, 
there exists 
a unique solution 
\begin{align}
 (\vv_N, \dt \vv_N) 
 = \{(v_{N, j}, \dt v_{N, j})\}_{j = 1}^N
  \in \A_N C([0, \tau];  \H^s(\T^2))
\label{class1}
\end{align}

\noi
  to \eqref{NLW3}
with $(\vv_N, \dt \vv_N)|_{t = 0}
= \big(\vv_N^{(0)}, \vv_N^{(1)}\big)$
on the time interval $[0, \tau]$
for some almost surely positive time $\tau  > 0$.

\medskip

\noi
\textup{(ii)}
Let $m > 0$ and $ \frac 45 < s < 1$.
Then, given $N \in \N$, \HLS \eqref{NLW4} is globally well-posed
in $\big(\H^s (\T^2)\big)^{\otimes N}$.
More precisely, 
given  $\big(\vv_{N}^{(0)},  \vv_{N}^{(1)}\big) = \big\{\big(v_{N, j}^{(0)},  v_{N, j}^{(1)}\big)\big\}_{j = 1}^N \in \big(\H^s (\T^2)\big)^{\otimes N}$, 
 there exists a unique global-in-time solution 
$(\vv_N, \dt \vv_N) \in \big(  C (\R_+; \H^s (\T^2) )\big)^{\otimes N}$
 to~\eqref{NLW3} with initial data
$\big(\vv_{N}^{(0)},  \vv_{N}^{(1)}\big)$, 
almost surely.

\end{theorem}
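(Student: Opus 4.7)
The plan is to reduce both parts of Theorem~\ref{THM:GWP1} to the deterministic system~\eqref{NLW3} for the residual $\vv_N$, viewed as a PDE driven by the stochastic data
$\Psi_j$, $\wick{\Psi_j^2}$, $\wick{\Psi_k\Psi_j}$ ($k\neq j$), and $\wick{\Psi_k^2\Psi_j}$, whose Besov-type regularities and Wiener-chaos moment bounds are standard (see~\cite{GKO, GKOT}). All of the work is carried out in the averaged norm $\A_N$ of~\eqref{AN0}, since it is the combination of the $1/N$ prefactor in~\eqref{NLW3} with Cauchy--Schwarz in $k$ that delivers bounds uniform in $N$, and since the solution class~\eqref{class1} itself is formulated in this norm.

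For part~(i), I would set up a Picard iteration in a Banach space $\A_N\X^s_\tau$, where $\X^s_\tau$ is a Strichartz-type space for the damped wave operator $\dt^2+\dt+m-\Dl$ with $\X^s_\tau\hookrightarrow C([0,\tau];\H^s(\T^2))$ for $s\ge 1/2$. Each contribution on the right-hand side of~\eqref{NLW3} is controlled by a combination of (a) tri- and bi-linear product or paraproduct estimates in $H^s$ against the negative-regularity stochastic objects, (b) Sobolev embedding on $\T^2$, and (c) Cauchy--Schwarz in the summation index $k$ paired with the $1/N$ prefactor. Schematically, a term $\frac1N\sum_k\Xi_{k,j}\,P(v_{N,k},v_{N,j})$, where $\Xi_{k,j}$ denotes one of the stochastic objects and $P$ is polynomial, is bounded in $\A_{N,j}$ by an $\ell^2_k$-type norm of $\{\Xi_{k,j}\}$ times a power of $\|\vv_N\|_{\A_N\X^s_\tau}$; the fully stochastic source $\frac1N\sum_k\wick{\Psi_k^2\Psi_j}$ is handled directly by Wiener-chaos and the independence of distinct $\xi_k,\xi_j$. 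A small-time contraction then yields a unique solution on an a.s.\ positive time interval.

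For part~(ii), I would establish an \emph{a priori} bound on the $\A_N\H^s$-norm of $(\vv_N,\dt\vv_N)$ on any bounded time interval, thereby upgrading the local theory of~(i) to a global one. The natural strategy is the energy method of~\cite{GKOT} for the scalar cubic SdNLW, adapted to the system with a modified energy of the form
\begin{align*}
\EE_N(t)
&= \frac{1}{2N}\sum_{j=1}^N\Bigl(\|\dt v_{N,j}\|_{L^2}^2+\|(m-\Dl)^{1/2}v_{N,j}\|_{L^2}^2\Bigr)\\
&\quad+\frac{1}{4N^2}\bigg\|\sum_{k=1}^N v_{N,k}^2\bigg\|_{L^2}^2+\mathcal R_N(t),
\end{align*}
where $\mathcal R_N$ is a corrector built from pairings such as $-\frac1N\sum_{j,k}\int_{\T^2}\wick{\Psi_k\Psi_j}\,v_{N,k}v_{N,j}\,dx$ and cubic analogues, designed to cancel the most singular contributions in $\frac{d}{dt}\EE_N$. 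The damping produces the good term $-\|\dt\vv_N\|_{\A_N L^2}^2$, while the remaining contributions are absorbed into $\EE_N$ modulo an integrable-in-time random multiplier, allowing Gronwall to close the bound. The threshold $s>4/5$ enters at the level of the pairings involving $\wick{\Psi_k^2}\,v_{N,j}$ against $\dt v_{N,j}$ and coincides with the scalar threshold in~\cite{GKOT}; the $\A_N$ averaging does not deteriorate it.

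The main obstacle is the uniform-in-$N$ closure of this energy estimate: the corrector $\mathcal R_N$ involves double averages over $k$ and $j$ of multilinear expressions in $\vv_N$ and the off-diagonal stochastic objects $\wick{\Psi_k\Psi_j}$ and $\wick{\Psi_k^2\Psi_j}$ for $k\neq j$, and absorbing $\mathcal R_N$ into the positive part of $\EE_N$ without an $N$-dependent loss requires simultaneously exploiting (i) the $1/N$ normalization in the quartic potential, (ii) the additional $N^{-1/2}$-type gain from Wiener-chaos estimates built on the independence of distinct $\xi_k$'s in the off-diagonal sums, and (iii) the paracontrolled analysis of~\cite{GKOT} at the $s>4/5$ level. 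Once these three ingredients are orchestrated within the $\A_N$ framework, global well-posedness follows.
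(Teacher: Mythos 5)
Your part~(i) is essentially workable, but note two points. First, for $s\ge \tfrac12$ no Strichartz-type auxiliary space is needed: the paper runs the contraction directly in $X^s_N(\tau)=\A_N C([0,\tau];\H^s_x)$ using only Sobolev embedding, the product estimates of Lemma~\ref{LEM:prod}, and Cauchy--Schwarz in $k$ against the $1/N$ prefactor. This is not merely cosmetic: the theorem asserts uniqueness in the \emph{entire} class~\eqref{class1}, and introducing an auxiliary space $\X^s_\tau$ would a priori only give uniqueness in $\A_N\X^s_\tau$, which is strictly weaker (cf.\ Remark~\ref{REM:LWP1}\,(i); this unconditional uniqueness is used later in the paper). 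Second, no Wiener-chaos input is needed at the level of the fixed-point argument itself: the source term $\frac1N\sum_k\wick{\Psi_k^2\Psi_j}$ is simply bounded by the deterministic norm $\|Z^{(3)}_{k,j}\|_{\A_N^{(2)}C_TW_x^{-\eps,\infty}}$ of the enhanced data set, and all probabilistic content is quarantined in Lemma~\ref{LEM:sto1}.

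Part~(ii) has a genuine gap. Your modified energy $\EE_N$ is built on the $\H^1$-norm of $(\vv_N,\dt\vv_N)$ itself, but the local solution satisfies $(\vv_N,\dt\vv_N)(t)\notin\big(\H^1(\T^2)\big)^{\otimes N}$ for $t>0$ almost surely --- the residual $v_{N,j}$ only gains regularity up to (but not including) $H^1$ because of the roughness of the forcing terms $\wick{\Psi_k^2\Psi_j}$ etc. Hence $\EE_N(t)=\infty$ a.s.\ for $t>0$, and no corrector $\mathcal R_N$ of the quadratic/cubic pairing type you describe can repair this: such correctors are designed to cancel divergent contributions to $\frac{d}{dt}\EE_N$, not to render the quadratic part of $\EE_N$ finite. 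The missing ingredient is the $I$-operator $I=I_{s,M}$ of Subsection~\ref{SUBSEC:2.3}: one works with $E_N(I\vv_N,\dt I\vv_N)$, which is finite since $I$ maps $H^s$ into $H^1$, and controls its growth via the commutator estimates of Lemmas~\ref{LEM:com1}--\ref{LEM:com2} (which produce negative powers of $M$) combined with a Gronwall-type argument for the perturbative terms. Even granting a finite energy, your claim that the remaining terms can be ``absorbed \dots allowing Gronwall to close'' understates the difficulty: the term pairing $\dt Iv_{N,j}$ against $(I\Psi_k)(Iv_{N,k})(Iv_{N,j})$ is supercritical with respect to the energy (it yields $\EN^{1+c\eta}$), and closing it requires the logarithmic-divergence bound on $\|I\Psi_j\|_{L^r}$ of Lemma~\ref{LEM:sto2} together with a double-exponential Gronwall iteration over a growing sequence $M_k$ (Lemma~\ref{LEM:GWP1a} and the proof of Proposition~\ref{PROP:GWP1}); the threshold $s>\tfrac45$ arises from the compatibility conditions $\be<\al\le 3s-2$ and $2(1-s)<\be$ in that iteration, not from a single pairing estimate. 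Finally, GKOT's scheme (and this paper's) is a first-order expansion plus $I$-method, not a paracontrolled analysis.
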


Theorem \ref{THM:GWP1}
follows from a straightforward modification
of the corresponding results in the scalar case (i.e.~$N = 1$)
in \cite{GKOT}.
We present its proof in Section \ref{SEC:3}.
The local well-posedness result 
(Theorem \ref{THM:GWP1}\,(i))
follows from Proposition \ref{PROP:LWP1}
on local well-posedness of the perturbed SdNLW system \eqref{VN2}
with 
 Lemma \ref{LEM:sto1}
on the regularity properties
of the Wick powers of the stochastic convolution $\Psi_j$
defined in \eqref{psi1}.
Our proof does not make use of any auxiliary function space
and thus the uniqueness of a solution 
$ (\vv_N, \dt \vv_N) $
holds in the entire class \eqref{class1}.
See Remark~\ref{REM:LWP1}\,(i).
We also point out that our argument yields
a uniform (in $N$)
local existence time (in the sense of Proposition \ref{PROP:LWP1}).

Let us now turn our attention 
to 
the  pathwise global well-posedness result 
(Theorem \ref{THM:GWP1}\,(ii)).
Given
$(\uu_N, \dt \uu_N)
= \{(u_{N, j}, \dt u_{N, j})\}_{j = 1}^N$, 
define the energy functional 
$E_N (\uu_N, \dt \uu_N) $
by setting
\begin{align}
\begin{split}
E_N (\uu_N, \dt \uu_N) 
&= \frac{1}{2N} \sum_{j = 1}^N \int_{\T^2} |\nb u_{N, j}|^2 + m u_{N, j}^2 + (\dt  u_{N, j})^2 dx\\
& \quad  + \frac{1}{4N^2} \int_{\T^2} \Big( \sum_{j = 1}^N  u_{N, j}^2 \Big)^2 dx \\
&= \frac{1}{2}  \int_{\T^2} \|\nb  u_{N, j}\|_{\A_N}^2 
+ m \|u_{N, j}\|_{\A_N}^2 + \|\dt  u_{N, j}\|_{\A_N}^2 dx\\
& \quad  + \frac{1}{4} \int_{\T^2} \|  u_{N, j}\|_{\A_N}^4 dx, 
\end{split}
\label{E1}
\end{align}

\noi
where $\A_N = \A_N \R$ is as in \eqref{AN0}.
The energy functional 
$E_N (\uu_N, \dt \uu_N)$ 
is obtained
by simply replacing  the absolute value (= the norm on $\R$)
in the energy:
\begin{align}
E(u, \dt u)
= \frac{1}{2}  \int_{\T^2} |\nb  u|^2
+ m u^2 + (\dt  u)^2 dx + \frac{1}{4} \int_{\T^2} u^4 dx
\label{EW1}
\end{align}

\noi
 for the scalar (deterministic) NLW:
 \begin{align}
 (\dt^2 + m - \Dl) u= - u^3
 \label{EW2}
 \end{align}
 
 \noi
by the $\A_N \R$-norm.
As such, it is easy to check that 
$E_N (\uu_N, \dt \uu_N)$ is conserved
under the following (deterministic) NLW system:
\begin{align}
    (\dt^2 + m - \Dl) u_{N, j} = - \frac{1}{N} \sum_{k = 1}^N u_{N, k}^2 u_{N, j}, 
\qquad     j = 1, \dots, N.
\label{NLWz1}
\end{align}

\noi
With $\vv_N = \dt \uu_N$, we can write \eqref{NLWz1}
as 
the following Hamiltonian formulation:
\begin{align}
\dt \begin{pmatrix}
\uu_N  \\ \vv_N
\end{pmatrix}
= \begin{pmatrix}
0 & \Id_N\\
- \Id_N & 0
\end{pmatrix}
\nb_{(\A_N L^2_x)^{\otimes 2}} E_N(\uu_N, \vv_N), 
\label{NLWz2}
\end{align}

\noi
where $\Id_N$ is the $N\times N$ identity matrix
and 
$\nb_{(\A_N L^2_x)^{\otimes 2}} E_N(\uu_N, \vv_N)$ is the $L^2$-gradient
(= the Riesz representation of the Fr\'echet derivative of 
$E_N(\uu_N, \vv_N)$)
with respect to the $\big(\A_N L^2(\T^2)\big)^{\otimes 2}$-inner product.

As observed in \cite{GKOT}, 
there are two difficulties
in proving global well-posedness
of \eqref{NLW3}
(see Proposition \ref{PROP:GWP1}), 
using the energy 
$E_N (\uu_N, \dt \uu_N) $ in \eqref{E1}.
First of all, 
due to the roughness of the space-time white noises $\{\xi_j \}_{j = 1}^N$, 
the local-in-time solution $(\vv_N, \dt \vv_N)(t)$
constructed in Theorem \ref{THM:GWP1}\,(i)
does {\it not} belong to 
$\big(\H^1 (\T^2)\big)^{\otimes N}$, 
namely
$E_N (\vv_N, \dt \vv_N) (t) = \infty$
for $t > 0$, almost surely.
The second issue comes from the fact that 
$(\vv_N, \dt \vv_N)$
does not solve the NLW system~\eqref{NLWz1}
but it satisfies
the perturbed NLW system \eqref{NLW3}, 
where we view the terms involving (the Wick powers of) $\Psi_j$
as a perturbation.
If each of these difficulties
appears alone, then we could overcome the issue
by applying the $I$-method \cite{CKSTT1} in the former case, 
and by applying a Gronwall-type argument after
Burq and Tzvetkov \cite{BT2} in the latter case.
In our current setting, however, 
we need to handle both of the difficulties at the same time.
Our strategy is to adapt the hybrid approach
combining the $I$-method with a Gronwall-type argument, 
introduced in~\cite{GKOT}
by Gubinelli, Koch, Tolomeo, and the third author, 
to the current vector-valued setting.
In Subsections \ref{SUBSEC:3.2}
and~\ref{SUBSEC:3.3},
we present details of a proof of Theorem \ref{THM:GWP1}\,(ii)
by establishing uniform (in $N$) estimates.
Our argument also yields
a uniform (in $N$) double exponential growth bound; see Remark \ref{REM:growth1}.

\medskip

Next, we turn our attention to the well-posedness
issue of the mean-field SdNLW \eqref{MF1}.
Take $N \to \infty$ in \eqref{NLW3}.
Heuristically, in view of the law of large numbers
(see Subsection~\ref{SUBSEC:5.1}), 
 the empirical average 
$\frac 1N \sum_{1 \leq k \leq N}$ in~\eqref{NLW3}
gets replaced by 
an expectation $\E$.
Thus, in the limit as $N \to \infty$, 
we formally obtain the following limiting mean-field equation:
\begin{align}
(\dt^2 + \dt + m - \Dl) v_j = 
- \E [ v_j^2 ] v_j
- 2 \E [ \Psi_j v_j ] v_j 
- \E [ v_j^2 ] \Psi_j 
- 2 \E [ v_j \Psi_j  ] \Psi_j , 
\label{NLW5}
\end{align}

\noi
We note that the fourth and sixth terms on the right-hand side
of \eqref{NLW3} vanish in the limit,
since they (formally)
converge
to 
$\E[ \wick{\Psi_j^2}] v_j $
and $\E[ \wick{\Psi_j^2}] \Psi_j$, respectively,
which are both $0$
in view of 
$\E[ \wick{\Psi_j^2}] = 0$.
By setting  
\begin{align}
    u_j = \Psi_j + v_j,
\label{expj}
\end{align}

\noi
we obtain  the following renormalized version of the mean-field SdNLW \eqref{MF1}:
\begin{align}
    (\dt^2 + \dt + m - \Dl) u_j = - \E [ u_j^2 - \Psi_j^2 ] \,u_j + \sqrt{2} \xi_j.
\label{MF2}
\end{align}

\noi
As in the \HLS case, 
we say that 
 $u_j$ is a solution to \eqref{MF2} if $u_j$ has the structure \eqref{expj}, 
 where $v_j$ is  a solution to \eqref{NLW5}.

Next, we state  local and global well-posedness 
of the  mean-field SdNLW \eqref{MF2}.

\begin{theorem} %[well-posedness for the mean-field SdNLW]
\label{THM:GWP2}
\textup{(i)} Let $m > 0$, $\frac 12 \le s < 1$, 
and $2 \le p < \infty$.
Then, 
the mean-field SdNLW~\eqref{MF2} is locally well-posed
in $L^p (\Omega; \H^s (\T^2) )$.
More precisely, 
given  
  $(v^{(0)}, v^{(1)}) \in L^p (\Omega; \H^s (\T^2) )$, 
there exists a unique solution 
\begin{align}
(v, \dt v) \in L^p(\O; C([0, \tau]; \H^s(\T^2)))
\label{class2}
\end{align}

\noi
to \eqref{NLW5}
with $(v, \dt v)|_{ t= 0} = (v^{(0)}, v^{(1)})$
on the time interval $[0, \tau]$ for some $\tau > 0$.

\medskip

\noi
\textup{(ii)}
Let $m > 0$ and $ \frac 45 < s < 1$.
Then, 
the mean-field SdNLW~\eqref{MF2} is globally well-posed
in $L^2 (\Omega; \H^s (\T^2) )$.
More precisely, 
given  any
  $(v^{(0)}, v^{(1)}) \in L^2 (\Omega; \H^s (\T^2) )$
and $T > 0$,   
there exists a unique global-in-time solution 
$(v, \dt v) \in L^2(\O; C([0, T]; \H^s(\T^2)))$
to \eqref{NLW5}
with $(v, \dt v)|_{ t= 0} = (v^{(0)}, v^{(1)})$.

\end{theorem}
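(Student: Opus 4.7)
For Part~(i), my plan is to run a Picard iteration for the Duhamel formulation of~\eqref{NLW5} directly in the Banach space $L^p(\O; C([0,\tau]; \H^s(\T^2)))$, viewing the mean-field coefficients $\E[v^2]$, $\E[\Psi_j v]$, and $\E[v\Psi_j]$ as deterministic $(t,x)$-functions (or distributions) determined self-consistently by the candidate~$v$. The point is that taking $\E$ trades random coefficients for deterministic ones, which can be bounded in $C_T L^r_x$ via Minkowski and Jensen:
\begin{align*}
\|\E[v^2]\|_{C_T L^r_x}
\le \bigl\|\,\|v\|_{L^{2r}_x}^2\bigr\|_{L^1_\o C_T}
\lesssim \|v\|_{L^p(\O; C_T L^{2r}_x)}^2
\lesssim \|v\|_{L^p(\O; C_T H^s_x)}^2,
\end{align*}
valid for $p \ge 2$ whenever $2r \le 2/(1-s)$ by 2D Sobolev embedding. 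The mean-field terms involving $\Psi_j$ are treated analogously, combining Lemma~\ref{LEM:sto1} with Cauchy--Schwarz in $\o$. Once each deterministic coefficient is estimated, the remaining spatial nonlinear bounds reduce to standard paraproduct / fractional Leibniz estimates exactly as in the scalar SdNLW analysis of~\cite{GKOT}, and contraction on a short interval $[0,\tau]$ follows by taking differences, which also provides uniqueness in the class~\eqref{class2}.

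For Part~(ii), I plan to adapt the hybrid $I$-method plus Gronwall argument of~\cite{GKOT} to the mean-field setting, essentially replacing the empirical average $\A_N$ by the expectation $\E$ throughout. The natural conserved quantity is the limiting energy
\begin{align*}
E_\infty(u, \dt u)
= \frac 12\, \E \int_{\T^2} \bigl(|\nb u|^2 + m u^2 + (\dt u)^2\bigr) dx
+ \frac 14 \int_{\T^2} \bigl(\E[u^2]\bigr)^2 dx,
\end{align*}
obtained as the formal $N \to \infty$ limit of $E_N$ in~\eqref{E1} (via the law of large numbers on the $\A_N$-average) and conserved under the deterministic mean-field NLW $(\dt^2 + m - \Dl)u = -\E[u^2]u$. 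Since $(v, \dt v)(t)$ lies in $\H^s$ with $s < 1$ almost surely, $E_\infty(v, \dt v) = +\infty$, so I would pass to a low-frequency truncation via the $I$-operator $I_M$ at parameter $M \gg 1$ and track the evolution of $E_\infty(I_M v, I_M \dt v)$, with the $\Psi_j$- and $\wick{\Psi_j^k}$-dependent terms in~\eqref{NLW5} playing the role of the perturbation and controlled by Lemma~\ref{LEM:sto1}. Integrating the resulting Gronwall-type differential inequality and optimizing in $M$ would then yield the desired a priori bound on $\|(v, \dt v)\|_{L^2(\O; \H^s)}$ on any $[0,T]$, and hence global existence.

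The hardest part, I expect, is the handling of the cross terms $\E[\Psi_j v]$ and $\E[v \Psi_j]$: since $v$ depends on $\Psi_j$ through the equation itself, these expectations neither vanish nor become smooth by any independence argument, and must instead be estimated by Cauchy--Schwarz in $\o$ combined with a Wick-type renormalization as in~\cite{GKOT}. Propagating such bounds through both the local contraction and --- more delicately --- through the $I$-method energy identity, where one additionally needs commutator-type estimates for $[I_M, \E[v^2]\,\cdot\,]$, while keeping the constants uniform enough to close a Gronwall loop on an arbitrarily long interval $[0,T]$, is the main technical obstacle.
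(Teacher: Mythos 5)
Your overall architecture matches the paper's: Part (i) is proved by a contraction in $L^p(\O; C([0,\tau];\H^s))$ (Proposition \ref{PROP:LWP2}), and Part (ii) by the hybrid $I$-method/Gronwall scheme applied to the modified energy $\EE(Iv,\dt Iv)$ built from \eqref{MFE1} (Proposition \ref{PROP:CM0}), exactly as you outline. However, there is a genuine gap in your treatment of the term $\E[v\Psi]\Psi$ in \eqref{NLW5}, which is precisely the new difficulty of the mean-field equation. Your plan is to regard $\E[v\Psi]$ as a deterministic coefficient and then multiply it by $\Psi$, "combining Lemma \ref{LEM:sto1} with Cauchy--Schwarz in $\o$" and "a Wick-type renormalization." This does not close: a priori $\E[v\Psi]$ only inherits the regularity of $v\Psi$, namely $W^{-\eps,q}_x$ for some $q$ (there is no independence between $v$ and $\Psi$ to exploit, since $v$ is built from $\Psi$ through the equation), and its product with $\Psi \in C_T W^{-\eps,\infty}_x$ is a product of two negative-regularity distributions, which is not covered by Lemma \ref{LEM:prod} or any paraproduct estimate (the regularities sum to $-2\eps<0$). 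Moreover, no Wick renormalization is available for this term: the equation contains $\E[v\Psi]\Psi$ as is, with no counterterm, as the paper explicitly points out.

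The paper's resolution, which your proposal is missing, is the independent-copy representation \eqref{WN3}:
\begin{align*}
\E[v\Psi]\Psi(\o) = \int_\O v'(\o')\,\Psi'(\o')\,\Psi(\o)\,\PP(d\o'),
\end{align*}
where $(v',\Psi')$ is an independent copy of $(v,\Psi)$. The point is that $\Psi'(\o')\Psi(\o)$ is a product of \emph{independent} Gaussian fields, hence a well-defined second-chaos object with $\|\Psi'\Psi\|_{L^p_{\o,\o'}C_TW^{-\eps,\infty}_x}<\infty$ (no renormalization needed, exactly as for $\wick{\Psi_k\Psi_j}$ with $k\ne j$); one then pairs the remaining positive-regularity factor $v'(\o')$ against $\Psi'\Psi$ via Lemma \ref{LEM:prod}\,(ii) before integrating in $\o'$, as in \eqref{WN4}. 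The same device is indispensable in Part (ii) for the energy-increment terms $\B_3$ and $\B_3'$ (Lemmas \ref{LEM:CM2} and \ref{LEM:CM3}, where $\|\Psi'\Psi\|_{L^2_\o C_TW^{-\sigma_0,p_0}_x}$ appears explicitly in the bounds). The rest of your outline for both parts --- the estimates on $\E[v^2]v$, $\E[\Psi v]v$, and $\E[v^2]\Psi$, where one factor always carries positive regularity, the commutator estimates, and the double-Gronwall iteration --- is consistent with the paper.
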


We note that 
 the uniqueness of a solution $(v, \dt v)$ to \eqref{NLW5}
holds in the entire class~\eqref{class2}.
See Remark \ref{REM:LWP2}\,(i).
As in the case of \HLS \eqref{NLW4}
(Theorem \ref{THM:GWP1}), 
Theorem \ref{THM:GWP2}
follows from adapting the argument for the scalar case \cite{GKOT}
to the current mean-field setting.
There is, however, one
seemingly  problematic term
$\E [ v_j \Psi_j  ] \Psi_j $
in \eqref{NLW5} which does not come with renormalization. 
Such a term does not appear in the scalar case \cite{GKOT}
or in \eqref{NLW3}, 
representing 
a new aspect of the current mean-field model, 
 as compared to
the hyperbolic $O(N)$ linear sigma model.
See a proof of 
Proposition \ref{PROP:LWP2}.
See also Subsection \ref{SUBSEC:4.2}
for a  globalization argument
where we encounter similar issues, specific
to the current mean-field setting.

As for the global well-posedness (Theorem \ref{THM:GWP2}\,(ii)), 
the main strategy is to adapt
the hybrid $I$-method approach \cite{GKOT}
to the current mean-field setting, starting
with the following energy functional:
\begin{align}
\begin{split}
\EE (u, \dt  u) 
& = 
\frac 12 \E \bigg[ \int_{\T^2} 
|\nb  u|^2  + m u^2 +  (\dt u)^2 d x \bigg] + \frac 14 \int_{\T^2} \big(\E [u^2]\big)^2 d x\\
& = 
\frac 12  \int_{\T^2} 
\|\nb  u\|_{L^2_\o}^2  + m \|u\|_{L^2_\o}^2 +  \|\dt u\|_{L^2_\o}^2 d x  + \frac 14 \int_{\T^2} 
\|u\|_{L^2_\o}^4 d x.
\end{split}
\label{MFE1}
\end{align}

\noi
Note that 
the energy $\EE (u, \dt  u) $
is obtained
by simply replacing  the absolute value (=~the norm on~$\R$)
in the definition \eqref{EW1} of the energy $E(u, \dt u)$  
 for the scalar (deterministic) NLW~\eqref{EW2}
by the $L^2(\O)$-norm.
It is easy to  check that 
$\EE (u, \dt  u) $ is conserved
under the mean-field NLW:
\begin{align}
    (\dt^2 + m - \Dl) u = - \E[u^2] u .
\label{NLWz3}
\end{align}

\begin{remark}\rm

With $v = \dt u $,  \eqref{NLWz3}
is equivalent to the following Hamiltonian formulation:
\begin{align*}
\dt \begin{pmatrix}
u  \\ v
\end{pmatrix}
= \begin{pmatrix}
0 & 1\\
- 1 & 0
\end{pmatrix}
\nb_{(L^2_{\o, x})^{\otimes 2}} \EE(u,v), 
%\label{NLWz4}
\end{align*}

\noi
where 
$\nb_{(L^2_{\o, x})^{\otimes 2}}\EE(u, v)$ is the $L^2$-gradient
with respect to the $\big(L^2(\O; L^2(\T^2))\big)^{\otimes 2}$-inner product. 

\end{remark}

\medskip
We now turn our attention to the convergence problem
for \HLS \eqref{NLW4}.
% to the mean-field SdNLW~\eqref{MF2}
%with general initial data (under some assumptions).
Before proceeding further, 
we first introduce  the notion of convergence in probability at a (given)
rate.

\begin{definition}\label{DEF:conv1}
\rm 
Let $B$ be a Banach space
and $\{r_N\}_{N \in \N}$
be a sequence of positive numbers, 
tending to $0$.
We say that a sequence $\{ X_N\}_{N \in \N}$ of 
$B$-valued random variables converges
in probability to $X$ in $B$ at  the rate $\{r_N\}_{N \in \N}$
as $N \to \infty$,  
if, given any $\dl > 0$, there exist $N_\dl \in \N$ and  $L = L(\eps) > 0$
such that 
\begin{align}
\PP\Big(\| X_N - X\|_B > L r_N \Big) < \dl
\label{PR1}
\end{align}

\noi
for any $N \ge  N_\dl$.
Namely, given any $\dl > 0$, 
there exist 
$N_\dl \in \N$,
$L = L(\dl) > 0$, and a set $\O_{\dl, N} \subset \O$
for each $N \ge N_\dl$
with $\PP(\O_{\dl, N}^c) < \dl$
such that 
\[ \| X_N(\o) - X(\o) \|_B \le L r_N \]

\noi
for each $\o \in \O_{\dl, N}$, $N \ge N_\dl$.

\end{definition}

\begin{remark}\label{REM:conv2}\rm
(i) The  convergence 
\eqref{PR1}
in probability at the rate
$\{r_N\}_{N \in \N}$
is often  denoted by 
\begin{align}
 \|X_N - X\|_B = O_p(r_N)
 \label{PR2}
\end{align}

\noi
as $N \to \infty$.
If we do not assume that $r_N\to 0$, 
then  \eqref{PR2}
is referred to as 
``bounded
in probability at the rate  
$\{r_N\}_{N \in \N}$''.
See, for example, 
\cite[Section~2.2]{Vaart}.
In Definition \ref{DEF:conv1}, 
we additionally assume that $r_N \to 0$
and thus refer to \eqref{PR1}
as ``convergence in probability at the rate $\{r_N\}_{N \in \N}$''.

\medskip

\noi
(ii) Suppose that 
$\{ X_N\}_{N \in \N}$ converges to $X$ in $L^p(\O;B)$
at the rate $\{r_N\}_{N \in \N}$
(tending to $0$)
for some $1 \le p \le \infty$:
\[ \|X_N - X\|_{L^p_\o B} \le r_N\]

\noi
for any $N \in \N$.
Then, 
 \eqref{PR1}
follows form 
Chebyshev's inequality
and taking $L = L(\dl) > 0$ sufficiently large.
Thus, convergence in $L^p(\O)$
at the rate $\{r_N\}_{N \in \N}$
implies convergence in probability
with the rate $\{r_N\}_{N \in \N}$
as defined in Definition \ref{DEF:conv1}.
In this sense, Definition \ref{DEF:conv1} is a natural extension
of the usual convergence in probability (without a rate)
which follows from convergence in  $L^p(\O)$
(without a rate).

%\medskip
%
%\noi
%(iii)
%
%
%
%\cite[Theorem 4.2]{LP}
%
%\cite[Theorem 1.2]{HLP}

\end{remark}

Finally, we are ready to state convergence results
for \HLS \eqref{NLW4} to the mean-field SdNLW~\eqref{MF2}
with general initial data (under some assumptions).

\begin{theorem}
\label{THM:conv1}

\textup{(i)}
Let $m > 0$ and  $\frac 45 <  s < 1$.
Let 
$\big\{ \big(u_{N, j}^{(0)}, u_{N, j}^{(1)}\big)\big\}_{j = 1}^N
\in \big(\H^s (\T^2)\big)^{\otimes N}$, $N \in \N$, 
and 
$\big\{\big(u_{j}^{(0)}, u_{ j}^{(1)}\big)\big\}_{j \in \N}
\in \big(\H^s (\T^2)\big)^{\otimes \N}$
be 
 sequences of random functions
such that 

\smallskip

\begin{itemize}
\item
$\big(u_{j}^{(0)}, u_{ j}^{(1)}\big)
\in L^2(\O; \H^s (\T^2))$, $j \in \N$, 
 are independent and identically distributed,

\smallskip

\item 
for each fixed $j \in \N$, we have 
\begin{align}
 \big\| \big( u_{N, j}^{(0)} - u_{j}^{(0)}, u_{N, j}^{(1)} -u_{j}^{(1)} \big) \big\|_{\H^s (\T^2)} 
& \too 0  
\label{IV1}
\end{align}

\noi
in probability as $N \to \infty$, and 
\begin{align}
 \big\| 
\big( u_{N, j}^{(0)} - u_{j}^{(0)}, u_{N, j}^{(1)} -u_{j}^{(1)} \big)
 \big\|_{\A_N \H^s (\T^2)} 
 & \longrightarrow 0
\label{IV2}
\end{align}

\noi
in probability as $N \to \infty$.

\end{itemize}

\smallskip

\noi
Let 
$\{ (u_{N, j}, \dt  u_{N, j})\}_{j = 1}^N$
and $\{(u_j, \dt u_j)\}_{j \in \N}$
be the global-in-time solutions to \eqref{NLW4} 
and \eqref{MF2}
with initial data $\big\{ \big(u_{N, j}^{(0)}, u_{N, j}^{(1)}\big)\big\}_{j = 1}^N$
and $\big\{\big(u_{j}^{(0)}, u_{ j}^{(1)}\big)\big\}_{j \in \N}$
constructed in Theorems \ref{THM:GWP1}
and~\ref{THM:GWP2}, respectively.
Then, given any $T > 0$,  we have, for each fixed $j \in \N$, 
\begin{align}
\| (u_{N, j}, \dt u_{N, j})   - (u_j, \dt u_j) \|_{ C_T  \H^s_x}
\too 0
\label{IV3}
\end{align}

\noi
in probability as $N \to \infty$, and 
\begin{align}
\| (u_{N, j}, \dt u_{N, j})   - (u_j, \dt u_j) \|_{\A_{N, j} C_T  \H^s_x}
\too 0
\label{IV4}
\end{align}

\noi
in probability as $N \to \infty$.

\medskip

\noi
\textup{(ii)}
Let $m > 0$ and  $\frac 12 \le s < 1$.
Let 
$\big\{ \big(u_{N, j}^{(0)}, u_{N, j}^{(1)}\big)\big\}_{j = 1}^N
\in \big(\H^s (\T^2)\big)^{\otimes N}$, $N \in \N$, 
and 
$\big\{\big(u_{j}^{(0)}, u_{ j}^{(1)}\big)\big\}_{j \in \N}
\in \big(\H^s (\T^2)\big)^{\otimes \N}$
be sequences of random functions
such that 

\smallskip

\begin{itemize}
\item
$\big(u_{j}^{(0)}, u_{ j}^{(1)}\big)
\in L^6(\O; \H^s (\T^2))$, $j \in \N$, 
 are independent and identically distributed,

\smallskip

\item 
for each fixed $j \in \N$, we have 
\begin{align}
 \big\| \big( u_{N, j}^{(0)} - u_{j}^{(0)}, u_{N, j}^{(1)} -u_{j}^{(1)} \big) \big\|_{\H^s (\T^2)} 
& \too 0  
\label{IV5}
\end{align}

\noi
in probability at the rate $N^{-\frac 12}$ as $N \to \infty$, and 
\begin{align}
 \big\| 
\big( u_{N, j}^{(0)} - u_{j}^{(0)}, u_{N, j}^{(1)} -u_{j}^{(1)} \big)
 \big\|_{\A_N \H^s (\T^2)} 
 & \longrightarrow 0
\label{IV6}
\end{align}

\noi
in probability 
at the rate $N^{-\frac 12}$
as $N \to \infty$.

\end{itemize}

\noi
Let 
$\{ (u_{N, j}, \dt  u_{N, j})\}_{j = 1}^N$
and $\{(u_j, \dt u_j)\}_{j \in \N}$
be the local-in-time solutions to \eqref{NLW4} 
and \eqref{MF2}
with initial data $\big\{ \big(u_{N, j}^{(0)}, u_{N, j}^{(1)}\big)\big\}_{j = 1}^N$
and $\big\{\big(u_{j}^{(0)}, u_{ j}^{(1)}\big)\big\}_{j \in \N}$
  constructed in Theorems \ref{THM:GWP1}
and~\ref{THM:GWP2}, respectively.
Suppose that $u_j$, $j \in \N$,  exists on the time interval $[0, T]$
for some $T > 0$
and that 
\begin{align}
v_j  = u_j - \Psi_j \in L^6 (\Omega; C([0, T]; H^{s} (\T^2))), \qquad j \in \N.
\label{IV6a}
\end{align}

\noi
Then,    for each fixed $j \in \N$, we have
\begin{align*}
 \| (u_{N, j}, \dt u_{N, j})   - (u_j, \dt u_j) \|_{ C_T  \H^s_x}
\too 0
%\label{IV7}
\end{align*}

\noi
in probability 
at the rate $N^{-\frac 12}$
as $N \to \infty$, and we have
\begin{align*}
  \| (u_{N, j}, \dt u_{N, j})   - (u_j, \dt u_j) \|_{\A_{N, j} C_T  \H^s_x}
\too 0
%\label{IV8}
\end{align*}

\noi
in probability 
at the rate $N^{-\frac 12}$
as $N \to \infty$.

\end{theorem}

Theorem~\ref{THM:conv1}\,(i)
establishes global-in-time convergence
of \HLS \eqref{NLW4} to the mean-field SdNLW
\eqref{MF2}
but it does not provide a rate of convergence.
On the other hand, 
Theorem~\ref{THM:conv1}\,(ii)
only establishes local-in-time convergence
but it provides an optimal  convergence rate of $N^{- \frac12}$.
Here, the optimality can be seen from the convergence
rate of the empirical averages
of the purely stochastic terms; see, for example,  Lemma \ref{LEM:LLN1}.
In view of Theorem \ref{THM:GWP2}\,(i), 
we note that 
 the higher moment assumption~\eqref{IV6a}
indeed holds true, at least for small $T > 0$
(and hence is not an assumption
for short times).
We point out that, at the Gibbs equilibria, 
we can extend the  local-in-time convergence in probability 
at the rate $N^{-\frac 12}$
in Theorem \ref{THM:conv1}\,(ii)
to a global one.
See Theorem \ref{THM:conv2}\,(iii).

We present a proof of 
Theorem~\ref{THM:conv1} in Section~\ref{SEC:conv}. 
The main difficulty for proving 
the global-in-time convergence result (Theorem~\ref{THM:conv1}\,(i))
comes from the lack of a higher moment control
on a global solution $v_j$ 
to the mean-field SdNLW \eqref{NLW5}.
Namely, 
due to the use of the energy $\EE(v_j, \dt v_j)$ in~\eqref{MFE1}
(more precisely, the modified energy
$\EE(Iv_j, \dt I v_j)$ in~\eqref{WW1}, 
where $I$ denotes the $I$-operator defined in \eqref{I2})
which comes with the $L^2(\O)$-norm, 
Theorem \ref{THM:GWP2}\,(ii)
only provides  the second moment bound
for the residual part $v_j = u_j - \Psi_j$.
This is in sharp contrast to the 
 parabolic setting studied in~\cite{SSZZ},
 where a higher moment bound
 is available;
see \cite[Lemma~3.4]{SSZZ}.
We overcome this difficulty 
in the current hyperbolic\,/\,dispersive problem
by directly establishing 
 law-of-large-numbers type lemmas
 (Lemmas \ref{LEM:LLN1}, \ref{LEM:LLN2}, and \ref{LEM:LLN3})
in Subsection~\ref{SUBSEC:5.1}, 
assuming only the second moment bound on $v_j$.
We note that, due to the lack of a higher moment control, 
we can not rely on a simple orthogonality argument (as in Lemmas \ref{LEM:LLN4} and \ref{LEM:LLN5};
see a discussion below)
and thus we need to proceed with more care, 
  invoking the (strong) law of large numbers
in the Banach space setting; see \cite[Chapter 7]{LT}.
Once we prove
the key 
 law-of-large-numbers type lemmas
(Lemmas \ref{LEM:LLN1}, \ref{LEM:LLN2}, and \ref{LEM:LLN3}), 
Theorem~\ref{THM:conv1}\,(i)
follows from 
a modification of the pathwise local well-posedness
argument 
with Lemmas \ref{LEM:LLN1}, \ref{LEM:LLN2}, and \ref{LEM:LLN3};
see Subsection~\ref{SUBSEC:5.2} for details.
Note that, in the parabolic case~\cite{SSZZ}, 
the convergence was shown by 
an energy estimate
which is not available
in the current hyperbolic\,/\,dispersive setting
(recall that  the energy is almost surely infinite
since $(v_{N, j}(t), \dt v_{N, j}(t)) \notin \H^1(\T^2)$, 
almost surely).

We point out that, in handling the term $I_3^N$ in \cite[(4.5)]{SSZZ}, 
the authors also established a law-of-large-numbers type result;
see \cite[Step 4 on p.\,163]{SSZZ}. 
Their argument, however,  relies on a square root gain 
via orthogonality
(see \cite[(4.1)]{SSZZ}) 
which requires a higher moment bound. 
In proving 
Theorem~\ref{THM:conv1}\,(ii) 
under the higher moment assumption,  
we also 
establish 
 law-of-large-numbers type lemmas
(Lemmas \ref{LEM:LLN4} and \ref{LEM:LLN5})
but their proofs are much simpler
than those in Subsection \ref{SUBSEC:5.1}
thanks to the higher moment control, 
which allows us to 
exploit orthogonality
and obtain a square root gain
just as in \cite{SSZZ}.
See Subsection \ref{SUBSEC:5.3}.

\begin{remark} \rm
 The results stated in this subsection
(Theorems \ref{THM:GWP1},  \ref{THM:GWP2},
and \ref{THM:conv1})
hold for general initial data (under suitable assumptions).
In particular, we do not make use of invariant Gibbs measures.
As such, 
the damping term $\dt u_{N, j}$
and 
$\sqrt 2$ on the noises in \eqref{NLW4}
(and also in  \eqref{NLW3}, \eqref{NLW5}, and \eqref{MF2})
do not play any role, 
and the corresponding results
hold for the undamped case.
We also note that
 these results also hold for the case  $m = 0$ (without the damping term), where a separate (but not difficult) analysis at the frequency $0$ is required; see, for example, \cite{GKO}.
In the next subsection, 
we consider invariant Gibbs dynamics, 
for which the damping term, 
$\sqrt 2$ on the noises, 
and the positivity of $m$ are all necessary.
We chose to treat the damped case
in this subsection 
such that we have a uniform presentation throughout the paper.

\end{remark}

\begin{remark}\label{REM:conv3}\rm
In \cite{SSZZ}, 
the authors proved 
global-in-time $L^2(\O)$-convergence at the rate $N^{-\frac 12}$ of solutions
to 
the coupled system of SNLH
\eqref{NLH1} to 
a solution to 
the mean-field SNLH \eqref{NLH2}
(at the level of  the residual
terms
after the first order expansion 
with respect to a suitable space-time function space);
see, for example,  \cite[(5.14)]{SSZZ}.\footnote{We note that 
\cite[(5.14)]{SSZZ} is for invariant dynamics.
While not stated explicitly in \cite{SSZZ}, 
we expect that, in \cite{SSZZ},  
global-in-time $L^2(\O)$-convergence at the rate $N^{-\frac 12}$
also holds
for non-invariant dynamics (under an appropriate assumption on initial data).
}
In view of 
Remark \ref{REM:conv2}\,(ii),
this result implies
convergence in probability at the  rate $N^{-\frac 12 }$,
which agrees with Theorem \ref{THM:conv1}\,(ii)
for \HLS~\eqref{NLW4}, at least locally in time.

In \cite{SSZZ}, 
the proof is based on an energy estimate, 
computing the growth of  the $H^1$-energy functional of the residual part, 
which directly provides
a global-in-time control.
%As mentioned above%pointed out (after Theorem~\ref{THM:GWP1}), 
Such an argument is not possible
in the current 
wave context, 
since 
we do not have 
 higher moment bounds
 for large times
due to the lack of the strong parabolic smoothing.
As mentioned above, we instead prove Theorem \ref{THM:conv1}
by iterating a local-in-time argument.
See Section~\ref{SEC:conv} for details.

\end{remark}

\subsection{Large $N$ limit of the invariant Gibbs dynamics} % for the hyperbolic $O (N)$ linear sigma model}
\label{SUBSEC:1.3}

In this subsection, 
we consider
 the hyperbolic $O(N)$ linear sigma model \eqref{NLW4} and its mean-field limit \eqref{MF2}
at Gibbs equilibria.

The Gibbs measure $\rhoo$ for the 
coupled NLW system
\eqref{NLWz1}
is formally given by
\begin{align*}
d\rhoo 
(\uu_N, \dt \uu_N)
= Z_N^{-1} e^{-N\cdot  E_N (\uu_N, \dt \uu_N) } 
d \uu_N d (\dt \uu_N),  
%\label{Gibbs2}
\end{align*}

\noi
where
$(\uu_N, \dt \uu_N)
= \{(u_{N, j}, \dt u_{N, j})\}_{j = 1}^N$
and 
$E_N (\uu_N, \dt \uu_N) $ is as in \eqref{E1}.
Let us rewrite $\rhoo$ as a weighted Gaussian measure in the following.
Given $m > 0$ and $s \in \R$, 
define the fractional 
Gaussian field  $\mu_s$ with mass $m > 0$:
\begin{align}
    d \mu_{s} = Z^{-1} \exp\bigg(-\frac 12  \| (m - \Dl)^\frac s2 u \|_{L^2}^2 \bigg) du.
\label{gauss1}
\end{align}

\noi
Recall that $\mu_s$ is a Gaussian probability measure
on $H^{s - 1-\eps}(\T^2) \setminus H^{s - 1}(\T^2)$.
When $s = 1$, 
it corresponds to the massive Gaussian free field $\mu_1$
defined in \eqref{gauss0}, 
while it corresponds to the white noise measure when $s = 0$.
With this notation, 
we can write $\rhoo$ as 
\begin{align}
\begin{split}
& d \rhoo 
(\uu_N, \dt \uu_N) \\
& \quad = Z_N^{-1} \exp \bigg( - \frac{1}{4N} \int_{\T^2} 
\Big( \sum_{j = 1}^N u_{N, j}^2 \Big)^2 dx \bigg) 
d \mu_1^{\otimes N} 
    (\uu_N) 
\otimes d  \mu_0^{\otimes N} (\dt \uu_N) \\
& \quad = 
d \rho_N
    (\uu_N) 
\otimes d  \mu_0^{\otimes N} (\dt \uu_N), 
\end{split}
\label{Gibbs}
\end{align}

\noi
where
$\rho_N$
is the 
$O(N)$ linear sigma model in \eqref{Gibbs1}.
Using the notation in \eqref{NLWz2}
(with $\vv_N = \dt \uu_N$), 
let us now rewrite 
the (formal)\footnote{Namely, without renormalization.}   \HLS \eqref{NLW1}
as 
\begin{align*}
\dt \begin{pmatrix}
\uu_N  \\ \vv_N
\end{pmatrix}
= \begin{pmatrix}
0 & \Id_N\\
- \Id_N & 0
\end{pmatrix}
\nb_{(\A_N L^2_x)^{\otimes 2}} E_N(\uu_N, \vv_N) 
    + \begin{pmatrix}
        0 \\ - \vv_N + \sqrt{2}\, \pmb\xi_N, 
    \end{pmatrix}
%\label{NLWz5}
\end{align*}

\noi
where $\pmb \xi_N = \{\xi_j\}_{j = 1}^N$.
Namely, 
\HLS \eqref{NLW1}
is written as a superposition
of the coupled NLW system \eqref{NLWz1}
and 
an Ornstein-Uhlenbeck process (for the $\vv_N$-component), 
both of which (formally) preserves 
the Gibbs measure $\rhoo$.
As a result, we see that 
the Gibbs measure $\rhoo$
in~\eqref{Gibbs}
is (formally) invariant under the dynamics of 
\HLS \eqref{NLW1}
(which can be made rigorous with a renormalization
and a frequency-truncation; see \cite[Section 4]{GKOT}).

Note that $\mu_1^{\otimes N}$ 
in \eqref{Gibbs} is supported on (vectors of) distributions
of negative regularity.
Namely, the interaction potential in \eqref{Gibbs}
requires a renormalization.
At the level of 
the 
$O(N)$ linear sigma model, this leads to 
\begin{align}
d \rho_N (\uu_N)
&  = Z_N^{-1} \exp \bigg( - \frac{1}{4N} \int_{\T^2} 
\wick{\Big( \sum_{j = 1}^N u_{N, j}^2 \Big)^2} dx \bigg) 
d \mu_1^{\otimes N} 
    (\uu_N) , 
\label{Gibbsx}
\end{align}

\noi
where 
\begin{align*}
\wick{\Big( \sum_{j = 1}^N u_{N, j}^2 \Big)^2} 
= 
\sum_{k, j = 1}^N 
\wick{u_{N, k}^2 u_{N, j}^2 }
\end{align*}

\noi
with 
\begin{align*}
\begin{split}
 \wick{ u_{N, k}^2u_{N, j}^2 } & = 
\begin{cases}
\wick{u_{N, k}^4}\,, & \text{if } k = j, \\
\wick{u_{N, k}^2}\, \wick{u_{N, j}^2}, & \text{if } k \ne j.
\end{cases}
\end{split}
%\label{Wick0x}
\end{align*}

\noi
Here, 
$\wick{u_k^\l}$ on the right-hand side denotes
the standard Wick power.
By adapting the argument from the scalar case
(\cite{Nelson2, DPT, OTh}) to the current vector-valued setting, 
we can realize $\rho_N$ in~\eqref{Gibbsx}
as a probability measure
on $\big(H^{-\eps} (\T^2)\big)^{\otimes N}$;
see also \cite[Remark~5.8]{SSZZ}.
The renormalized Gibbs measure $\rhoo
= \rho_N \otimes \mu_0^{\otimes N}$ 
(wth $\rho_N$ in \eqref{Gibbsx}) is then given by 
\begin{align}
\begin{split}
& d \rhoo 
(\uu_N, \dt \uu_N) \\
& \quad = Z_N^{-1} \exp \bigg( - \frac{1}{4N} \int_{\T^2} 
\wick{\Big( \sum_{j = 1}^N u_{N, j}^2 \Big)^2} dx \bigg) 
d \mu_1^{\otimes N} 
    (\uu_N) 
\otimes d  \mu_0^{\otimes N} (\dt \uu_N).
\end{split}
\label{Gibbsy}
\end{align}

Let us now turn our attention to the dynamical problem.
Let 
$\big\{ \big(\phi^{(0)}_j, \phi^{(1)}_j\big)\big\}_{j \in \N}$
be  a family of independent $\D'(\T^2)$-valued random variables, 
independent of the family $\{\xi_j\}_{j \in \N}$
of the space-time white noises, 
satisfying 
\begin{align}
\Law \big(\phi^{(0)}_j\big) = \mu_1
\qquad \text{and}\qquad  
\Law \big(\phi^{(1)}_j\big) = \mu_0, \quad j \in \N, 
\label{gauss2}
\end{align}

\noi
where $\mu_s$ is as in 
\eqref{gauss1}.
Here, $\Law(X)$ denotes the law of a random variable $X$.
Given $j \in \N$, 
let 
 $\Phi_j$ be the solution to the following linear stochastic damped wave equation:
\begin{align}
\begin{cases}
    (\dt^2 + \dt + m - \Dl) \Phi_j = \sqrt{2} \xi_j \\
    (\Phi_j, \dt \Phi_j) |_{t = 0} = \big(\phi^{(0)}_j, \phi^{(1)}_j\big).
\end{cases}
\label{phi1}
\end{align}

\noi
  A straightforward computation shows that 
\begin{align*}
\Law (\Phi_j (t) , \dt \Phi_j(t)) = \Law \big(\phi^{(0)}_j, \phi^{(1)}_j\big) = \mu_1 \otimes \mu_0
\end{align*}

\noi
for any $t \in \R_+$.

Our goal is to study the convergence issue
of (the renormalized version of) \HLS \eqref{NLW1} with the Gibbsian initial data:
\begin{align}
\Law \Big( \{(u_{N, j}(0), \dt u_{N, j}(0))\}_{j = 1}^N \Big) = \rhoo
= \rho_N \otimes \mu_0^{\otimes N}, 
\label{law1}
\end{align}

\noi
which 
we assume to be independent of the family $\{\xi_j\}_{j \in \N}$
of the space-time white noises.
Moreover, we impose that 
\begin{align}
\dt u_{N, j}(0) = \phi_j^{(1)}, \quad j = 1, \dots, N.
\label{law2}
\end{align}

\noi
By  the first order expansion:
\begin{align*}
    u_{N, j} = \Phi_j + v_{N, j},
%\label{exp2}
\end{align*}

\noi
 the residual term $v_{N, j}$ satisfies
\begin{align}
\begin{split}
(\dt^2 + \dt + m - \Dl) v_{N, j} 
& = - \frac{1}{N} \sum_{k = 1}^N \Big( 
v_{N, k}^2 v_{N, j}
+ 2 \Phi_k v_{N, k} v_{N, j}
  + v_{N, k}^2 \Phi_j \\
& 
\hphantom{XXXXX}
 + \wick{\Phi_k^2} v_{N, j} + 2 v_{N, k} \wick{ \Phi_k \Phi_j } 
+ \wick{\Phi_k^2 \Phi_j} \Big)\\
\end{split}
\label{NLW6}
\end{align}

\noi
with initial data (recall \eqref{law2})
\begin{align*}
(v_{N, j}, \dt v_{N, j})|_{t = 0} = 
\big(v_{N, j}^{(0)}, 0\big)
= 
\big(u_{N, j}(0) - \phi_j^{(0)}, 0\big), 
%\label{NLW7}
\end{align*}

\noi
where we have already applied Wick renormalization 
to the powers of the stochastic convolution~$\Phi_j$;
 see \eqref{def_wick2} for the more precise definition. 
Then, by setting
\begin{align*}
\wick{ u_{N, k}^2 u_{N, j} } 
& \deff 
v_{N, k}^2 v_{N, j}
+ 2 \Phi_k v_{N, k} v_{N, j}
  + v_{N, k}^2 \Phi_j \\
& 
\quad \,
 + \wick{\Phi_k^2} v_{N, j} + 2 v_{N, k} \wick{ \Phi_k \Phi_j } 
+ \wick{\Phi_k^2 \Phi_j} , 
\end{align*}

\noi
we obtain the renormalized version of
\HLS \eqref{NLW1}:
\begin{align}
    (\dt^2 + \dt + m - \Dl) u_{N, j} = - \frac{1}{N} \sum_{k = 1}^N \wick{ u_{N, k}^2 u_{N, j} } + \sqrt{2} \xi_j
\label{SdNLW1}
\end{align}

\noi
with the Gibbsian initial data \eqref{law1}.
In the current setting, the limiting mean-field equation is given by 
\begin{align}
    (\dt^2 + \dt + m - \Dl) u_j = - \E [ u_j^2 - \Phi_j^2 ] \, u_j + \sqrt{2} \xi_j.
\label{MFx}
\end{align}

\noi
Compare \eqref{MFx} with \eqref{MF2}.
Consider \eqref{MFx}
with  initial data $\big(\phi^{(0)}_j, \phi^{(1)}_j\big)$.
We first note that  $u_j \equiv \Phi_j$
satisfies \eqref{MFx}
with $(u_j, \dt u_j) = \big(\phi^{(0)}_j, \phi^{(1)}_j\big)$.
Under the first order expansion
$u_j = \Phi_j + v_j$, 
this implies that 
$(v_j, \dt v_j) \equiv (0, 0)$
is a global-in-time solution to 
\begin{align}
(\dt^2 + \dt + m - \Dl) v_j = 
- \E [ v_j^2 ] v_j
- 2 \E [ \Phi_j v_j ] v_j 
- \E [ v_j^2 ] \Phi_j 
- 2 \E [ v_j \Phi_j  ] \Phi_j 
\label{MFy}
\end{align}

\noi
with zero initial data,
which trivially implies
\begin{align}
(v_j, \dt v_j)  \in 
L^p(\O; C([0, T]; \H^s(\T^2)))
\label{class5}
\end{align}

\noi
for any $s \in \R$ and $T > 0$.
By noting that  Theorem \ref{THM:GWP2}
also applies to \eqref{MFx} and \eqref{MFy}
and iteratively applying
Theorem \ref{THM:GWP2}\,(i), 
we then conclude that 
$(u_j, \dt u_j)  \equiv (\Phi_j, \dt \Phi_j) $ is the unique solution
in the class:
\begin{align}
( \Phi_j, \dt \Phi_j)  + L^p(\O; C([0, T]; \H^s(\T^2)))
 \label{class4}
\end{align}

\noi
for any 
$\frac 12 \le s < 1$
and $T > 0$.

We  now  state a convergence  result 
of the invariant Gibbs dynamics
of \HLS \eqref{SdNLW1}.

\begin{theorem}
%[mean-field convergence
%of invariant Gibbs dynamics]
\label{THM:conv2}

\textup{(i)} 
Let $m > 0$ and $ \frac 45 < s < 1$.
Then, given $N \in \N$, \HLS \eqref{SdNLW1} is globally well-posed
in $\big(\H^s (\T^2)\big)^{\otimes N}$.
More precisely, 
given  $\big(\vv_{N}^{(0)},  \vv_{N}^{(1)}\big) = \big\{\big(v_{N, j}^{(0)},  v_{N, j}^{(1)}\big)\big\}_{j = 1}^N \in \big(\H^s (\T^2)\big)^{\otimes N}$, 
 there exists a unique global-in-time solution 
\begin{align*}
(\vv_N, \dt \vv_N) 
= \{(v_{N, j}, \dt v_{N, j})\}_{j = 1}^N
\in \big(  C (\R_+; \H^s (\T^2) )\big)^{\otimes N}
%\label{class3}
\end{align*}
 to~\eqref{NLW6} with initial data
$\big(\vv_{N}^{(0)},  \vv_{N}^{(1)}\big)$, 
almost surely.

\medskip

\noi
\textup{(ii)} 
Let $m > 0$. Then, for each $N \in \N$, the Gibbs measure $\rhoo = \rho_N \otimes \mu_0^{\otimes N}$ 
defined in~\eqref{Gibbsy} is invariant under the dynamics of~\eqref{SdNLW1}. More precisely, for each $t \in \R_+$, we have 
\[\Law 
 \Big( \{(u_{N, j}(t), \dt u_{N, j}(t))\}_{j = 1}^N \Big) = \rhoo.\]

\medskip

\noi
\textup{(iii)} 
Let $m > 0$.
There exists a probability space $(\O', \F', \PP')$
such that 
the following statements hold\textup{:}

\smallskip
\begin{itemize}
\item[(iii.a)]
There exists a family 
$\big\{ \big(\phi^{(0)}_j, \phi^{(1)}_j\big)\big\}_{j \in \N}$
of 
 independent $\D'(\T^2)$-valued random variables
 on $\O'$, 
independent of the family $\{\xi_j\}_{j \in \N}$
of the space-time white noises, 
satisfying \eqref{gauss2}.

\smallskip
\item [(iii.b)]
For each $N \in \N$, 
there exists  a family $\{v^{(0)}_{N, j}\}_{j \in \N}$
of $H^1$-valued 
random variables
 on $\O'$, 
independent of the family $\{\xi_j\}_{j \in \N}$
of the space-time white noises, satisfying
\begin{align}
\sup_{j =1, \dots, N}\Big\|  \big\|v^{(0)}_{N, j} \big\|_{H^1_x}\Big\|_{L^2(\O')}
 \les N^{-\frac 12 } , 
\label{randx}
\end{align}

\noi
such that 
\[
\Law \Big( \big\{\big(v^{(0)}_{N, j} + \phi^{(0)}_j, \phi^{(1)}_j\big)\big\}_{j = 1}^N\Big)
= \rhoo.\]

\smallskip
\item [(iii.c)]
Let 
$\{ (u_{N, j}, \dt  u_{N, j})\}_{j = 1}^N$
be the global-in-time solution to \eqref{SdNLW1} 
with the Gibbsian initial data 
$\big\{\big(v^{(0)}_{N, j} + \phi^{(0)}_j, \phi^{(1)}_j\big)\big\}_{j = 1}^N$
constructed in \textup{(iii.b)}.
Fix 
 $\frac 12 \le s < 1$ and 
$T > 0$.
Then,    for each fixed $j \in \N$, 
we have 
\begin{align}
 \| (u_{N, j}, \dt u_{N, j})   - (\Phi_j, \dt \Phi_j) \|_{ C_T  \H^s_x}
\too 0
\label{IV11}
\end{align}

\noi
in probability at the rate $N^{-\frac 12}$ as $N \to \infty$, and we have 
\begin{align}
  \| (u_{N, j}, \dt u_{N, j})   - (\Phi_j, \dt \Phi_j) \|_{\A_{N, j} C_T  \H^s_x}
\too 0
\label{IV12}
\end{align}

\noi
in probability at the rate $N^{-\frac 12}$  as $N \to \infty$.

\end{itemize}

\end{theorem}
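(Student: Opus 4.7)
My plan is to prove the three parts of Theorem \ref{THM:conv2} in order, reducing each as far as possible to the already-established results.

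For (i), I would rerun the global well-posedness proof of Theorem \ref{THM:GWP1}(ii) after replacing the zero-initial-data stochastic convolution $\Psi_j$ by the stationary stochastic convolution $\Phi_j$ from \eqref{phi1}. Since $(\Phi_j(t), \dt \Phi_j(t))$ is stationary with law $\mu_1 \otimes \mu_0$, the Wick powers and mixed Wick products of the $\Phi_k$'s enjoy moment and regularity bounds at least as good as those of the corresponding $\Psi$-objects, and the vector-valued hybrid $I$-method and Gronwall-type argument of Subsections \ref{SUBSEC:3.2}--\ref{SUBSEC:3.3} goes through with no essential change. Part (ii) is a standard Bourgain-type invariance argument: introduce a smooth frequency cutoff at level $M$, observe that the truncated \HLS dynamics splits into the Hamiltonian flow of the truncated coupled NLW system \eqref{NLWz1} (preserving the truncated energy $E_N^{(M)}$, hence $\rhoo^{(M)}$) plus an Ornstein--Uhlenbeck flow on the velocity modes (preserving the truncated $\mu_0^{\otimes N}$), and pass to the limit $M \to \infty$ using the $M$-uniform bounds from (i) together with the weak convergence $\rhoo^{(M)} \to \rhoo$, following \cite[Section 4]{GKOT}.

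For (iii), statement (iii.a) is a standard product-space construction. The crux is (iii.b): I would produce the coupling between $\rho_N$ and $\mu_1^{\otimes N}$ via Talagrand's transport-entropy inequality, as in \cite{DS}. The density of $\rho_N$ with respect to $\mu_1^{\otimes N}$ is $Z_N^{-1} \exp(-V_N)$ with the Wick-renormalized potential $V_N$ having mean zero and $O(1)$ variance under $\mu_1^{\otimes N}$ (the $1/N$ prefactor in \eqref{Gibbsy} exactly balances the $O(N^2)$ independent cross terms in the expansion), so the relative entropy of $\rho_N$ with respect to $\mu_1^{\otimes N}$ is $O(1)$. Together with the product log-Sobolev inequality for $\mu_1^{\otimes N}$ in the $\ell^2$-product $H^1$-norm (with constant $O(1/m)$ by Bakry--\'Emery and tensorization), Talagrand's inequality yields a coupling with $W_2(\rho_N, \mu_1^{\otimes N})^2 = O(1)$ in the product $H^1$-norm; exchangeability of both measures then distributes this transport budget evenly across the $N$ coordinates, giving the per-coordinate bound \eqref{randx}. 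For (iii.c), I observe that in the Gibbsian setting the mean-field equation \eqref{MFx} admits the explicit stationary solution $u_j \equiv \Phi_j$, whose residual $v_j \equiv 0$ trivially satisfies the higher-moment assumption \eqref{IV6a}. The proof of Theorem \ref{THM:conv1}(ii) then applies with $\Psi_j$ replaced by $\Phi_j$ throughout, and with the law-of-large-numbers inputs (Lemmas \ref{LEM:LLN4} and \ref{LEM:LLN5}) run on the stationary Wick powers of the $\Phi_k$'s; the initial-data hypotheses \eqref{IV5} and \eqref{IV6} follow from \eqref{randx} via Markov's inequality.

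The main obstacle is (iii.b): producing the coupling at the sharp $N^{-1/2}$ rate in the strong $H^1$ norm, and not merely in a weaker Sobolev space, while keeping independence from the white noises $\{\xi_j\}$; the latter is arranged by realising the coupling on a separate auxiliary probability space. A secondary difficulty is ensuring that iterating the local-convergence argument in (iii.c) up to the fixed time $T > 0$ does not degrade the $N^{-1/2 + \eps_0}$ rate, and here the invariance established in (ii) becomes essential, as it supplies the uniform-in-$t$ moment control on $(u_{N,j}(t), \dt u_{N,j}(t))$ needed to apply the law-of-large-numbers lemmas at each step of the iteration.
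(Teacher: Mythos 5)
Your overall strategy matches the paper's for parts (i), (ii), and (iii.c): part (i) is indeed a rerun of the proof of Theorem \ref{THM:GWP1}\,(ii) with $\Psi_j$ replaced by $\Phi_j$ (whose Wick powers obey the same bounds, Lemma \ref{LEM:sto1}\,(iii)); part (ii) is the truncation-plus-PDE-approximation argument you describe (the paper notes that Bourgain's globalization device is not needed since global well-posedness is already pathwise); and for (iii.c) the key observation is exactly yours, namely that the mean-field limit is $u_j\equiv\Phi_j$ with residual $v_j\equiv 0$, so the local-convergence iteration of Subsection \ref{SUBSEC:5.2} applies with only the pure Wick-power law-of-large-numbers input (Lemma \ref{LEM:LLN1}) and the $N^{-1/2}$-small initial data.

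There is, however, a genuine gap in your treatment of (iii.a)--(iii.b). The Wasserstein coupling you extract from Talagrand's inequality (this is Lemma \ref{LEM:DS}, which the paper quotes from \cite{DS} rather than re-deriving) produces, \emph{for each fixed $N$}, a pair $(\vec\phi_N,\vec\psi_N)$ with $\vec\phi_N\sim\mu_1^{\otimes N}$, $\vec\psi_N\sim\rho_N$, and $O(1)$ total squared $H^1$ transport cost as in \eqref{Ruo3}. But the Gaussian marginal $\vec\phi_N$ of this coupling depends on $N$, whereas the statement of (iii.b) --- and, crucially, the convergence claims \eqref{IV11}--\eqref{IV12}, in which the limit object $\Phi_j$ is driven by a fixed $(\phi^{(0)}_j,\phi^{(1)}_j)$ --- requires a single $N$-independent i.i.d.\ family $\{\phi^{(0)}_j\}_{j\in\N}$ coupled \emph{simultaneously} to $\vec\psi_N$ for every $N$. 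Saying the coupling is ``realised on a separate auxiliary probability space'' does not resolve this. The paper closes this gap with an infinite-dimensional gluing lemma (Lemma \ref{LEM:glue}, proved via disintegration of measures and the Kolmogorov extension theorem, together with Lemma \ref{LEM:polish}): one extends each $\vec\phi_N$ to an i.i.d.\ sequence with law $\mu_1^{\otimes\N}$ and glues all the couplings $\nu_N=\Law(\vec\phi_N^\infty,\vec\psi_N)$ along their common marginal $\mu_1^{\otimes\N}$; only then does exchangeability of $\rho_N$ convert the $O(1)$ total cost \eqref{Ruo4} into the per-coordinate bound \eqref{randx}. A secondary inaccuracy: your claim that the invariance from (ii) is ``essential'' in (iii.c) to supply uniform-in-$t$ moment control is not how the argument works and is not needed --- since $v_j\equiv 0$, the only stochastic inputs to the iteration are the stationary linear objects $\Phi_j$ and their Wick powers (controlled by Lemma \ref{LEM:sto1}\,(iii) directly), and the $N^{-\frac12+\eps_0}$ rate survives the iteration because the recursion for $N^{\frac12-\eps_0}\|\vec v_{N,j}\|$ is affine with $N$-independent constants, exactly as in \eqref{CN19}--\eqref{CN20}.
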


Theorem \ref{THM:conv2}\,(iii)
implies propagation of chaos
for \HLS \eqref{SdNLW1}
in the following sense.
In view of \eqref{randx}
and the independence of $\big\{ \big(\phi^{(0)}_j, \phi^{(1)}_j\big)\big\}_{j \in \N}$, 
the Gibbsian initial data
$\big\{\big(v^{(0)}_{N, j} + \phi^{(0)}_j, \phi^{(1)}_j\big)\big\}_{j = 1}^N$
(which is coupled)
is asymptotically chaotic
(namely, components becomes independent  as $N \to \infty$).
The convergence claims \eqref{IV11} and \eqref{IV12}
show that the solution 
$\{ (u_{N, j}(t), \dt  u_{N, j}(t))\}_{j = 1}^N$
is also 
asymptotically chaotic
for each $t > 0$.
See 
\cite[Definition 2.1 on p.\,177 and Theorem 3.1 on p.\,203]{Sznit}
for a further discussion on propagation of chaos.

Noting that the regularity properties of $\Phi_j$
is essentially the same as $\Psi_j$
(see Lemmas \ref{LEM:sto1} and~\ref{LEM:sto2}), 
Theorem \ref{THM:conv2}\,(i) follows from a slight modification
of the proof of 
Theorem \ref{THM:GWP1} presented in Section \ref{SEC:3}
and thus we omit details.
As for 
Theorem \ref{THM:conv2}\,(ii), 
we first note that Bourgain's invariant measure
argument \cite{BO94, BO96} is not needed here
in view of  the pathwise global well-posedness
(Theorem \ref{THM:conv2}\,(i)).
The claimed invariance
of the Gibbs measure $\rhoo$
under \eqref{SdNLW1}
follows
from 
the invariance of the frequency-truncated version
of the Gibbs measure $\rhoo$
and a PDE approximation argument; 
see, for example, \cite{GKOT, ORTz}.
As this is standard in the field by now,  we omit details.

Theorem~\ref{THM:conv2}\,(iii.c)
establishes convergence in probability 
at the rate $N^{-\frac 12}$
on an arbitrarily long time interval $[0, T]$, 
thus extending Theorem \ref{THM:conv1}\,(ii)
globally in time in the special case
of the invariant Gibbs dynamics.
The construction of the initial data sets claimed in 
Theorem~\ref{THM:conv2}\,(iii.a) and (iii.b)
is based on the results in  
\cite{SSZZ, DS} (see Lemma \ref{LEM:DS})
and follows from a key proposition (Proposition~\ref{PROP:rand}).
We present details in Subsection \ref{SUBSEC:6.1}.
In view of \eqref{randx}
and the (trivial) higher moment bound
\eqref{class5}, 
Theorem~\ref{THM:conv2}\,(iii.c)
 follows from a
 straightforward modification
 of the proof of Theorem \ref{THM:conv1}\,(ii).
See  Subsection \ref{SUBSEC:6.2} for details.

\begin{remark} \rm
In \cite{Tolo2}, 
Tolomeo  proved ergodicity of the invariant Gibbs measures 
for the defocusing SdNLW on $\T^2$.
A slight variation of the argument  in \cite{Tolo2} 
yields ergodicity of the Gibbs measure $\rhoo$
for \HLS \eqref{SdNLW1}.
 It would be of interest to study ergodic properties of the invariant Gaussian measure 
 $\mu_1 \otimes \mu_0$ for the mean-field SdNLW \eqref{MFx}
 on $\T^2$.
 Note that such a result is known in the parabolic case;
 see \cite{SSZZ}.
\end{remark}

\begin{remark}\rm
With a slight modification of the proof, Theorem \ref{THM:conv2} 
extends to 
the NLW system~\eqref{NLWz1} (with a proper renormalization)
with the Gibbsian initial data in~\eqref{law1}, 
converging
to its mean-field limit \eqref{NLWz3}, 
which is nothing but the linear dynamics:
\begin{align*}
\begin{cases}
    (\dt^2 +  m - \Dl) Z_j = 0\\
    (Z_j, \dt Z_j) |_{t = 0} = \big(\phi^{(0)}_j, \phi^{(1)}_j\big)
\end{cases}
\end{align*}

\noi
with
$\big\{ \big(\phi^{(0)}_j, \phi^{(1)}_j\big)\big\}_{j \in \N}$
 as in \eqref{gauss2}.

\end{remark}

\begin{remark}\rm 
(i) In this paper, we considered the defocusing case, namely, 
with the $-$ sign in front of the nonlinearity
$ \frac 1N \sum_{k = 1}^N u_{N, k}^2 u_{N, j}$
in \eqref{NLW1}.
We note that the local-in-time results
(Theorems \ref{THM:GWP1}\,(i), 
\ref{THM:GWP2}\,(i), and 
\ref{THM:conv1}\,(ii))
hold true even in the focusing case
(i.e.~with the $+$ sign in front of the nonlinearity
$ \frac 1N \sum_{k = 1}^N u_{N, k}^2 u_{N, j}$
in \eqref{NLW1}).

As for Theorem \ref{THM:conv2}\,(iii)
on the convergence of invariant Gibbs dynamics, 
it does not extend to the focusing case, 
since, even in the scalar case ($N = 1$), 
the focusing $\Phi^4_2$-measure
does not exist as a probability measure
(even with a (Wick renormalized) $L^2$-cutoff 
(as in \cite{LRS, BO96})
or a taming by a power of the Wick renormalized $L^2$-norm); 
see \cite{BS, OST2, GOTT}.
See also 
\cite{OST1, OOT1, OOT}
for an overview of the focusing Gibbs measure
(non-)construction and the references therein.
On the other hand, 
in the case of the quadratic nonlinearity, 
the Gibbs measure exists in the scalar case ($N = 1$; see~\cite{OST2}),
which seems to  extend to the $O(N)$ linear sigma model
(with a taming by a power of the Wick renormalized $L^2$-norm,
which is suitable for studying wave dynamics).
Thus, it would be of interest to investigate if an analogue
of Theorem \ref{THM:conv2}\,(iii) holds  in this case.

\medskip

\noi
(ii) In the survey paper \cite[Section 9]{Shen}, 
Shen 
proposed to study the case of infinite volume, 
where there is no known result at this point.
In the wave case, thanks to the finite speed of propagation, 
we may combine the ideas
from \cite{Tolo1} and the current paper
to extend 
Theorem~\ref{THM:conv1}
for general initial data
to \HLS \eqref{NLW4} posed on the plane $\R^2$.
We will address this issue
in a forthcoming work.

\end{remark}

\section{Notations and preliminary lemmas}

In Subsections \ref{SUBSEC:2.1}
and  \ref{SUBSEC:2.2}, 
we introduce basic notations
and recall the product estimates.
In Subsection \ref{SUBSEC:2.3}, 
we then 
recall the definition of the $I$-operator
along with its basic properties
and commutator estimates.
In Subsection \ref{SUBSEC:2.4}, 
we go over the 
definitions of the stochastic convolutions
$\Psi_j$
and $\Phi_j$
in \eqref{psi1}
and \eqref{phi1}, respectively, 
and their Wick powers
along with their regularity properties.

\subsection{Notations}
\label{SUBSEC:2.1}

Throughout this paper, the value of mass $m > 0$
is fixed and thus we often suppress $m$-dependence
in the remaining part of the paper.
In describing regularities of functions and distributions, 
we use $\eps > 0$ to denote an arbitrarily small constant.
Constants in various estimates depend on this $\eps$ but,
for simplicity of notation,  we suppress such $\eps$-dependence.
When it is clear from the context, we often suppress
dependence on other parameters such as $s$ and $t$.

For $A, B > 0$, we use $A\lesssim B$ to mean that
there exists $C>0$ such that $A \leq CB$.
By $A\sim B$, we mean that $A\lesssim B$ and $B \lesssim A$.
We may write  $\les_{\al}$ and  $\sim_{\al}$  to 
emphasize the dependence  on an external parameter $\al$.
We use $A \ll B$ to mean
$A\le c_0 B$
for some small $c_0 > 0$.
We use $C>0$ to denote various constants, which may vary line by line,
and we may write $C_\al$ or $C(\al)$ to  signify  dependence on an external parameter
$\al$.

In dealing with a solution $u$ to a wave equation, 
we use 
$\vec u$ to denote a pair $ ( u, \dt u)$.
With a slight abuse of notation, 
given a vector $(f_1, \dots, f_N)$, 
we often write it as $\{f_j \}_{j = 1}^N$
or $\vec f$.
We apply a similar convention for a vector 
$(f_1, f_2, \dots)$ of infinite length and write it as 
$\{f_j \}_{j = 1}^\infty$
or $\vec f$.
Namely,  we use the vectorial notation 
for two different meanings: $\vec u = (u, \dt u)$
and $
 \vec f = (f_1, \dots, f_N)
 = \{f_j \}_{j = 1}^N $,
but its meaning is clear from the context.

Given 
a Banach space $B$ and 
$N \in \N$,
let the $\A_N B$-norm 
of $ a = \{ a_j \}_{j = 1}^N \in B^{\otimes N}$
be the $\l^2$-average of the $B$-norms of $a_j$, defined in 
 \eqref{AN0}.
When there is no confusion, 
we also write 
$\|a_j \|_{\A_{N}B}$ for $\|a \|_{\A_{N}B}  = \|a_j \|_{\A_{N, j}B}$. 
By Jensen's inequality (or Cauchy-Schwarz's inequality), 
we have 
\begin{align}
 \frac{1}{N} \sum_{j = 1}^N \| a_j \|_B
\le     \|  a \|_{\A_N B} = \|a_j \|_{\A_{N}B}. 
\label{AN1a}
\end{align}

\noi
Similarly, 
we define the $\A_N^{(2)} B$-norm by 
setting
\begin{align}
\|  a \|_{\A_N^{(2)} B}^2 
= \|  a \|_{\A_{N, j} \A_{N, k}  B}^2 
=  \frac{1}{N^2} \sum_{j, k = 1}^N \| a_{j, k} \|_B^2
\label{AN2} 
\end{align}

\noi
for $ a = \{ a_{j, k} \}_{j, k = 1}^N \in B^{\otimes N^2}$. 
When $B = \R$, we simply set
$\A_N = \A_N \R$ and $\A_N^{(2)} = \A_N^{(2)}\R$.

By convention, we endow
$\T^2$ with the normalized Lebesgue measure $ dx_{_{\T^2}} =  (2\pi)^{-2}dx$
such that we do not need to carry factors involving $2\pi$.
For simplicity of notation, 
 we use $dx$ to denote
 the normalized Lebesgue measure  $ dx_{_{\T^2}}$ on $\T^2$
in the following.
 We write $\ft f$ or $\F(f) = \F_x (f)$ to denote the Fourier transform of $f$ on $\T^2$.
Given $M \in \N$, we denote by $\P_M$ the 
(spatial) frequency projector  onto frequencies $\{ n \in \Z^2 : |n| \leq M \}$ and 
we set
$\P_{M}^\perp = \Id - \P_M$.

We use 
 $\D (t)$ 
 to denote 
 the linear damped wave propagator
defined by 
\begin{align}
\D (t) = e^{- \frac{t}{2}} \frac{\sin ( t \jbb{\nb} )}{\jbb{\nb}}, 
\label{D1}
\end{align} 

\noi
where
\begin{align*}
  \jbb{\nb}  
=  \jbb{\nb}_m
  = \sqrt{m - \frac 14 - \Dl}.
%\label{D2}
\end{align*}

\noi
For $n \in \Z^2$, we also set
\begin{align*}
\jbb{n} 
= \jbb{n}_m= 
 \sqrt{m - \frac 14 + |n|^2}.
%\label{D3}
\end{align*}

\noi
(When  $0 < m \le \frac 14$, 
the operator $\D(t)$ in \eqref{D1} behaves slightly differently at the frequency $n = 0$.
However, it does not affect our analysis.)
Given $t_0 \in \R_+$, 
consider the following linear damped wave equation
(with a nice function $F$ on $\R_+\times \T^2$):
\begin{align}
\begin{cases}
(\dt^2 + \dt + m - \Dl) u = F\\
(u, \dt u)|_{t = t_0} = (f, g).
\end{cases}
\label{NLWx1}
\end{align}

\noi
Then, a solution $u$ to \eqref{NLWx1}
is 
given by 
\begin{align*}
u(t) = S(t - t_0) (f, g) + \I_{t_0}(F)(t), 
%\label{NLWx2}
\end{align*}

\noi
where  $S(t)$ is the linear propagator given by 
\begin{align}
    S (t) (f, g) =  \dt \D (t) f + \D (t) (f + g)
\label{D4}
\end{align}

\noi
and $\I_{t_0}$ denotes the damped wave Duhamel integral operator given by 
\begin{align}
\I_{t_0} (f) (t) =  \int_{t_0}^t \D (t - t') f (t') d t' 
= \int_{t_0}^t e^{- \frac{t - t'}{2}} \frac{\sin ( (t - t') \jbb{\nb} )}{\jbb{\nb}} f (t') d t'.
\label{D5}
\end{align}

\noi
When $t_0 = 0$, 
we set $\I = \I_0$
for simplicity.

Given $s \in \R$ and $1 \leq r \leq \infty$, we define the $L^r$-based Sobolev space $W^{s, r} (\T^2)$ 
as the closure of $C^\infty(\T^2)$ under 
 the norm: 
\begin{align*}
    \| f \|_{W^{s, r}} =  \| \jb{\nb}^s f \|_{L^r} 
= \big\| \F^{-1}\big(\jb{\,\cdot\,}^s \ft f\big) \big\|_{L^r}
\end{align*}

\noi
such that $W^{s, r} (\T^2)$
is separable even when $r = \infty$.
Here,  
 $\jb{\,\cdot\,} = (1 + |\cdot|^2)^{\frac 12}$. 
When $r = 2$, we set  $H^s (\T^2) = W^{s, 2} (\T^2)$.
We also define $\H^{s} (\T^2) = H^s (\T^2) \times H^{s - 1} (\T^2)$ via the norm:
\begin{align*}
    \| (f, g) \|_{\H^s} = \| f \|_{H^s} + \| g \|_{H^{s - 1}}.
\end{align*}

 We will often use the short-hand notations 
 such as $C_T H^{s}_x$ and $L^p_\omega H^{s}_x$ for $C( [0,T]; H^{s}(\T^2))$ and $L^p(\Omega; H^{s}(\T^2))$, respectively.
Given an interval $I \subset \R$, 
we also write 
$C_I H_x^s $ for $C (I; H^s (\T^2))$, etc.
Lastly, given $1 \leq p \leq \infty$, we denote by $p'$ its H\"older conjugate (i.e.~$\frac{1}{p} + \frac{1}{p'} = 1$).

\subsection{Product estimates}
\label{SUBSEC:2.2}

We recall the following product estimates for Sobolev spaces;
see~\cite{BOZ} for a proof. See also \cite{GKO}.

\begin{lemma}
\label{LEM:prod}
Let $s > 0$.

\smallskip \noi
\textup{(i) (fractional Leibniz rule).} Let $1 < p_1, p_2, q_1, q_2, r \leq \infty$ be such that $\frac{1}{r} = \frac{1}{p_1} + \frac{1}{q_1} = \frac{1}{p_2} + \frac{1}{q_2}$. Then, we have
\begin{align*}
    \| f g \|_{W^{s, r}} \les \| f \|_{W^{s, p_1}} \| g \|_{L^{q_1}} + \| f \|_{L^{p_2}} \| g \|_{W^{s, q_2}}.
\end{align*}

\smallskip \noi
\textup{(ii)} Let $1 < p \leq \infty$ and $1 < q, r < \infty$ be such that 
$ \frac{1}{p} + \frac{1}{q}\le \frac s2 +  \frac{1}{r}$ and $q, r' \geq p'$. Then, we have
\begin{align*}
    \| f g \|_{W^{-s, r}} \les \| f \|_{W^{-s, p}} \| g \|_{W^{s, q}}.
\end{align*}
\end{lemma}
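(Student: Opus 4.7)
The plan is to prove the two parts separately, using Littlewood--Paley / paraproduct analysis for part (i) and then reducing part (ii) to part (i) via duality.

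\medskip
\noindent
\textbf{Part (i) — Fractional Leibniz rule.} I would argue via Bony's paraproduct decomposition. Writing $\Delta_k$ for the usual Littlewood--Paley projector onto frequencies $|n|\sim 2^k$, decompose
\begin{equation*}
fg \;=\; T_f g + T_g f + R(f,g),
\end{equation*}
where $T_f g = \sum_k (S_{k-2}f)(\Delta_k g)$ is the low-high paraproduct (and symmetrically for $T_g f$), and $R(f,g)=\sum_{|k-k'|\le 2}\Delta_k f \,\Delta_{k'}g$ is the resonant part. For the low-high piece, each dyadic block $\Delta_j T_f g$ is essentially $(S_{j-2}f)(\Delta_j g)$, so by Hölder with $\frac1r=\frac1{p_2}+\frac1{q_2}$ and the $L^{p_2}$-boundedness of the maximal function we get $\|T_f g\|_{W^{s,r}} \lesssim \|f\|_{L^{p_2}}\|g\|_{W^{s,q_2}}$ via the square-function characterization of $W^{s,r}$ (and its Besov-type variant when $r=\infty$ or $p_2=\infty$). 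Symmetrically, $\|T_g f\|_{W^{s,r}}\lesssim \|g\|_{L^{q_1}}\|f\|_{W^{s,p_1}}$. The remainder $R(f,g)$ is handled by absorbing the derivative onto whichever factor is convenient (since $s>0$ and the summation over near-diagonal frequencies is under control), giving a bound by either of the two right-hand sides. In the endpoint cases where some exponent equals $\infty$, I would replace the $L^\infty$-based Sobolev norm by its Hölder--Zygmund (Besov $B^s_{\infty,\infty}$) analogue, at the price of absorbing a logarithmic loss into $\lesssim$ — but since $W^{s,\infty}\hookrightarrow B^s_{\infty,\infty}$ with equivalent norms for non-integer $s>0$, no actual loss occurs.

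\medskip
\noindent
\textbf{Part (ii) — Negative index product estimate.} I would use duality together with part (i). By the $\langle W^{-s,p},W^{s,p'}\rangle$ pairing,
\begin{equation*}
\|fg\|_{W^{-s,r}} \;=\; \sup_{\|\varphi\|_{W^{s,r'}}\le 1}\bigl|\langle fg,\varphi\rangle\bigr|
\;\le\; \|f\|_{W^{-s,p}}\,\|g\varphi\|_{W^{s,p'}}.
\end{equation*}
Applying part (i) with the splitting $\frac1{p'} = \frac1q + \frac1a = \frac1b + \frac1{r'}$ yields
\begin{equation*}
\|g\varphi\|_{W^{s,p'}} \;\lesssim\; \|g\|_{W^{s,q}}\|\varphi\|_{L^a} + \|g\|_{L^b}\|\varphi\|_{W^{s,r'}}.
\end{equation*}
I then invoke the Sobolev embeddings $W^{s,r'}\hookrightarrow L^a$ and $W^{s,q}\hookrightarrow L^b$ on $\T^2$; both require $\frac1p+\frac1q \le \frac{s}{2}+\frac1r$ (an algebraic check confirms the two embedding conditions collapse to this single inequality), while the hypotheses $q,r'\ge p'$ guarantee that the Hölder exponents $a,b$ are admissible (in $[r',\infty]$ and $[q,\infty]$ respectively), so the Sobolev embeddings apply without running into the excluded endpoints. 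Combining the two bounds with the supremum over $\varphi$ completes the argument.

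\medskip
\noindent
\textbf{Main obstacle.} The real difficulty is bookkeeping rather than ideas. For (i), the delicate point is the diagonal/resonant paraproduct and the endpoint exponents: one must be sure the square-function characterization of $W^{s,r}$ is used only when $1<r<\infty$, and pass through $B^s_{\infty,\infty}$ otherwise. For (ii), the obstacle is verifying that the single scaling condition $\frac1p+\frac1q\le\frac{s}{2}+\frac1r$ together with $q,r'\ge p'$ is exactly what is needed to simultaneously validate the Hölder split and both Sobolev embeddings — this is a routine but careful index chase, and one has to watch that no exponent degenerates to $1$ or $\infty$ where the required embedding fails.
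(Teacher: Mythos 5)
The paper does not actually prove this lemma: it is quoted verbatim from the references \cite{BOZ} and \cite{GKO}, so there is no in-paper argument to compare against. Your overall strategy --- paraproducts for (i), and duality plus part (i) plus Sobolev embedding for (ii) --- is exactly the standard route taken in those references, and your index chase in (ii) is essentially right: $q \ge p'$ is what makes the H\"older split $\frac{1}{p'} = \frac1q + \frac1a$ admissible, $r' \ge p'$ does the same for $\frac{1}{p'} = \frac1b + \frac1{r'}$, and both Sobolev embeddings $W^{s,r'} \hookrightarrow L^a$, $W^{s,q} \hookrightarrow L^b$ on $\T^2$ reduce to the single condition $\frac1p + \frac1q \le \frac s2 + \frac1r$.

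There is, however, a genuine gap in your treatment of the endpoint exponents in part (i). The lemma allows $r = \infty$ (and $p_i, q_i = \infty$), and your proposed fix --- passing to $B^s_{\infty,\infty}$ and claiming that $W^{s,\infty} \hookrightarrow B^s_{\infty,\infty}$ ``with equivalent norms for non-integer $s$'' --- is false: the Bessel-potential space $W^{s,\infty} = \{f : \jb{\nb}^s f \in L^\infty\}$ embeds \emph{strictly} into the H\"older--Zygmund space $B^s_{\infty,\infty}$ even for non-integer $s$, and in any case a bound in the larger Besov space cannot control the $W^{s,\infty}$-norm of $fg$, which is what the left-hand side requires. The $L^\infty$-endpoint of the Kato--Ponce inequality is a genuinely nontrivial result (Grafakos--Oh, Bourgain--Li on $\R^d$; \cite{BOZ} transfers it to $\T^2$), not something absorbed into the implicit constant. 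Two smaller points in (ii): when $p = \infty$ you need part (i) with target exponent $p' = 1$, which lies outside the stated range $r > 1$; and when equality holds in $\frac1p + \frac1q \le \frac s2 + \frac1r$ with $q = p'$ or $r = p$, one of your H\"older exponents $a, b$ becomes $\infty$ and the corresponding Sobolev embedding into $L^\infty$ sits at the failing critical case, so those borderline configurations need a separate (if routine) argument.
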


\subsection{$I$-operator and basic commutator estimates} 
\label{SUBSEC:2.3}

Fix $0 < s < 1$.
Given $M \in \N$, we define a 
smooth, radially symmetric, non-increasing (in $|\xi|$)
multiplier $\mf = \mf_{s, M} \in C^\infty(\R^2; [0, 1])$, satisfying %, for $\xi \in \R^2$, 
\begin{equation*}
%m (\xi)= 
\mf_{s, M}(\xi)=
\begin{cases}
1, & \text{if } |\xi|\le M, \\
\big(\frac{M}{|\xi|}\big)^{1-s}, & \text{if } |\xi|\ge2M.
\end{cases}
%\label{I1}
\end{equation*} 

\noi
We then define the $I$-operator $I = I_{s, M}$
to be the Fourier multiplier operator with the multiplier $\mf_{s, M}$:
\begin{align}
\ft{I_{s, M}f}(n)=\mf_{s, M}(n)\ft{f}(n), \quad n \in \Z^2.
\label{I2}
\end{align}

\noi
As it is customary in the field, 
we  suppress the $s$-dependence
and 
 usually use $I$ (or $I_M$ to denote 
the $M$-dependence in an explicit manner)  in the following, 
Note that 
 $I f \in H^1(\T^2)$ if and only if $f \in H^s(\T^2)$
with the bound:
\begin{align}
\|f\|_{H^s}\les \|I f\|_{ H^1} \les M^{1-s} \|f\|_{H^s}.
\label{I3}
\end{align}

\noi
Moreover, 
 from the Littlewood-Paley theorem, we have
\begin{align}
\| I f \|_{W^{s_0 + s_1, p}} & \les M^{s_1} \| f \|_{W^{s_0, p}}
\label{I4}
%\| f \|_{W^{s_0, p}} & \les \| I f \|_{W^{s_0 + 1 - s, p}}
%\label{I5}
\end{align}

\noi
for any $s_0 \in \R$, $0 \leq s_1 \leq 1-s$, and $1 < p < \infty$.

We now recall the commutator estimates.
See  \cite[Lemmas~3.1 and~3.3]{GKOT}
for the proofs.

\begin{lemma} 
\label{LEM:com1}
Let $\frac 23 \le s < 1$. Then, we have 
\begin{align*}
\|   I ( f^2 g ) - ( I f )^2 I g\|_{L^2} \les M^{2-  3s} \| I f \|_{H^1}^2 \| I g \|_{H^1}, 
\end{align*}

\noi
where the implicit constant is independent of $M \in \N$.

\end{lemma}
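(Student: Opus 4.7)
Using Plancherel, the estimate reduces to bounding a trilinear form in $f, f, g$ with kernel
\[
K(n_1, n_2, n_3) \deff \mf(n_1 + n_2 + n_3) - \mf(n_1)\,\mf(n_2)\,\mf(n_3),
\]
where $\mf = \mf_{s, M}$ is the symbol of $I$ defined in \eqref{I2}. Since $\mf \equiv 1$ on $\{|\xi| \le M\}$, the kernel $K$ vanishes whenever $|n_1|, |n_2|, |n_3|, |n_1+n_2+n_3|$ are all $\lesssim M$. Hence, after a Littlewood--Paley decomposition $f = \sum_{N_1} P_{N_1} f$, $g = \sum_{N_3} P_{N_3} g$, only dyadic blocks with at least one frequency $\gtrsim M$ contribute, and by symmetry in the two copies of $f$ I may suppose $N_1 \ge N_2$.

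First, I would dispose of the \emph{high$\times$high} cases (i.e.\ $N_1 \sim N_2$ or $N_1 \sim N_3$ with $N_1 \gtrsim M$) by the trivial bound $|K| \le \mf(n_1+n_2+n_3) + \mf(n_1)\mf(n_2)\mf(n_3)$. For each factor $\mf(n)$ with $|n| \sim N \gtrsim M$ one has $\mf(n) \lesssim (M/N)^{1-s}$, which converts an $H^s$-based norm of the corresponding Littlewood--Paley piece into an $H^1$-norm of its $I$-image at the cost of $M^{1-s}$. Crudely collecting the three $\mf$-factors yields an overall factor of $M^{3(1-s)} = M^{3 - 3s}$; the remaining frequency powers can be absorbed by Lemma~\ref{LEM:prod} and Sobolev embedding on $\T^2$ (which costs one extra power of $M^{-1}$ after using $\|If_{N}\|_{H^1}\sim N\|If_N\|_{L^2}$ with $N\gtrsim M$), so the total exponent becomes $M^{2 - 3s}$ as desired. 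The regularity threshold $s\ge 2/3$ enters precisely here to ensure that the exponents obtained from Sobolev embedding and the gain $(M/N)^{1-s}$ sum up to something non-negative in $N$, allowing the dyadic sums to converge.

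Next, in the \emph{high$\times$low$\times$low} regime $N_1 \gg \max(N_2, N_3)$, one has $|n_1 + n_2 + n_3| \sim |n_1|$ and $\mf(n_2) = \mf(n_3) = 1$ unless $N_2$ or $N_3$ is itself $\gtrsim M$. I would rewrite
\[
K = \bigl(\mf(n_1+n_2+n_3) - \mf(n_1)\bigr) + \mf(n_1)\bigl(1 - \mf(n_2)\mf(n_3)\bigr),
\]
and apply the mean value theorem to the smooth symbol $\mf$: since $\mf$ behaves like $(M/|\xi|)^{1-s}$ for $|\xi| \gtrsim M$, one has $|\nabla \mf(n_1)| \lesssim \mf(n_1)/|n_1|$, which gives the gain $\mf(n_1) \cdot \max(N_2, N_3)/N_1$ in the first piece. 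The second piece is only non-zero when $N_2$ or $N_3$ is $\gtrsim M$, in which case $|1-\mf(n_2)\mf(n_3)|\lesssim 1$ can be paired with the $\mf$-gain and treated similarly to Step~1. After relabeling and combining with the $(M/N_1)^{1-s}$-factor from $\mf(n_1)$, summation in $N_2, N_3$ costs at most logarithmically (absorbed in the $\eps$ of the ambient setup) and summation in $N_1 \gtrsim M$ converges, again producing the factor $M^{2-3s}$.

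The main obstacle I expect is the high--high-to-low interaction $N_1\sim N_2 \gg M \gg N_3$, where $|n_1+n_2+n_3|$ may drop far below $N_1$, so that $\mf(n_1+n_2+n_3)$ is \emph{larger} than $\mf(n_1)\mf(n_2)$ and the naive mean value bound on $K$ is not favorable. The remedy is to treat $\mf(n_1+n_2+n_3)$ and $\mf(n_1)\mf(n_2)\mf(n_3)$ separately using H\"older's inequality combined with Lemma~\ref{LEM:prod}\,(ii) (duality version), converting one of the $H^s$-norms of $f$ into a $W^{-s,q}$-type norm of the product and trading derivatives across the product; here, $s \geq 2/3$ is exactly what makes the counting of derivatives work out. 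Summing over all dyadic configurations and using \eqref{I3}--\eqref{I4} to pass from $H^s$-norms of $f, g$ to $H^1$-norms of $If, Ig$ completes the proof.
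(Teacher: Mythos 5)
Your plan is correct and is essentially the standard argument: the paper itself gives no proof of this lemma but defers to \cite[Lemma 3.1]{GKOT}, whose proof is precisely the Littlewood--Paley decomposition you describe, with the mean value theorem applied to the symbol $\mf_{s,M}$ in the high$\times$low regime and crude product/Sobolev bounds (using $\mf(n)\le (M/|n|)^{1-s}$ to pass between $\|P_N f\|_{H^s}$ and $\|I P_N f\|_{H^1}$) in the remaining regimes. One small caveat: in your high$\times$high bookkeeping the factor $M^{1-s}$ goes the other way --- for a block at frequency $N\ge 2M$ one has $\|P_N f\|_{H^s}\sim M^{-(1-s)}\|I P_N f\|_{H^1}$, a \emph{gain} per high factor --- but the exponents still close (indeed one gets $M^{-1}\le M^{2-3s}$ there), so this is a wording issue rather than a gap.
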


\begin{lemma}
\label{LEM:com2} 
Let $\frac 23 \le s < 1$.
 Then, given small $\dl > 0$,  there exist small $\sigma_0 = \sigma_0(\dl) >0$ and $p_0 = p_0 (\dl) \gg 1$ such that 
\begin{align*} 
\| I ( f h) - (I f)( I h) \|_{L^2} 
& \les M^{-\frac {s}{2}  +\dl} \| I f \|_{H^1} \| h \|_{W^{-\sigma_0, p_0}}, \\
\| I ( f g h) - (I f)( I g )(I h) \|_{L^2}
&  \les M^{- s + \frac 12  + \dl} \| I f \|_{H^1} \| I g \|_{H^1} \| h \|_{W^{-\sigma_0, p_0}}
\end{align*}

\noi
for any sufficiently large $M \gg 1$, 
where the implicit constant is independent of 
$M \gg 1$.

\end{lemma}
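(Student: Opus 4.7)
My plan is to prove both commutator estimates by Paley-Littlewood decomposition and case analysis on the frequency interactions, exploiting two structural properties of the multiplier $\mf = \mf_{s, M}$ in \eqref{I2}: that $\mf \equiv 1$ on $\{|\xi| \le M\}$, and that $\|\nb \mf\|_{L^\infty(\R^2)} \les M^{-1}$ (which follows from the explicit formula $\mf(\xi) = (M/|\xi|)^{1-s}$ for $|\xi| \ge 2M$). The role of the negative-regularity factor $h$ is absorbed by choosing $\sigma_0 = \sigma_0(\dl)$ small and $p_0 = p_0(\dl)$ large, so that $\|P_N h\|_{L^{p_0}} \les N^{\sigma_0} \|h\|_{W^{-\sigma_0, p_0}}$, combined with Sobolev embedding on the other factor(s), produces only an arbitrarily small power loss, absorbable into $M^{\dl}$.

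For the bilinear estimate, writing on the Fourier side
\[
\widehat{I(f h) - (If)(Ih)}(n) = \sum_{n_1 + n_2 = n} \bigl[\mf(n) - \mf(n_1)\mf(n_2)\bigr]\, \widehat{f}(n_1)\,\widehat{h}(n_2),
\]
I would localize $f$ and $h$ to dyadic shells $|n_1| \sim N_1$, $|n_2| \sim N_2$. In the high--low regime $N_1 \gg N_2$ (so $|n| \sim N_1$), I decompose the symbol as $[\mf(n)-\mf(n_1)] + \mf(n_1)[1-\mf(n_2)]$: the first bracket is nontrivial only when $N_1 \gtrsim M$, where the mean value theorem gives $|\mf(n)-\mf(n_1)| \les N_2/M$, and the second is nontrivial only when $N_2 \gtrsim M$, where $\mf(n_1) \les (M/N_1)^{1-s}$. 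Pairing $\|I P_{N_1} f\|_{H^1}$ with $\|P_{N_2} h\|_{L^{p_0}}$ via H\"older and Sobolev embedding, geometric summation over $(N_1, N_2)$ yields a net factor of $M^{-s/2 + O(\sigma_0) + O(1/p_0)}$; the symmetric regime $N_1 \ll N_2$ is analogous. The main obstacle is the high--high regime $N_1 \sim N_2 \gtrsim M$, where no mean-value gain is available; here I estimate each of $|\mf(n)|$ and $\mf(n_1)\mf(n_2)$ crudely via $(M/N)^{1-s}$-type decay, distribute regularity to $f$ via $\|IP_{N_1} f\|_{H^1}$ together with a Sobolev embedding into an $L^{p_0'}$-space (valid since $p_0' > 1$), and absorb $h$ using $\|P_{N_2} h\|_{L^{p_0}}$; summation over $N_1 \sim N_2 \gtrsim M$ again yields $M^{-s/2 + \dl}$.

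For the trilinear estimate, I would use the telescoping identity
\[
I(fgh) - (If)(Ig)(Ih) = \bigl[I((fg)h) - I(fg) \cdot Ih\bigr] + \bigl[I(fg) - (If)(Ig)\bigr]\,Ih,
\]
applying the bilinear estimate just proved (with $fg$ in place of $f$) to the first bracket together with a frequency-localized product estimate $\|I(fg)\|_{H^1} \les \|If\|_{H^1}\|Ig\|_{H^1}$, and applying Lemma~\ref{LEM:com1} to the second bracket after H\"older-pairing with $\|Ih\|_{L^{q}_x}$ for some large but finite $q$, bounded via $\|h\|_{W^{-\sigma_0, p_0}}$ and \eqref{I4}. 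The net exponent $M^{-s + \frac12 + \dl}$ (larger than $M^{-s/2 + \dl}$ by $M^{(1-s)/2}$, consistent with the extra $\|Ig\|_{H^1}$ factor now present on the right-hand side) is obtained by tracking one more unit of smoothness extracted from the highest-frequency block in the critical high--high--high regime. The main technical obstacle in both parts will be to ensure that the small $\dl$-loss in the high--high (and high--high--high) regime is compatible across all dyadic summations, which dictates the required smallness of $\sigma_0$ and largeness of $p_0$.
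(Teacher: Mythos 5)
First, note that the paper does not prove this lemma itself; it cites \cite[Lemma~3.3]{GKOT}, where the argument is a direct dyadic frequency analysis of the bilinear and trilinear symbols $\mf(n)-\prod_j\mf(n_j)$ — essentially your bilinear strategy carried out for each expression separately, without any telescoping. Your bilinear argument is therefore on the right track, with one caveat: in the regime where $h$ carries the high frequency $N_2\gtrsim M$ and $f$ sits at low frequency $N_1\ll N_2$, the factor $f$ yields no power of $M$, so all decay must come from the symbol piece $\mf(n)-\mf(n_2)$. If you only use the global bound $\|\nb\mf\|_{L^\infty}\les M^{-1}$, the mean value theorem gives $|\mf(n)-\mf(n_2)|\les M^{-1}N_1$, and the subsequent sum over $N_2\gtrsim M$ of $N_2^{\sigma_0+O(1/p_0)}$ diverges. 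You must instead use the pointwise decay $|\nb\mf(\xi)|\les M^{1-s}|\xi|^{s-2}$ (which your explicit formula for $\mf$ does provide), or interpolate the mean-value bound with the crude bound $(M/N_2)^{1-s}$, to render the $N_2$-sum convergent. This is fixable, but as written the sketch would hit a divergent sum precisely in the case the commutator structure is designed to handle.

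The trilinear part contains a genuine gap: the product estimate $\| I(fg)\|_{H^1}\les \|If\|_{H^1}\|Ig\|_{H^1}$ on which your telescoping rests is false. Indeed, $H^\sigma(\T^2)$ is not an algebra for $\sigma\le 1$, and the $I$-version fails quantitatively: take $f=g=\lambda\,\P_{\sim N}\big(N\phi(N\cdot)\big)$ with $N\gg M$ and $\lambda= M^{s-1}N^{-s}$ chosen so that $\|If\|_{H^1}\sim 1$; then $\|\P_{\sim N}(f^2)\|_{L^2}\sim N\lambda^2$, whence $\|I(f^2)\|_{H^1}\gtrsim \mf(N)\, N\cdot N\lambda^2\sim (N/M)^{1-s}\to\infty$. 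So the first bracket of your telescoping cannot be closed by invoking the bilinear estimate with $fg$ in place of $f$. (Separately, Lemma~\ref{LEM:com1} controls the specific trilinear commutator $I(f^2g)-(If)^2Ig$ and is not the bilinear commutator bound for two $IH^1$-factors that your second bracket requires, though such a bound, of the form $\|I(fg)-(If)(Ig)\|_{L^{2+}}\les M^{-1+}\|If\|_{H^1}\|Ig\|_{H^1}$, is provable by the same dyadic analysis.) The repair is to abandon the telescoping and run the trilinear frequency analysis directly, as in \cite{GKOT}: decompose all three factors dyadically, use the mean value theorem on $\mf(n)-\mf(n_{\max})$ together with the decay of $\mf$ and $\nb\mf$ above $M$, and in the high--high regime for $f$ and $g$ pay two Sobolev embeddings $\|\P_Nf\|_{L^{4+}}\les M^{s-1}N^{\frac12-s+}\|I\P_Nf\|_{H^1}$, which after summation over $N\gtrsim M$ comfortably yields the stated $M^{-s+\frac12+\dl}$.
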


\subsection{Stochastic convolutions
and their Wick powers}
\label{SUBSEC:2.4}

In this subsection, 
by first recalling some basic tools from stochastic analysis
(see, for example, \cite{Simon, Kuo, Nua}), 
we 
go over the definitions 
and basic regularity properties of 
the stochastic convolutions $\Psi_j$  and $\Phi_j$ 
defined in \eqref{psi1} and~\eqref{phi1}, respectively, 
and their Wick powers, 
formally defined in \eqref{Wick0}.

First, 
recall the Hermite polynomials $H_k(x; \s)$ 
of degree $k$
with variance parameter $\s > 0$, 
defined through the generating function:
\begin{equation*}
%F(t, x; \s) \stackrel{\text{def}}{=} 
 e^{tx - \frac{1}{2}\s t^2} = \sum_{k = 0}^\infty \frac{t^k}{k!} H_k(x;\s).
 \end{equation*}
	
\noi
For readers' convenience, we write out the first few Hermite polynomials:
\begin{align*}
\begin{split}
& H_0(x; \s) = 1, 
\quad 
H_1(x; \s) = x, 
\quad
H_2(x; \s) = x^2 - \s,   
\quad
 H_3(x; \s) = x^3 - 3\s x.
\end{split}
\end{align*}

Next, we recall the Wiener chaos estimate.
Let $(H, B, \mu)$ be an abstract Wiener space.
Namely, $\mu$ is a Gaussian measure on a separable Banach space $B$
with $H \subset B$ as its Cameron-Martin space.
Given  a complete orthonormal system $\{e_j \}_{ j \in \N} \subset B^*$ of $H^* = H$, 
we  define a polynomial chaos of order
$k \in \N\cup\{0\}$ to be an element of the form $\prod_{j = 1}^\infty H_{k_j}(\jb{x, e_j})$, 
where $x \in B$, $k_j \ne 0$ for only finitely many $j$'s, $k= \sum_{j = 1}^\infty k_j$, 
$H_{k_j}$ is the Hermite polynomial of degree $k_j$, 
and $\jb{\,\cdot\,, \,\cdot\,} = \vphantom{|}_B \jb{\,\cdot\,, \,\cdot\,}_{B^*}$ denotes the $B$-$B^*$ duality pairing.
We then 
denote the closure  of the span of
polynomial chaoses of order $k$ 
under $L^2(B, \mu)$ by $\mathcal{H}_k$.
The elements in $\H_k$ 
are called homogeneous Wiener chaoses of order $k$.
We also set
\[ \H_{\leq k} = \bigoplus_{j = 0}^k \H_j\]

\noi
 for $k \in \N$.

As a consequence
of the  hypercontractivity of the Ornstein-Uhlenbeck
semigroup  due to Nelson \cite{Nelson2}, 
we have the following Wiener chaos estimate
(\cite[Theorem~I.22]{Simon}).
See also \cite[Proposition~2.4]{TTz}.

\begin{lemma}\label{LEM:hyp}
Let $k \in \N$.
Then, we have
\begin{equation*}
\|X \|_{L^p(\O)} \leq (p-1)^\frac{k}{2} \|X\|_{L^2(\O)}
 \end{equation*}
 
 \noi
 for any $p \geq 2$
 and $X \in \H_{\leq k}$.

\end{lemma}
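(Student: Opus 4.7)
The plan is to deduce the estimate from Nelson's hypercontractivity of the Ornstein-Uhlenbeck semigroup $T_t = \Gamma(e^{-t})$ on $L^2(B,\mu)$, defined as the second quantization of multiplication by $e^{-t}$ on the Cameron-Martin space $H$. A direct computation from the Wiener-It\^o decomposition shows that $T_t$ acts on $\H_j$ as multiplication by $e^{-jt}$. Nelson's theorem, which I would take as a black box from \cite{Simon}, asserts that $T_t : L^2(B,\mu) \to L^p(B,\mu)$ is a contraction provided $e^{2t} \geq p-1$.

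First I would dispatch the homogeneous case $X \in \H_k$. Choose $t_\ast = \tfrac{1}{2}\log(p-1)$ so that Nelson's theorem applies at the saturating parameter and $e^{kt_\ast} = (p-1)^{k/2}$. Since $T_{t_\ast}$ acts on $\H_k$ as multiplication by $e^{-kt_\ast}$, we have the identity $X = e^{kt_\ast} T_{t_\ast} X$, and Nelson's contraction yields
\[
\|X\|_{L^p(\O)} = e^{kt_\ast}\|T_{t_\ast}X\|_{L^p(\O)} \leq e^{kt_\ast}\|X\|_{L^2(\O)} = (p-1)^{k/2}\|X\|_{L^2(\O)}.
\]

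For the inhomogeneous case $X \in \H_{\leq k}$, I would decompose $X = \sum_{j=0}^k X_j$ with $X_j \in \H_j$, apply the homogeneous bound to each $X_j$, and invoke Minkowski's inequality to obtain
\[
\|X\|_{L^p(\O)} \leq \sum_{j=0}^k \|X_j\|_{L^p(\O)} \leq (p-1)^{k/2} \sum_{j=0}^k \|X_j\|_{L^2(\O)}.
\]
Cauchy-Schwarz together with the Pythagorean identity $\|X\|_{L^2(\O)}^2 = \sum_j \|X_j\|_{L^2(\O)}^2$, which uses $L^2$-orthogonality of distinct chaoses, then gives $\sum_j \|X_j\|_{L^2(\O)} \leq \sqrt{k+1}\,\|X\|_{L^2(\O)}$, producing the bound up to a factor of $\sqrt{k+1}$.

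The main obstacle, if one insists on the sharp constant $(p-1)^{k/2}$ with no $k$-prefactor, is the removal of the $\sqrt{k+1}$ loss from the Minkowski step, since $L^p$ does not enjoy the orthogonality properties of $L^2$ for $p > 2$. This is classically handled by a lifting trick (see \cite{Simon}): enlarge the Gaussian space by an independent standard Gaussian $Z$ and embed $X$ into the top homogeneous chaos of the enlarged space via tensoring with suitably normalized Hermite polynomials $H_{k-j}(Z)/\sqrt{(k-j)!}$, so that the $L^2$-norm is preserved by orthogonality while the $L^p$-norm dominates that of $X$; one then applies the homogeneous bound to the lifted object in $\H_k$. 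Since $k$ is a fixed integer in all applications in the sequel, the weaker bound produced by the Minkowski argument already suffices for our purposes, and the sharp form may be invoked directly from the literature.
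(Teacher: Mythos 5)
Your route — Nelson hypercontractivity of the Ornstein--Uhlenbeck semigroup $T_t$, which acts on $\H_j$ by $e^{-jt}$, evaluated at the saturating time $t_* = \tfrac12\log(p-1)$ — is exactly the argument behind the reference the paper cites for this lemma, and your treatment of the homogeneous case $X \in \H_k$ is complete and correct.

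The inhomogeneous case is where your write-up falls short of the stated constant. The Minkowski--Cauchy--Schwarz step only yields $\sqrt{k+1}\,(p-1)^{k/2}\|X\|_{L^2}$, as you acknowledge; since $k$ is fixed in every application here, that weaker bound would indeed suffice for the paper. However, the ``lifting trick'' you sketch to recover the sharp constant does not work as described: if you tensor $X_j$ with $H_{k-j}(Z)/\sqrt{(k-j)!}$ for an independent Gaussian $Z$, the resulting $\wt X$ does lie in $\H_k$ of the enlarged space with $\|\wt X\|_{L^2} = \|X\|_{L^2}$, but its conditional expectation over $Z$ is $X_k$ alone (since $\E[H_m(Z)] = 0$ for $m \ge 1$), so the contractivity of conditional expectation on $L^p$ only gives $\|X_k\|_{L^p} \le \|\wt X\|_{L^p}$, not $\|X\|_{L^p} \le \|\wt X\|_{L^p}$. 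The clean way to get the sharp constant needs no lifting at all: since $T_{t_*}$ is diagonal on the chaos decomposition and $X$ has finitely many chaos components, write $X = T_{t_*} Y$ with $Y = \sum_{j=0}^k e^{j t_*} X_j$, apply Nelson's contraction to get $\|X\|_{L^p} \le \|Y\|_{L^2}$, and then use $L^2$-orthogonality on the \emph{preimage}: $\|Y\|_{L^2}^2 = \sum_j e^{2jt_*}\|X_j\|_{L^2}^2 \le e^{2kt_*}\|X\|_{L^2}^2 = (p-1)^{k}\|X\|_{L^2}^2$. With that replacement your proof is complete and sharp.
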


Next, we provide  a precise meaning of the stochastic convolutions
$\Psi_j$ and $\Phi_j$
and define their Wick powers.
Let $\{ \xi_j\}_{j \in \N}$
be a family of 
independent space-time white noises on 
$\R_+ \times \T^2$
as in~\eqref{NLW1}.
Given $j \in \N$
and $n \in \Z^2$, 
define $B_n^j$
by setting
$B_n^j (t) =  \jb{ \xi_j, \ind_{[0, t]} \cdot e^{i n \cdot x} }_{t, x}$, 
where  
 $\jb{\,\cdot, \cdot\,}_{t, x}$ denotes 
the duality pairing on $ \R_+\times \T^2$.
It is easy
to see that 
$\{ B_n^j \}_{j  \in \N, n \in \Z^2}$ is a family of  independent complex-valued
Brownian motions conditioned  that $B^j_{-n} = \cj{B^j_n}$
for each  $j \in \N$ and $n \in \Z^2$. 
Note that we have
 \[\text{Var}(B^j_n(t)) = \E\Big[
 \jb{\xi_j, \ind_{[0, t]} \cdot e^{in \cdot x}}_{t, x}\cj{\jb{\xi_j, \ind_{[0, t]} \cdot e^{in \cdot x}}_{ t, x}}
 \Big] = \|\ind_{[0, t]} \cdot e^{in \cdot x}\|_{L^2_{t, x}}^2 = t\]

\noi
 for any $n \in \Z^2$.
 We then define  
a family 
$\{  W^j\}_{j \in \N}$
of independent cylindrical Wiener processes on $L^2(\T^2)$
by setting
\begin{align*}
 W^j(t)
 = \sum_{n \in \Z^2} B_n^j (t) e^{in \cdot x}.
%\label{BM1}
\end{align*}

With these notations, we can write 
 $\Psi_j$  in \eqref{psi1}
as 
\begin{align}
\begin{split}
\Psi_j (t) 
& =
\sqrt 2  \int_0^t \D (t - t') \xi_j (dt')\\
& = \sqrt 2  \int_0^t \D (t - t') d W^j (t'),
\end{split}
\label{psi2}
\end{align}

\noi 
where $\D (t)$ is 
 the linear damped wave propagator defined  in \eqref{D1}.
Given  $j, M \in \N$, set $\Psi_{j, M} = \P_M \Psi_j$. 
Then, given $(t, x) \in \R_+ \times \T^2$, 
we see that 
$\Psi_{j, M}(t, x)$ is a mean-zero Gaussian random variable with variance:
\begin{align}
\begin{split}
\sigma_M (t)
    & \deff \E \big[ \Psi_{j, M}^2 (t, x) \big] \\
& \hspace{0.5mm}
= 
\sum_{|n| \le M} 
\bigg(
\frac {1 - e^{- t}}{\jbb{n}^2}   
-
\frac {e^{- t}2 \sin(2 t \jbb{n})}{\jbb{n} (1 + 4\jbb{n}^2)}
- 
\frac {1 - e^{- t} \cos(2 t \jbb{n})}{\jbb{n}^2(1 + 4 \jbb{n}^2)} 
\bigg)    \\
&  \sim_{m, t} \log M 
\too \infty, 
\end{split} 
\label{sig1}
\end{align}

\noi
 as $M \to \infty$.
Note that, 
while  $\sigma_M$ depends on $t\in \R_+$, 
it is independent of $x \in \T^2$.
 
Given $k, j \in \N$, we define
the Wick powers by setting
\begin{align}
\begin{split}
\wick{ \Psi_{k, M} \Psi_{j, M} } \, 
& = 
\begin{cases}
        H_2 ( \Psi_{j, M}; \sigma_M ), &  \text{if } k = j,  \\
\Psi_{k, M} \Psi_{j, M}, & \text{if } k \neq j,
    \end{cases} \\
\wick{ \Psi_{k, M}^2 \Psi_{j, M} } \, 
& = 
\begin{cases}
H_3 ( \Psi_{j, M}; \sigma_M ),  & \text{if } k = j,  \\
H_2 ( \Psi_{k, M}; \sigma_M ) \Psi_{j, M},  & \text{if } k \neq j,
\end{cases}
\end{split}
\label{psi3}
\end{align}

\noi
where $H_\l (x ; \sigma)$ is the  Hermite polynomial of degree $\l$ with variance parameter $\sigma$. Then, the Wick powers,  formally given in \eqref{Wick0}, 
are defined by 
\begin{align}
\begin{split}
    \wick{\Psi_k \Psi_j} \, &=  \lim_{M \to \infty} \wick{\Psi_{k,M} \Psi_{j,M}} \, , \\
    \wick{\Psi_k^2 \Psi_j} \, &=  \lim_{M \to \infty} \wick{\Psi_{k,M}^2 \Psi_{j,M}}.
\end{split}
\label{psi4}
\end{align}

\noi
Proceeding as in \cite{GKO, GKO2}, 
it is easy to see that, given any $\eps > 0$,  the limits in \eqref{psi4}
exist
in $C(\R_+; W^{-\eps, \infty}(\T^2))$
almost surely.
See Lemma \ref{LEM:sto1}.

Next, we turn our attention to  $\Phi_j$  in \eqref{phi1}.
With the notations above, we have 
\begin{align}
\Phi_j (t) = 
S(t) \big(\phi^{(0)}_j, \phi^{(1)}_j\big) + \sqrt {2} \int_0^t \D (t - t') d W^j (t'), 
\label{phi2}
\end{align}

\noi
where 
$S(t)$ is as in \eqref{D4} and $\big\{\big(\phi^{(0)}_j, \phi^{(1)}_j\big)\big\}_{j \in \N}$
is a family of independent $\D'(\T^2)$-valued random variables, satisfying~\eqref{gauss2}, 
which is assumed to be independent of $\{\xi_j\}_{j \in \N}$.
Given $M \in \N$ and $j \in \N$, we set  $\Phi_{j, M} = \P_{M} \Phi_j$. 
Then, 
using \eqref{sig1}, 
it is easy to see that, 
given $(t, x) \in \R_+ \times \T^2$, 
$\Phi_{j, M}(t, x)$ is a mean-zero Gaussian random variable with variance:
\begin{align*}
    \al_M \deff \E \big[ \Phi^2_{j, M} (t, x)\big] = \sum_{\substack{n \in \Z^2 \\ |n| \leq M}} \frac{1}{m + |n|^2} \sim_m \log M
\too \infty,
%\label{sig2}
\end{align*}

\noi
as $M \to \infty$.
Note that $\al_M$ is independent of 
$(t, x) \in \R_+ \times \T^2$.
Then, as in \eqref{psi4}, we define the Wick powers
by setting
\begin{align}
\begin{split}
    \wick{\Phi_k \Phi_j} \, &=  \lim_{M \to \infty} \wick{\Phi_{k,M} \Phi_{j,M}} , \\
    \wick{\Phi_k^2 \Phi_j} \, &=  \lim_{M \to \infty} \wick{\Phi_{k,M}^2 \Phi_{j,M}}, 
\end{split}
\label{def_wick2}
\end{align}

\noi
where the limits exist almost surely in 
 $C(\R_+; W^{-\eps, \infty}(\T^2))$
for any $\eps > 0$.
Here, 
$\wick{\Phi_{k,M} \Phi_{j,M}}$ and $\wick{(\Phi_{k,M})^2 \Phi_{j,M}}$ 
are  as in~\eqref{psi3}, 
but with the variance parameter $\al_M$ instead of $\s_M$.

Before we state lemmas on regularity properties
of the stochastic convolutions and their Wick powers, we need to introduce 
some notations.
Fix $N \in \N$.
Given
$\eps> 0 $
and an interval $J \subset \R_+$, 
we
define the space $\ZZ_N^\eps(J)$
 of enhanced data sets
$\pmb Z_N$ of the form
\begin{align}
\pmb Z_N = \big\{Z_j^{(1)},  Z_{k, j}^{(2)}, Z_{k, j}^{(3)}\big\}_{j, k = 1}^N
\label{X0}
\end{align}

\noi
by setting
\begin{align}
\begin{split}
\| \pmb Z_N \|_{\ZZ_N^\eps(J)}
&
=  \|Z_j^{(1)}\|_{\A_N C_J W_x^{- \eps, \infty}}
+  \|Z_{k, k}^{(2)}\|_{\A_N C_J W_x^{- \eps, \infty}}\\
&  \quad
+  \|Z_{k, j}^{(2)}\|_{\A^{(2)}_N C_J W_x^{- \eps, \infty}}
 +   \|Z_{k, j}^{(3)}\|_{\A^{(2)}_N C_J W_x^{- \eps, \infty}}, 
\end{split}
\label{X1}
\end{align}

\noi
where 
$\A_N B$ %C(J; W^{- \eps, \infty}(\T^2))$
and 
$\A^{(2)}_N B$ %L^2([0, T]; W^{- \eps, \infty}(\T^2))$
are 
the $\l^2$-averages defined
 in~\eqref{AN0} and \eqref{AN2}.
When $J = [0, T]$, 
with a slight abuse of notation, we simply set
\begin{align}
\ZZ_N^\eps(T) = \ZZ_N^\eps([0, T]).
\label{X1a}
\end{align}

\noi
For our application, we take
\begin{align}
Z_j^{(1)} = \Psi_j,\qquad   Z_{k, j}^{(2)} = \, \wick{\Psi_k \Psi_j} \, , 
\qquad \text{and}\qquad 
Z_{k, j}^{(3)}
=  \, \wick{\Psi_k^2 \Psi_j} 
\label{X2}
\end{align}

\noi
in studying 
\eqref{NLW3}, 
while we set 
\begin{align}
Z_j^{(1)} = \Phi_j,\qquad   Z_{k, j}^{(2)} = \, \wick{\Phi_k \Phi_j} \,, 
\qquad \text{and}\qquad 
Z_{k, j}^{(3)}
=  \, \wick{\Phi_k^2 \Phi_j}
\label{X3}
\end{align}

\noi
in studying 
\eqref{NLW6}.

The first lemma states the basic regularity properties
of these stochastic terms.

\begin{lemma}
\label{LEM:sto1}

\textup{(i)}
Given %$\l = 1, 2, 3$ and 
$j, k, M \in \N$, set 
\begin{align*}
Z_{M}^{(1)} = \Psi_{j, M},\qquad   Z_{M}^{(2)} = \, \wick{\Psi_{k, M} \Psi_{j, M}} \, , 
\qquad \text{and}\qquad 
Z_{M}^{(3)}
=  \, \wick{\Psi_{k, M}^2 \Psi_{j, M}}, 
%\label{X4}
\end{align*}

\noi
where 
$\Psi_{j, M} = \P_M \Psi_{j}$, 
and
$ \wick{\Psi_{k, M} \Psi_{j, M}} $
and $ \wick{\Psi_{k, M}^2 \Psi_{j, M}}$
are as in 
\eqref{psi3}.
Let $\l = 1, 2, 3$.
Then, 
given any  $T,\eps>0$ and finite $p \geq 1$, 
$  Z^{(\l)}_M$
converges to some limit $Z^{(\l)}$ in 
\[L^p(\O;C([0,T];W^{-\eps,\infty}(\T^2)))\]

\noi
and also almost surely in $C(\R_+;W^{-\eps,\infty}(\T^2))$, 
endowed with the compact-open topology in time, 
as $M \to \infty$, 
where $Z^{(\l)}$ is given by 
\begin{align*}
Z^{(1)} = \Psi_{j},\qquad   Z^{(2)} = \, \wick{\Psi_{k} \Psi_{j}} \, , 
\qquad \text{and}\qquad 
Z^{(3)}
=  \, \wick{\Psi_{k}^2 \Psi_{j}}.
%\label{X5}
\end{align*}

\noi
Moreover, there exist
  $C, c > 0$ such that 
\begin{align}
\PP \Big( \| Z^{(\l)} \|_{C_{J} W_x^{- \eps, \infty}} > \ld \Big) 
\leq C \exp\big( - c\ld^{\frac{2}{\l}}\big)
\label{tail1}
\end{align}

\noi
for any $\ld > 0$ and any interval $J \subset \R_+$ with $|J| = 1$, 
uniformly in $J$ and $j, k \in \N$.

\smallskip

\noi
\textup{(ii)}
Given $N \in \N$, 
let $\pmb Z_N$ be as in \eqref{X0}, 
where 
$Z_j^{(1)}$,  $Z_{k, j}^{(2)}$, and $Z_{k, j}^{(3)}$
are as in \eqref{X2}.
Then,  given $\eps > 0$, there exist
  $C, c > 0$ such that 
\begin{align}
\PP \Big( \| \pmb Z_N \|_{\ZZ^\eps_N(J)} > \ld \Big) 
\le C %(1 +  |J| )
  \exp\Big( - c\frac{\ld^\frac{2}{3}}{(1 + |J|)^\frac 23 }\Big)
\label{tail2}
\end{align}

\noi
for any 
interval $J \subset \R_+$
and 
$\ld > 0$,  
uniformly in $N \in \N$, 
where the $\ZZ_N^\eps(J)$-norm is as in \eqref{X1}.

\smallskip

\noi
\textup{(iii)}
The claims in Parts (i) and (ii) hold
even if we replace $\Psi_j$ 
and its  Wick powers
by $\Phi_j$
defined in \eqref{phi2} 
with the corresponding Wick powers.

\end{lemma}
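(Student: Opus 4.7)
To prove Part (i), my plan is the standard Gaussian chaos argument combined with Sobolev embedding. I first compute the second moment of the Fourier coefficient $\widehat{Z_M^{(\ell)}}(n, t)$. Using \eqref{psi2} and the It\^o isometry, $\widehat{\Psi_{j,M}}(n, t)$ is a mean-zero complex Gaussian with variance bounded by $\jb{n}^{-2} \mathbf{1}_{|n| \le M}$, uniformly in $t \ge 0$. For the Wick products $Z_M^{(2)}$ and $Z_M^{(3)}$, Wick's isometry (orthogonality of chaoses) reduces $\E|\widehat{Z_M^{(\ell)}}(n,t)|^2$ to a combinatorial sum over frequencies $n = n_1 + \cdots + n_\ell$ with Gaussian pairings. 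Treating the diagonal $k=j$ (genuine Hermite power) and off-diagonal $k \neq j$ (factorized product of independent Wick squares) cases separately, a direct computation yields $\E|\widehat{Z_M^{(\ell)}}(n,t)|^2 \lesssim \jb{n}^{-2\eps}$ for any $\eps > 0$, uniformly in $M$. The Wiener chaos estimate (Lemma~\ref{LEM:hyp}) then promotes $L^2_\omega H^{-\eps}_x$ control to $L^p_\omega H^{-\eps}_x$ control with gain $(p-1)^{\ell/2}$, and Sobolev embedding $W^{-\eps/2, p} \hookrightarrow W^{-\eps, \infty}$ for $p \gg 1/\eps$ upgrades the target space. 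Time continuity follows from the Kolmogorov criterion after an analogous computation showing $\E\|Z_M^{(\ell)}(t) - Z_M^{(\ell)}(t')\|_{W^{-\eps/2, p}}^p \lesssim |t-t'|^{\alpha p}$ for some $\alpha > 0$. Convergence $Z_M^{(\ell)} \to Z^{(\ell)}$ is obtained by repeating the same estimates for the difference $Z_{M_1}^{(\ell)} - Z_{M_2}^{(\ell)}$ and verifying the Cauchy property, as in \cite{GKO, GKO2}.

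The tail bound \eqref{tail1} follows from combining the resulting $L^p_\omega$-estimate, of the form $\E\|Z^{(\ell)}\|_{C_J W_x^{-\eps, \infty}}^p \le (Cp)^{\ell p/2}$ for $|J| = 1$, with Markov's inequality optimized in $p$; the choice $p \sim \lambda^{2/\ell}$ yields the Gaussian-type tail with exponent $2/\ell$. Uniformity in $j, k$ is automatic since the law of every individual Wick product depends only on whether $k = j$.

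For Part (ii), the essential observation is that the $\ell^2$-averages $\A_N, \A_N^{(2)}$ in \eqref{X1} do not amplify with $N$. By Jensen's inequality (convexity of $x \mapsto x^{p/2}$ for $p \ge 2$),
\begin{align*}
\E \|Z_j^{(1)}\|_{\A_N C_J W_x^{-\eps,\infty}}^p
\le \frac{1}{N}\sum_{j=1}^N \E \|Z_j^{(1)}\|_{C_J W_x^{-\eps,\infty}}^p
= \E \|Z_1^{(1)}\|_{C_J W_x^{-\eps,\infty}}^p,
\end{align*}
uniformly in $N$, and similarly for the $\A_N^{(2)}$-contributions from $Z_{k,j}^{(2)}$, $Z_{k,j}^{(3)}$ (off-diagonal) and the $\A_N$-contribution from $Z_{k,k}^{(2)}$ (diagonal), since $\{Z_j^{(1)}\}_j$ are i.i.d.~and the pairs $\{Z_{k,j}^{(\l)}\}$ (in either the $k=j$ or $k \ne j$ regime) have identical laws. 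Consequently $\E \|\pmb Z_N\|_{\ZZ_N^\eps(J)}^p \le (Cp)^{3p/2}$ uniformly in $N$ for $|J| = 1$ (the worst chaos order being $\ell = 3$), and the tail on a unit interval follows from the same Markov-in-$p$ argument as in (i), producing the exponent $2/3$. For a general interval $J$, I cover $[0, |J|]$ by $\lceil |J| \rceil$ unit intervals and apply a union bound; the resulting factor $1 + |J|$ is absorbed into $(1+|J|)^{2/3}$ in the exponent by adjusting the constant $c$ (the small-$\lambda$ regime being trivial by enlarging $C$).

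Part (iii) reduces to (i)--(ii) by decomposing $\Phi_j$ via \eqref{phi2} into the stochastic convolution (equal in law to $\Psi_j$) and the linear evolution $S(t)\big(\phi_j^{(0)}, \phi_j^{(1)}\big)$ of the stationary Gaussian initial data. The same chaos computations apply verbatim with the time-independent variance $\al_M$ replacing $\s_M(t)$, and independence in $j$ is preserved since $\big\{\big(\phi_j^{(0)}, \phi_j^{(1)}\big)\big\}_j$ is independent of $\{\xi_j\}_j$. The principal technical obstacle across the lemma is not any individual estimate but the bookkeeping of the diagonal $k=j$ (Hermite renormalization) versus off-diagonal $k \ne j$ (factorized independent Wick squares) contributions in \eqref{psi3} and \eqref{def_wick2}, which is needed both in the combinatorial second-moment computation and in separating the $\A_N$ and $\A_N^{(2)}$ pieces of the $\ZZ_N^\eps(J)$-norm.
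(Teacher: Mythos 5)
Your treatment of Part (i) and of Part (iii) matches the paper's (referenced) argument: second-moment/chaos computation, Wiener chaos estimate, Sobolev embedding, a continuity criterion in time, and the Chebyshev-optimized-in-$p$ argument $p \sim \ld^{2/\l}$ for the tail; the paper likewise reduces (iii) to (i)--(ii). For Part (ii), your Jensen-inequality observation that the $\A_N$ and $\A_N^{(2)}$ averages do not amplify with $N$ is exactly the content of the paper's Minkowski step, and the unit-interval decomposition is also the same.

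The one genuine flaw is your passage from unit intervals to a general interval $J$ in Part (ii). You apply a union bound over the $\sim 1+|J|$ unit subintervals and claim the resulting prefactor ``is absorbed into $(1+|J|)^{2/3}$ in the exponent by adjusting the constant $c$.'' This absorption fails: you would need $(1+|J|) e^{-c x} \le C e^{-c' x}$ with $x = \ld^{2/3}/(1+|J|)^{2/3}$, uniformly in $\ld$ and $J$, i.e.\ $(1+|J|) \le C e^{(c-c')x}$. In the intermediate regime where $x$ is a fixed moderate constant but $|J|$ is huge (e.g.\ $\ld \sim 10^{3/2}(1+|J|)$, $|J| = e^{100}$), the union bound yields a probability estimate far exceeding $1$, the trivial bound gives only $1$, and the claimed right-hand side $C e^{-c'x}$ is a fixed number less than $1$; so neither ingredient delivers \eqref{tail2} there, and ``enlarging $C$'' only handles the regime $x \le \frac{1}{c'}\log C$. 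The paper avoids this by summing at the level of \emph{moments} rather than probabilities: Minkowski's integral inequality over the unit intervals gives
\begin{align*}
\Big\|\|\pmb Z_N\|_{\ZZ_N^\eps(J)}\Big\|_{L^p(\O)} \les p^{\frac 32}(1+|J|),
\end{align*}
uniformly in $N$, and a single application of Chebyshev's inequality optimized in $p$ (now with $p \sim (\ld/(1+|J|))^{2/3}$) produces \eqref{tail2} with no extraneous prefactor. Replacing your union bound by this moment summation repairs the argument; everything else in your proposal is sound.
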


\begin{proof}
(i) 
By noting that $Z^{(\l)}_M, \, Z^{(\l)} \in \H_\l$, 
the claims in Part (i) 
follows from 
a slight modification of 
the proof of 
\cite[Lemma~2.3]{GKOT}, 
based on 
the Wiener chaos estimate (Lemma \ref{LEM:hyp})
and 
the Garsia-Rodemich-Rumsey inequality
(\cite[Lemma~2.2]{GKOT}).
Since we work with the damped wave equation, 
there is no growth in time
in computing the $p$th moments of 
the stochastic terms,
which is the reason that the right-hand side of \eqref{tail1}
is independent of time;
compare this with \cite[(2.7)]{GKOT}.
See also  \cite[Proposition~2.1]{GKO}
and 
\cite[Lemma~3.1]{GKO2}, 
where the latter utilizes
  \cite[Proposition~2.7 and Lemma~A.1]{OOTz}.

\medskip

\noi
(ii)
For simplicity of the presentation, 
we only consider the case $J = [0, T]$.
We first consider the contribution from the last term
on the right-hand side of~\eqref{X1}. 
Write $[0, T] = \bigcup_{\l = 0}^{[T]} J_\l$, 
where $J_\l = [\l, \l+1]$ for $\l = 0, \dots, [T] - 1$
and $J_{[T]} = \big[[T], T\big]$.
Here, $[T]$ denotes the integer part of $T$.

From 
 \eqref{tail1}, 
we have 
\begin{align*}
\sup_{\l = 0, \dots, [T]} 
\sup_{j, k \in \N}
\|Z_{k, j}^{(3)}  \|_{L^p_\o C_{J_\l} W_x^{- \eps, \infty}}
\les p ^\frac 32.
\end{align*}

\noi
Then, by Minkowski's integral inequality, we obtain
\begin{align}
\begin{split}
\Big\|\|Z_{k, j}^{(3)}  \|_{\A^{(2)}_N C_T W_x^{- \eps, \infty}}\Big\|_{L^p(\O)}
& \le 
\sum_{\l = 0}^{[T]}
\Big\|\|Z_{k, j}^{(3)}  \|_{L^p_\o C_{J_\l} W_x^{- \eps, \infty}}\Big\|_{\A_N^{(2)}}\\
& \les  p ^\frac 32 ( 1+ T)
\end{split}
\label{tail5}
\end{align}

\noi
for any finite $p \ge 2$, 
where the implicit constant is independent of $N \in \N$.
By a crude estimate, 
we see that 
the other terms on the right-hand side of \eqref{X1}
also satisfy the same bound.
Hence, 
the  tail estimate~\eqref{tail2} 
follows
from~\eqref{tail5} and Chebyshev's inequality
(see the proof of \cite[Lemma 3]{BOP1}\footnote{Lemma 2.1 in the arXiv version.}); see also \cite[Lemma~4.5]{Tz10}.

\medskip

\noi
(iii) A straightforward modification yields the corresponding results for $\Phi_j$, 
and thus we omit details.
\end{proof}

\begin{remark} \rm
\label{REM:sto1a}

(i) As a consequence of  \eqref{tail1}, 
we have the following moment bound:
\begin{align*}
    \E \Big[ \| Z ^{(\l)}\|_{C_J W_x^{- \eps, \infty}}^p \Big] \les p^\frac{\l}{2}
\end{align*}

\noi
for any finite $p \ge 1$
and any interval $J \subset \R_+$ with $|J| = 1$, 
uniformly in $J$ and $j, k \in \N$.

\smallskip

\noi
(ii)
In Section~\ref{SEC:conv}, 
we  need to deal with the product $\Psi_k \P_{M}^\perp \Psi_j$ with $k \neq j$, where 
$\P^\perp_{M} = \Id - \P_{M}$.
It follows from a slight modification of the proof of Lemma \ref{LEM:sto1}
that 
there exists small $\ta > 0$ such that 
\begin{align}
 \E \big[ \| \Psi_k \P_M^\perp \Psi_j \|_{C_T W_x^{- \eps, \infty}}^p \big] \les_{T} 
 p 
 M^{- \ta  p}
\label{mom2}
\end{align}

\noi
\noi
for any finite $p \ge 1$, 
uniformly in $j, k, M \in \N$.

\end{remark}

Lastly, we state a variant of \cite[Lemma~2.4]{GKOT}
and its consequence (see \cite[Subsection 3.2]{GKOT})
on the 
 logarithmically divergent nature of the stochastic convolution~$\Psi_j$
 in \eqref{psi2}.

\begin{lemma} 
\label{LEM:sto2}

Given $0 < s < 1$ and $M \in \N$, let $I = I_{s, M}$
be the $I$-operator defined in~\eqref{I2}.
Given $j \in \N$ and  
$(t, x) \in \R_+\times \T^2$, 
 $I \Psi_j (t, x)$ is a mean-zero Gaussian random variable, 
 satisfying
\begin{align}
 \E\big[  (I\Psi_j (t, x))^2 \big]
\les _s  \log \jb{M}, 
\label{mom3a}
\end{align}

\noi
uniformly in 
$j \in \N$ and 
$(t, x) \in \R_+\times \T^2$.
Furthermore, given $N \in \N$, there exists
a random variable $R_N$
and  $\ta = \ta(s) \gg 1$, independent of $N \in \N$, 
such that 
\begin{align}
\sup_{N \in \N} \E[ R_N ]  < \infty
\label{mom4}
\end{align}

\noi
and 
\begin{align}
\| I \Psi_j  \|_{L^r_{[t_1, t_2], x}\A_N} 
 \le
r e^{\frac 1r \ta T \log \jb{M}} R_N^\frac 1r
\label{mom4a} 
\end{align}

\noi
for any 
$1 \le r < \infty$, 
$0 \le t_1  \le t_2 \le T$, 
and $M \in \N$, 
uniformly in $N \in \N$, 
where
$L^r_{[t_1, t_2], x}\A_N
= L^r_{[t_1, t_2]}L^r_x \A_N$.

\end{lemma}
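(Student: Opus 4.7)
\emph{Gaussianity and variance.} Since $I_{s,M}$ is a Fourier multiplier and $\Psi_j(t,x)=\sqrt 2\int_0^t \D(t-t')dW^j(t')$ is a Wiener integral of a deterministic kernel, $I\Psi_j(t,x)$ is a mean-zero Gaussian. By Parseval,
\[\E\big[(I\Psi_j(t,x))^2\big] = \sum_{n\in\Z^2} |\mf_{s,M}(n)|^2\, \E\big[|\ft{\Psi_j}(t,n)|^2\big],\]
and each Fourier-mode variance is $\lesssim \jbb{n}^{-2}$ uniformly in $(t,x)$ (as in the explicit formula \eqref{sig1}). Splitting at $|n|=M$: the low-frequency part (where $\mf_{s,M}=1$) gives $\sum_{|n|\le M}\jbb{n}^{-2}\sim\log M$, and in the tail $|\mf_{s,M}(n)|^2\lesssim (M/|n|)^{2(1-s)}$ produces $M^{2(1-s)}\sum_{|n|>M}|n|^{-2-2(1-s)}\lesssim_s 1$ (the series converges since $s<1$). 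This proves \eqref{mom3a}.

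\emph{Scalar moment bound.} The Wiener chaos estimate (Lemma~\ref{LEM:hyp}) applied to $I\Psi_j(t,x)\in\H_1$, together with Fubini, yields, for every $r\ge 2$ and $M\in\N$,
\[\E\big[\|I\Psi_j\|_{L^r_{[0,T],x}}^r\big]\;\le\; T(Cr\log\jb{M})^{r/2}.\]
I would then introduce the scalar random variable
\[R^{(j)}\;:=\;\sum_{k,M\in\N}\frac{\|I_{s,M}\Psi_j\|_{L^{2k}_{[0,T],x}}^{2k}}{(2k)^{2k}\,\jb{M}^{\ta T}}.\]
Taking expectations term-by-term, for each fixed $M$ the $k$-sum peaks near $k\sim c_s\log\jb{M}$ with value $\lesssim \jb{M}^{c_s}$, and choosing $\ta=\ta(s)\gg 1$ makes the outer series $\sum_M \jb{M}^{c_s-\ta T}$ convergent. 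Hence $\E[R^{(j)}]\le C_{s,T}<\infty$, and the bound is uniform in $j$ since $\{\Psi_j\}$ are identically distributed.

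\emph{Averaging and conclusion.} Since $\{\Psi_j\}_{j\in\N}$ are i.i.d., so are $\{R^{(j)}\}_{j\in\N}$; setting $R_N:=\frac1N\sum_{j=1}^N R^{(j)}$ gives $\E[R_N]=\E[R^{(1)}]\le C_{s,T}$ independent of $N$, proving \eqref{mom4}. For $r=2k\ge 2$, Jensen's inequality (equivalently, the power-mean inequality on $N$-fold averages) yields
\[\|I\Psi_j\|_{L^{2k}_{[t_1,t_2],x}\A_N}^{2k} =\int_{t_1}^{t_2}\!\int_{\T^2}\!\bigg(\frac1N\sum_{j=1}^N|I\Psi_j|^2\bigg)^{\!k}dx\,dt \;\le\; \frac1N\sum_{j=1}^N\|I\Psi_j\|_{L^{2k}_{[0,T],x}}^{2k} \;\le\; (2k)^{2k}\jb{M}^{\ta T}R_N,\]
and the $2k$-th root gives \eqref{mom4a} for even integer $r$. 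For general $r\in[1,\infty)$, H\"older's inequality on the finite-measure set $[t_1,t_2]\times\T^2$ reduces to the nearest even integer $2k\ge r$, the bounded interpolation factor being absorbed into $e^{\ta T\log\jb{M}/r}$.

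The principal obstacle is the verification $\E[R^{(j)}]<\infty$, where the Gaussian moment $(r\log\jb{M})^{r/2}$ must be dominated by $r^r\jb{M}^{\ta T}$ \emph{simultaneously} in the continuous parameter $r$ and the discrete parameter $M$; this is what forces the choice of $\ta=\ta(s)$ sufficiently large, so that the peak of the $r$-sum (a fixed power of $\jb{M}$ depending on $s$) is absorbed by $\jb{M}^{-\ta T}$ with enough margin to make the $M$-series summable.
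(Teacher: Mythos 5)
Your argument is correct and lands on the same estimates, but it builds $R_N$ by a different device than the paper. For \eqref{mom3a} you carry out explicitly the frequency-split computation (Parseval, $\sum_{|n|\le M}\jbb{n}^{-2}\sim\log M$ plus a convergent tail from the decay $(M/|n|)^{2(1-s)}$) that the paper delegates to a modification of \cite[Lemma~2.4]{GKOT}; that is fine. For \eqref{mom4}--\eqref{mom4a}, the paper defines a single random variable $R_N=\sum_{M,\l}e^{-\ta\l\log\jb{M}}\int_0^\l\int_{\T^2}e^{\|I\Psi_j(t,x)\|_{\A_N}}\,dx\,dt$, gets all exponents $r$ at once from $x^r\le r!\,e^x\le r^re^x$, and controls $\E[R_N]$ uniformly in $N$ via the pointwise convexity bound $e^{\|I\Psi_j(t,x)\|_{\A_N}}\le\frac1N\sum_{j}e^{1+|I\Psi_j(t,x)|}$ together with $\E\big[e^{|I\Psi_j(t,x)|}\big]\les e^{C_s\log\jb{M}}$. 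You instead define per-index variables $R^{(j)}$ by summing normalized even moments over $k$ and $M$, set $R_N=\frac1N\sum_jR^{(j)}$, and pass from the $\A_N$-average to the individual norms by the power-mean inequality $(\frac1N\sum a_j)^k\le\frac1N\sum a_j^k$; identical distribution of the $\Psi_j$ then gives $\E[R_N]=\E[R^{(1)}]$ at once. The two are parallel ways of encoding all moments with an exponential weight in $M$, and your peak analysis of $\sum_k(C_s\log\jb{M}/(2k))^k\les\jb{M}^{c_s}$ plays exactly the role of the paper's exponential moment bound; the paper's version is marginally slicker in that it treats all real $r\ge1$ directly, while yours isolates the probabilistic input (i.i.d.\ structure) more transparently. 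Two small points to tidy: (i) the H\"older step from $2k=2\lceil r/2\rceil$ down to general $r$ costs a factor $2k/r\le2$ that cannot be absorbed into $e^{\frac1r\ta T\log\jb{M}}$ uniformly in $r$; either accept an absolute constant in \eqref{mom4a} (harmless, as the lemma is only used inside $\lesssim$ bounds in Lemma~\ref{LEM:com6}) or put $k^{2k}$ rather than $(2k)^{2k}$ in the denominator of $R^{(j)}$, which makes the rounded bound land exactly on $r$; (ii) convergence of your $M$-series needs $\ta T>c_s+2$, so to keep $\ta=\ta(s)$ independent of $T$ you should use $\jb{M}^{\ta\max(T,1)}$ in the weight (the lemma is only invoked with $T\gg1$, and the paper's own treatment of the time variable has the same endpoint looseness).
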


\begin{proof}
The bound \eqref{mom3a}
follows
from a straightforward modification of the proof of 
\cite[Lemma~2.4]{GKOT} and thus we omit details.

Given $N \in \N$, 
define $R_N = R_N(\ta)$ by setting 
\begin{align}
R_N = \sum_{M = 1}^\infty \sum_{\l = 1}^\infty
e^{-\ta \l \log \jb{M}}
\int_0^\l \int_{\T^2}e^{\|I \Psi_j (t, x) \|_{\A_N}} dx dt, 
\label{mom3}
\end{align}

\noi
where $\ta = \ta(s) \gg 1$ (to be chosen later)  and $\A_N = \A_N \R$ is as in \eqref{AN0}.
Then, from \eqref{mom3}, we have 
\begin{align}
\begin{split}
\| I \Psi_j  \|_{L^r_{[t_1, t_2], x}\A_N}^r 
& = \int_{t_1}^{t_2}\int_{\T^2}
\| I \Psi_j(t, x)\|_{\A_N}^r 
 dx dt 
 \leq r!  \int_0^T 
\int_{\T^2}
e^{\| I_N \Psi_j( t, x)\|_{\A_N}}
 dx dt \\
& \le
r! e^{\ta T \log \jb{M}} R_N 
\end{split}
\label{mom4b}
\end{align}

\noi
for any 
$1 \le r < \infty$, 
$0 \le t_1  \le t_2 \le T$, 
and $M \in \N$, 
uniformly in $N \in \N$.
By noting $r! \le r^r$, 
the bound \eqref{mom4a} follows from \eqref{mom4b}.

It remains to prove \eqref{mom4}.
Note that the function $e^{(1+|x|)^\frac 12}$ is convex.
From \eqref{AN0}
and Jensen's inequality, we have 
\begin{align*}
%\begin{split}
e^{\| I \Psi_j (t, x) \|_{\A_N}}
& \le  \exp\bigg( \Big\{1+  \frac 1N\sum_{j = 1}^N \big( I \Psi_j (t, x)\big)^2 \Big\}^\frac 12 \bigg)\\
& \le 
\frac 1N \sum_{j = 1}^N e^{1 + | I \Psi_j (t, x)|}.
%\end{split}
%\label{mom3b}
\end{align*}

\noi
Then, from \eqref{mom3},  \eqref{mom3a}, 
and the fact that 
 $I \Psi_j (t, x)$ is a mean-zero Gaussian random variable, 
satisfying \eqref{mom3a}, 
we obtain
\begin{align*}
\E[R_N] 
& 
\les 
\frac 1N \sum_{j = 1}^N
 \sum_{M = 1}^\infty \sum_{\l = 1}^\infty
e^{-\ta \l \log \jb{M}}
\int_0^\l \int_{\T^2}
\E\Big[e^{|I \Psi_j (t, x)|}\Big] dx dt\\
& \les 
\frac 1N \sum_{j = 1}^N
 \sum_{M = 1}^\infty \sum_{\l = 1}^\infty
e^{-\ta \l \log \jb{M}}\cdot 
\l e^{C_s \log \jb{M}} \\
& \les 1,
\end{align*}

\noi
uniformly in $N \in \N$, 
where the last step follows
from taking  
 sufficiently large
$\ta = \ta(s)\gg1 $.
This proves \eqref{mom4}.
\end{proof}

\section{Well-posedness of the hyperbolic $O (N)$ linear sigma model}

\label{SEC:3}
In this section, we prove pathwise global 
well-posedness of 
\HLS\eqref{NLW4}
(Theorem \ref{THM:GWP1}). 
In fact, by the first order expansion \eqref{exp}, we show global well-posedness of the SdNLW 
system~\eqref{NLW3} for 
$\vv_N = \{v_{N, j}\}_{j = 1}^N$, which we write out below for readers' convenience:
\begin{align}
\begin{split}
(\dt^2 + \dt + m - \Dl) v_{N, j} 
& = - \frac{1}{N} \sum_{k = 1}^N  \Big( 
v_{N, k}^2 v_{N, j}
+ 2 \Psi_k v_{N, k} v_{N, j}
  + v_{N, k}^2 \Psi_j \\
& 
\hphantom{XXXXX}
 + \wick{\Psi_k^2} v_{N, j} + 2 v_{N, k} \wick{ \Psi_k \Psi_j } 
+ \wick{\Psi_k^2 \Psi_j} \Big)
\end{split}
\label{VN1}
\end{align}

\noi
for $j = 1, \dots, N$.

In Subsection~\ref{SUBSEC:3.1}, we prove local well-posedness of \eqref{VN1} via a standard 
contraction  argument. 
In Subsection~\ref{SUBSEC:3.2}, we then establish some preliminary uniform
(in $N$) estimates for the modified energy 
$\EN (t) $
defined in  \eqref{EN1a}.
Finally, in Subsection~\ref{SUBSEC:3.3}, 
we prove global well-posedness of~\eqref{VN1}, 
with uniform (in $N$) bounds, 
by adapting
the hybrid $I$-method introduced in \cite{GKOT}
to the current vector-valued setting.
We note that a slight modification of the argument presented in this section also yields
pathwise global well-posedness for~\eqref{SdNLW1}
(Theorem~\ref{THM:conv2}\,(i)).
We omit details.

\subsection{Local well-posedness}
\label{SUBSEC:3.1}

In this subsection, we prove local well-posedness
of \eqref{VN1}.
In order to separate the deterministic and probabilistic components
of the argument, we
consider the following system
of SdNLW
with a given {\it deterministic} 
enhanced data set
$\pmb Z_N = \big\{Z_j^{(1)},  Z_{k, j}^{(2)}, Z_{k, j}^{(3)}\big\}_{j, k = 1}^N$
as in 
\eqref{X0}:
\begin{align}
\begin{split}
(\dt^2 + \dt + m - \Dl) v_{N, j} 
& = - \frac{1}{N} \sum_{k = 1}^N  \Big( 
v_{N, k}^2 v_{N, j}
+ 2 Z_{k}^{(1)} v_{N, k} v_{N, j} + v_{N, k}^2 Z_{j}^{(1)}\\
&\hphantom{XXXXX}
 + 
 Z_{k, k}^{(2)} v_{N, j} + 2 v_{N, k} Z_{k, j}^{(2)}
+  Z_{k, j}^{(3)} \Big)
\end{split}
\label{VN2}
\end{align}

\noi
for $j = 1, \dots, N$.
Under \eqref{X2}, 
the system \eqref{VN2} reduces to \eqref{VN1}, 
while it reduces to~\eqref{NLW6}
under \eqref{X3}.

Given an interval $J \subset \R_+$, 
define the $X^s_N(J)$-norm by setting
\begin{align}
\begin{split}
\|  \vec \vv_N\|_{X^s_N(J)}
& 
= 
\| (\vv_N, \dt \vv_N)\|_{\A_N C_J  \H^s_x}\\
& = 
\| ( v_{N, j}, \dt   v_{N, j})\|_{\A_{N, j} C_J  \H^s_x}.
\end{split}
\label{XN1}
\end{align}

\noi
When $J = [0, \tau]$, 
with a slight abuse of notation, 
we set 
$X^s_N(\tau) = X^s_N([0, \tau])$.

\begin{proposition}
\label{PROP:LWP1}
Let $m > 0$ and $\frac 12 \le s < 1$
and $ T\ge 1$.
Given $N \in \N$, 
suppose that 
\[\pmb Z_N = \big\{Z_j^{(1)},  Z_{k, j}^{(2)}, Z_{k, j}^{(3)}\big\}_{j, k = 1}^N
\in \ZZ^\eps_N(T),\] 
and 
$\big(\vv_N^{(0)}, \vv_N^{(1)}\big) = \big\{\big(v^{(0)}_{N, j}, v^{(1)}_{N, j}\big) \big\}_{j = 1}^N \in \A_N \H^s(\T^2)$, 
where 
$\ZZ^\eps_N(T)$ 
and
$\A_N B$ 
are  as in \eqref{X1}
and  \eqref{AN0}, respectively.
Then, given any $t_0 \in [0, T)$, 
there exists a 
unique solution $\vec \vv_N = (\vv_N, \dt \vv_N)
= \{ (v_{N, j}, \dt v_{N, j})\}_{j = 1}^N$
to \eqref{VN2}
with
$\vec \vv_N|_{t = t_0} 
=  \big(\vv_N^{(0)}, \vv_N^{(1)}\big)$
on the time interval 
$[t_0, t_0 + \tau]\cap [0, T]$, 
where the local existence time
\[\tau = \tau\Big(\big\| \big(v^{(0)}_{N, j}, v^{(1)}_{N, j}\big) \big\|_{\A_{N} \H^s_x}, 
\| \pmb Z_N \|_{\ZZ_N^\eps(T)}\Big) >0\]
does not depend on $N \in \N$ in an explicit manner.
Moreover, the solution $\vec \vv_N$
is unique
in the entire class $X^s_N([t_0, (t_0 + \tau)\wedge T])$, where 
$a\wedge b = \min (a, b)$.

\end{proposition}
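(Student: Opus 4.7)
The plan is to run a contraction-mapping argument for the Duhamel formulation of \eqref{VN2} in the Banach space $X^s_N([t_0, (t_0+\tau)\wedge T])$ defined by \eqref{XN1}. Writing $F_{N,j}(\vec \vv_N, \pmb Z_N)$ for the right-hand side of \eqref{VN2}, the Duhamel formula reads
\[
v_{N,j}(t) = S(t-t_0)\big(v_{N,j}^{(0)}, v_{N,j}^{(1)}\big) - \I_{t_0}\big(F_{N,j}(\vec \vv_N, \pmb Z_N)\big)(t),
\]
with $S(t)$ as in \eqref{D4} and $\I_{t_0}$ as in \eqref{D5}, and $\dt v_{N,j}$ obtained by differentiation. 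The linear part is standard and passes termwise to the $\A_N$-averaged norm: one has $\|S(t-t_0)(f,g)\|_{\H^s_x}\les \|(f,g)\|_{\H^s_x}$ uniformly for $t\in[0,T]$, and the damped wave energy estimate yields $\|(\I_{t_0}F, \dt\I_{t_0}F)\|_{L^\infty_\tau \H^s_x} \les \|F\|_{L^1_\tau H^{s-1}_x}$, with all constants independent of $N$.

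The heart of the argument is a uniform-in-$N$ multilinear bound on each of the six contributions to $F_{N,j}$ in $\A_{N,j} L^1_\tau H^{s-1}_x$. For each such term, which has the form $\frac{1}{N}\sum_{k=1}^N \mathcal N_{k,j}$, I would first apply Lemma \ref{LEM:prod} pointwise in $k$ to control $\|\mathcal N_{k,j}\|_{H^{s-1}}$ by the product of the relevant $H^s$-norms of the $v$-factors and (where applicable) a $W^{-\eps,\infty}$-norm of the stochastic factor. This is valid for $\frac{1}{2}\le s<1$ on $\T^2$, using $H^s(\T^2)\hookrightarrow L^p$ for all $p<\infty$ to pair with the negative-regularity stochastic objects. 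Cauchy--Schwarz on the empirical sum in $k$ then converts $\frac{1}{N}\sum_k$ into an $\A_{N,k}$-average, and taking the outer $\A_{N,j}$-average produces exactly the norms appearing in \eqref{X1}: the cubic deterministic term $\frac{1}{N}\sum_k v_{N,k}^2 v_{N,j}$ is bounded by $\|\vec \vv_N\|_{X^s_N}^3$; the single-index stochastic factors $Z_k^{(1)}$ and $Z_{k,k}^{(2)}$ produce $\A_N$-norms of $\pmb Z_N$; and the double-index factors $Z_{k,j}^{(2)}$ and $Z_{k,j}^{(3)}$ produce $\A_N^{(2)}$-norms of $\pmb Z_N$. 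The time integration contributes a factor of $\tau$ (or $\tau$ to a positive power), providing the smallness needed for contraction.

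Choosing $\tau = \tau\big(\|(\vv_N^{(0)}, \vv_N^{(1)})\|_{\A_N \H^s_x}, \|\pmb Z_N\|_{\ZZ^\eps_N(T)}\big) > 0$ small enough, the resulting solution map sends the closed ball of radius $2\|(\vv_N^{(0)}, \vv_N^{(1)})\|_{\A_N \H^s_x}$ in $X^s_N([t_0, (t_0+\tau)\wedge T])$ into itself and is a contraction there, yielding existence. For uniqueness in the \emph{entire} class $X^s_N([t_0, (t_0+\tau)\wedge T])$, I would apply the same multilinear estimates to the difference of two candidate solutions on a short sub-interval; the resulting self-improving bound forces coincidence there, and a standard iteration up to $(t_0+\tau)\wedge T$ finishes the argument. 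The main (indeed, essentially only) obstacle peculiar to the vector-valued system is the uniformity in $N$, and this is secured by the mechanism above: the $\frac{1}{N}$-normalization in \eqref{VN2} combined with Cauchy--Schwarz is precisely what turns each empirical sum into an $\A_N$ or $\A_N^{(2)}$ average, matching the structure of $\|\pmb Z_N\|_{\ZZ^\eps_N(T)}$ in \eqref{X1}, so that no $N$-dependent constant ever arises and the local time $\tau$ depends on $N$ only through the a priori bounds on the data.
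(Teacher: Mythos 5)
Your proposal is correct and follows essentially the same route as the paper: Duhamel formulation, termwise multilinear estimates via the product estimates of Lemma \ref{LEM:prod} combined with Cauchy--Schwarz in $k$ to convert the $\frac1N$-normalized empirical sums into $\A_N$ and $\A_N^{(2)}$ averages matching the $\ZZ^\eps_N$-norm, a contraction on a ball in $X^s_N$, and extension of uniqueness to the full class by a continuity argument. The only cosmetic difference is that the paper takes the ball radius to be $2C_1\big\|\big(v^{(0)}_{N,j},v^{(1)}_{N,j}\big)\big\|_{\A_N\H^s}+1$ (the $+1$ absorbing the purely stochastic forcing term $A_{j,6}$, which is nonzero even for zero data), a detail you should include.
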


Theorem \ref{THM:GWP1}\,(i)
follows from Proposition \ref{PROP:LWP1}, 
\eqref{X2}, and Lemma \ref{LEM:sto1}.
We omit details.

\begin{remark}
\label{REM:LWP1} \rm

(i)
As pointed out in Section \ref{SEC:1}, 
by using the Strichartz estimates as in \cite{GKO}, 
it is possible to prove local well-posedness
of \eqref{VN2} for $s > \frac 14$.
However, the uniqueness guaranteed
by such an argument holds
only in the intersection of 
$X^s_N(\tau\wedge (T - t_0))$
with a suitable $\l^2$-averaged version
of Strichartz spaces.
A similar remark holds for 
the local well-posedness argument 
of the mean-field SdNLW \eqref{MF2} (see Proposition \ref{PROP:LWP2})
for which 
 unconditional uniqueness
of  solutions
plays a key role in Section \ref{SEC:6}.
Since we prefer to have a uniform treatment
for both \HLS~\eqref{NLW4}
and 
the mean-field SdNLW \eqref{MF2}, 
we restrict our attention to $\frac 12 \le s < 1$
in  
Proposition \ref{PROP:LWP1}
(and also in Proposition \ref{PROP:LWP2}), 
 guaranteeing
unconditional uniqueness of solutions.

\smallskip

\noi
(ii) Let $T \ge 1$.
Given $N \in \N$, 
let $\vv_N$ be the local-in-time solution 
to \eqref{VN2} constructed in Proposition \ref{PROP:LWP1}.
Then, 
by denoting by $T_*$ its maximal existence time, 
the following blowup alternative holds:
\begin{align}
T_* = T\qquad \text{or}\qquad 
\lim_{t \to T_*-}
\| (\vv_N, \dt \vv_N)(t)\|_{\A_{N} \H^s_x}
= \infty.
\label{blow1}
\end{align}

% = \tau\big(\big) 

\end{remark}

We now present a proof of 
Proposition \ref{PROP:LWP1}.

\begin{proof}[Proof of Proposition \ref{PROP:LWP1}] 

The claimed local well-posedness of \eqref{VN2}
follows
from a straightforward adaptation of the proof of 
\cite[Proposition 4.1]{GKOT}.
For readers' convenience, however, we present some details, 
in particular to show 
that 
 the local existence time $\tau > 0$ does not depend
 on $N \in \N$ in an explicit manner.

Without loss of generality, we assume that $t_0 = 0$.
Fix $N \in \N$.
By writing  \eqref{VN1} in the Duhamel formulation, 
we have
\begin{align}
v_{N, j} (t)
=  S (t) \big(v^{(0)}_{N, j}, v^{(1)}_{N, j}\big) 
-  \sum_{\l = 1}^6 \I(A_{j, \l})(t), 
\quad j = 1, \dots, N, 
\label{LWP1}
\end{align}

\noi
where $S(t)$ and $\I = \I_0$ are as in \eqref{D4} and \eqref{D5}, respectively, 
and 
$\{ A_{j, \l} = A_{j, \l}(N)\}_{j = 1}^N$, $ \l = 1, \dots, 6$, are given by 
\begin{align*}
 A_{j, 1}  & =  \frac{1}{N} \sum_{k = 1}^N  
  v_{N, k}^2 v_{N, j}, 
  \qquad 
A_{j, 2}   =  \frac{1}{N} \sum_{k = 1}^N 
 2 Z_{k}^{(1)} v_{N, k} v_{N, j}, \\
A_{j, 3} & =  \frac{1}{N} \sum_{k = 1}^N 
 v_{N, k}^2 Z_{j}^{(1)} , \qquad 
 A_{j, 4}  =  \frac{1}{N} \sum_{k = 1}^N 
 Z_{k, k}^{(2)} v_{N, j},\\
A_{j, 5}  & =  \frac{1}{N} \sum_{k = 1}^N 
  2 v_{N, k} Z_{k, j}^{(2)},
  \qquad 
A_{j, 6}  =    \frac{1}{N} \sum_{k = 1}^N 
Z_{k, j}^{(3)}.
\end{align*}

Fix $0 < \tau \le 1$
and small $\eps > 0$
such that $  s \le 1-\eps$.
By 
Sobolev's and  H\"older's inequalities 
with~\eqref{XN1}, 
we have 
\begin{align}
\begin{split}
\|\vec \I(A_{j, 1})\|_{X^s_{N, j}(\tau)}
&\les \tau 
\frac 1N \sum_{k = 1}^N 
\| v_{N, k} ^2 v_{N, j} \|_{\A_{N, j} C_\tau  H_x^{s-1}} \\
&\les \tau 
\frac 1N \sum_{k = 1}^N 
\| v_{N, k}  \|_{ C_\tau  L^{\frac{6}{2 - s}}_x}^2
\|  v_{N, j} \|_{\A_{N, j} C_\tau   L^{\frac{6}{2 - s}}_x} \\
&\le \tau
\|  \vec \vv_N\|_{X^s_N(\tau)}^3 , 
\end{split}
\label{LWP2}
\end{align}

\noi
provided that $\frac 12 \le s$ ($< 1$).
Here, $\vec  \I(A_{j, 1})$ is given by 
 \begin{align}
  \vec  \I(A_{j, 1})
 = \big( \I(A_{j, 1}), \dt \I(A_{j, 1})\big)
\label{LWP2a}
 \end{align}

\noi
and we use analogous notations in the following.
By \eqref{XN1}, 
Sobolev's inequality, 
Lemma~\ref{LEM:prod}\,(ii), 
the fractional Leibniz rule 
(Lemma~\ref{LEM:prod}\,(i)), 
Sobolev's inequality, 
and~Cauchy-Schwarz's inequality (in $k$) %\eqref{AN1a} 
with~\eqref{X1}, 
we have 
\begin{align}
\begin{split}
\|\vec \I(A_{j, 2})\|_{X^s_{N, j}(\tau)}
& 
\les \tau
\frac 1N \sum_{k = 1}^N 
\|  Z_{k}^{(1)} v_{N, k} v_{N, j} \|_{\A_{N, j} C_\tau W_x^{-\eps, \frac 2{2-s-\eps}}}\\
& 
\les \tau
\frac 1N \sum_{k = 1}^N 
\|  Z_{k}^{(1)}\|_{C_T W^{-\eps, \infty}_x}
\| v_{N, k} v_{N, j}\|_{\A_{N, j}C_\tau W_x^{\eps, \frac 2{2-s-\eps}}}\\
& 
\les \tau
\frac 1N \sum_{k = 1}^N 
\|  Z_{k}^{(1)}\|_{C_T W^{-\eps, \infty}_x}
\| v_{N, k} \|_{C_\tau H^{\frac s2 + \frac 32 \eps}_x}
\| v_{N, j}\|_{\A_{N, j}C_\tau H^{\frac s2  + \frac 32 \eps}_x}\\
& 
\le
 \tau
\| \pmb Z_N \|_{\ZZ_N^\eps(T)}
\|  \vec \vv_N\|_{X^s_N(\tau)}^2, 
\end{split}
\label{LWP3}
\end{align}

\noi
provided that $3 \eps \le s \le 1-\eps$.
A similar computation yields
\begin{align*}
\begin{split}
\|\vec \I(A_{j, 3})\|_{X^s_{N, j}(\tau)}
\les
 \tau
\| \pmb Z \|_{\ZZ_N^\eps(T)}
\|  \vec \vv_N\|_{X^s_N(\tau)}^2.
\end{split}
%\label{LWP4}
\end{align*}

\noi
By Sobolev's inequality, 
Lemma~\ref{LEM:prod}\,(ii),
and  \eqref{AN1a}
with~\eqref{X1}, we have 
\begin{align}
\begin{split}
\|\vec \I(A_{j, 4})\|_{X^s_{N, j}(\tau)}
& \les  \tau
 \frac 1N \sum_{k = 1}^N
\| Z_{k, k}^{(2)} v_{N, j}\|_{\A_{N, j} C_\tau  W_x^{-\eps, \frac 2{2-s-\eps}}}\\
& \les  \tau
 \frac 1N \sum_{k = 1}^N
\| Z_{k, k}^{(2)}\|_{C_T W^{-\eps, \infty}_x}  
\|v_{N, j}\|_{\A_{N, j} C_\tau
 W_x^{\eps, \frac 2{2-s-\eps}}}\\
 & \les 
 \tau
\| \pmb Z_N \|_{\ZZ_N^\eps(T)}
\|  \vec \vv_N\|_{X^s_N(\tau)}, 
\end{split}
\label{LWP5}
\end{align}

\noi
provided that $\eps \le s  \le  1-\eps$. 
Similarly, by Cauchy-Schwarz's inequality (in $k$),  we have 
\begin{align*}
\begin{split}
\|\vec \I(A_{j, 5})\|_{X^s_{N, j}(\tau)}
& \les  \tau
  \frac 1N \sum_{k = 1}^N  
\|v_{N, k}Z_{k, j}^{(2)}
\|_{\A_{N, j} C_\tau H_x^{s-1}}\\
& \les 
 \tau
\| \pmb Z_N \|_{\ZZ_N^\eps(T)}
\|  \vec \vv_N\|_{X^s_N(\tau)}.
\end{split}
%\label{LWP6}
\end{align*}

\noi
Lastly, 
it follows from 
\eqref{XN1}, 
\eqref{AN1a}, 
and \eqref{AN2}
 with 
\eqref{X1} that 
\begin{align}
\begin{split}
\|\vec  \I(A_{j, 6})\|_{X^s_{N, j}(\tau)}
& \les  \tau
 \bigg\| \frac 1N \sum_{k = 1}^N Z_{k, j}^{(3)}
 \bigg\|_{\A_{N, j} C_T W_x^{s-1, \infty}}
\le 
 \tau\|Z_{k, j}^{(3)} \|_{\A_{N, j}^{(2)} C_T W_x^{- \eps, \infty}}\\
& \le 
 \tau
\| \pmb Z_N \|_{\ZZ_N^\eps(T)}, 
\end{split}
\label{LWP7}
\end{align}

\noi 
provided that $s - 1 \le - \eps $.

Denote the right-hand side of \eqref{LWP1}
by $\G_{N, j}(\vec \vv_{N})$, 
and set 
\begin{align*}
 \vec {\pmb \G}_N(\vec \vv_{N}) = 
\big\{(\G_{N, j}(\vec \vv_{N}), \dt \G_{N, j}(\vec \vv_{N}))\big\}_{j = 1}^N.
%\label{LWP7a}
\end{align*}

\noi
Then, putting \eqref{LWP1}-\eqref{LWP7} together, 
we obtain
\begin{align*}
\|\vec {\pmb \G}_{N}(\vec \vv_{N}) \|_{X^s_N(\tau)}
& \le C_1 \big\| \big(v^{(0)}_{N, j}, v^{(1)}_{N, j}\big) \big\|_{\A_{N, j} \H^s_x}
+ C_2 \tau \|  \vec \vv_N\|_{X^s_N(\tau)}^3
\\
& \quad + C_3 \tau
\| \pmb Z_N \|_{\ZZ_N^\eps(T)}
\Big( 1  + \|  \vec \vv_N\|_{X^s_N(\tau)}\Big)^2
%\label{LWP8}
\end{align*}

\noi
for any $\vec \vv_N \in X^s_N(\tau)$.
A similar computation yields the following difference estimate:
\begin{align*}
 \|\vec {\pmb \G}_{N} & (\vec \vv_{N}) - \vec {\pmb \G}_{N}(\vec \ww_{N})\|_{X^s_N(\tau)}\\
& 
\les 
  \tau
\Big( 1+ \| \pmb Z_N \|_{\ZZ_N^\eps(T)}\Big)\\
&  \quad \times \Big( 1  + \| \vec \vv_{N}\|_{X^s_N(\tau)}
+ \| \vec \ww_{N}\|_{X^s_N(\tau)}
\Big)^2
\|  \vec \vv_{N} - \vec \ww_{N}\|_{X^s_N(\tau)}
%\label{LWP9}
\end{align*}

\noi
for any $\vec \vv_N,\vec  \ww_N \in X^s_N(\tau)$.
Therefore, 
by choosing
\[\tau = \tau\Big(\big\| \big(v^{(0)}_{N, j}, v^{(1)}_{N, j}\big) \big\|_{\A_{N} \H^s}, 
\| \pmb Z_N \|_{\ZZ_N^\eps(T)})\Big) >0\]

\noi
 sufficiently small, 
we conclude that $\vec {\pmb \G}_N$ is a contraction in the (closed) ball 
$B_R \subset X_N^s(\tau)$ of radius
$R = 2C_1 \big\| \big(v^{(0)}_{N, j}, v^{(1)}_{N, j}\big) \big\|_{\A_{N} \H^s} + 1$.
At this point, the uniqueness holds only in the ball $B_R$
but by a standard continuity argument, 
we can extend the uniqueness to hold
in the entire class $ X_N^s(\tau)$.
We omit details. 
This concludes the proof of Proposition \ref{PROP:LWP1}.
\end{proof}

\subsection{Preliminary estimates} 
\label{SUBSEC:3.2}

In this subsection, we establish preliminary  uniform (in $N$) 
estimates for proving  global well-posedness of \HLS \eqref{NLW4}
by pathwise analysis.

Fix a target time $T \gg1$.
In view of 
the blowup alternative \eqref{blow1}
and 
\eqref{I3}, 
almost sure global well-posedness of \eqref{VN1} on $[0, T]$
follows once we show
the following bound:
\begin{align}
\sup_{0 \le t \le T}
\| (I v_{N, j} , \dt I v_{N, j} ) (t;\o)\|_{\A_{N} \H^1_x} \le C(\o, T) 
\label{VN3a}
\end{align}

\noi
for some almost surely finite constant $C(\o, T) > 0$, 
where 
$\{I v_{N, j}\}_{j = 1}^N$ is a solution to the following $I$-SdNLW system:
\begin{align}
\begin{split}
& (\dt^2 + \dt + m - \Dl) I v_{N, j} \\
& \quad = - \frac{1}{N} \sum_{k = 1}^N  \Big( 
I (v_{N, k}^2 v_{N, j})
+ 2 I (\Psi_k v_{N, k} v_{N, j})
  + I (v_{N, k}^2 \Psi_j )\\
& 
\hphantom{XXXXXX}
 + I (\wick{\Psi_k^2} v_{N, j}) + 2 I(v_{N, k} \wick{ \Psi_k \Psi_j } )
+ I(\wick{\Psi_k^2 \Psi_j}) \Big).
\end{split}
\label{VN3}
\end{align}

\noi
Here, $I$ denotes the $I$-operator  defined in \eqref{I2}.
Define
the  modified energy $\EN (t)$ by setting
\begin{align}
\EN (t) 
 =  \EN (I \vv_N, \dt I \vv_N) (t), 
\label{EN1a}
\end{align}

\noi
where  $E_N$ is the energy defined in \eqref{E1}.
Then, \eqref{VN3a} follows once we prove
\begin{align*}
\sup_{0 \le t \le T}
\EN(t; \o) \le C'(\o, T) 
%\label{VN3x}
\end{align*}

\noi
for some almost surely finite constant $C'(\o, T)> 0$.

Let  $t_2\ge t_1 \ge 0$.
From \eqref{EN1a} and \eqref{VN3},  we have
\begin{align}
\begin{split}
\EN (t_2) - \EN (t_1) 
& = \sum_{\l = 1}^4 B_\l
- \int_{t_1}^{t_2} \int_{\T^2} \frac 1N \sum_{j=1}^{N} ( \dt  I v_{N,j} )^2 d x d t  \\
& \le  \sum_{\l = 1}^4 B_\l, 
\end{split}
\label{EN2}
\end{align}

\noi
where $B_\l = B_\l(N)$, $\l = 1, \dots, 4$, 
are given by 
\begin{align}
\begin{split}
B_1 & = 
 \frac{1}{N^2} \sum_{j, k =1}^{N}
\int_{t_1}^{t_2} \int_{\T^2}
 \dt I v_{N,j}  \Big( ( I v_{N,k} )^2 I v_{N,j} 
- I (  v_{N,k} ^2 v_{N,j} ) \Big) d x d t  \\
B_2 & =  - 
 \frac{1}{N^2} \sum_{j, k =1}^{N}
\int_{t_1}^{t_2} \int_{\T^2}  \dt I v_{N,j}
 \Big(  2 I ( \Psi_k v_{N,k} v_{N,j} ) 
 + I (  v_{N,k} ^2 \Psi_j )\Big) d x d t, \\
B_3 & =  - \frac{1}{N^2} \sum_{j, k =1}^{N}
\int_{t_1}^{t_2} \int_{\T^2}  \dt I v_{N,j}   \Big( I ( \wick{ \Psi_k^2 } v_{N,j} ) 
+ 2 I ( v_{N,k} \wick{ \Psi_k \Psi_j } ) \Big) d x d t, \\
B_4 & =  - 
 \frac{1}{N^2} \sum_{j, k =1}^{N}
\int_{t_1}^{t_2} \int_{\T^2} \dt I v_{N,j}  \sum_{k=1}^{N} 
I ( \wick{ \Psi_k^2 \Psi_j } ) d x d t.
\end{split}
\label{EN3}
\end{align}

\noi
In \eqref{EN2} and \eqref{EN3}, we suppressed
$t$-dependence for notational simplicity;
 we apply the same convention in the following.
The first term $B_1$ is the main commutator term, appearing
in the standard application of the $I$-method, 
while the terms $B_2$, $B_3$, and $B_4$
represent the contributions from the perturbative terms 
in~\eqref{VN3}
which are to be controlled 
by a Gronwall-type argument.
As seen in \cite{GKOT}, 
the key contribution from the perturbative terms
comes from $B_2$
(or rather $B_2'$ defined in \eqref{EN4} below).

In the following, 
we establish uniform (in $N \in \N$)
bounds on $B_\l$, $\l = 1, \dots,4$. 

\begin{lemma}
\label{LEM:com3} 
Let $\frac 23 \le s < 1$. Then, we have 
\begin{align} 
    | B_1 | \les M^{2 - 3s} \int_{t_1}^{t_2} \EN^2 (t) d t
\label{EN5}
\end{align} 

\noi
for any $t_2 \ge t_1 \ge 0$, 
where the implicit constant is independent of $N, M \in \N$.

\end{lemma}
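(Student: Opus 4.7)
The plan is to apply the pointwise commutator bound of Lemma \ref{LEM:com1} inside the spatial integral, then close by averaging over the vector indices $j,k$ so that everything is controlled by the modified energy $\EN(t)$ in \eqref{EN1a}.

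First, I would pair $\dt Iv_{N,j}$ with the commutator factor $(Iv_{N,k})^2 Iv_{N,j} - I(v_{N,k}^2 v_{N,j})$ via Cauchy--Schwarz in $x$, obtaining, for each fixed $j,k$ and $t$,
\[
\bigg|\int_{\T^2} \dt Iv_{N,j}\Big((Iv_{N,k})^2 Iv_{N,j} - I(v_{N,k}^2 v_{N,j})\Big)dx\bigg|
\le \|\dt Iv_{N,j}\|_{L^2_x}\,\big\|(Iv_{N,k})^2 Iv_{N,j} - I(v_{N,k}^2 v_{N,j})\big\|_{L^2_x}.
\]
Lemma \ref{LEM:com1} (with $f=v_{N,k}$ and $g=v_{N,j}$, which is permissible since $\frac 45 < s < 1$ implies $\frac 23 \le s$) then bounds the commutator norm by $M^{2-3s}\|Iv_{N,k}\|_{H^1}^2 \|Iv_{N,j}\|_{H^1}$.

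Next, I would carry out the double average $\frac{1}{N^2}\sum_{j,k}$. The $k$-sum produces the $\A_{N,k}$-average of $\|Iv_{N,k}\|_{H^1}^2$, and for the $j$-sum I would apply Cauchy--Schwarz in $j$ to factor $\|\dt Iv_{N,j}\|_{\A_{N,j} L^2_x}\,\|Iv_{N,j}\|_{\A_{N,j} H^1_x}$. This yields
\[
|B_1|\lesssim M^{2-3s}\int_{t_1}^{t_2}\|Iv_{N,k}\|_{\A_N H^1_x}^2\,\|\dt Iv_{N,j}\|_{\A_N L^2_x}\,\|Iv_{N,j}\|_{\A_N H^1_x}\,dt.
\]

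To finish, I would use AM--GM on the last two factors and recall from \eqref{E1}, \eqref{EN1a} that, since $m>0$,
\[
\|Iv_{N,j}\|_{\A_N H^1_x}^2 + \|\dt Iv_{N,j}\|_{\A_N L^2_x}^2 \lesssim \EN(t),
\]
so the integrand is bounded by $\EN(t)^2$, giving \eqref{EN5}. The ``main obstacle'' here is really only bookkeeping: verifying that the $\frac{1}{N^2}\sum_{j,k}$ averaging collapses to precisely the $\A_N$-norms appearing in $\EN(t)$ so that the bound is uniform in $N$; there is no subtle analytic step beyond the commutator input from Lemma \ref{LEM:com1}.
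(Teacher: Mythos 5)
Your proposal is correct and follows essentially the same route as the paper, which proves the bound by Cauchy--Schwarz in $x$, Lemma \ref{LEM:com1} applied with $f = v_{N,k}$, $g = v_{N,j}$, and Cauchy--Schwarz in $j$ and $k$ together with \eqref{EN1a}. (One trivial remark: the hypothesis $\tfrac 23 \le s < 1$ is already assumed in the lemma, so there is no need to invoke $\tfrac 45 < s$.)
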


\begin{proof} 
The bound \eqref{EN5}
follows from 
Cauchy-Schwarz's inequality in $x$, 
Lemma \ref{LEM:com1}, 
and 
Cauchy-Schwarz's inequality in $j$ and $k$
with  \eqref{EN1a}.
\end{proof}

By formally distributing the $I$-operator 
to each factor of $B_2$ and $B_3$
in \eqref{EN3}, 
we obtain the following terms: 
\begin{align} 
\begin{split}
B_2' & =  
- \frac{1}{N^2}
\sum_{j, k =1}^N
\int_{t_1}^{t_2} \int_{\T^2}   \dt I v_{N,j} \\
&  \hphantom{XXXXX} 
\times
\Big( 2 (I \Psi_k)( I v_{N,k})( I v_{N,j}) + ( I v_{N,k})^2 I \Psi_j \Big) d x d t,  \\ 
B_3' & = 
- \frac{1}{N^2}\sum_{j, k =1}^N
 \int_{t_1}^{t_2} \int_{\T^2} \dt I v_{N,j} \\
 &  \hphantom{XXXXX} 
\times
  \Big( \big(I ( \wick{ \Psi_k^2 } ) \big)I v_{N,j} + 2 Iv_{N,k} \big(I ( \wick{\Psi_k \Psi_j})\big) \Big) d x d t . 
\end{split}
\label{EN4}
\end{align}

\noi
We first establish  the following commutator estimates.

\begin{lemma}
\label{LEM:com4} 
For  $\l = 2, 3$, 
let $B_\l$ and $B_\l'$ be as in \eqref{EN3} and \eqref{EN4}, respectively.
Let $\frac 23 \le s < 1$.
 Then, given $T \gg 1$ and small $\dl > 0$, we have
\begin{align} 
| B_2 - B_2' | 
& \les
 M^{- s + \frac 12 + \dl} V_N^{(1)} (T, \dl) \int_{t_1}^{t_2} 
\EN ^{\frac 32} (t) d t, 
\label{EN4b}\\
| B_3 - B_3' | 
& \les M^{- \frac {s}{2} + \dl} V_N^{(1)}  (T, \dl) 
\int_{t_1}^{t_2} \EN(t) d t 
\label{EN4c}
\end{align}

\noi
for any 
$0 \le t_1 \le t_2 \le T$ and 
any sufficiently large $M \gg 1$, 
where the implicit constant is independent of 
$N \in \N$ and  $M,  T \gg 1$.
Here, 
$ V_N^{(1)} ( T, \dl )$
is defined by 
\begin{align} 
\begin{split}
V_N^{(1)} (T, \dl)
& =  \| \Psi_j \|_{\A_N C_T W_x^{-\sigma_0, p_0}} 
+ \| \wick{ \Psi_j^2 } \|_{\A_N C_T W_x^{-\sigma_0, p_0}} \\
& \quad 
 + \| \wick{ \Psi_k \Psi_j } \|_{\A_N^{(2)}C_T W_x^{-\sigma_0, p_0}},
\end{split}
\label{EN4d} 
\end{align} 

\noi
where 
$\A_N B$ and $\A_N^{(2)} B$ are 
the $\l^2$-averages defined  in \eqref{AN0} and \eqref{AN2}, 
while
$\sigma_0 = \sigma_0 (\dl) >0$ and $p_0 = p_0 (\dl) \gg 1$ are as  in Lemma~\ref{LEM:com2}.

\end{lemma}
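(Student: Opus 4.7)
The plan is to prove both commutator bounds by the same two-step scheme: first apply Cauchy-Schwarz in $x$ to peel off the factor $\dt I v_{N,j}$, then invoke the commutator estimates of Lemma~\ref{LEM:com2} to control the remaining commutator in $L^2_x$ by $M^{-s+1/2+\dl}$ or $M^{-s/2+\dl}$ times a product of $H^1$- and $W^{-\sigma_0, p_0}$-norms, and finally use double Cauchy-Schwarz in the indices $(j,k)$ to decouple the sums and recognize the $\A_N$ and $\A_N^{(2)}$ averages that make up $V_N^{(1)}(T,\dl)$.

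For $B_2 - B_2'$, there are two kinds of differences to handle:
\begin{align*}
I(\Psi_k v_{N,k} v_{N,j}) - (I\Psi_k)(Iv_{N,k})(Iv_{N,j})
\quad\text{and}\quad
I(v_{N,k}^2 \Psi_j) - (Iv_{N,k})^2 \, I\Psi_j.
\end{align*}
In both cases I would apply the three-factor commutator estimate in Lemma~\ref{LEM:com2}, placing $\Psi_k$ (respectively $\Psi_j$) in the low-regularity slot, to gain the factor $M^{-s+1/2+\dl}$ together with $\|Iv_{N,k}\|_{H^1}\|Iv_{N,j}\|_{H^1}\|\Psi_\ast\|_{W^{-\sigma_0, p_0}}$. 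Inserting this in the spatial integral against $\dt Iv_{N,j}$ and summing in $(j,k)$ with the $1/N^2$ prefactor, I would split the double sum as a product of two $\l^2$-averages: one of the form $\frac{1}{N}\sum_j \|\dt Iv_{N,j}\|_{L^2}\|Iv_{N,j}\|_{H^1}\lesssim \EN(t)$ (by Cauchy-Schwarz in $j$ and the definition \eqref{EN1a}–\eqref{E1}), and the other of the form $\frac{1}{N}\sum_k \|Iv_{N,k}\|_{H^1}\|\Psi_k\|_{W^{-\sigma_0,p_0}} \lesssim \EN^{1/2}(t)\, \|\Psi_k\|_{\A_N W_x^{-\sigma_0,p_0}}$. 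Together these produce $\EN^{3/2}(t)$ pointwise in $t$; integrating over $[t_1,t_2]$ yields \eqref{EN4b}.

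For $B_3 - B_3'$, the analogous differences are
\begin{align*}
I(\wick{\Psi_k^2}\, v_{N,j}) - I(\wick{\Psi_k^2})\, Iv_{N,j}
\quad\text{and}\quad
I(v_{N,k}\wick{\Psi_k\Psi_j}) - Iv_{N,k}\, I(\wick{\Psi_k\Psi_j}),
\end{align*}
to which I would apply the two-factor commutator estimate of Lemma~\ref{LEM:com2}, picking up $M^{-s/2+\dl}$ times $\|Iv_{\ast}\|_{H^1}\|\wick{\Psi\cdot}\|_{W^{-\sigma_0,p_0}}$. Pairing with $\dt Iv_{N,j}$ via Cauchy-Schwarz in $x$ and then decoupling in $(j,k)$, the first piece factors cleanly (the stochastic object depends only on $k$, the wave variables only on $j$) giving $\EN(t)\cdot \|\wick{\Psi_k^2}\|_{\A_N W_x^{-\sigma_0, p_0}}$, while the second piece — where both indices appear inside the stochastic factor — is handled by a single global Cauchy-Schwarz in $(j,k)$, producing the $\A_N^{(2)}$-average $\|\wick{\Psi_k\Psi_j}\|_{\A_N^{(2)} W_x^{-\sigma_0,p_0}}$ and the remaining factor $\EN(t)$. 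Bounding each stochastic piece by $V_N^{(1)}(T,\dl)$ (using its $C_T$-norm) and integrating in $t$ then gives \eqref{EN4c}.

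The main obstacle — really the only conceptual one, the rest being bookkeeping — is arranging the index pairings so that the factors $\dt Iv_{N,j}$ and $Iv_{N,j}$ are decoupled into two independent $\frac1N\sum_j$ sums, each of which is controlled by $\EN$ via Cauchy-Schwarz; otherwise one would only recover an $\ell^\infty$-type bound in $j$, losing the uniformity in $N$. Choosing the correct variable to place in the low-regularity slot of Lemma~\ref{LEM:com2} for each term, and choosing whether to Cauchy-Schwarz the sums before or after splitting, is what makes the stochastic norms assemble exactly into the $\A_N$ and $\A_N^{(2)}$ pieces defining $V_N^{(1)}(T,\dl)$ in \eqref{EN4d}.
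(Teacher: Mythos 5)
Your proposal is correct and follows essentially the same route as the paper's proof, which likewise deduces \eqref{EN4b} from the three-factor commutator estimate of Lemma~\ref{LEM:com2} together with Cauchy--Schwarz (in $x$ and in the indices), and \eqref{EN4c} from the two-factor estimate, Cauchy--Schwarz, and \eqref{AN1a} to absorb the single sum $\frac1N\sum_k\|\wick{\Psi_k^2}\|$ into the $\A_N$-average. Your index bookkeeping — splitting the $j$- and $k$-sums for the decoupled terms and using a single global Cauchy--Schwarz in $(j,k)$ for the $\wick{\Psi_k\Psi_j}$ term to produce the $\A_N^{(2)}$-norm — is exactly what the paper's one-line proof leaves implicit.
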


\begin{proof}
The first bound \eqref{EN4b} follows 
from 
Lemma \ref{LEM:com2}
and Cauchy-Schwarz's inequality, 
while the 
second  bound \eqref{EN4c} follows 
from 
Lemma \ref{LEM:com2}, 
Cauchy-Schwarz's inequality,
and \eqref{AN1a}.
 \end{proof}

Next, we estimate
$B_4$ and $B_3'$  in \eqref{EN3} and \eqref{EN4}, respectively.

\begin{lemma}
\label{LEM:com5} 
Let $B_4$ and $B_3'$ be as in \eqref{EN3} and \eqref{EN4}, respectively.
Let $\frac 23 \le s < 1$.
 Then, given small $\g > 0$, we have 
\begin{align} 
\label{EN6}
| B_4 | & \les M^{\g} V_N^{(2)} (T, \g) \int_{t_1}^{t_2} \EN^{\frac 12}(t) d t, \\
\label{EN7}
| B_3' | & \les M^{\g} V_N^{(2)} (T, \g) \int_{t_1}^{t_2} \EN^{\frac 34}(t) d t
\end{align}

\noi
for any 
$0 \le t_0 \le t \le T$ and 
any sufficiently large $M \gg 1$, 
where the implicit constant is independent of 
$N \in \N$ and  $M, T \gg 1$.
Here, 
$V_N^{(2)} ( T, \g )$
is defined by 
\begin{align} 
\begin{split}
V_N^{(2)} (T, \g)
& 
= \| \wick { \Psi_j ^2} \|_{\A_N C_T W_x^{-\g, 4}} 
+ 
 \| \wick{ \Psi_k \Psi_j } \|_{\A_N^{(2)}C_T W_x^{-\g, 4}}\\
& \quad  + \| \wick{\Psi_k^2 \Psi_j} \|_{\A_N^{(2)}C_T W_x^{-\g, 4}}, 
\end{split}
\label{EN8} 
\end{align} 

\noi
where 
$\A_N B$ and $\A_N^{(2)} B$ are 
the $\l^2$-averages defined  
 in \eqref{AN0} and \eqref{AN2}.

\end{lemma}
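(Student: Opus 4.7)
The plan is to establish both bounds \eqref{EN6} and \eqref{EN7} by combining H\"older's inequality in $x$ with Cauchy--Schwarz in the discrete indices $(j, k)$, while exploiting two distinct energy bounds extracted from the definition \eqref{EN1a}: the kinetic energy gives $\|\dt I v_{N,j}\|_{\A_N L^2_x} \lesssim \EN^{1/2}$, and---this is the key point---the potential energy term $\tfrac{1}{4}\int \|I v_{N,j}\|_{\A_N}^4 dx$ in $\EN$ yields
\begin{align*}
\big\| \| I v_{N, j} \|_{\A_N} \big\|_{L^4_x} \lesssim \EN^{1/4}.
\end{align*}
Note that this bound is strictly sharper than what Sobolev embedding $H^1 \hookrightarrow L^4$ would give for the ``opposite'' quantity $\|I v_{N, j}\|_{\A_N L^4_x}$ (namely, $\EN^{1/2}$); the two quantities are related by Minkowski's integral inequality. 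The boost to $\EN^{1/4}$ is exactly what will produce the exponent $3/4$ in~\eqref{EN7}. In both estimates, the factor $M^\g$ arises from the mapping property \eqref{I4} of the $I$-operator, which trades the $W^{-\g, 4}_x$-norm of a Wick term for its $L^4_x$-norm at the cost of $M^\g$.

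For~\eqref{EN6}, writing
\begin{align*}
B_4 = - \int_{t_1}^{t_2} \int_{\T^2} \Big( \tfrac{1}{N} \sum_{j} \dt I v_{N,j}\Big) \Big( \tfrac{1}{N} \sum_{k} I(\wick{\Psi_k^2 \Psi_j})\Big) dx\, dt,
\end{align*}
I would first apply Cauchy--Schwarz pointwise over $(j, k)$ to bound the integrand by $\|\dt I v_{N, j}(x)\|_{\A_{N}} \cdot \|I(\wick{\Psi_k^2 \Psi_j})(x)\|_{\A_{N}^{(2)}}$. Then H\"older in $x$ with the pair $L^2 \times L^2$, followed by the embedding $L^4 \hookrightarrow L^2$ on $\T^2$ and the $I$-operator bound \eqref{I4}, gives the integrand control $\lesssim \EN^{1/2}(t) \cdot M^\g \| \wick{\Psi_k^2 \Psi_j}\|_{\A_N^{(2)} C_T W_x^{-\g,4}} \le M^\g V_N^{(2)}(T, \g)\, \EN^{1/2}(t)$ pointwise in $t$, yielding~\eqref{EN6} after integrating in $t$.

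For~\eqref{EN7}, I would split $B_3' = B_3'^{(1)} + B_3'^{(2)}$ along its two summands. The first piece $B_3'^{(1)}$, containing $I(\wick{\Psi_k^2}) \cdot I v_{N,j}$, factors cleanly as
\begin{align*}
-\int_{t_1}^{t_2}\int_{\T^2} \Big( \tfrac{1}{N}\sum_{j} \dt I v_{N,j}\cdot I v_{N,j}\Big)\Big( \tfrac{1}{N}\sum_{k} I(\wick{\Psi_k^2})\Big) dx\, dt.
\end{align*}
The first factor, bounded pointwise by $\|\dt I v\|_{\A_N}\|I v\|_{\A_N}$, lies in $L^{4/3}_x$ by H\"older ($L^2 \cdot L^4$), giving $\lesssim \EN^{1/2}\EN^{1/4} = \EN^{3/4}$; the second factor is bounded in $L^4_x$ by $M^\g V_N^{(2)}(T, \g)$ via Cauchy--Schwarz in $k$ and \eqref{I4}. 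For the second piece $B_3'^{(2)}$, containing $\frac{1}{N^2}\sum_{j,k} \dt I v_{N,j}\cdot I v_{N,k}\cdot I(\wick{\Psi_k \Psi_j})$, Cauchy--Schwarz on the full $(j,k)$-sum gives the pointwise bound $\|\dt I v\|_{\A_N}\|I v\|_{\A_N}\|I(\wick{\Psi_k \Psi_j})\|_{\A_N^{(2)}}$, and H\"older in $x$ with $L^2\cdot L^4\cdot L^4$ closes at $\EN^{1/2}\cdot \EN^{1/4} \cdot M^\g V_N^{(2)}(T,\g) = M^\g V_N^{(2)}(T,\g)\,\EN^{3/4}(t)$, as desired.

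The main obstacle---and the entire reason the lemma is nontrivial---is obtaining the correct exponent $3/4$ (rather than $1$) in~\eqref{EN7}: a naive use of Sobolev's $H^1 \hookrightarrow L^4$ to handle $Iv_{N,k}$ produces $\EN$, which would be too weak for a Gronwall-type closure in Subsection~\ref{SUBSEC:3.3}. Exploiting the potential energy term of $\EN$ to get the sharper $L^4_x$ bound on the $\A_N$-averaged field $\|Iv\|_{\A_N}$ is the decisive point; checking that Minkowski's integral inequality indeed permits the transition from the $\A_N^{(2)} L^4_x$-type bound on the Wick factor to its $L^4_x$ $\A_N^{(2)}$-average form (and similarly for $\A_N$) must be done with a little care, but is routine.
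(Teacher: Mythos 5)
Your proof is correct and follows essentially the same route as the paper's: Cauchy--Schwarz in the indices plus H\"older in $x$ (with $L^2\cdot L^2$ for $B_4$ and $L^2\cdot L^4\cdot L^4$ for the two pieces of $B_3'$), using \eqref{I4} to trade the $W^{-\g,4}_x$-norms of the Wick factors for $M^\g$, and crucially reading $\big\|\|Iv_{N,j}\|_{\A_N}\big\|_{L^4_x}\lesssim \EN^{1/4}$ off the quartic potential term in \eqref{E1} to obtain the exponent $3/4$ in \eqref{EN7}. The Minkowski point you flag (the potential energy controls the $L^4_x$-norm of the $\l^2$-average, which is exactly the smaller quantity appearing after the pointwise Cauchy--Schwarz) is handled the same way in the paper's displays \eqref{EN8b}--\eqref{EN8c}.
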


\begin{proof} 
The first bound \eqref{EN6} on $B_4$ follows 
from 
Cauchy-Schwarz's inequality
in $x$ and then in~$j$
and~\eqref{AN1a}.

As for the second bound \eqref{EN7} on $B_3'$, 
we first write 
\begin{align}
B_3' = B_{31}' + B_{32}',
\label{EN8a} 
\end{align}

\noi
where 
$B_{31}'$ and $B_{32}'$ are given by 
\begin{align*} 
B_{31}' & = 
- \frac{1}{N^2}\sum_{j, k =1}^N
 \int_{t_1}^{t_2} \int_{\T^2} \dt I v_{N,j} \cdot 
I ( \wick{ \Psi_k^2 }) \cdot I v_{N,j} d x d t,  \\
B_{32}' & = 
- \frac{2}{N^2}\sum_{j, k =1}^N
 \int_{t_1}^{t_2} \int_{\T^2} \dt I v_{N,j} 
\cdot 
 Iv_{N,k} \cdot I ( \wick{\Psi_k \Psi_j})  d x d t. 
\end{align*} 

\noi
By Cauchy-Schwarz's inequality in $j$, 
 H\"older's inequality in $x$, 
\eqref{EN1a}, 
\eqref{I4},
and \eqref{AN1a}, we have 
\begin{align}
\begin{split}
| B_{31}' | 
& \leq 
\frac 1{N^2} \sum_{k=1}^N 
\int_{t_1}^{t_2} 
\|  \dt I v_{N, j}\|_{\l_j^2L_x^2} 
\|  I v_{N, j} \|_{L_x^4\l^2_j} 
\| I ( \wick { \Psi_k^2} ) \|_{L_x^{4}}
d t' \\
& \les
M^{\g} 
 \|  \wick { \Psi_k^2}  \|_{\A_N C_T W_x^{-\g, 4}}
\int_{t_1}^{t_2}\EN^{\frac 34}(t)
d t .
\end{split}
\label{EN8b}
\end{align} 

\noi
Similarly, we have
\begin{align}
\begin{split}
| B_{32}' | 
& \les 
\frac 1{N^2}
\int_{t_1}^{t_2} 
\|  \dt I v_{N, j}\|_{\l_j^2L_x^2} 
\|  I v_{N, k} \|_{L_x^4\l^2_k} 
\| I ( \wick { \Psi_k \Psi_j} ) \|_{\l^2_{j, k} L_x^{4}}
d t' \\
& \les
M^{\g} 
 \|  \wick { \Psi_k \Psi_j }  \|_{\A_N^{(2)} C_T W_x^{-\g, 4}}
\int_{t_1}^{t_2}\EN^{\frac 34}(t)
d t.
\end{split}
\label{EN8c}
\end{align} 

\noi
Hence, the second bound \eqref{EN7}
follows from \eqref{EN8a}, \eqref{EN8b}, and \eqref{EN8c}
with \eqref{EN8}.
\end{proof}

Lastly, we 
estimate $B_2'$
in \eqref{EN4}, 
where the logarithmically divergent nature
of $ I \Psi_j $ plays an important role.

\begin{lemma} 
\label{LEM:com6}  

Let $B_2'$ be as in \eqref{EN4}.
Then, given $s < 1$, 
there exists $c > 0$ such that 
\begin{align} 
\label{EN9}
| B_2' | \les 
\bigg\{\int_{t_1}^{t_2} 
\Big(1 + 
 \EN^{1 + c\eta} (t)+ \frac {\eta}{(t - t_1)^{\frac 12}} \Big)  d t\bigg\}
\| I \Psi_j \|_{L^{\eta^{-1}}_{[t_1, t_2], x} \A_N } 
\end{align} 

\noi 
for any 
$t_2 \geq t_1 \geq 0$
and small $\eta > 0$, 
uniformly in 
$M, N \in \N$, 
where
$L^r_{[t_1, t_2], x}\A_N
= L^r_{[t_1, t_2]}L^r_x \A_N$.

\end{lemma}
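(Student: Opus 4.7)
The plan is to apply Hölder's inequality in both the spatial variable and the index $j$ so as to isolate the factor $I\Psi$ in the norm $\|I\Psi_j\|_{L^{1/\eta}_{[t_1,t_2],x}\A_N}$, which by Lemma~\ref{LEM:sto2} grows only polynomially in $1/\eta$ (together with a factor of the form $e^{\theta T\log\jb{M}}$ that will be harmless). The remaining factors will be controlled by a slightly super-energy quantity $\EN^{1+c\eta}$ via Gagliardo--Nirenberg interpolation, and the additional time-weight $\eta/(t-t_1)^{1/2}$ is the price paid for converting the outer Hölder exponent $1-\eta$ back into an integrand.

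\medskip

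More concretely, I would split $B_2'$ according to \eqref{EN4} into the piece containing $(I\Psi_k)(Iv_{N,k})(Iv_{N,j})$ and the piece containing $(Iv_{N,k})^2 I\Psi_j$. For either piece, Hölder in $x$ with the partition $\tfrac{1}{2}+\eta+\tfrac{1-2\eta}{2}=1$ places $\dt Iv_{N,j}$ in $L^2_x$, the factor $I\Psi$ in $L^{1/\eta}_x$, and each of the two $Iv$-factors in $L^q_x$ with $q=4/(1-2\eta)$. The double sums over $j,k$ I would reduce by Cauchy--Schwarz applied in each index separately, converting them into products of $\A_N$-averages. For instance, in the piece containing $I\Psi_j$ one has, pointwise in $x$,
\[
\Big|\frac{1}{N}\sum_{j=1}^N\dt Iv_{N,j}\cdot I\Psi_j\Big|\le\|\dt Iv_{N,j}\|_{\A_N}\|I\Psi_j\|_{\A_N},\qquad\frac{1}{N}\sum_{k=1}^N(Iv_{N,k})^2=\|Iv_{N,k}\|_{\A_N}^2.
\]

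\medskip

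Since $q=4+O(\eta)$, Gagliardo--Nirenberg on $\T^2$ gives $\|f\|_{L^q_x}\les\|f\|_{L^4_x}^{1-2\eta}\|f\|_{H^1_x}^{2\eta}$, and a further Hölder in the index $k$ with exponents $1/(1-2\eta)$ and $1/(2\eta)$ converts this into $\|Iv_{N,k}\|_{\A_N L^4}^{1-2\eta}\|Iv_{N,k}\|_{\A_N H^1}^{2\eta}$. Using the definition \eqref{E1}, the modified energy $\EN$ dominates each of $\|Iv_{N,j}\|_{\A_N L^4}^4$, $\|\nb Iv_{N,j}\|_{\A_N L^2}^2$, and $\|\dt Iv_{N,j}\|_{\A_N L^2}^2$, so the two $L^q$-factors together contribute $\EN^{1/2+\eta}(t)$ (since $\tfrac{1-2\eta}{4}\cdot 2+\eta\cdot 2=\tfrac12+\eta$), while the $\dt Iv$-factor contributes $\EN^{1/2}(t)$; altogether the integrand at time $t$ is bounded by $\EN(t)^{1+\eta}\,\|I\Psi_j(t)\|_{L^{1/\eta}_x\A_N}$.

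\medskip

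Finally, Hölder in time with conjugate exponents $\tfrac{1}{1-\eta}$ and $\tfrac{1}{\eta}$ pulls out $\|I\Psi_j\|_{L^{1/\eta}_{[t_1,t_2],x}\A_N}$ and leaves the factor $\big(\int_{t_1}^{t_2}\EN^{(1+\eta)/(1-\eta)}(t)\,dt\big)^{1-\eta}$. Since $(1+\eta)/(1-\eta)=1+c\eta$ for a universal $c>0$ when $\eta$ is small, one last application of Young's inequality in the form $a^{1-\eta}\le(1-\eta)a+\eta$, combined with the elementary identity $\eta\les\int_{t_1}^{t_2}\eta(t-t_1)^{-1/2}dt$ (valid uniformly for $t_2-t_1$ bounded above), converts the outer exponent into the advertised integrand $1+\EN^{1+c\eta}(t)+\eta/(t-t_1)^{1/2}$. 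The main obstacle is the careful bookkeeping of the $\A_N$-averages alongside the spatial $L^q$-norms---one must commute $\|\cdot\|_{\A_N}$ with $\|\cdot\|_{L^q_x}$ via Minkowski's inequality, which requires $q\ge 2$---and simultaneously ensure that the exponent of $\EN$ remains strictly below $2$, so that the Gronwall-type argument of Subsection~\ref{SUBSEC:3.3} can be closed against the logarithmic divergence of $\|I\Psi_j\|_{L^{1/\eta}_{t,x}\A_N}$ as $M\to\infty$ quantified by Lemma~\ref{LEM:sto2}.
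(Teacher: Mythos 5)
Your overall architecture matches the paper's: split $B_2'$ into the two pieces in \eqref{EN9b}, apply H\"older in $x$ with the partition $\tfrac12+\eta+\tfrac{1-2\eta}{2}=1$ and Cauchy--Schwarz in the indices, bound the two $L^{4/(1-2\eta)}_x$ factors by interpolation against the energy, and finish with H\"older in time followed by the Young-type manipulation from \cite[display before (3.21)]{GKOT} that produces the $\eta(t-t_1)^{-1/2}$ weight. However, there is a genuine error in your power counting, and it occurs at exactly the one point where the vector-valued setting differs from the scalar case. After Cauchy--Schwarz in $k$ pointwise in $x$, the quantity you must control is $\|Iv_{N,k}\|_{L^{4/(1-2\eta)}_x\A_N}$, i.e.\ the $\ell^2$-average is taken \emph{inside} the spatial norm. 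Your route applies Minkowski first to pass to $\A_N L^q_x$, then Gagliardo--Nirenberg componentwise, then H\"older in $k$, landing on $\|Iv_{N,k}\|_{\A_N L^4}^{1-2\eta}\|Iv_{N,k}\|_{\A_N H^1}^{2\eta}$. You then assert that $\EN$ dominates $\|Iv_{N,j}\|_{\A_N L^4}^4$. This is false uniformly in $N$: the quartic term of \eqref{E1} is $\tfrac14\|Iv_{N,j}\|_{L^4_x\A_N}^4$, and Minkowski only gives $\|f\|_{L^4_x\A_N}\le\|f\|_{\A_N L^4_x}$ (the larger exponent moves inward), which is the wrong direction. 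Indeed, for components with essentially disjoint supports one has $\|Iv\|_{L^4_x\A_N}^4\sim N^{-1}\|Iv\|_{\A_N L^4_x}^4$, so the only uniform bound available for $\|Iv\|_{\A_N L^4}$ is $\lesssim\EN^{1/2}$ (via Sobolev through $\A_N H^1$), not $\EN^{1/4}$. With the correct exponent your two $L^q$-factors contribute $\EN^{1+O(\eta)}$ rather than $\EN^{1/2+\eta}$, the integrand becomes $\EN^{3/2+c\eta}$, and the lemma (and the downstream double-exponential Gronwall argument) fails.

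The fix is to reverse the order of operations, as in \eqref{EN9z}: first interpolate the scalar function $x\mapsto\|Iv_{N,k}(x)\|_{\A_N}$ between $L^4_x$ and $L^8_x$, so that the low-integrability endpoint stays in the form $\|Iv_{N,k}\|_{L^4_x\A_N}^{1-4\eta}\lesssim\EN^{(1-4\eta)/4}$, which the quartic energy term controls directly; only the high-integrability endpoint $\|Iv_{N,k}\|_{L^8_x\A_N}^{4\eta}$ is swapped via Minkowski to $\A_N L^8_x$ and then bounded by $\A_N H^1_x\lesssim\EN^{1/2}$ via Sobolev. This yields $\EN^{(1+4\eta)/4}$ per factor and restores the exponent $1+c\eta$. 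As a secondary remark, your inequality $\eta\lesssim\int_{t_1}^{t_2}\eta(t-t_1)^{-1/2}\,dt$ requires $t_2-t_1$ bounded \emph{below}, not above; the role of the $(t-t_1)^{-1/2}$ weight in \cite{GKOT} is as the pointwise-in-$t$ mass against which Young's inequality is applied when converting $\big(\int\EN^{(1+2\eta)/(1-\eta)}\big)^{1-\eta}$ into an honest time integral, so you should follow that manipulation rather than the identity as you stated it.
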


\begin{proof} 
The bound \eqref{EN9} follows
from a straightforward modification
of the proof of 
\cite[Lemma~3.4]{GKOT}
adapted to the current vector-valued setting.
We first note that 
by interpolation, 
Minkowski's integral inequality, 
and Sobolev's inequality with \eqref{EN1a}, we have
 \begin{align}
\begin{split}
\|I v_{N, j}\|_{L^{\frac 4 {1- 2\eta}}_x\A_N}
& \le
\|I v_{N, j}\|_{L^4_x\A_N}^{1 - 4\eta}
\|I v_{N, j}\|_{L^8_x\A_N}^{4\eta}
 \le
\|I v_{N, j}\|_{L^4_x\A_N}^{1 - 4\eta}
\|I v_{N, j}\|_{\A_NL^8_x}^{4\eta}\\
& \les 
\|I v_{N, j}\|_{L^4_x\A_N}^{1 - 4\eta}
\|I v_{N, j}\|_{\A_N H^1_x}^{4\eta}
\\
&   \les 
\EN^\frac {1-4\eta} 4
\EN^{2\eta}
= 
\EN^{\frac{1+4\eta}{4}} , 
\end{split}
\label{EN9z}
 \end{align}

\noi
uniformly in small $\eta > 0$.

Write $B_2'$ in \eqref{EN4}
as 
\begin{align*}
B_2' = B_{21}' + B_{22}',
%\label{EN9a} 
\end{align*}

\noi
where 
$B_{21}'$ and $B_{22}'$ are given by 
\begin{align} 
\begin{split}
B_{21}' & =  
- \frac{2}{N^2}
\sum_{j, k =1}^N
\int_{t_1}^{t_2} \int_{\T^2}   \dt I v_{N,j} \cdot
 (I \Psi_k)( I v_{N,k})( I v_{N,j}) d x d t, \\
B_{22}' & =  
- \frac{1}{N^2}
\sum_{j, k =1}^N
\int_{t_1}^{t_2} \int_{\T^2}   \dt I v_{N,j} \cdot
 ( I v_{N,k})^2 \cdot I \Psi_j  d x d t.
\end{split}
\label{EN9b}
\end{align}

\noi
By Cauchy-Schwarz's inequality (in $j$ and $k$), 
H\"older's inequality (in $x$), 
and \eqref{EN9z}, we have 
 \begin{align*}
|B_{22}'|
&\le \int_{t_1}^{t_2}
\|\dt Iv_{N, j}\|_{L^2_x\A_N} 
\|Iv_{N, k}\|_{L^{\frac 4 {1- 2\eta}}_x\A_N}^2
 \|I\Psi_j \|_{L_x^{\eta^{-1}}\A_N} dt \\
& \les \int_{t_1}^{t_2} \EN^{1 +2 \eta} (t) \|I \Psi_j (t)\|_{L_x^{\eta^{-1}}\A_N} dt \\
& \les \bigg(\int_{t_1}^{t_2} \EN^{\frac{1 + 2\eta}{1-\eta}}(t)dt\bigg)^{1-\eta}
\|I \Psi_j \|_{L^{\eta^{-1}}_{[t_1, t_2], x} \A_N}\\
& \lesssim 
\bigg\{\int_{t_1}^{t_2}
\Big( \EN^{\frac{1+2\eta}{1-2\eta}}(t) + \frac{\eta}{(t-t_1)^\frac12}\Big) dt
\bigg\}\|I \Psi_j \|_{L^{\eta^{-1}}_{[t_1, t_2], x} \A_N}\\
& \les \bigg\{\int_{t_1}^{t_2} 
\Big(1 + 
 \EN^{1 + c\eta} (t)+ \frac {\eta}{(t - t_1)^{\frac 12}}  \Big) d t\bigg\}
\| I \Psi_j \|_{L^{\eta^{-1}}_{[t_1, t_2], x} \A_N } , 
 \end{align*}
 
 \noi
uniformly in  small $ \eta > 0$, 
where the penultimate step follows from \cite[the math display before~(3.21)]{GKOT}
as a consequence of H\"older's and Young's inequalities.
The contribution from $B_{21}'$ in \eqref{EN9b}
can be treated in an analogous manner, 
thus yielding \eqref{EN9}.
\end{proof}

\subsection{Global well-posedness} 
\label{SUBSEC:3.3}

In this subsection, we prove
pathwise  global well-posedness of \HLS~\eqref{NLW4}
(Theorem~\ref{THM:GWP1}\,(ii)).
In view of the first order expansion \eqref{exp}, 
Theorem~\ref{THM:GWP1}\,(ii)
follows once we prove the following proposition.

\begin{proposition}
\label{PROP:GWP1}
Let $m > 0$ and $ \frac 45 < s < 1$.
Given $N \in \N$, 
let $\big(\vv_{N}^{(0)},  \vv_{N}^{(1)}\big) = \big\{\big(v_{N, j}^{(0)},  v_{N, j}^{(1)}\big)\big\}_{j = 1}^N \in \big(\H^s (\T^2)\big)^{\otimes N}$. Then, there exists a unique solution 
$(\vv_N, \dt \vv_N) = \{( v_{N, j}, \dt  v_{N, j} )\}_{j = 1}^N$
in the class
$\big(  C (\R_+; \H^s (\T^2) )\big)^{\otimes N}$
 to~\eqref{VN1} with initial data
$\big(\vv_{N}^{(0)},  \vv_{N}^{(1)}\big)$, 
almost surely.

\end{proposition}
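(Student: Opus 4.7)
The plan is to adapt the hybrid $I$-method of \cite{GKOT} to the current vector-valued setting, proving \emph{uniform-in-$N$} almost sure bounds on the modified energy $\EN(t)$ defined in \eqref{EN1a}. By the blowup alternative in Remark~\ref{REM:LWP1}\,(ii) together with the equivalence \eqref{I3}, it suffices, for any fixed target time $T \gg 1$, to establish an almost sure bound on $\sup_{0 \le t \le T} \EN(t)$ with a constant that is finite for each $\o$ (and, as a bonus, independent of $N$).

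First, I would combine Lemmas~\ref{LEM:com3}--\ref{LEM:com6} with the decomposition \eqref{EN2}--\eqref{EN4} into a single energy-increment inequality: for any $0 \le t_1 \le t_2 \le T$ and any sufficiently large $M \gg 1$,
\begin{align*}
\EN(t_2) - \EN(t_1)
&\les M^{2-3s}\!\int_{t_1}^{t_2}\! \EN^2(t)\,dt
+ M^{-s+\frac12+\dl} V_N^{(1)}\!\int_{t_1}^{t_2}\! \EN^{\frac32}(t)\,dt \\
&\quad + M^{-\frac{s}{2}+\dl} V_N^{(1)}\!\int_{t_1}^{t_2}\! \EN(t)\,dt
+ M^{\g} V_N^{(2)}\!\int_{t_1}^{t_2}\! \big(\EN^{\frac34}+\EN^{\frac12}\big)(t)\,dt\\
&\quad +\bigg\{\int_{t_1}^{t_2}\!
\Big(1 + \EN^{1+c\eta}(t) + \tfrac{\eta}{(t-t_1)^{1/2}}\Big)\,dt\bigg\}
\|I\Psi_j\|_{L^{\eta^{-1}}_{[t_1,t_2],x}\A_N}.
\end{align*}
The only contribution carrying the dangerous quadratic power $\EN^2$ is the $B_1$ commutator term; all other contributions are either sublinear in $\EN$ or carry an exponent $1+c\eta$ that can be made arbitrarily close to $1$ by taking $\eta$ small.

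Next, exploiting $s > \tfrac45$, which yields $2-3s < -\tfrac25$, the factor $M^{2-3s}$ becomes a strict negative power of $M$. This lets me close a Gronwall-type inequality on any subinterval on which $\EN$ stays at size $\sim M^{3s-2}$, in the spirit of \cite{BT2, GKOT}. I would then choose $\eta = 1/r$ with $r \gg 1$ and invoke Lemma~\ref{LEM:sto2} to control the last stochastic factor by $r e^{\frac{1}{r}\ta T\log\jb{M}} R_N^{1/r}$; balancing $r$ against $T$ and $M$ yields a time-dependent but $N$-independent constant. Iterating the local energy bound over $O(M^{3s-2})$ consecutive subintervals covering $[0, T]$, summing, and invoking the uniform-in-$N$ tail estimate \eqref{tail2} on $V_N^{(1)}, V_N^{(2)}$ together with the uniform moment bound \eqref{mom4} on $R_N$, I obtain
\[
\sup_{0 \le t \le T} \EN(t) \le C(\o, T) < \infty \quad \text{almost surely,}
\]
with $C(\o, T)$ independent of $N \in \N$. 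Translating back via \eqref{I3} and Proposition~\ref{PROP:LWP1} then promotes the local solution of \eqref{VN1} to a global one, while uniqueness in $\big(C(\R_+; \H^s(\T^2))\big)^{\otimes N}$ follows by iterating the unconditional local uniqueness in Proposition~\ref{PROP:LWP1}.

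The main obstacle is the simultaneous presence of the $I$-method commutator (which forces $M$ large) and the logarithmically divergent factor $\|I\Psi_j\|_{L^r_{t,x}\A_N}$ (which produces the $e^{\frac{1}{r}\ta T\log\jb{M}}$ growth in time); balancing these through the hybrid scheme while maintaining uniformity in $N$---so that every implicit constant and stochastic norm appearing in the iteration is controlled by the $\l^2$-averaged quantities $V_N^{(1)}, V_N^{(2)}, R_N$ satisfying $N$-independent bounds---is the principal technical content, directly parallel to \cite[Subsections~3.2--3.3]{GKOT} but with the scalar absolute values replaced throughout by the $\A_N$ and $\A_N^{(2)}$ norms.
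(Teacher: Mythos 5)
Your assembly of the energy-increment inequality from Lemmas \ref{LEM:com3}--\ref{LEM:com6} matches \eqref{EN10}, and the ingredients you invoke (Lemma \ref{LEM:sto2} for $\| I \Psi_j \|_{L^{\eta^{-1}}_{[t_1,t_2],x}\A_N}$, the tail bound \eqref{tail2}, the uniform moment bound \eqref{mom4}) are the right ones. The gap is in the iteration scheme. You propose to fix $M$ and iterate over $O(M^{3s-2})$ consecutive subintervals, as in the classical $I$-method. This cannot close here: because of the bound $\| I \Psi_j \|_{L^{\eta^{-1}}_{[t_1,t_2],x}\A_N} \le \eta^{-1} e^{\eta \ta T \log\jb{M}} R_N^{\eta}$, the Gronwall rate on each subinterval carries a factor of $\log M$ (through $\eta^{-1}$, once $\eta$ is tuned so that $\EN^{c\eta}$ and $M^{\eta \ta T}$ stay bounded). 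Hence the energy grows from $M^{\be}$ to roughly $M^{\be e^{C\tau}}$ over a time step $\tau$, and the total time one can cover while keeping $\EN \les M^{3s-2}$ is bounded \emph{independently of $M$}; taking $M$ larger does not buy more subintervals. The paper's scheme (Lemma \ref{LEM:GWP1a} and the proof of Proposition \ref{PROP:GWP1}) instead uses a time step $\tau_N = \tau_N(s, M, T, R_N, V_N)$ together with an \emph{increasing} sequence $M_k = M_0^{\sigma^k}$: on $[k\tau_N, (k+1)\tau_N]$ one runs the double-logarithmic Gronwall inequality \eqref{EN16b} with the operator $I_{M_k}$, obtaining $E_{N, M_k} \le M_k^{\al}$, and then re-initializes with $I_{M_{k+1}}$ via \eqref{EN21}.

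This re-initialization is also where the threshold $s > \frac45$ actually enters, and your explanation of it (``$2 - 3s < -\frac25$'') is not the right reason---the commutator term alone only requires $s > \frac23$. Passing from $E_{N, M_k}((k+1)\tau_N) \le M_k^{\al}$ to $E_{N, M_{k+1}}((k+1)\tau_N) \le M_{k+1}^{\be}$ costs a factor $M_{k+1}^{2(1-s)}$ by \eqref{I3}, so one needs $2(1-s) < \be < \al \le 3s - 2$ (see \eqref{EN15a}, \eqref{EN18}, and \eqref{EN19}), which is possible precisely when $s > \frac 45$. Without this step your argument has no mechanism for restarting the energy bound after it has grown by a power of $M$ on a single subinterval, so as written it only yields existence up to a time of order one rather than up to an arbitrary target time $T$.
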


The rest of this subsection is devoted to a proof of Proposition \ref{PROP:GWP1}, 
with a particular emphasis on uniformity in $N \in \N$;
see, for example, Remark \ref{REM:unif1}.
Fix $  \frac 45 < s < 1$.
Given small $\dl, \g > 0$ and a target time $T \gg1$, 
it follows from 
 \eqref{EN2}, \eqref{EN3}, \eqref{EN4}, Lemmas \ref{LEM:com3}, \ref{LEM:com4}, \ref{LEM:com5}, and \ref{LEM:com6}
 that 
\begin{align} 
\begin{split}
& \EN(t_2)  - \EN(t_1) \\
& \quad \les M^{2 - 3s} \int_{t_1}^{t_2} \EN^2 (t) d t \\ 
&\quad \quad 
+ \sum_{\l = 1}^2
M^{-  \frac {1 - \l(1-s)}2 +  \dl} 
V_N^{(1)} (T, \dl) \int_{t_1}^{t_2} \EN^\frac {\l + 1}2 (t)dt\\
&\quad \quad 
+ M^{\g} V_N^{(2)} (T, \g) \int_{t_1}^{t_2} \big(1 + \EN^\frac 34 (t)\big) d t 
\\
&\quad \quad 
+ \bigg\{\int_{t_1}^{t_2}  
\Big(1 + \EN^{1 + c\eta} (t) + \frac {\eta}{(t - t_1)^{\frac 12}} \Big)d t\bigg\}
\| I \Psi_j \|_{L^{\eta^{-1}}_{[t_1, t_2], x} \A_N } 
\end{split}
\label{EN10}
\end{align} 

\noi 
for any $0\leq t_1 \leq t_2 \leq T$
and small $\eta > 0$, 
uniformly in $N \in \N$ and $M \gg1$, 
where $V_N^{(1)} (T, \dl)$ and 
$V_N^{(2)} (T, \g)$
are as in \eqref{EN4d} and \eqref{EN8}, respectively.

Before we state a crucial lemma (Lemma \ref{LEM:GWP1a})
that allows us to proceed with an iterative argument, 
we introduce two auxiliary random variables.
Given $N \in \N$, let $R_N$ be as in Lemma~\ref{LEM:sto2}
which controls $I \Psi_j$ on the last term in \eqref{EN10}.
The next random $V_N$ controls
the other 
contributions from the stochastic terms in \eqref{EN10}.
By choosing $\eps = \min(\s_0, \g)$, 
it follows from~\eqref{EN4d} and \eqref{EN8}
that 
\begin{align*}
V_N^{(1)} (T, \dl)
+ V_N^{(2)} (T, \g)
\les 
\| \pmb{\Psi}_N\|_{\ZZ_N^\eps(T)}, 
%\label{EN11}
\end{align*}

\noi
where 
$\ZZ_N^\eps(T)$ is as 
in \eqref{X1}
and $\pmb \Psi_N$ is given by 
\begin{align*}
\pmb \Psi_N = \big\{
\Psi_j,  \, \wick{\Psi_k \Psi_j} \, , 
 \, \wick{\Psi_k^2 \Psi_j} \big\}_{j, k = 1}^N.
\end{align*}

\noi
Define a random variable $V_N$ by 
\begin{align}
e^{V_N^\frac{1}{3}}
 = \sum_{\l = 0}^\infty e^{-\ta \l} e^{\| \pmb \Psi_N\|_{\ZZ^\eps(\l)}^\frac{1}{3}}
\label{EN12}
\end{align}

\noi
for some  $\ta > 0$.
Then, by applying 
Lemma \ref{LEM:sto1}\,(ii)
and choosing  $\ta  > 0$ sufficiently large, we have 
\begin{align}
\sup_{N \in \N} \E\Big[ 
e^{V_N^\frac{1}{3}} \Big] 
= 
\sup_{N \in \N}
\sum_{\l = 1}^\infty e^{-\ta \l} \E\Big[e^{\| \pmb \Psi_N\|_{\ZZ^\eps(\l)}^\frac{1}{3}}\Big]
\les \sum_{\l = 1}^\infty e^{-\ta \l} e^{c \l} < \infty.
\label{EN13}
\end{align}

\noi
From \eqref{EN12}, we also have
\begin{align}
 \| \pmb \Psi_N\|_{\ZZ_N^\eps(T)}
\le \Big( V_N^\frac{1}{3} + \ta (1 + T) \Big)^3
\les V_N + T^3.
\label{EN14}
\end{align}

\noi
The left-hand side of \eqref{EN14}
plays a role of $M_T$ 
in \cite[(3.24)]{GKOT}.

The following lemma
is an analogue of
\cite[Proposition 3.5]{GKOT}
in the current vector-valued setting.

\begin{lemma}
\label{LEM:GWP1a}
Let $\frac 23 < s < 1$
and $T \gg 1$.
Given $N \in \N$, 
 let $R_N$ and $V_N$ be as in Lemma~\ref{LEM:sto2}
 and~\eqref{EN12}, respectively, 
 which are almost surely finite.
Fix $\o \in \O$
such that $R_N(\o), V_N(\o) < \infty$.
Then, there exist $\al = \al(s) $,  $\be = \be(s)> 0$, 
and  $M_* = M_*(s) \in \N$, 
satisfying
\begin{align}
\be < \al \le 3s - 2, 
\label{EN15a}
\end{align}

\noi
 such that 
if \begin{align}
\EN(t_0) \le M^\beta
\label{EN15}
\end{align}

\noi
for some $0 \le t_0 < T$
and   $M \ge M_*$,
 then
there exists
small 
\[\tau_N = \tau_N (s, M, T, R_N(\o), V_N(\o)) = \tau_N (s, M, T, \o) >0 \] 

\noi
such that 
\begin{align*}
\EN(t) \le M^\alpha
\end{align*} 

\noi
for any $t $
satisfying
$t_0 \le t \le \min(T, t_0 + \tau_N)$. 

\end{lemma}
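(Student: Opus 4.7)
The plan is a continuity/bootstrap argument for the modified energy $\EN$ based on the increment inequality \eqref{EN10}. Fix constants $\al, \be$ with $\be < \al \le 3s-2$ (so $s > \frac 45$ is implicit). Assuming $\EN(t_0) \le M^\be$, local well-posedness (Proposition \ref{PROP:LWP1}) makes $t \mapsto \EN(t)$ continuous, so the hitting time
\[
\mathcal{T}^* = \sup\big\{ \tau \ge 0 : \EN(t) \le M^\al \text{ for all } t \in [t_0, (t_0+\tau) \wedge T] \big\}
\]
is strictly positive. If $\mathcal{T}^* \ge \tau_N$ we are done; otherwise, continuity yields $\EN(t_0 + \mathcal{T}^*) = M^\al$ and the task reduces to contradicting this equality by showing $\EN(t_0 + \mathcal{T}^*) < M^\al$ for a suitably small $\tau_N$.

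On $[t_0, t_0 + \mathcal{T}^*]$, substitute $\EN(\cdot) \le M^\al$ into \eqref{EN10}, use \eqref{EN14} to bound $V_N^{(1)}(T,\dl) + V_N^{(2)}(T,\g) \les V_N(\o) + T^3$, and apply Lemma \ref{LEM:sto2} with $r = \eta^{-1}$ to bound $\|I\Psi_j\|_{L^{\eta^{-1}}_{[t_0, t], x}\A_N} \le \eta^{-1} \jb{M}^{\eta \ta T} R_N(\o)^\eta$. The main commutator contribution $M^{2-3s}\int \EN^2 \, dt$ becomes $\les M^{2-3s+2\al}(t-t_0) \le M^\al (t-t_0)$, exploiting $\al \le 3s - 2$. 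A direct check shows that for $s > \frac 45$ and $\dl, \g$ sufficiently small, the remaining power-of-$M$ exponents appearing in the $\l = 1, 2$ terms and in the bounds on $B_4, B_3'$, namely $M^{-\frac s2 + \dl + \al}$, $M^{\frac 12 - s + \dl + \frac 32 \al}$, and $M^{\g + \frac 34 \al}$, are all strictly less than $\al$; hence each of these groups of terms contributes at most $C M^\al (V_N(\o) + T^3)(t-t_0)$.

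The delicate contribution is the last line of \eqref{EN10}, which after Lemma \ref{LEM:sto2} is bounded by
\[
\Big\{(t-t_0)\big(1 + M^{\al(1+c\eta)}\big) + 2\eta (t-t_0)^{1/2}\Big\}\cdot \eta^{-1} \jb{M}^{\eta \ta T} R_N(\o)^\eta.
\]
The factor $\jb{M}^{\eta \ta T}$ grows with $M$, so $\eta$ must be shrunk to compensate: first choose $\eta = \eta(s, \al - \be, T) > 0$ so small that $(\eta \ta T + c\eta \al)\log\jb{M} \le \frac{1}{100}(\al-\be)\log M$. This makes the right-hand side at most a constant times $M^{\al + \frac{1}{50}(\al - \be)}(t-t_0) + M^{\frac{1}{50}(\al-\be)}(t-t_0)^{1/2} \cdot R_N(\o)^\eta / \eta$. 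Finally, choose $\tau_N = \tau_N(s, M, T, R_N(\o), V_N(\o)) > 0$ small enough that the total accumulated increment on $[t_0, t_0 + \tau_N]$ is at most $\frac 12 (M^\al - M^\be)$. Together with $\EN(t_0) \le M^\be$ this gives $\EN(t_0 + \mathcal{T}^*) \le M^\be + \frac 12(M^\al - M^\be) < M^\al$, contradicting $\EN(t_0 + \mathcal{T}^*) = M^\al$ and forcing $\mathcal{T}^* \ge \tau_N$.

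The main obstacle is precisely this $\eta$-vs-$M$ balance: Lemma \ref{LEM:sto2} only furnishes finite-order integrability of $I\Psi_j$ at the cost of the factor $\jb{M}^{\eta \ta T}$, preventing $\eta$ from being chosen uniformly in $M$ and $T$. Accordingly, $\tau_N$ must depend polynomially on $M^{-1}$; this is acceptable for the iterative globalization scheme that follows, since the cumulative number of iterations needed to cover $[0, T]$ remains manageable. All other aspects are mechanical adaptations of the scalar $I$-method analysis of \cite{GKOT}, with the $\l^2_j$-averaged structure already absorbed into the commutator estimates of Subsection \ref{SUBSEC:3.2} and into \eqref{EN14}.
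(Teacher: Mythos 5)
Your bootstrap correctly handles the power-counting for the commutator terms and for $B_4, B_3'$, and it does produce \emph{some} positive $\tau_N$. The problem is quantitative: because you fix $\eta$ independently of the running size of the energy, the last line of \eqref{EN10} contributes an increment of order $\tau\, M^{\al+\frac{1}{100}(\al-\be)}$ (the factors $M^{c\eta\al}$ from $\EN^{1+c\eta}$ and $\jb{M}^{\eta\ta T}$ from Lemma \ref{LEM:sto2} are genuine positive powers of $M$), so closing the additive bootstrap $\EN \le M^{\be} + \tfrac12(M^\al-M^\be)$ forces $\tau_N \lesssim M^{-\frac{1}{100}(\al-\be)}$. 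Even the sharper choice $\eta \sim 1/\log M$ only improves this to $\tau_N \lesssim 1/\log M$. This degeneration of $\tau_N$ in $M$ is fatal for the intended application: in the proof of Proposition \ref{PROP:GWP1} the lemma is applied along the sequence $M_k = M_0^{\sigma^k}$, and the total time covered, $\sum_k \tau_N(M_k) \lesssim \sum_k M_0^{-\theta\sigma^k} \approx M_0^{-\theta} \ll 1$, is a convergent series that never reaches a given target time $T \gg 1$. Your closing assertion that the iteration count ``remains manageable'' is therefore false; what is needed is a $\tau_N$ that is \emph{uniform} in $M \ge M_*$, and the paper's iteration tacitly uses exactly that.

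The paper obtains this uniformity by a genuinely different mechanism, the ``double Gronwall'' argument of \cite{GKOT}: one tracks the running maximum $F_N(t) = \max_{t_0\le\tau\le t}\EN(\tau) - \EN(t_0) + \max(\EN(t_0), M^\be)$, peels off the singular piece via $G_N = F_N - 2C_0R_N(t-t_0)^{1/2}$ (valid while $2C_0R_N(t-t_0)^{1/2}\sim 1$, a constraint independent of $M$), and optimizes $\eta$ \emph{pointwise in time} as $\eta \sim 1/\log G_N(t)$. With this choice $G_N^{c\eta}\sim 1$, $\eta^{-1}\sim\log G_N$, $\jb{M}^{\eta\ta T}\lesssim_T 1$ and $R_N^\eta\le\max(1,R_N)$, yielding the multiplicative inequality $G_N(t)-G_N(t_0)\lesssim(1+V_N+R_N+T)\int_{t_0}^t G_N(\log G_N + T^2)\,dt'$. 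Integrating gives $\log G_N(t)+T^2 \le (\log G_N(t_0)+T^2)e^{CA(t-t_0)}$, so passing from the exponent $\be$ to the exponent $\al$ costs a time $\tau_N\sim\frac1{CA}\log\frac\al\be$ that does not shrink as $M\to\infty$. You should replace your fixed-$\eta$ additive bootstrap by this energy-adapted choice of $\eta$ and the logarithmic Gronwall step; the rest of your power counting can be kept.
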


\begin{remark}\label{REM:unif1}\rm

In Lemma \ref{LEM:GWP1a}, 
the dependence of $\tau_N$ on $N$ 
only appears through
$R_N$ and $V_N$.
In view of 
the uniform (in $N$) bounds
\eqref{mom4}
and
\eqref{EN13}
with Chebyshev's inequality, 
we see that 
Lemma \ref{LEM:GWP1a}
holds ``uniformly in $N \in \N$''
in the sense that the statistical properties
of $\tau_N$ can be 
estimated 
uniformly in $N \in \N$.

\end{remark}

\begin{proof}
[Proof of Lemma \ref{LEM:GWP1a}]
Once we have the bound \eqref{EN10}, 
this proposition follows from repeating
the proof of 
\cite[Proposition~3.5]{GKOT}
based on a 
double Gronwall-type inequality
and %thus we omit details.
we only indicate the basic idea.
As in \cite[(3.32) and (3.38)]{GKOT}, 
define $F_N$ by 
\begin{align}
\begin{split}
 F_N(t) & = \max_{t_0 \le \tau \le t}  \EN(\tau) - \EN(t_0) + \max(\EN(t_0), M^\be), \\
 G_N(t) & = F_N(t) - 2 C_0 R_N (t - t_0)^\frac{1}{2}
\end{split}
\label{EN16a}
\end{align}

\noi
for some suitable $C_0 > 0$.
Then, after some manipulation on 
\eqref{EN10} with \eqref{EN16a}, 
we arrive at 
\begin{align} 
\begin{split}
G_N(t)  -  G_N(t_0) 
\les (1 + V_N + R_N  + T)
\int_{t_0}^{t}  G_N(t') (\log G(t') + T^2)  dt'
\end{split}
\label{EN16b}
\end{align}

\noi
for any $t_0 \le t \le \min( t_1, t_0 + t_*(C_0, R_N))$
such that 
$2 C_0 R (t - t_0)^\frac{1}{2} \sim 1$, 
guaranteeing
$G_N(t) \sim F_N(t)$;
see \cite[(3.39)]{GKOT}.
From the differential inequality \eqref{EN16b}, 
we obtain a double exponential bound on $G_N(t)$,
which, together with \eqref{EN15}, 
implies the conclusion of this proposition.
Lastly, we note that 
the condition \eqref{EN15a} follows from 
\cite[(3.36)]{GKOT}
and that 
the condition
$M \ge M_* = M_*(s)$
comes from 
\cite[(3.45) and (3.46)]{GKOT}.
\end{proof}

We conclude this section by presenting
a proof of
 Proposition~\ref{PROP:GWP1}.

\begin{proof}[Proof of Proposition~\ref{PROP:GWP1}] 
This proposition follows from 
a straightforward modification of the proof of 
 \cite[Theorem 1.2]{GKOT}.
We, however, present some details
in order to demonstrate  that  the following iterative argument
provides
a uniform (in $N$) bound on the $\A_N\H^s_x$-norm 
of a solution  $\vec \vv_N (t) = (\vv_N, \dt \vv_N)(t)$.
See also Remark \ref{REM:growth1}.

Let $\frac 45 < s < 1$
and fix  a target time $T \gg1$.
Moreover, 
given $N \in \N$, 
fix 
 $\om \in \Omega$ 
such that 
\begin{align}
R_N(\o), V_N(\o) < \infty.
\label{EN17a}
\end{align}

\noi
Then, let the parameters $\al, \be, M_*$ and $\tau_N$
be as in Lemma~\ref{LEM:GWP1a}.

Fix  $M_0 \gg1$ (to be determined later)
and define an increasing sequence $\{M_k\}_{k \in \N}$ by setting 
\begin{align*} 
    M_k = M_0^{\sigma^k} 
\end{align*}

\noi 
for some $\sigma  \gg 1$.
Under the assumption
\begin{align}
2 (1-s) < \be, 
\label{EN18}
\end{align}

\noi
we can take 
$\sigma = \sigma(s, \al, \be) \gg 1$
 sufficiently large such that 
\begin{align} 
    M_{k+1}^{2(1-s)} M_k^{\al} + M_k^{2\al} \ll M_{k+1}^{\be}, 
\label{EN19}
\end{align} 

\noi 
We point out that the constraint $s > \frac 45$
appears 
from  \eqref{EN15a}
and \eqref{EN18}.

Given $N, M \in \N$
and $t \ge 0$, set
\begin{align}
\ENM(t) = \EN (I_M \vv_N, \dt I_M \vv_N) (t)
\label{EN19a}
\end{align}

\noi
where the right-hand side is as in  \eqref{E1}.
Suppose that 
\begin{align} 
E_{N, M_k}(t)  \leq M_{k}^{\al}
\label{EN20}
\end{align}

\noi
for some $k \in \N \cup \{0\}$ and $t \geq 0$. Then, by  \eqref{I3}, Minkowski's integral inequality, 
\eqref{I4}, Sobolev's inequality (with $s\ge \frac 12$), \eqref{EN20}, and \eqref{EN19}, we have 
\begin{align}
\begin{split}
 E_{N, M_{k+1}}(t)
& \les  \| ( I_{M_{k+1}} \vv_N, \dt I_{M_{k+1}} \vv_N ) \|^2_{\A_N  \H_x^{1}} 
+ \| I_{M_{k+1}}\vv_N \|^4_{L_x^4 \A_N}  \\ 
& \les M_{k+1}^{2(1-s)} \| ( \vv_N, \dt \vv_N ) \|^2_{\A_N \H_x^s} 
+ \| I_{M_{k+1}} \vv_N \|^4_{\A_N L_x^4}  \\ 
& \les M_{k+1}^{2(1-s)} \| (I_{M_{k}} \vv_N, \dt I_{M_k} \vv_N ) \|^2_{\A_N \mathcal{H}_x^{1}} +
 \| \vv_N \|^4_{\A_N H_x^{\frac 12}}  \\ 
& \les M_{k+1}^{2(1-s)} M_{k}^{\al} 
+ \| I_{M_{k}} \vv_N \|^4_{\A_N H_x^{1}}  \\ 
& \les M_{k+1}^{2(1-s)} M_{k}^{\al} + M_k^{2\al}  \\ 
& \ll M_{k+1}^{\be}, 
\end{split}
\label{EN21}
\end{align}

\noi
where, at the fourth inequality, we use the fact that $s\ge \frac 12$
(in applying \eqref{I3}).

Given $\big(\vv_N^{(0)}, \vv_N^{(1)}\big) \in \A_N \H_x^{s}(\T^2)$
and $\o \in \O$, satisfying \eqref{EN17a}, 
let 
$ \vec \vv_N(\o)
=  (\vv_N(\o), \dt \vv_N(\o))$
be the local-in-time solution to \eqref{VN1}
with $\vec \vv_N(\o)|_{t = 0} = 
\big(\vv_N^{(0)}, \vv_N^{(1)}\big)$.
In view of 
the blowup alternative \eqref{blow1}, 
the desired global well-posedness of \eqref{VN1}
on the time interval $[0, T]$
(for the fixed $\o \in \O$) follows
once we prove 
\begin{align}
\sup_{0 \le t \le T} \| \vec  \vv_N(t; \o)\|_{\A_N  \H^s_x}
\le C(\o, T) < \infty.
\label{EN21a}
\end{align}

\noi
In the following, we prove \eqref{EN21a}
by implementing an iterative argument
based on Lemma~\ref{LEM:GWP1a}.

Choose 
 $M_0 %= M_0(\vec v_N^{(0)}, \vec v_N^{(1)}, s,\al, \be, M_*)
 = M_0\big(\vv_N^{(0)}, \vv_N^{(1)}, s, T\big)
  \gg 1$
such that 
\begin{align*} 
 E_{N, M_0}(0) \leq M_{0}^\be.
\end{align*}

\noi
Then, it follows from Lemma  \ref{LEM:GWP1a}
that 
\begin{align*}
\sup_{0 \le t \le \tau_N} 
 E_{N, M_0}(t) \leq M_{0}^\al, 
\end{align*}

\noi
satisfying \eqref{EN20} at time $\tau_N$. 
From 
 \eqref{EN19a}, 
\eqref{E1}, 
and \eqref{I3}, 
this in particular implies
\begin{align*}
\sup_{0 \le t \le \tau_N} \| \vec  \vv_N(t; \o)\|_{ \A_N  \H^s_x}^2
\les M_{0}^\al.
\end{align*}

\noi
Moreover, 
 from \eqref{EN21}, 
we obtain
\begin{align*}
 E_{N, M_1}(\tau_N) \leq M_{1}^\be. 
\end{align*}

\noi
By applying 
Lemma \ref{LEM:GWP1a} once again, 
we have 
\begin{align*}
\sup_{\tau_N \le t \le 2 \tau_N} 
 E_{N, M_1}(t) \leq M_{1}^\al, 
\end{align*}

\noi
satisfying \eqref{EN20} at time $2\tau_N$, 
from which we obtain 
\begin{align*}
\sup_{\tau_N \le t \le 2\tau_N} \| \vec  \vv_N(t; \o)\|_{\A_N  \H^s_x}^2
\les M_{1}^\al
\end{align*}

\noi
and 
\begin{align*}
 E_{N, M_2}(2\tau_N) \leq M_{2}^\be. 
\end{align*}

\noi
By  iterating this argument $\big[\frac{T}{\tau_N}\big] + 1$ times, we
obtain the following bound:
\begin{align*}
\sup_{\l \tau_N \le t \le (\l+1)\tau_N} \| \vec  \vv_N(t; \o)\|_{ \A_N  \H^s_x}^2
\les M_{\l}^\al
\end{align*}

\noi
for $\l = 0, \dots, \big[\frac T {\tau_N}\big] + 1$, 
yielding \eqref{EN21a}.
This proves  existence of the unique solution $\vv_N$ to~\eqref{VN1}
on the time interval $[0, T]$
(for the fixed $\o \in \O$, satisfying~\eqref{EN17a}).
Since the choice of $T \gg1 $
was arbitrary, 
we conclude almost sure global well-posedness of \eqref{VN1}.
\end{proof}

\begin{remark}\label{REM:growth1}
\rm

As pointed out in 
\cite[Remark 3.7]{GKOT}, 
the iterative argument presented above
yields
the following uniform (in $N$)
double exponential bound:
\begin{align*}
\| \vec \vv_N(t)\|_{\A_N\H^s_x} 
\le
C \exp\Big( c \log 
\big(2 + \| \vec \vv_N(0)\|_{\A_N\H^s_x}\big)
\cdot e^{C(\o)  t^2}\Big)
\end{align*}

\noi
for any $t \geq 0$.
See \cite[Remark 3.7]{GKOT} for details.

\end{remark}

\section{Well-posedness of the mean-field SdNLW} 
\label{SEC:4}
In this section, we present a proof of  Theorem~\ref{THM:GWP2}
on local  and global well-posedness of the mean-field SdNLW \eqref{MF2}. By the first order expansion \eqref{expj}, 
we study the following  equation
(see \eqref{NLW5})
satisfied by the residual part $v = u - \Phi$, where  we drop the superscript $j$ 
for notational simplicity:
\begin{align} 
\begin{cases}
(\dt^2 + \dt + m - \Delta) v 
= 
 - \E [ v^2 ] v
 - 2 \E [ \Psi v ] v - \E [ v^2 ] \Psi
- 2 \E [ v \Psi ] \Psi \\
(v, \dt v)|_{t = 0} = (v^{(0)}, v^{(1)}). 
\end{cases}
\label{WN1} 
\end{align}

\noi
As pointed out in Section \ref{SEC:1}, 
 there are no terms involving $\E[\, \wick{\Psi^2}\,]$
since $\E[ \,\wick{\Psi^2}\,] = 0$.
We note that there are certain similarities between \eqref{WN1} and \eqref{VN1}
after replacing the empirical average 
$\frac 1N \sum_{1 \leq k \leq N}$ in~\eqref{VN1}
with the expectation $\E$, 
leading to computations similar to those
presented in Section \ref{SEC:3}.
There are, however,  
fundamental differences
between
 the mean-field SdNLW~\eqref{WN1} and the SdNLW system
 \eqref{VN1}, in particular on  the treatment of the following terms: 
\[ \E [v\Psi ]\Psi
\qquad \text{and}\qquad  
\frac 1N \sum_{k = 1}^N v_{N,k} \wick{\Psi_k \Psi_j}\,,  \]

\noi
where the former term does {\it not} have an explicit renormalization;
see \eqref{WN4} below.
See also the proofs of Lemmas \ref{LEM:CM1}, \ref{LEM:CM2}, \ref{LEM:CM3}, and \ref{LEM:CM4}, 
where we apply a similar idea.

In Subsection~\ref{SUBSEC:4.1}, we prove local well-posedness of \eqref{WN1}. 
In Subsection~\ref{SUBSEC:4.2}, we briefly discuss 
global well-posedness of \eqref{WN1}.

\subsection{Local well-posedness} 
\label{SUBSEC:4.1}

In this subsection, 
we prove the following proposition on local well-posedness
of the mean-field SdNLW \eqref{WN1}
(Theorem \ref{THM:GWP2}\,(i))
whose proof  follows closely that of Proposition~\ref{PROP:LWP1}
but with one twist due to the term $\E[ v\Psi]\Psi$;
see \eqref{WN3} below.

\begin{proposition}
\label{PROP:LWP2}

Let  $m > 0$ and  $\frac 12 \le s < 1$. % and $T\ge 1$.
Given  $2 \le p < \infty$, 
let  $(v^{(0)}, v^{(1)}) \in L^p (\Omega; \H^s (\T^2) )$.
Then, given any $t_0 \in \R_+$, 
there exists a unique solution $(v, \dt v) \in L^p(\O; C([t_0, t_0+ \tau]; \H^s(\T^2)))$
to~\eqref{WN1}
with $(v, \dt v)|_{ t= t_0} = (v^{(0)}, v^{(1)})$, 
where the local existence time $\tau > 0$ depends
on 
$\|(v^{(0)}, v^{(1)})\|_{L^p_\o \H^s_x}$ and~$\Psi$.
\end{proposition}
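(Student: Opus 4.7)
The plan is to run a Banach fixed-point argument in the closed ball of radius $R = 2C_1 \|(v^{(0)}, v^{(1)})\|_{L^p_\o \H^s_x} + 1$ in $L^p(\O; C([t_0, t_0 + \tau]; \H^s(\T^2)))$, paralleling the local well-posedness proof for the hyperbolic $O(N)$ system (Proposition~\ref{PROP:LWP1}). Without loss of generality take $t_0 = 0$ and write \eqref{WN1} in Duhamel form,
\begin{align*}
v(t) = S(t)(v^{(0)}, v^{(1)}) - \sum_{\l = 1}^{4} \I(A_\l)(t),
\end{align*}
where $A_1 = \E[v^2]\,v$, $A_2 = 2\E[\Psi v]\,v$, $A_3 = \E[v^2]\,\Psi$, and $A_4 = 2\E[v\Psi]\,\Psi$, and denote the right-hand side by $\G(v)$.

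For the first three terms, I plan to transcribe the bounds \eqref{LWP2}--\eqref{LWP6} from the proof of Proposition~\ref{PROP:LWP1}, replacing the empirical average $\frac{1}{N}\sum_{k = 1}^N$ by the expectation $\E[\,\cdot\,]$ throughout. Since $\E[fg]$ is deterministic, Minkowski's inequality allows the expectation to be exchanged with any spatial Sobolev norm, and Cauchy--Schwarz in $\o$ bounds $\|\E[fg]\|_X \les \|f\|_{L^2_\o X_1}\|g\|_{L^2_\o X_2}$ whenever $X_1 \cdot X_2 \hookrightarrow X$ by Lemma~\ref{LEM:prod}. Combining this with the Sobolev embeddings and product rules used in Proposition~\ref{PROP:LWP1} yields, for $\l = 1, 2, 3$,
\begin{align*}
\|\vec\I(A_\l)\|_{L^p_\o C_\tau \H^s_x}
\les \tau \,\Xi_\l\Big(\|v\|_{L^p_\o C_\tau \H^s_x}, \, \|\Psi\|_{L^q_\o C_T W^{-\eps, \infty}_x}\Big)
\end{align*}
for a polynomial $\Xi_\l$ of degree at most~$3$, where $q = q(p) < \infty$ is chosen so that Lemma~\ref{LEM:sto1} makes the relevant stochastic norm finite, and where the assumption $p \geq 2$ is used to pass from $L^2_\o$ to $L^p_\o$ at the cost of a constant.

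The main obstacle is the term $A_4 = 2\E[v\Psi]\,\Psi$, which is the principal new feature of the mean-field setting. Unlike $\frac{1}{N}\sum_k v_{N,k}\wick{\Psi_k\Psi_j}$ in \eqref{VN1}, which is Wick-renormalized on the diagonal $k = j$, the analogue $\E[v\Psi]\Psi$ inherits no explicit Wick renormalization in the limit, and naively both $\E[v\Psi]$ and $\Psi$ live in spaces of negative spatial regularity, so their product is a priori ill-defined. The plan is to exploit that $\E[v\Psi]$ is deterministic and can be pulled out of $L^p_\o$ before multiplying by $\Psi$: applying Lemma~\ref{LEM:prod}\,(ii) in the spatial variables yields
\begin{align*}
\|\E[v\Psi]\cdot\Psi\|_{L^p_\o C_\tau H^{s - 1}_x}
\les \|\E[v\Psi]\|_{C_\tau W^{\eps, r_1}_x} \, \|\Psi\|_{L^p_\o C_T W^{-\eps, \infty}_x},
\end{align*}
after which $\|\E[v\Psi]\|_{W^{\eps, r_1}_x}$ is controlled by applying Minkowski in $\o$, the product estimate of Lemma~\ref{LEM:prod}\,(ii) to $v\Psi$ pointwise in $\o$, and Cauchy--Schwarz in $\o$, producing an overall bound of the form $\tau^c \|v\|_{L^p_\o C_\tau \H^s_x} \cdot \|\Psi\|_{L^q_\o C_T W^{-\eps, \infty}_x}^2$. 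The restriction $\tfrac{1}{2} \leq s < 1$ enters precisely so that the required Sobolev embeddings and product exponents close for $\eps > 0$ sufficiently small.

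With these estimates together with their natural difference analogues (obtained by linearizing each $A_\l$ and applying the same bounds), for $\tau > 0$ sufficiently small depending on $\|(v^{(0)}, v^{(1)})\|_{L^p_\o \H^s_x}$ and on finite moments of $\Psi$, I obtain $\|\G(v) - \G(w)\|_{L^p_\o C_\tau \H^s_x} \leq \tfrac{1}{2} \|v - w\|_{L^p_\o C_\tau \H^s_x}$ together with $\|\G(v)\|_{L^p_\o C_\tau \H^s_x} \leq R$ on the target ball. Banach's fixed-point theorem then produces the unique solution, and unconditional uniqueness in the entire class~\eqref{class2} follows from a standard continuity argument, exactly as in Proposition~\ref{PROP:LWP1}.
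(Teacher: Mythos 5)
Your overall architecture (Duhamel formulation, contraction in the ball of radius $2C_1\|(v^{(0)},v^{(1)})\|_{L^p_\o\H^s_x}+1$, transcription of the estimates for $\E[v^2]v$, $\E[\Psi v]v$, $\E[v^2]\Psi$ from the proof of Proposition~\ref{PROP:LWP1}, and the continuity argument for unconditional uniqueness) coincides with the paper's proof. The gap is exactly at the term you correctly single out as the main obstacle, $A_4 = 2\E[v\Psi]\Psi$. Your plan is to first form the deterministic object $\E[v\Psi]$, place it in a \emph{positive}-regularity space $W^{\eps,r_1}_x$, and then multiply by $\Psi\in W^{-\eps,\infty}_x$ via Lemma~\ref{LEM:prod}\,(ii). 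But the justification you offer for $\E[v\Psi]\in W^{\eps,r_1}_x$ --- Minkowski in $\o$ followed by Lemma~\ref{LEM:prod}\,(ii) applied to $v\Psi$ pointwise in $\o$ --- cannot work: Lemma~\ref{LEM:prod}\,(ii) only produces a bound on $\|v\Psi\|_{W^{-\eps,r}}$ (the product of a rough and a smooth factor lands in a negative-regularity space), so after Minkowski you control $\E[v\Psi]$ only in $W^{-\eps,\cdot}_x$. You are then back to multiplying two distributions of negative regularity, which is precisely the ill-defined product you set out to avoid; there is no soft reason for $\E[v\Psi]$ to have positive regularity given only $v\in L^p_\o C_\tau H^s_x$ and $\Psi \in C_T W^{-\eps,\infty}_x$.

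The paper's resolution is different in an essential way: keep the inner expectation as an integration over an \emph{independent copy} and regroup the product before estimating. Writing, as in \eqref{WN3},
\begin{align*}
\E[v\Psi]\,\Psi(\o) = \int_\O v'(\o')\,\Psi'(\o')\,\Psi(\o)\,\PP(d\o'),
\end{align*}
with $(v',\Psi')$ an independent copy of $(v,\Psi)$, one pairs $\Psi'(\o')$ with $\Psi(\o)$ rather than with $v'(\o')$. Since $\Psi'$ and $\Psi$ are independent, the product $\Psi'\Psi$ is well defined \emph{without} renormalization (it is the analogue of $\Psi_k\Psi_j$ with $k\ne j$) and has finite moments in $C_T W^{-\eps,\infty}_x$; this is the constant $K_p$ in \eqref{WN3a}. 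Lemma~\ref{LEM:prod}\,(ii) is then applied with $\Psi'\Psi$ as the rough factor and $v'\in W^{\eps,\cdot}_x$ as the smooth factor, and the $L^1_{\o'}$ integration together with $\Law(v')=\Law(v)$ closes the estimate as in \eqref{WN4}. Your quadratic dependence on $\Psi$ in the final bound is the right shape, but you need this reordering of the product (expectation outside, independent copies inside) to make the estimate legitimate.
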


\begin{remark}
\label{REM:LWP2} \rm

(i)
As pointed out in Remark \ref{REM:LWP1}\,(i), 
it is possible to lower the regularity threshold
by using the Strichartz estimates.
We, however, 
restrict our attention to the range $\frac 12 \le s < 1$,
which guarantees
unconditional uniqueness
of solutions
(namely, uniqueness
 in the entire class 
$L^p(\O; C(\tau; \H^s(\T^2)))$), 
since it
plays a key role in Section \ref{SEC:6}.

\smallskip

\noi
(ii) Let $T \ge 1$.
Let $(v, \dt v)$ be the local-in-time solution 
to \eqref{WN1} constructed in Proposition \ref{PROP:LWP2}.
Then, 
by denoting by $T_*$ its maximal existence time, 
the following blowup alternative holds:
\begin{align}
T_* = T\qquad \text{or}\qquad 
\lim_{t \to T_*-}
\| (v, \dt v)(t)\|_{L^p_\o \H^s_x}
= \infty.
\label{blow2}
\end{align}

% = \tau\big(\big) 

\end{remark}

\begin{proof}[Proof of Proposition \ref{PROP:LWP2}] 
Without loss of generality, we assume that $t_0 = 0$.
By writing  \eqref{WN1} in the Duhamel formulation, 
we have 
\begin{align}
\begin{split}
v(t) 
& = S (t) (v^{(0)}, v^{(1)}) 
- \I \big(\E [ v^2 ] v\big)(t) 
- 2 \I \big(\E [ \Psi v ] v\big)(t)\\
& \quad 
 -  \I \big(\E [ v^2 ] \Psi \big)(t)
 - 2 \I \big(\E [ v \Psi] \Psi\big)(t),  \\
\end{split}
\label{WN2}
\end{align} 

\noi
where $S (t)$ and  $\mathcal{I}$
are as in \eqref{D4} and 
 \eqref{D5}, respectively.

Let $2 \le p < \infty$.
Fix $0 < \tau \le 1$
and small $\eps > 0$
such that $s \le 1-\eps$.
Let us first consider the last term on the right-hand side
of \eqref{WN2},
involving $\E [ v\Psi  ] \Psi$, 
which shows a new aspect as compared to
the hyperbolic $O(N)$ linear sigma model
\eqref{VN1} studied in Subsection \ref{SUBSEC:3.1}.
In order to handle this term, 
we 
write 
\begin{align}
    \E [v\Psi ] \Psi(\o) = \E [ v'\Psi'  ] \Psi(\o)
= \int_\O
v' (\o') \Psi' (\o') \Psi(\o)
 \PP(d\o')    , 
\label{WN3}
\end{align}

\noi
where  $(v' \Psi')$ is an independent copy of $(v, \Psi)$.
In particular, the product 
$\Psi' \Psi$ makes sense {\it without} renormalization.
As a consequence, it follows from Lemma \ref{LEM:sto1}
that 
\begin{align}
K_p =  \| \Psi  \|_{L^p_\o C_1 W_x^{-\eps, \infty}}
+ 
\|\Psi' \Psi  \|_{L^p_\o C_1 W_x^{-\eps, \infty}} < \infty.
\label{WN3a}
\end{align}

Recall the notation \eqref{LWP2a}.
We  apply \eqref{WN3}
and  proceed as in \eqref{LWP5}
with 
 Sobolev's inequality
and 
Lemma~\ref{LEM:prod}\,(ii).
Then, from 
the independence of $\Psi'$ and $\Psi$
with $\Law (v') =  
\Law (v)$
and~\eqref{WN3a}, 
we have 
\begin{align}
\begin{split}
& \big\| \vec \I\big(\E [v\Psi] \Psi \big)\big\|_{L_\om^p C_\tau \H_x^{s}}
\les \tau 
\big\| \|v'(\o') \Psi'(\o') \Psi(\o)\|_{C_\tau W_x^{-\eps, \frac 2{2-s-\eps}} } \big\|_{L^p_\o L^1_{\o'}} \\
& \quad 
\les \tau 
\Big\|\|\Psi' (\o')\Psi (\o) \|_{C_1 W_x^{-\eps, \infty}} 
\| v'  (\o') \|_{C_\tau W_x^{\eps, \frac 2{2-s-\eps}}} 
\Big\|_{L^p_\o L^1_{\o'}}
\\
& \quad 
\le \tau 
\|\Psi' (\o')\Psi (\o) \|_{L^p_{\o, \o'}C_1 W_x^{-\eps, \infty}}
\| v'   \|_{L^2_{\o'} C_\tau W_x^{\eps, \frac 2{2-s-\eps}}}
\\
& \quad 
=  \tau 
\|\Psi' (\o)\Psi (\o) \|_{L^p_{\o}C_1 W_x^{-\eps, \infty}}
\| v   \|_{L^2_{\o} C_\tau W_x^{\eps, \frac 2{2-s-\eps}}}
\\
& \quad 
\les  \tau 
K_p
\| v   \|_{L^p_\o C_\tau H_x^s}, 
\end{split}
\label{WN4}
\end{align}

\noi 
provided that $\eps \le s \le 1 - \eps$, 
where, at the third inequality, we used the fact that $p \ge 2$.

Let us now handle the remaining terms.
Proceeding as in \eqref{LWP2}, we have 
\begin{align}
\begin{split}
\big\| \vec \I\big(\E [v^2] v \big)\big\|_{L_\om^p C_\tau \H_x^{s}}
& \les 
\tau 
\|\E [v^2] v \|_{L_\om^p C_\tau H_x^{s-1}}\\
& \les 
\tau 
\Big\| \E \big[ \| v \|^2_{C_\tau L_x^\frac 6{2-s}} \big]
 \| v \|_{C_\tau L_x^\frac 6{2-s}} \Big\|_{L_\o^p} \\
&\les
\tau  \| v \|^3_{L_\o^p C_{\tau} H_x^s}, 
\end{split}
\label{WN5}
\end{align} 

\noi
provided that $\frac 12 \le s $ ($< 1$).
Proceeding as in \eqref{LWP3} with Lemma \ref{LEM:prod}\,(ii)
(but in a slightly different order due to the presence of the expectation), 
we have 
\begin{align}
\begin{split}
\big\| \vec \I\big(\E [\Psi v] v \big)\big\|_{L_\om^p C_\tau \H_x^{s}}
&\les \tau \|\E [\Psi v] v \|_{L_\om^p C_\tau W_x^{-\eps, \frac {2}{2-s-\eps}}}\\ 
&\les \tau \Big\| \E \big[ \| \Psi v \|_{C_\tau  W_x^{-\eps, \frac 4{2- s- \eps}}} \big]
\| v \|_{C_1 W_x^{\eps, \frac 4{2- s- \eps}}} \Big\|_{L_\o^p} \\
&\les \tau 
 \| \Psi \|_{L_\om^p C_1 W_x^{-\eps, \infty}}
\| v \|_{L_\om^p C_\tau H_x^{\frac s2 + \frac 32\eps }}^2  \\
&\les \tau 
K_p
% \| \Psi \|_{L_\om^p C_1 W_x^{-\eps, \infty}}
\| v \|_{L_\o^p C_\tau H_x^s}^2, 
\end{split}
\label{WN6}
\end{align} 

\noi
provided that $3\eps  \le s $ ($< 1$).
Similarly, 
we have 
\begin{align}
\begin{split}
\big\| \vec \I\big(\E [v^2] \Psi \big)\big\|_{L_\om^p C_\tau \H_x^{s}}
&\les \tau \|\E [v^2] \Psi \|_{L_\om^p C_\tau W_x^{-\eps, \frac {2}{2-s-\eps}}}\\ 
&\les \tau \Big\| \E \big[ \| v^2 \|_{C_\tau  W_x^{\eps, \frac 2{2- s- \eps}}} \big]
\| \Psi \|_{C_1 W_x^{-\eps, \infty}} \Big\|_{L_\o^p} \\
&\les \tau 
 \| \Psi \|_{L_\om^p C_1 W_x^{-\eps, \infty}}
\| v \|_{L_\om^p C_\tau H_x^{\frac s2 + \frac 32\eps }}^2  \\
&\les \tau 
% \| \Psi \|_{L_\om^p C_1 W_x^{-\eps, \infty}}
K_p
\| v \|_{L_\o^p C_\tau H_x^s}^2, 
\end{split}
\label{WN7}
\end{align} 

\noi
provided that $3\eps  \le s $ ($< 1$).

Denote the right-hand side of \eqref{WN2}
by $\G(\vec v) = \G(v, \dt v)$, 
and set 
$\vec \G(\vec v)
= \big(\G(\vec v), \dt \G(\vec v)\big)$.
Then, putting \eqref{WN2}, \eqref{WN4}, \eqref{WN5}, 
\eqref{WN6}, and \eqref{WN7}  together, 
we obtain
\begin{align*}
\|\vec \G(\vec v) \|_{L^p_\o C_\tau \H^s_x}
& \le C_1 \| (v^{(0)}, v^{(1)}) \|_{L^p_\o \H^s_x}
+ C_2
\tau  \| \vec v \|^3_{L_\o^p C_{\tau} \H_x^s}
\\
& \quad + C_3 \tau
K_p 
\Big( 1  + \| \vec v   \|_{L^p_\o C_\tau \H_x^s}\Big)
\| \vec v   \|_{L^p_\o C_\tau \H_x^s} 
%\label{WN8}
\end{align*}

\noi
for any $\vec v \in L^p(\O; C([0, \tau]; \H^s(\T^2)))$,
where $K_p$
is as in \eqref{WN3a}.
A similar computation yields the following difference estimate:
\begin{align*}
& \|\vec\G( \vec v) - \vec \G (\vec w)\|_{L^p_\o C_\tau \H^s_x}\\
& \quad 
\les 
  \tau
\Big(  \| \vec v\|_{L^p_\o C_\tau \H^s_x}^2
+ \| \vec w\|_{L^p_\o C_\tau \H^s_x}^2
\Big)\|  \vec v  - \vec w\|_{L^p_\o C_\tau \H^s_x}\\
& \quad \quad + 
\tau K_p 
\Big( 1  + \| \vec v\|_{L^p_\o C_\tau \H^s_x}
+ \| \vec w\|_{L^p_\o C_\tau \H^s_x}
\Big)
 \|  \vec v  - \vec w\|_{L^p_\o C_\tau \H^s_x}
%\label{WN9}
\end{align*}

\noi
\noi
for any $\vec v, \vec w \in L^p(\O; C([0, \tau]; \H^s(\T^2)))$.
Therefore, 
by choosing
\[
\tau = \tau \big(\| (v^{(0)}, v^{(1)}) \|_{L^p_\o \H^s_x}, 
K_p\big) >0\] sufficiently small, 
we conclude that $\vec \G$ is a contraction in the 
(closed) ball 
\[B_R \subset L^p(\O; C([0, \tau]; \H^s(\T^2)))\] of radius
$R = 2C_1
\| (v^{(0)}, v^{(1)}) \|_{L^p_\o \H^s_x} + 1$.
At this point, the uniqueness holds only in the ball $B_R$
but by a standard continuity argument, 
we can extend the uniqueness to hold
in the entire class $L^p(\O; C([0, \tau]; \H^s(\T^2)))$.
We omit details. 
This concludes the proof of Proposition \ref{PROP:LWP2}.
\end{proof}

\subsection{Global well-posedness} 
\label{SUBSEC:4.2}

In this subsection, 
we prove global well-posedness of the mean-field SdNLW
\eqref{MF2} (Theorem \ref{THM:GWP2}\,(ii)).
As in Subsection \ref{SUBSEC:3.3}, 
our main strategy  is to adapt  the 
hybrid $I$-method
introduced in \cite{GKOT}
to the current mean-field setting.
We note that, while the analysis in \cite{GKOT}
and Section \ref{SEC:3}
is pathwise, 
our analysis presented below
is carried out under the $L^2(\O)$-norm (i.e.~not pathwise).

Fix a target time $T \gg1$.
In view of 
the blowup alternative \eqref{blow2} (with $p = 2$)
and 
\eqref{I3}, 
existence of the unique solution $(v, \dt v)$ to  \eqref{WN1} on 
the time interval $[0, T]$
follows once we show the following bound:
\begin{align*}
\sup_{0 \le t \le T}
\| (I v , \dt I v ) (t)\|_{ L^2_\o \H^1_x} \le C(T) 
\end{align*}

\noi
for some  finite constant $C(T) > 0$, 
where $(I v , \dt I v )$ is a solution to the following
$I$-mean-field SdNLW:
\begin{align} 
\begin{split}
& (\dt^2 + \dt + m - \Delta) I v \\
& \quad = 
 - I\big( \E [ v^2 ] v\big)
 - 2 I \big(\E [ \Psi v ] v\big) -I \big( \E [ v^2 ] \Psi\big)
- 2 I \big(\E [ v \Psi  ] \Psi\big).
\end{split}
\label{WN1a} 
\end{align}

\noi
For this purpose, define
the modified energy 
$\EE (I v, \dt I v) $:
\begin{align}
\begin{split}
\EE (I v, \dt I v) 
& = 
\frac 12 \E \bigg[ \int_{\T^2} 
|\nb I v|^2  + m (Iv)^2 +  (\dt Iv)^2 d x \bigg] + \frac 14 \int_{\T^2} \big(\E [(Iv)^2]\big)^2 d x\\
& = 
\frac 12  \int_{\T^2} 
\|\nb I v\|_{L^2_\o}^2  + m \|Iv\|_{L^2_\o}^2 +  \|\dt Iv\|_{L^2_\o}^2 d x  + \frac 14 \int_{\T^2} 
\|Iv\|_{L^2_\o}^4 d x.
\end{split}
\label{WW1}
\end{align}

\noi
where 
$\EE(v, \dt v)$ is as in  \eqref{MFE1}.
For notational simplicity, we set 
\begin{align*}
\EE(t) = \EE (I v, \dt I v) (t).
\end{align*}

\noi
As in Subsection \ref{SUBSEC:3.3}, 
Theorem \ref{THM:GWP2}\,(ii) on global well-posedness
of the mean-field SdNLW \eqref{MF2} follows
from the following proposition;
compare \eqref{WW2a} with \eqref{EN10}.

\begin{proposition}\label{PROP:CM0}
\textup{(i)}
Let $\frac 23 \le s < 1$.
Then, given small $\dl, \g > 0$ and $T \gg1$, 
we have 
\begin{align} 
\begin{split}
 \EE(t_2)  - \EE(t_1) 
&  \les M^{2 - 3s} \int_{t_1}^{t_2} \EE^2 (t) d t \\ 
&\quad 
+  M^{- s + \frac 12 + \dl} 
  \int_{t_1}^{t_2} 
\EE ^{\frac 32} (t) d t
\cdot \| \Psi \|_{L_\om^2 C_T W_x^{- \sigma_0, p_0}} \\
& \quad
+  M^{- \frac {s}{2} + \dl} 
\int_{t_1}^{t_2} \EE(t) d t 
\cdot \| \Psi' \Psi \|_{L_\om^2 C_T W_x^{- \sigma_0, p_0}} \\
&\quad 
+ M^{\g} \int_{t_1}^{t_2} \EE^{\frac 34}(t) d t
\cdot 
\| \Psi' \Psi  \|_{L_\om^2 C_T W_x^{- \g, \infty}} 
\\
&\quad 
+ \bigg\{\int_{t_1}^{t_2}  
\Big(1 + \EE^{1 + c\eta} (t) + \frac {\eta}{(t - t_1)^{\frac 12}} \Big)d t\bigg\}
\| I \Psi \|_{L^{\eta^{-1}}_{[t_1, t_2], x}L^2_\o } 
\end{split}
\label{WW2a}
\end{align} 

\noi 
for any $0\leq t_1 \leq t_2 \leq T$
and small $ \eta > 0$, 
uniformly in $N \in \N$ and $M, T\gg1$, 
where
$\sigma_0 = \sigma_0 (\dl) >0$ and $p_0 = p_0 (\dl) \gg 1$ are as  in Lemma~\ref{LEM:com2}.
Here, $\Psi'$ denotes an independent copy of $\Psi$.

\medskip

\noi
\textup{(ii)}
Let $ \frac 45 < s < 1$.
Then, given $(v^{(0)}, v^{(1)}) \in \H^s(\T^2)$, 
there exists a unique global solution $(v, \dt v) 
\in C(\R_+; \H^s(\T^2))$
to 
\eqref{WN1} with $(v, \dt v) |_{t = 0} = (v^{(0)}, v^{(1)})$.

\end{proposition}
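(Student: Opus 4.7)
The plan for Part (i) is to mimic the energy computation carried out in Subsection~\ref{SUBSEC:3.2} for the hyperbolic $O(N)$ linear sigma model, systematically replacing the empirical average $\frac 1N \sum_{k=1}^N$ by the expectation $\E$ (over an auxiliary probability space). Differentiating $\EE(t) = \EE(Iv, \dt Iv)(t)$ in time and using the $I$-mean-field SdNLW~\eqref{WN1a}, we obtain, after integrating from $t_1$ to $t_2$,
\begin{align*}
\EE(t_2) - \EE(t_1) = - \int_{t_1}^{t_2} \E \int_{\T^2} (\dt Iv)^2 dx\, dt + \sum_{\l = 1}^4 \widetilde B_\l,
\end{align*}
where the $\widetilde B_\l$ are the mean-field analogues of $B_1, \dots, B_4$ in \eqref{EN3}. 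The dissipative term being nonpositive, it suffices to estimate the $\widetilde B_\l$. The commutator term $\widetilde B_1$ (involving $I(v^2 v) - (Iv)^2 Iv$) is handled directly by Lemma~\ref{LEM:com1} under the expectation, yielding the $M^{2-3s}\int_{t_1}^{t_2}\EE^2$ contribution to~\eqref{WW2a} exactly as in Lemma~\ref{LEM:com3}. Note that since $\E[\wick{\Psi^2}] = 0$, the $\wick{\Psi^2}$-type terms that appeared in $B_3$ of \eqref{EN3} now drop out entirely, leaving only the three nonlinear couplings $\E[\Psi v]v$, $\E[v^2]\Psi$, and $\E[v\Psi]\Psi$ in \eqref{WN1}.

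The crucial new feature, specific to the mean-field setting, is the treatment of the term $\E[v\Psi]\Psi$, which does not come with renormalization. Following the device used in the proof of Proposition~\ref{PROP:LWP2} (see \eqref{WN3}), we introduce an independent copy $(v', \Psi')$ of $(v, \Psi)$ and rewrite
\begin{align*}
\E[v\Psi](\omega)\, \Psi(\omega) = \E_{\omega'}\bigl[v(\omega')\Psi(\omega')\Psi(\omega)\bigr],
\end{align*}
so that the product $\Psi(\omega')\Psi(\omega)$ of two \emph{independent} copies of the stochastic convolution appears without renormalization, in direct analogy with $\wick{\Psi_k\Psi_j}$ for $k \ne j$ in Lemma~\ref{LEM:sto1}. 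After pairing with $\dt Iv(\omega)$ and exchanging expectations by Fubini, we distribute the $I$-operator via Lemma~\ref{LEM:com2}. The commutator errors produce the $M^{-s/2 + \dl}\|\Psi'\Psi\|_{L^2_\omega C_T W^{-\sigma_0, p_0}_x}$ contribution to \eqref{WW2a} in parallel with Lemma~\ref{LEM:com4}, while estimating the analogue of $B_3'$ from \eqref{EN4} by Cauchy--Schwarz in $\omega$ and H\"older in $x$ (as in the proof of Lemma~\ref{LEM:com5}) produces the $M^\g \|\Psi'\Psi\|_{L^2_\omega C_T W^{-\g, 4}_x}$ contribution. The $M^{-s+\frac12+\dl}\|\Psi\|$ contribution arises analogously from the trilinear commutator of Lemma~\ref{LEM:com2} applied to $\E[\Psi v]v$ and $\E[v^2]\Psi$ (using an independent copy in the second instance), as in Lemma~\ref{LEM:com4}.

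The most singular piece, namely the full-distribution analogue of $B_2'$ in \eqref{EN4}, is handled exactly as in Lemma~\ref{LEM:com6}: after interpolation as in \eqref{EN9z} but now with the $L^p_\omega$-norm in place of $\|\cdot\|_{\A_N}$, H\"older and Cauchy--Schwarz in $(\omega, x)$, and Young's inequality in $t$, one isolates the logarithmically divergent factor $\|I\Psi\|_{L^{\eta^{-1}}_{[t_1,t_2], x} L^2_\omega}$ in place of $\|I\Psi_j\|_{L^{\eta^{-1}}_{[t_1,t_2],x}\A_N}$ of Lemma~\ref{LEM:sto2}. Putting these estimates together yields~\eqref{WW2a}.

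For Part (ii), the energy estimate \eqref{WW2a} replaces \eqref{EN10} in the hybrid $I$-method argument of Subsection~\ref{SUBSEC:3.3}. We introduce two auxiliary random variables: one controlling $\|\Psi\|_{L^2_\omega C_T W^{-\eps, \infty}_x}$ and $\|\Psi'\Psi\|_{L^2_\omega C_T W^{-\eps, \infty}_x}$ (both finite by Lemma~\ref{LEM:sto1}(i) and Remark~\ref{REM:sto1a}(ii)), and another controlling $\|I\Psi\|_{L^{\eta^{-1}}_{[0,T], x} L^2_\omega}$ uniformly in $M$, whose expectation is finite by a direct analogue of Lemma~\ref{LEM:sto2} (the argument there applies verbatim with the $\l^2$-average $\A_N$ replaced by $L^2(\Omega)$, since the only input is that $I\Psi(t,x)$ is a mean-zero Gaussian with variance $\les_s \log \jb M$). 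With these ingredients, a direct translation of Lemma~\ref{LEM:GWP1a} (the double Gronwall step upgrading $\EE(t_0) \le M^\be$ to $\EE(t) \le M^\al$ on a short interval) together with the iteration~\eqref{EN21} yields control of $\EE(t)$ on any interval $[0,T]$, hence the claimed global well-posedness via the blowup alternative~\eqref{blow2}. The main obstacle — the absence of explicit renormalization for $\E[v\Psi]\Psi$ — is already resolved in Part~(i) by the independent copy trick, so Part~(ii) is then a routine translation of the pathwise framework of Subsection~\ref{SUBSEC:3.3} into the $L^2(\Omega)$-based modified energy setting.
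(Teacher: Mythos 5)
Your proposal is correct and follows essentially the same route as the paper: the same decomposition of $\EE(t_2)-\EE(t_1)$ into a commutator term plus perturbative terms, the same independent-copy rewriting $\E[v\Psi]\Psi(\o)=\E_{\o'}[v'(\o')\Psi'(\o')\Psi(\o)]$ to make sense of the unrenormalized product, and the same $L^2(\O)$-adapted versions of Lemmas~\ref{LEM:com1}, \ref{LEM:com2}, \ref{LEM:com5}, and~\ref{LEM:com6}, followed by the hybrid $I$-method iteration for part (ii). The only cosmetic differences are that the paper groups the perturbative contributions into three pieces $\B_1,\B_2,\B_3$ (the $\wick{\Psi^2}$-type terms having already dropped out of \eqref{WN1}), and that in part (ii) the controlling quantities such as $\|\Psi'\Psi\|_{L^2_\o C_T W^{-\sigma_0,p_0}_x}$ and $\|I\Psi\|_{L^{\eta^{-1}}_{[0,T],x}L^2_\o}$ are deterministic constants rather than random variables, which only simplifies the Gronwall step.
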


\begin{proof}
(i)
Let  $t_2\ge t_1 \ge 0$.
Then, from \eqref{WW1} and \eqref{WN1a}, 
we have
\begin{align*}
\EE (t_2) - \EE (t_1) 
& = \sum_{\l = 1}^3 \B_\l
- 
\E \bigg[ \int_{t_1}^{t_2} \int_{\T^2} (\dt I v)^2 d x d t \bigg] \\ 
& \le  \sum_{\l = 1}^3 \B_\l, 
%\label{WW3}
\end{align*}

\noi
where $\B_\l = \B_\l(N)$, $\l = 1, \dots, 3$, 
are given by 
\begin{align}
\begin{split}
\B_1 
& = \E \bigg[ \int_{t_1}^{t_2} \int_{\T^2} (\dt I v) \Big( \E [ (I v)^2 ] I v - I \big( \E [v^2] v \big) \Big) d x d t \bigg] , \\
\B_2 
& = 
 - \E \bigg[ \int_{t_1}^{t_2} \int_{\T^2} (\dt I v) \Big( 
2  I \big( \E [\Psi v] v \big)
+  I \big( \E [v^2] \Psi \big) \Big) d x d t \bigg],  \\
\B_3 & = - 2 \E \bigg[ \int_{t_1}^{t_2} \int_{\T^2} (\dt I v) I \big( \E [v \Psi ] \Psi \big) d x d t' \bigg].
\end{split} 
\label{WW4}
\end{align}

\noi
Then, 
the energy growth bound \eqref{WW2a}
follows from 
Lemmas \ref{LEM:CM1}, \ref{LEM:CM2}, 
\ref{LEM:CM3}, and \ref{LEM:CM4}
presented below.

\medskip

\noi
(ii)
The energy growth bound \eqref{WW2a}
yields 
an analogue of 
Lemma \ref{LEM:GWP1a}, 
which in turn allows us to proceed
with an iterative argument as in the 
proof of Proposition \ref{PROP:GWP1}
presented in Subsection~\ref{SUBSEC:3.3}.
We omit details.
\end{proof}

The remaining part of this section 
is devoted
 to estimating 
 $\B_\l$, $\l = 1, \dots, 3$, 
 which 
closely 
follows the argument presented in 
Subsection \ref{SUBSEC:3.2}
but
with 
modification coming from  the viewpoint~\eqref{WN3};
more precisely, as in \eqref{WN3}, 
we regard the outer expectation 
as an integration 
 over $\o \in \O$, 
while
the inner expectation 
is over $\o' \in \O$.
Moreover, 
as in 
Subsection \ref{SUBSEC:3.2}, 
we often suppress
$t$-dependence for notational simplicity
in the following.

The following lemma establishes 
the  commutator estimate
on the cubic term in $v$.

\begin{lemma}
\label{LEM:CM1}
Let $\frac 23 \le s < 1$. Then, we have 
\begin{align} 
    | \B_1 | \les M^{2 - 3s} \int_{t_1}^{t_2} \EE^2 (t) d t
\label{WW5}
\end{align} 

\noi
for any $t_2 \ge t_1 \ge 0$, 
where the implicit constant is independent of $M \in \N$.

\end{lemma}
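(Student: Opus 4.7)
The plan is to mimic the proof of Lemma \ref{LEM:com3}, using the ``independent copy'' viewpoint introduced around \eqref{WN3} to rewrite the mean-field expression $\E[v^2]v - \E[(Iv)^2]Iv$ as a genuine triple product to which the cubic commutator estimate (Lemma \ref{LEM:com1}) applies pointwise. Concretely, let $v'$ be an independent copy of $v$ (defined on a probability space with variable $\o'$), so that by Fubini and the linearity of the spatial multiplier $I$,
\begin{align*}
\E[(Iv)^2]\,Iv(\o) - I\big(\E[v^2]v\big)(\o)
= \E_{\o'}\Big[(Iv'(\o'))^2 Iv(\o) - I\big(v'(\o')^2 v(\o)\big)\Big],
\end{align*}
where $I$ acts on the spatial variable and therefore commutes with $\E_{\o'}$. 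Plugging this into \eqref{WW4} and interchanging the outer $\E_\o$ with the inner $\E_{\o'}$, I can apply Lemma \ref{LEM:com1} pointwise in $(\o,\o')$ to the integrand, obtaining
\begin{align*}
\big\|(Iv'(\o'))^2 Iv(\o) - I\big(v'(\o')^2 v(\o)\big)\big\|_{L^2_x}
\lesssim M^{2-3s}\,\|Iv'(\o')\|_{H^1_x}^{2}\,\|Iv(\o)\|_{H^1_x}.
\end{align*}

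Combining this with Cauchy--Schwarz in $x$ against $\dt Iv(\o)$ and then taking expectations, I get
\begin{align*}
|\B_1| \lesssim M^{2-3s}\int_{t_1}^{t_2} \E_\o\Big[\|\dt Iv(\o)\|_{L^2_x}\|Iv(\o)\|_{H^1_x}\Big]\cdot\E_{\o'}\Big[\|Iv'(\o')\|_{H^1_x}^{2}\Big]\,dt,
\end{align*}
where the factorization into a product of expectations in $\o$ and $\o'$ uses Fubini together with the independence of $v$ and $v'$. A further Cauchy--Schwarz in $\o$ bounds the first factor by $\|\dt Iv\|_{L^2_\o L^2_x}\|Iv\|_{L^2_\o H^1_x}$, and recalling the definition \eqref{WW1} of the modified energy, each of $\|\dt Iv\|_{L^2_\o L^2_x}^2$, $\|Iv\|_{L^2_\o H^1_x}^2$, and $\E[\|Iv'\|_{H^1_x}^{2}]$ is controlled by $\EE(t)$. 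Multiplying the three contributions yields the claimed bound $|\B_1|\lesssim M^{2-3s}\int_{t_1}^{t_2}\EE^2(t)\,dt$.

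There is essentially no hard step here: the entire content of the estimate is the pointwise cubic commutator bound from Lemma \ref{LEM:com1}, and the only new ingredient relative to Lemma \ref{LEM:com3} is the bookkeeping of replacing the empirical $\l^2$-average over $k$ by an expectation over an independent copy $\o'$. The one small care is to ensure that $I$ and $\E_{\o'}$ commute (which is immediate since $I$ is a deterministic spatial Fourier multiplier) and that the $\o$- and $\o'$-integrations separate cleanly (which follows from independence of $v$ and $v'$); both are routine.
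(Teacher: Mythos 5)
Your proof is correct and follows essentially the same route as the paper: the independent-copy rewriting of $\E[(Iv)^2]Iv - I(\E[v^2]v)$ as an $\o'$-expectation, the pointwise application of Lemma \ref{LEM:com1} in $(\o,\o')$, and Cauchy--Schwarz plus the definition \eqref{WW1} of $\EE$ are exactly the ingredients of the paper's argument (which phrases the interchange of $\E_{\o'}$ with the $L^2_x$-norm as Minkowski's integral inequality). No gaps.
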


\begin{proof}
By Cauchy-Schwarz's and Minkowski's integral inequalities, we have
\begin{align}
\begin{split}
& \Big\|(\dt I v) \Big( \E [ (I v)^2 ] I v - I \big( \E [v^2] v \big)\Big) \Big\|_{L^1_{\o, x}}\\
& \quad 
\le 
\|\dt I v \|_{L^2_{\o, x}}
\big\| \E [ (I v)^2 ] I v - I \big( \E [v^2] v \big)\big\|_{L^2_{\o, x}}\\
& \quad 
\les \EE^\frac 12
\Big\| \|(I v(\o'))^2 I v(\o) - I \big( v^2(\o')  v(\o) \big) \|_{L^2_x}\Big\|_{L^2_{\o} L^1_{\o'}}.
\end{split}
\label{WW5a}
\end{align}

\noi
Then, the bound \eqref{WW5} 
follows
from \eqref{WW5a},
Lemma \ref{LEM:com1}, and \eqref{WW1}.
\end{proof}

By formally distributing the $I$-operator to each factor of $\B_2$ 
and $\B_3$ 
in \eqref{WW4}, 
we obtain the following term:
\begin{align} 
\begin{split}
\B_2' &  = -   \E \bigg[ \int_{t_1}^{t_2} \int_{\T^2} (\dt I v)
 \Big(  2 \E [ (I \Psi)( I v) ] I v  + \E [ (I v)^2] I \Psi  \Big) d x d t \bigg], \\
\B_3'  & =  - 2 \E_\o \bigg[ \int_{t_1}^{t_2} \int_{\T^2} (\dt I v(\o))\,
 \E_{\o'} \Big[ (I v'(\o')) \big(I  (\Psi' (\o') \Psi(\o)) \big)\Big] d x d t \bigg], 
\end{split} 
\label{WW6}
\end{align}  

\noi
\noi
where  $(\Psi', v')$ is an independent copy of $(\Psi, v)$
as in \eqref{WN3}.
We first establish  the following commutator estimates.

\begin{lemma}
\label{LEM:CM2}
For $\l = 2, 3$, 
let $\B_\l$ and $\B_\l'$ be as in \eqref{WW4} and \eqref{WW6}, respectively.
Let $\frac 23 \le s < 1$.
 Then, given $T \gg 1$ and small $\dl > 0$, we have
\begin{align} 
| \B_2 - \B_2' | 
& \les
 M^{- s + \frac 12 + \dl} 
  \int_{t_1}^{t_2} 
\EE ^{\frac 32} (t) d t
\cdot \| \Psi \|_{L_\om^2 C_T W_x^{- \sigma_0, p_0}} , 
\label{WW7}\\
| \B_3 - \B_3' | 
& \les M^{- \frac {s}{2} + \dl} 
\int_{t_1}^{t_2} \EE(t) d t 
\cdot \| \Psi' \Psi \|_{L_\om^2 C_T W_x^{- \sigma_0, p_0}} 
\label{WW7a}
\end{align}

\noi
for any 
$0 \le t_1 \le t_2 \le T$, 
uniformly in 
$N \in \N$ and  $M,  T \gg 1$, 
where
$\sigma_0 = \sigma_0 (\dl) >0$ and $p_0 = p_0 (\dl) \gg 1$ are as  in Lemma~\ref{LEM:com2}.
Here, $\Psi'$ denotes an independent copy of $\Psi$.

\end{lemma}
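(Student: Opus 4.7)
The plan is to mirror the argument of Lemma \ref{LEM:com4} from the SdNLW system setting, but to handle the mean-field expectations by the independent-copy device already introduced in~\eqref{WN3}. That is, writing $(v',\Psi')$ for an independent copy of $(v,\Psi)$, we represent each $\E[\,\cdot\,]$ appearing inside $\B_2$ and $\B_3$ as an integral over the copy variable $\o' \in \O'$, so that every product we have to commute with the $I$-operator becomes a \emph{pointwise} product of functions of $(\o,\o',x)$. Concretely,
\[
\E[\Psi v]\, v(\o) = \int_{\O'} \Psi'(\o')\,v'(\o')\,v(\o)\,\PP(d\o'),
\]
and analogously for $\E[v^2]\Psi(\o)$ and $\E[v\Psi]\Psi(\o)$; the same identities (with $I$ applied to each factor) hold with $v,\Psi$ replaced by $Iv, I\Psi$. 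Thus the integrands in $\B_\l - \B_\l'$ can, after interchanging $I$ with the $\o'$-integral, be written as
\[
I\big(\Psi'(\o')v'(\o')v(\o)\big) - \big(I\Psi'(\o')\big)\big(Iv'(\o')\big)\big(Iv(\o)\big)
\]
and the two analogous three-factor expressions for $\B_2-\B_2'$, and as
\[
I\big(v'(\o')\,\Psi'(\o')\Psi(\o)\big) - \big(Iv'(\o')\big)\,I\big(\Psi'(\o')\Psi(\o)\big)
\]
for $\B_3-\B_3'$.

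For $\B_2-\B_2'$, I would apply the three-factor commutator estimate in Lemma~\ref{LEM:com2} fibrewise in $(\o,\o')$, placing $h=\Psi'$ (or $h=\Psi$ in the second piece) in $W^{-\s_0,p_0}_x$ and the two $v$-factors in $H^1_x$ via their $I$-images. This yields the pointwise bound $M^{-s+\frac12+\dl}\|Iv'(\o')\|_{H^1}\|Iv(\o)\|_{H^1}\|\Psi'(\o')\|_{W^{-\s_0,p_0}}$ (and its symmetric variant). After multiplying by $\dt Iv(\o)$ and integrating in $t,x,\o,\o'$, I would use Cauchy--Schwarz in space to bring out $\|\dt Iv\|_{L^2_{\o,x}}$, then Cauchy--Schwarz in $\o'$ (Fubini between $\o$ and $\o'$) to pair the $L^2_{\o'}$-norms of $\|Iv'\|_{H^1}$ and $\|\Psi'\|_{W^{-\s_0,p_0}}$; because $\Law(v',\Psi')=\Law(v,\Psi)$, these match the $L^2_\o$-norms of $\|Iv\|_{H^1}$ and $\|\Psi\|_{W^{-\s_0,p_0}}$. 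Recognising $\|\dt Iv\|_{L^2_{\o,x}}^2 + \|Iv\|_{L^2_\o H^1_x}^2 \les \EE(t)$ from the definition~\eqref{WW1}, the time integrand becomes $\EE^{3/2}(t)$ times the uniform-in-time norm of $\Psi$, giving~\eqref{WW7}.

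For $\B_3-\B_3'$, the strategy is the same, but now the relevant commutator is the two-factor one from Lemma~\ref{LEM:com2}, applied with $f=v'$ (in $H^1$ via $Iv'$) and $h=\Psi'(\o')\Psi(\o)$ in $W^{-\s_0,p_0}_x$. This produces $M^{-\frac s2+\dl}\|Iv'(\o')\|_{H^1}\|\Psi'(\o')\Psi(\o)\|_{W^{-\s_0,p_0}}$. Pairing $\dt Iv(\o)$ via Cauchy--Schwarz and using Fubini with Cauchy--Schwarz in $\o'$, the independence of $\Psi'$ from $v$ together with $\Law(\Psi'\Psi)=\Law(\Psi'\Psi)$ lets me bound the $L^2_\o L^1_{\o'}$-norm of $\|\Psi'\Psi\|_{W^{-\s_0,p_0}}$ by its full $L^2_{\o,\o'}$-norm, i.e.~$\|\Psi'\Psi\|_{L^2_\o C_T W^{-\s_0,p_0}_x}$. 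The time integrand collapses to $\EE(t)$ times this stochastic constant, yielding~\eqref{WW7a}.

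The main subtlety I expect is bookkeeping: because each occurrence of $\E$ carries its own independent copy, one has to be careful that the Cauchy--Schwarz split in $\o$ vs.\ $\o'$ is compatible with the Lemma~\ref{LEM:com2} bound (which is stated fibrewise in $\o$) and, critically, that the norm $\|\Psi'\Psi\|_{W^{-\s_0,p_0}}$ in $\B_3$ cannot be decoupled into $\|\Psi'\|\cdot\|\Psi\|$ without losing regularity---this is exactly why the lemma states the bound in terms of the joint norm of $\Psi'\Psi$ rather than a product of $\Psi$-norms, paralleling the role of $\wick{\Psi_k\Psi_j}$ in Lemma~\ref{LEM:com4}. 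Once this is laid out, the remaining steps are routine applications of Cauchy--Schwarz and the identification $\Law(v',\Psi')=\Law(v,\Psi)$.
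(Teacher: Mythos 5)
Your proposal is correct and follows essentially the same route as the paper: the paper likewise represents each expectation via an independent copy $(v',\Psi')$ as in \eqref{WN3}, bounds the commutator fibrewise in $(\o,\o')$ using Lemma~\ref{LEM:com2} (after a Minkowski step to pass to the $L^2_\o L^1_{\o'}$-norm of the fibrewise $L^2_x$-commutator, cf.~\eqref{WW8}), and concludes with Cauchy--Schwarz in $\o$, $\o'$, and $x$ together with the definition \eqref{WW1} of $\EE$. The only blemish is the tautological line ``$\Law(\Psi'\Psi)=\Law(\Psi'\Psi)$''; what is actually used there is simply Cauchy--Schwarz on the product space to pass from $L^2_\o L^1_{\o'}$ to $L^2_{\o,\o'}$ of $\|\Psi'\Psi\|_{W^{-\s_0,p_0}}$, exactly as you describe.
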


\begin{proof} 
Proceeding as in \eqref{WW6}, we have 
\begin{align}
\begin{split}
& \big\|\E [ (I \Psi)( I v) ] I v
-  I \big( \E [\Psi v] v \big)\big\|_{L^2_{\o, x}}\\
& \quad
\le \Big\|
\|
 (I \Psi (\o'))( I v(\o')) ( I v(\o))
-  I \big( \Psi(\o')  v(\o')  v(\o) \big) \|_{L^2_x}\Big\|_{L^2_{\o} L^1_{\o'}}.
\end{split}
\label{WW8}
\end{align}

\noi
Then, 
the  bound \eqref{WW7} follows 
from
Cauchy-Schwarz's inequality (in $\o$ and $x$), 
\eqref{WW8}, 
and Lemma~\ref{LEM:com2}
with \eqref{WW1}.
The second bound \eqref{WW7a}
follows in a similar manner.
\end{proof}

We conclude this section by estimating 
$\B_3'$
and $\B_2'$ in \eqref{WW6}.

\begin{lemma}
\label{LEM:CM3}
Let  $\B_3'$ be as in \eqref{WW6}.
Let $\frac 23 \le s < 1$.
 Then, given small $\g > 0$, we have 
\begin{align} 
| \B_3' | & \les M^{\g} \int_{t_1}^{t_2} \EE^{\frac 34}(t) d t
\cdot 
\| \Psi' \Psi  \|_{L_\om^2 C_T W_x^{- \g, \infty}} 
\label{WW9}
\end{align}

\noi
for any 
$0 \le t_0 \le t \le T$, 
uniformly in 
$N \in \N$ and  $M, T \gg 1$.
Here, $\Psi'$ denotes an independent copy of $\Psi$.

\end{lemma}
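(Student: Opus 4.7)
The proof will follow closely the bound on $B_{32}'$ in Lemma~\ref{LEM:com5} (the analog in the $N$-particle setting), with the empirical average $\frac{1}{N^2}\sum_{j,k}$ replaced by the product expectation $\E_\o \E_{\o'}$. The twist, already used in the reformulation \eqref{WN3}, is that the factor $\Psi(\o)\Psi'(\o')$ arising from the mean-field term $\E[v\Psi]\Psi$ is the product of two \emph{independent} copies of $\Psi$, so it is a well-defined distribution without any Wick renormalization, and its regularity is controlled by Lemma~\ref{LEM:sto1}.

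First, by Fubini and the independence of $\o$ and $\o'$, I would rewrite
\begin{align*}
\B_3' = -2\int_{t_1}^{t_2}\int_{\T^2} \E_{\o,\o'}\Big[\big(\dt Iv(\o,x)\big)\big(Iv'(\o',x)\big)\big(I(\Psi'(\o')\Psi(\o))(x)\big)\Big]\,dx\,dt.
\end{align*}
At fixed $(t,x)$, a double Cauchy-Schwarz in $\o'$ and then in $\o$ (using $\o \perp \o'$) yields the pointwise inequality
\begin{align*}
\big|\E_{\o,\o'}[f_1(\o,x) f_2(\o',x) f_3(\o,\o',x)]\big| \les \|f_1(\cdot,x)\|_{L^2_\o}\,\|f_2(\cdot,x)\|_{L^2_{\o'}}\,\|f_3(\cdot,\cdot,x)\|_{L^2_{\o,\o'}}.
\end{align*}
Integrating in $x$ with H\"older's inequality at exponents $(2,4,4)$, I obtain
\begin{align*}
|\B_3'| \les \int_{t_1}^{t_2} \|\dt Iv\|_{L^2_{\o,x}}\,\big\|\|Iv\|_{L^2_\o}\big\|_{L^4_x}\,\big\|\|I(\Psi'\Psi)\|_{L^2_{\o,\o'}}\big\|_{L^4_x}\,dt.
\end{align*}

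Next, reading off from the definition \eqref{WW1} of the modified energy, the first factor is $\les \EE(t)^{1/2}$ (from the quadratic part $\frac12 \int \|\dt Iv\|_{L^2_\o}^2\,dx$) and the second factor is $\les \EE(t)^{1/4}$ (from the quartic part $\frac14 \int \|Iv\|_{L^2_\o}^4\,dx$), producing the desired exponent $\EE^{3/4}$. For the stochastic factor, I would apply the $I$-operator estimate \eqref{I4} pointwise in $(\o,\o')$ with $s_0=-\g$, $s_1=\g$, $p=4$, then use Minkowski's integral inequality (to pass from $L^4_x L^2_{\o,\o'}$ to $L^2_{\o,\o'} L^4_x$), the embedding $W^{-\g,\infty}(\T^2) \hookrightarrow W^{-\g,4}(\T^2)$, and Jensen's inequality $\sup_t \E[\,\cdot\,] \le \E[\sup_t \,\cdot\,]$ to bound
\begin{align*}
\sup_{t\in[t_1,t_2]} \big\|\|I(\Psi'\Psi)(t)\|_{L^2_{\o,\o'}}\big\|_{L^4_x} \les M^\g \|\Psi'\Psi\|_{L^2_\o C_T W_x^{-\g,\infty}},
\end{align*}
where $L^2_\o$ on the right denotes integration over the joint randomness of $\Psi$ and $\Psi'$.

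Combining these three bounds yields \eqref{WW9}. The main point requiring care is the Cauchy-Schwarz manipulation on the product space $\O\times\O'$: one must arrange the three factors so that the two-copy noise $\Psi'\Psi$ ends up in the joint $L^2_{\o,\o'}$-norm (which is finite by Lemma~\ref{LEM:sto1}), rather than an $L^\infty_{\o,\o'}$-norm that would not be controllable. Modulo this rearrangement, the argument reduces to the same elementary H\"older/energy estimate that underlies the bound on $B_{32}'$ in Lemma~\ref{LEM:com5}.
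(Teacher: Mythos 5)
Your proposal is correct and follows essentially the same route as the paper: the key step is the same double Cauchy--Schwarz on the product space $\O\times\O'$ (the paper's \eqref{WW10}), after which the paper simply says to proceed as in the bound \eqref{EN7} for $B_{32}'$ in Lemma~\ref{LEM:com5}, i.e.\ the H\"older $(2,4,4)$ split in $x$, the $I$-operator bound \eqref{I4}, and the identification of the $L^2_\o L^2_x$- and $L^4_x L^2_\o$-factors with $\EE^{1/2}$ and $\EE^{1/4}$ from \eqref{WW1}. The only quibble is that the monotonicity step $\sup_t \|X(t)\|_{L^2_\o}\le \|X\|_{L^2_\o C_T}$ is not Jensen's inequality, but this is a harmless mislabel.
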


\begin{proof} 

From the independence of $\Psi$ and $\Psi'$, we have 
\begin{align} 
\begin{split}
& \big\| (\dt I v(\o))
 (I v'(\o')) \big(I  (\Psi' (\o') \Psi(\o)) \big)\big\|_{L^1_{\o, \o'}} \\
& \quad 
\le  
\|\dt I v\|_{L^2_\o}
\|  I v\|_{L^2_\o}
 \big\| I  (\Psi' (\o') \Psi(\o)) \big)\big\|_{L^2_{\o, \o'}} \\
& \quad 
= \|\dt I v\|_{L^2_\o}
 \| I v\|_{L^2_\o}
 \big\| I  (\Psi' \Psi \big)\big\|_{L^2_{\o}}. 
\end{split}
\label{WW10}
\end{align}

\noi
Then, the bound \eqref{WW9}
follows
from proceeding as in the proof of \eqref{EN7}
in Lemma \ref{LEM:com5}
with~\eqref{WW10}.
\end{proof}

\begin{lemma}
\label{LEM:CM4}
Let $\B_2'$ be as in \eqref{WW6}.
Then, given $s < 1$, 
there exists $c > 0$ such that 
\begin{align} 
| \B_2' | \les 
\bigg\{\int_{t_1}^{t_2} 
\Big(1 + 
 \EE^{1 + c\eta} (t)+ \frac {\eta}{(t - t_1)^{\frac 12}} \Big)  d t\bigg\}
\| I \Psi \|_{ L^{\eta^{-1}}_{[t_1, t_2], x} L^2_\o } 
\label{WW11}
\end{align} 

\noi 
for any 
$t_2 \geq t_1 \geq 0$
and small $\eta > 0$,  
uniformly in 
$M, N \in \N$, 
where
$L^r_{[t_1, t_2], x}
= L^r_{[t_1, t_2]}L^r_x$.

\end{lemma}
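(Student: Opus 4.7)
The proof will follow the blueprint of Lemma \ref{LEM:com6} very closely, using the identification $\A_N \leftrightarrow L^2(\Omega)$ already visible throughout Section \ref{SEC:4}, but with one structural simplification: in each summand of $\B_2'$, the \emph{inner} expectation (over $\o'$, say) produces a \emph{deterministic} function of $x$ and $t$, which can therefore be pulled out of the outer expectation.

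Concretely, the plan is to split
\begin{align*}
\B_2' = \B_{21}' + \B_{22}'
\end{align*}
with $\B_{21}' = -2 \int_{t_1}^{t_2}\!\!\int_{\T^2} \E[(I\Psi)(Iv)]\cdot \E[(\dt Iv)(Iv)]\,dx\,dt$ and
$\B_{22}' = -\int_{t_1}^{t_2}\!\!\int_{\T^2} \E[(Iv)^2]\cdot \E[(\dt Iv)(I\Psi)]\,dx\,dt$,
where we have pulled out the deterministic factor. For $\B_{22}'$, two applications of Cauchy--Schwarz in $\o$ give the pointwise bound
\begin{align*}
|\B_{22}'| \le \int_{t_1}^{t_2}\!\!\int_{\T^2} \|Iv\|_{L^2_\o}^2 \,\|\dt Iv\|_{L^2_\o}\, \|I\Psi\|_{L^2_\o}\,dx\,dt,
\end{align*}
and $\B_{21}'$ admits the same bound (with the trivial factor $2$). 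Then I apply H\"older in $x$ with exponents $\bigl(\tfrac{4}{1-2\eta},\tfrac{4}{1-2\eta},2,\eta^{-1}\bigr)$, which satisfies the admissibility condition $\tfrac{1-2\eta}{2}+\tfrac12+\eta=1$.

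The analogue of \eqref{EN9z} in this setting is the interpolation
\begin{align*}
\|Iv\|_{L^{4/(1-2\eta)}_x L^2_\o} \le \|Iv\|_{L^4_x L^2_\o}^{1-4\eta}\,\|Iv\|_{L^8_x L^2_\o}^{4\eta} \lesssim \EE^{\frac{1-4\eta}{4}}\,\EE^{2\eta} = \EE^{\frac{1+4\eta}{4}},
\end{align*}
where for the first factor I use the $L^4$ part of the energy \eqref{WW1} directly, and for the second factor I use Minkowski's integral inequality to commute norms, followed by the 2D Sobolev embedding $H^1_x \hookrightarrow L^8_x$, controlling $\|Iv\|_{L^8_x L^2_\o}\le \|Iv\|_{L^2_\o H^1_x}\lesssim \EE^{1/2}$. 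Substituting back yields the integrand bound $\EE^{(1+2\eta)/2}\cdot\EE^{1/2}\cdot \|I\Psi\|_{L^{\eta^{-1}}_x L^2_\o} = \EE^{1+2\eta}\cdot \|I\Psi\|_{L^{\eta^{-1}}_x L^2_\o}$.

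At this point the time-variable is handled exactly as in the proof of Lemma \ref{LEM:com6}: H\"older in $t$ with exponents $((1-\eta)^{-1},\eta^{-1})$ moves $\|I\Psi\|_{L^{\eta^{-1}}_x L^2_\o}$ into the space-time norm $\|I\Psi\|_{L^{\eta^{-1}}_{[t_1,t_2],x}L^2_\o}$, and then the elementary estimate cited from the math display before \cite[(3.21)]{GKOT} (Young + H\"older) absorbs the singular factor $(t-t_1)^{-1/2}$ and replaces the exponent $\tfrac{1+2\eta}{1-2\eta}$ by $1+c\eta$ up to an additive $\eta/(t-t_1)^{1/2}$ term. This produces exactly \eqref{WW11}. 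The only conceptual novelty compared with Lemma \ref{LEM:com6} is the initial decoupling of the inner and outer expectations, and I do not expect any genuine obstacle; the ``hardest'' step is simply keeping track of which norm lives in $\o$ and which in $x$ when applying the iterated H\"older and Minkowski inequalities.
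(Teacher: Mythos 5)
Your proposal is correct and follows essentially the same route as the paper: the paper likewise reduces $\B_2'$ to the interpolation bound $\|Iv\|_{L^{4/(1-2\eta)}_x L^2_\o}\lesssim \EE^{(1+4\eta)/4}$ and a H\"older estimate in $(\o,x)$ (phrased there via an independent copy $\o'$ rather than by pulling out the deterministic inner expectation, which is the same decoupling), and then repeats the time-integration argument of Lemma \ref{LEM:com6}. The only blemish is a typo in your penultimate display: the squared factor should be $\EE^{(1+4\eta)/2}$, not $\EE^{(1+2\eta)/2}$, which is what actually yields the stated $\EE^{1+2\eta}$.
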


\begin{proof} 
The bound \eqref{WW11} follows
from a slight modification straightforward modification
of the proofs of 
\cite[Lemma~3.4]{GKOT}
and Lemma \ref{LEM:com6}, 
and thus we only indicate required modifications.

By replacing the $\A_N$-norm in \eqref{EN9z}
with the $L^2(\O)$-norm, we have 
 \begin{align}
\begin{split}
\|I v\|_{L^{\frac 4 {1- 2\eta}}_xL^2_\o }
&  \les 
\|I v\|_{L^4_x L^2_\o}^{1 - 4\eta}
\|I v\|_{L^2_\o H^1_x}^{4\eta}
\\
&   \les 
\EE^{\frac{1+4\eta}{4}} , 
\end{split}
\label{WW12}
 \end{align}

\noi
uniformly in small $\eta > 0$.
Moreover, by H\"older's inequality, we have 
\begin{align}
\begin{split}
& \Big\| (\dt I v (\o))
 \Big(  2  (I \Psi (\o') )( I v(\o')) ( I v(\o))  +  (I v(\o'))^2] (I \Psi (\o)) \Big) 
 \Big\|_{L^1_{\o, \o'}L^1_x}\\
& \quad \les \|\dt I v \|_{L^2_x L^2_\o}
\| I v \|_{L^\frac 4{1-2\eta}_x L^2_\o}^2
\| I \Psi \|_{L^{\eta^{-1}}_xL^2_\o}.
\end{split}
\label{WW13}
\end{align}

\noi
Then, the bound \eqref{WW11} follows from 
repeating the rest of the argument in the proof of Lemma~\ref{LEM:com6}
with \eqref{WW12} and \eqref{WW13}.
\end{proof}

\section{Mean-field limit of 
the hyperbolic $O (N)$ linear sigma model}

\label{SEC:conv}

In this section, we present a proof of  Theorem~\ref{THM:conv1}
on convergence of \HLS 
 \eqref{NLW4}
to the mean-field SdNLW \eqref{MF2}. 
In Subsection~\ref{SUBSEC:5.1}, 
we present several law-of-large-numbers type lemmas.
%Under the second moment assumption, 
In Subsection~\ref{SUBSEC:5.2}, 
we then prove Theorem~\ref{THM:conv1}\,(i). 
Lastly, 
under a higher moment assumption, 
we establish 
a convergence rate
 of~\eqref{NLW4}
 to the mean-field limit
 (Theorem~\ref{THM:conv1}\,(ii)) 
in Subsection~\ref{SUBSEC:5.3}.

\subsection{Law of large numbers} 
\label{SUBSEC:5.1}

In this subsection, we present three 
 law-of-large-numbers type lemmas.
While the first lemma (Lemma \ref{LEM:LLN1})
provides a convergence rate of the purely stochastic terms,
the second and third
lemmas (Lemmas \ref{LEM:LLN2} and \ref{LEM:LLN3})
do not provide any rate
under the second moment assumption.
Under a higher moment assumption, 
they can be upgraded
to versions with a decay rate;
see Lemmas~\ref{LEM:LLN4} and~\ref{LEM:LLN5}.

\begin{lemma}
\label{LEM:LLN1}

Given $\l = 2, 3$ and $j, k \in \N$, 
let $Z^{(\l)}_{k, j}$
be as in~\eqref{X2} or~\eqref{X3}.
Namely, 
\begin{align}
  Z_{k, j}^{(2)} = \, \wick{\Psi_k \Psi_j} \,  
\qquad \text{and}\qquad 
Z_{k, j}^{(3)}
=  \, \wick{\Psi_k^2 \Psi_j} 
\label{LN1}
\end{align}

\noi
or 
\begin{align}
   Z_{k, j}^{(2)} = \, \wick{\Phi_k \Phi_j} \, 
\qquad \text{and}\qquad 
Z_{k, j}^{(3)}
=  \, \wick{\Phi_k^2 \Phi_j}\,.
\label{LN2}
\end{align}

\noi
Then, 
given $\eps > 0$, we have 
\begin{align}
 \bigg\| \frac{1}{N} \sum_{k = 1}^N 
Z_{k, k}^{(2)}\, \bigg\|_{L^p_\o L_T^2 W_x^{- \eps, \infty}}
& \les_T \frac{p}{N^{\frac 12}} , 
\label{LN3} \\
\sup_{j \in \N}
 \bigg\| \frac 1N \sum_{k = 1}^N 
 Z_{k, j}^{(3)}\, \bigg\|_{L^p_\o L_T^2 W_x^{- \eps, \infty}}
& \les_T \frac{p^{\frac 32} }{N^{\frac 12}} , 
\label{LN4} \\
 \bigg\| \frac 1N \sum_{k = 1}^N 
 Z_{k, j}^{(3)}\, \bigg\|_{L^p_\o \A_{N, j}L_T^2 W_x^{- \eps, \infty}}
& \les_T \frac{p^{\frac 32} }{N^{\frac 12}} , 
\label{LN5}
\end{align}

\noi
uniformly in finite $p \ge 1$ and $N \in \N$, 
where $\A_N B$ is as in \eqref{AN0}.
As a consequence,
for each $T > 0$, we have 
\begin{align}
\begin{split}
 \bigg\| \frac{1}{N} \sum_{k = 1}^N 
Z_{k, k}^{(2)}\, \bigg\|_{ L_T^2 W_x^{- \eps, \infty}}
& \too 0 , \\
\text{for each fixed $j \in \N:$}
\quad 
  \bigg\| \frac 1N \sum_{k = 1}^N 
 Z_{k, j}^{(3)}\, \bigg\|_{ L_T^2 W_x^{- \eps, \infty}}
& \too 0 , \\
  \bigg\| \frac 1N \sum_{k = 1}^N 
 Z_{k, j}^{(3)}\, \bigg\|_{ \A_{N, j}L_T^2 W_x^{- \eps, \infty}}
& \too 0
\end{split}
\label{LN5a}
\end{align}

\noi
in probability
at the rate $N^{-\frac 12 }$
as $N \to \infty$
in the sense of Definition \ref{DEF:conv1}.

\end{lemma}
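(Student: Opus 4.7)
The plan is to interpret these bounds as law-of-large-numbers estimates for sums of orthogonal, mean-zero random variables living in a fixed Wiener chaos: the $N^{-1/2}$ rate will come from orthogonality in $L^2_\omega$, while the factors of $p$ and $p^{3/2}$ will come from the Wiener chaos hypercontractivity (Lemma~\ref{LEM:hyp}) in $\mathcal{H}_2$ and $\mathcal{H}_3$, respectively. We argue for \eqref{LN1}; the case \eqref{LN2} follows identically by Lemma~\ref{LEM:sto1}(iii). First, using the Sobolev embedding $W^{-\eps/2,q}(\T^2)\hookrightarrow W^{-\eps,\infty}(\T^2)$, valid for $q=q(\eps)\geq 4/\eps$, together with Minkowski's integral inequality (taking $p\geq \max(2,q)$, which suffices since smaller $p$ follows from monotonicity of $L^p_\omega$), I would reduce each of \eqref{LN3}--\eqref{LN5} to a pointwise-in-$(t,x)$ moment estimate of the form
\[
\bigl\| \langle\nabla\rangle^{-\eps/2}\bigl(\tfrac{1}{N}\textstyle\sum_{k=1}^N Z^{(\ell)}_{k,\cdot}\bigr)(t,x)\bigr\|_{L^p_\omega}
\les \frac{p^{\ell/2}}{N^{1/2}},
\]
uniform in $(t,x)\in [0,T]\times \T^2$ (and in $j$ for $\ell=3$). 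By Lemma~\ref{LEM:hyp}, this reduces to the analogous $L^2_\omega$ bound of order $N^{-1/2}$.

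The heart of the argument is then the $L^2_\omega$ variance computation via orthogonality. For \eqref{LN3}, the summands $\{\wick{\Psi_k^2}(t,x)\}_k$ are independent and mean-zero, so
\[
\Bigl\| \tfrac{1}{N}\textstyle\sum_{k=1}^N \langle\nabla\rangle^{-\eps/2}\wick{\Psi_k^2}(t,x)\Bigr\|_{L^2_\omega}^2
= \tfrac{1}{N^2}\sum_{k=1}^N \|\langle\nabla\rangle^{-\eps/2}\wick{\Psi_k^2}(t,x)\|_{L^2_\omega}^2
\les_T \tfrac{1}{N},
\]
where the uniform pointwise bound $\|\langle\nabla\rangle^{-\eps/2}\wick{\Psi_k^2}(t,x)\|_{L^2_\omega}\les_T 1$ follows from a direct Gaussian covariance computation (or from Remark~\ref{REM:sto1a}(i) together with Sobolev embedding). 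For \eqref{LN4}--\eqref{LN5}, the summands $\{\wick{\Psi_k^2\Psi_j}\}_k$ with $j$ fixed are \emph{not} independent across $k$ (they share the factor $\Psi_j$ whenever $k\neq j$), but they remain pairwise orthogonal in $L^2_\omega$: for distinct $k,k'$ with $k,k'\neq j$, independence of $\Psi_j$ from $\wick{\Psi_k^2},\wick{\Psi_{k'}^2}$ together with the mean-zero property yields
\[
\E\bigl[\wick{\Psi_k^2}(t,x)\Psi_j(t,x)\cdot \wick{\Psi_{k'}^2}(t,x)\Psi_j(t,x)\bigr]
= \E\bigl[\wick{\Psi_k^2}\wick{\Psi_{k'}^2}\bigr]\cdot \E[\Psi_j^2] = 0,
\]
and the single diagonal term with $k=j$, namely $\wick{\Psi_j^3}/N$, contributes only $O(N^{-2})$ to the variance. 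Summing the diagonal squares then yields the desired $N^{-1}$ bound, uniformly in $j$, which is preserved by the $\A_{N,j}$-averaging in \eqref{LN5}.

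Finally, the almost sure convergence claims in \eqref{LN5a} would follow from the moment bounds \eqref{LN3}--\eqref{LN5} by Chebyshev's inequality and the Borel--Cantelli lemma: fixing any $\eta>0$ and choosing $p=p(\eps_0)>1/\eps_0$, the bounds imply
\[
\sum_{N=1}^\infty \PP\Bigl( N^{\frac12-\eps_0}\bigl\| \tfrac{1}{N}\textstyle\sum_{k=1}^N Z^{(\ell)}_{k,\cdot}\bigr\|_{L^2_T W^{-\eps,\infty}_x} > \eta\Bigr)
\les_{\eta,p,T} \sum_{N=1}^\infty N^{-p\eps_0} < \infty.
\]
The main technical subtlety is making the orthogonality calculation in the third-chaos case rigorous in the presence of the non-independent summands; this can be handled by carrying out the variance computation at the level of the regularized approximations $\wick{\Psi_{k,M}^2\Psi_{j,M}}$ and passing to the $M\to\infty$ limit via the convergence in Lemma~\ref{LEM:sto1}(i).
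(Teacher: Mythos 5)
Your proposal is correct and follows essentially the same route as the paper's proof: Sobolev embedding $W^{-\eps/2,r}\hookrightarrow W^{-\eps,\infty}$ plus Minkowski's integral inequality to reduce to an $L^p_\o$ bound, the Wiener chaos estimate (Lemma \ref{LEM:hyp}) to pass to $L^2_\o$ at the cost of $p^{\l/2}$, orthogonality of the summands (independence for $\l=2$; the vanishing of $\E[\wick{\Psi_k^2\Psi_j}\wick{\Psi_{k'}^2\Psi_j}]$ for $k\ne k'$ for $\l=3$) to gain the $N^{-1/2}$, and Chebyshev with Borel--Cantelli for \eqref{LN5a}. The only cosmetic difference is that you reduce to a pointwise-in-$(t,x)$ second-moment bound while the paper keeps the $L^2_T L^r_x$ structure and invokes Lemma \ref{LEM:sto1}\,(i) directly; both are valid.
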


\begin{proof}
We first note that, 
as explained in  Remark \ref{REM:conv2}\,(ii), 
 the convergence \eqref{LN5a} in probability
at the rate $N^{-\frac 12}$
follows from \eqref{LN3}, \eqref{LN4}, and \eqref{LN5}
with  Chebyshev's inequality.
Hence, 
 we focus on proving \eqref{LN3}, \eqref{LN4}, and \eqref{LN5}
 in the following.

We assume 
\eqref{LN1} since the same proof holds under~\eqref{LN2}.
We first look at \eqref{LN3}. 
By Sobolev's embedding (with finite $r \gg1$ such that $\eps r > 4$),
 Minkowski's integral inequality
 (for finite $p \ge r \ge 2$),  the Wiener chaos estimate (Lemma \ref{LEM:hyp}), 
the independence
of $\{Z_{k, k}^{(2)}\}_{k = 1}^N$, 
and  Lemma~\ref{LEM:sto1}\,(i), we have
\begin{align}
\begin{split}
& \bigg\| \frac{1}{N} \sum_{k = 1}^N 
 Z_{k, k}^{(2)}\, \bigg\|_{L^p_\o L_T^2 W_x^{- \eps, \infty}} 
\les  \bigg\| \frac{1}{N} \sum_{k = 1}^N \jb{\nb}^{- \frac{\eps}{2}} 
Z_{k, k}^{(2)}\, 
\bigg\|_{L^p_\o L_T^2 L_x^r}\\
&\quad \leq  \bigg\| \frac{1}{N} \sum_{k = 1}^N \jb{\nb}^{- \frac{\eps}{2}} 
Z_{k, k}^{(2)}(t, x) \bigg\|_{L_T^2 L_x^r L^p_\o} 
\le p
 \bigg\| \frac{1}{N} \sum_{k = 1}^N \jb{\nb}^{- \frac{\eps}{2}} 
Z_{k, k}^{(2)}(t, x) \bigg\|_{L_T^2 L_x^r L^2_\o} \\
& \quad =  
\frac{p}{N^\frac 12 } 
\|  \jb{\nb}^{- \frac{\eps}{2}} 
Z_{k, k}^{(2)}(t, x) \|_{L_T^2 L_x^r \A_N L^2_\o} 
\les_T   
\frac{p }{N^\frac 12 } 
\|  \jb{\nb}^{- \frac{\eps}{2}} 
Z_{k, k}^{(2)}(t, x) \|_{\A_N L^2_\o C_T L_x^r } \\
&\quad \les_T \frac{p}{N^{\frac 12}},
\end{split}
\label{LN6}
\end{align}

\noi
yielding \eqref{LN3}.

We note that \eqref{LN5} follows from \eqref{LN4}
and the definition \eqref{AN0} of the $\A_NB$-norm.
Thus, it suffices  to prove \eqref{LN4}.
Note from 
\eqref{psi3} that 
\begin{align*}
\E\big[ 
\wick{\Psi_k^2 \Psi_j} \, \wick{\Psi_{k'}^2 \Psi_{j}}
\big] = 0
\end{align*}

\noi
unless $k = k'$.
Then, the bound \eqref{LN4} follows
from proceeding 
 as in \eqref{LN6}.
\end{proof}

In the following two lemmas, 
we assume that 
$v_k \in L^{2} (\Omega; C([0, T]; H^{s} (\T^2)))$, $k \in \N$,
which is the only integrability
guaranteed by Theorem \ref{THM:GWP2}
for global-in-time solutions to the mean-field SdNLW~\eqref{MF2}.
See Subsection \ref{SUBSEC:5.3}
for a discussion under
a higher moment assumption.

\begin{lemma}
\label{LEM:LLN2}

Let $\frac 12 \le s < 1$.
Given $k \in \N$, 
let  $\Psi_k$ be the stochastic convolution defined in~\eqref{psi2} 
and let $v_k \in L^{2} (\Omega; C([0, T]; H^{s} (\T^2)))$ be a solution of the mean-field SdNLW \eqref{NLW5}.
Moreover, we assume that $\big(v_k(0), \dt v_k(0)\big)
\in L^{2} (\Omega; \H^s (\T^2))$, $k \in \N$, 
are independent and identically distributed over $k \in \N$.
Then, given $T > 0$
and small $\eps > 0$, 
we have 
\begin{align}
\bigg\| \frac{1}{N} \sum_{k = 1}^N 
\Big( \Psi_k v_k - \E [\Psi_k v_k] \Big) \bigg\|_{L_T^2
W^{-\eps, \frac 4{2- s - \eps}}_x} 
& \longrightarrow 0, 
\label{LNy2} \\
\bigg\| \frac{1}{N} \sum_{k = 1}^N
\Big( v_k^2 - \E [v_k^2] \Big) \bigg\|_{L_T^2 (L^\frac 3{2-s}_x
\cap W_x^{\eps, \frac 2{2-s-\eps}})
} & \longrightarrow 0, 
\label{LNy3} 
\end{align}

\noi
almost surely, as
 $N \to \infty$.

\end{lemma}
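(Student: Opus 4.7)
The plan is to identify the relevant i.i.d.\ structure in the summands and then invoke a strong law of large numbers in a separable Banach space. First, observe that the mean-field SdNLW \eqref{NLW5} depends on $v_k$ only through its (common) law, so by the unconditional uniqueness statement of Theorem \ref{THM:GWP2}\,(i), each $v_k$ is a deterministic measurable functional of the triple $(\xi_k, v_k^{(0)}, v_k^{(1)})$. Since these triples are i.i.d.\ across $k$, so is the sequence $(\Psi_k, v_k)_{k\in\N}$ as a family of pairs valued in the separable Banach space $C([0,T]; W^{-\eps,\infty}(\T^2)) \times C([0,T]; H^s(\T^2))$. Consequently, $(\Psi_k v_k)_k$ and $(v_k^2)_k$ are i.i.d.\ sequences valued in the separable Banach spaces $B_1 \deff L^2_T W^{-\eps, \frac{4}{2-s-\eps}}_x$ and $B_2 \deff L^2_T\bigl(L^{\frac 3{2-s}}_x \cap W^{\eps, \frac{2}{2-s-\eps}}_x\bigr)$, respectively.

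Next, I would apply the Banach-valued strong law of large numbers of Mourier (see \cite[Chapter 7]{LT}): if $(X_k)_k$ is i.i.d.\ in a separable Banach space $B$ with $\E\|X_1\|_B < \infty$, then $\frac{1}{N}\sum_{k=1}^N X_k \to \E X_1$ almost surely in $B$. Since $\E[\Psi_k v_k]$ and $\E[v_k^2]$ are independent of $k$, the two claims \eqref{LNy2} and \eqref{LNy3} reduce to verifying the Bochner integrability conditions $\E\|\Psi_1 v_1\|_{B_1} < \infty$ and $\E\|v_1^2\|_{B_2} < \infty$.

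For the first, Lemma \ref{LEM:prod}\,(ii) with $p=\infty$, together with the Sobolev embedding $H^s(\T^2) \hookrightarrow W^{\eps, \frac{4}{2-s+\eps}}(\T^2)$ (valid for $s\ge \eps$), yields the pointwise-in-$t$ bound
\[
\|\Psi_k(t) v_k(t)\|_{W^{-\eps, \frac{4}{2-s-\eps}}_x} \les \|\Psi_k(t)\|_{W^{-\eps, \infty}_x} \|v_k(t)\|_{H^s_x},
\]
so that Cauchy--Schwarz in $\o$, Lemma \ref{LEM:sto1}, and the standing assumption $v_k \in L^2(\O; C_T H^s_x)$ give $\E\|\Psi_k v_k\|_{B_1} \les T^{1/2} \|\Psi_k\|_{L^2_\o C_T W^{-\eps,\infty}_x} \|v_k\|_{L^2_\o C_T H^s_x} < \infty$. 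For the second, the Sobolev embeddings $H^s \hookrightarrow L^{\frac 6{2-s}} \cap W^{\eps, \frac 4{2-s-\eps}}$ (valid for $s\ge \frac 12$ and small $\eps$), combined with the fractional Leibniz rule (Lemma \ref{LEM:prod}\,(i)), yield $\|v_k^2(t)\|_{L^{\frac 3{2-s}}_x \cap W^{\eps, \frac 2{2-s-\eps}}_x} \les \|v_k(t)\|_{H^s_x}^2$, whence $\E\|v_k^2\|_{B_2} \les T^{1/2} \E\|v_k\|_{C_T H^s_x}^2 < \infty$.

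The main subtlety---and the precise reason for invoking Mourier's theorem rather than a direct $L^2(\O)$ orthogonality argument as in \cite{SSZZ}---is that the Banach-valued SLLN only demands $L^1(\O)$ integrability of the summands. This is essential here: Theorem \ref{THM:GWP2}\,(ii) only provides a second-moment bound on $v_k$, which is exactly what is needed to make $\E\|v_k^2\|_{B_2} < \infty$, whereas a squared $L^2(\O)$ estimate on $\frac 1N \sum_k (v_k^2 - \E[v_k^2])$ would require an $L^4(\O)$ bound on $v_k$ that is not available. The trade-off is the absence of any quantitative rate, which is why \eqref{LNy2}--\eqref{LNy3} are stated as purely qualitative almost sure convergences; a rate is recovered in Lemmas \ref{LEM:LLN4} and \ref{LEM:LLN5} under the stronger moment hypothesis imposed in Theorem \ref{THM:conv1}\,(ii).
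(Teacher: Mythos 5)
Your proof is correct and follows essentially the same route as the paper: establish that $(\Psi_k, v_k)_{k\in\N}$ is i.i.d., verify the $L^1(\O)$ Bochner integrability of $\Psi_k v_k$ and $v_k^2$ in the relevant separable Banach spaces via the product estimates and Sobolev embeddings, and invoke the strong law of large numbers in Banach spaces from \cite[Chapter 7]{LT}. Your closing observation that the $L^1$-integrability requirement is exactly what the second-moment bound on $v_k$ can supply also matches the paper's own commentary (cf.\ Remark \ref{REM:high0}).
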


\begin{proof} 
We first note that 
$(\Psi_k, v_k)$, $k \in \N$,  are independent 
and identically distributed
since
$\Psi_k$ are independent
 and identically distributed 
and so are
the initial data
$(v_k(0), \dt v_k(0))$.
In particular, we have
\begin{align}
R\deff 
\sup_{k \in \N} 
\| v_k \|_{L^2_\o C_T H^s_x}
= 
\| v_1 \|_{L^2_\o C_T H^s_x}
 < \infty.
\label{LNy1}
\end{align}

Proceeding as in \eqref{WN6} with 
  Lemma~\ref{LEM:prod}\,(ii), 
Cauchy-Schwarz's inequality,  Lemma~\ref{LEM:sto1}\,(i), 
Sobolev's inequality, 
and~\eqref{LNy1}, we have
\begin{align*}
 \E \Big[ \| \Psi_k v_k \|_{L_T^2 W^{-\eps, \frac 4{2- s - \eps}}_x}  \Big] 
 &\les T^\frac 12  \| \Psi_k \|_{L^2_\o C_T W_x^{- \eps, \infty}}
  \| v_k \|_{L^2_\o C_T  W_x^{\eps, \frac4 {2-s-\eps}}}\\
&\les T^\frac 12 
  \| v_k \|_{L^2_\o C_T  H^s_x}\\
&< \infty, 
\end{align*}

\noi
uniformly in $k \in \N$.
Hence, in view of the  independence of $\{\Psi_k v_k\}_{k \in \N}$, 
the almost sure convergence
\eqref{LNy2} follows from the strong law of large numbers 
in the Banach space setting
(\cite[Corollary~7.10 on p.\,189]{LT}).

Next, we consider  \eqref{LNy3}. 
Proceeding as in \eqref{WN5} and \eqref{WN7} with 
the fractional Leibniz rule (Lemma~\ref{LEM:prod}\,(i)), 
Sobolev's inequality, Cauchy-Schwarz's inequality, 
and  \eqref{LNy1}, we have
\begin{align}
\E \Big[ \| v_k^2 \|_{L_T^2 (L^\frac 3{2-s}_x
\cap W_x^{\eps, \frac 2{2-s-\eps}})} \Big] 
 \les T^\frac 12 \| v_k \|^2_{L^2_\o C_T H_x^{s}} < \infty, 
\label{LNy4}
\end{align}

\noi
uniformly in $k \in \N$.
Hence, in view of the  independence of $\{v_k^2\}_{k \in \N}$, 
the almost sure convergence~\eqref{LNy2} follows from the strong law of large numbers (\cite[Corollary~7.10 on p.\,189]{LT}).
\end{proof}

\begin{remark}\label{REM:high0}
\rm 

From \eqref{LNy4}, we see that 
the $L^2(\O)$-integrability assumption
for $v_k$
is indeed necessary 
for the (almost sure) convergence \eqref{LNy3};
see \cite[Corollary~7.10 on p.\,189]{LT}.

\smallskip

\end{remark}

\begin{lemma}
\label{LEM:LLN3}

Let $\frac 12  \le s < 1$
and $\{(\Psi_j, v_j)\}_{j \in \N}$ be as in Lemma \ref{LEM:LLN2}.
Then, given $T  > 0$, 
we have, for each $j \in \N$, 
\begin{align}
\bigg\| \frac 1N \sum_{k = 1}^N \Big( v_k\! \wick{\Psi_k \Psi_j} 
- \E [ v_k\Psi_k ] \Psi_j \Big) \bigg\|_{L_T^2 H_x^{s-1}} \longrightarrow 0
\label{LNx1} 
\end{align}

\noi
in probability as $N \to \infty$, 
and
\begin{align}
&\bigg\| \frac 1N \sum_{k = 1}^N \Big( v_k \!\wick{\Psi_k \Psi_j} - \E [v_k\Psi_k ] \Psi_j \Big) 
\bigg\|_{\A_{N, j}L_T^2 H_x^{s-1}} \longrightarrow 0
\label{LNx1a}
\end{align}

\noi
in probability as $N \to \infty$.

\end{lemma}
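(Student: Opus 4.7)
The plan is to first split the sum according to whether $k = j$ or $k \neq j$. Since $\wick{\Psi_k \Psi_j} = \Psi_k \Psi_j$ for $k \neq j$ by \eqref{psi3}, the summand reduces to $B_{k,j} := (v_k \Psi_k - \E[v_k \Psi_k]) \Psi_j$ in that case, while the diagonal term $B_{j,j} = v_j \wick{\Psi_j^2} - \E[v_j \Psi_j] \Psi_j$, carrying an extra factor $\frac{1}{N}$, vanishes almost surely in $L_T^2 H_x^{s-1}$ since its $L_T^2 H_x^{s-1}$-norm is almost surely finite (via the product estimate of Lemma~\ref{LEM:prod}\,(ii) combined with the regularity of $\wick{\Psi_j^2}$ and $\Psi_j$ from Lemma~\ref{LEM:sto1}). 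The crucial structural observation is that, by the i.i.d.\ assumption on the noises and initial data, the pairs $\{(v_k, \Psi_k)\}_{k \in \N}$ are i.i.d., and for $k \ne j$ the pair $(v_k, \Psi_k)$ is moreover \emph{independent} of $\Psi_j$. Consequently, conditionally on $\Psi_j$, the family $\{B_{k,j}\}_{k \ne j}$ is i.i.d.\ and mean-zero with values in the Banach space $X := L_T^2 H_x^{s-1}$.

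For the pointwise statement \eqref{LNx1}, I would verify the first-moment hypothesis of the strong law of large numbers in Banach spaces (\cite[Corollary~7.10 on p.\,189]{LT}): by Lemma~\ref{LEM:prod}\,(ii) (applied with $s \ge \frac{1}{2}$ and $\eps \le 1-s$) one has $\|v_k \wick{\Psi_k \Psi_j}\|_{H^{s-1}} \les \|v_k\|_{H^s} \|\wick{\Psi_k \Psi_j}\|_{W^{-\eps, \infty}}$, and Cauchy--Schwarz in $\o$ together with the $L^2(\O)$-integrability of $v_k$ and the all-moments bounds of Lemma~\ref{LEM:sto1} deliver $\E\bigl[ \|B_{k,j}\|_X \bigm| \Psi_j \bigr] < \infty$ for almost every realization of $\Psi_j$. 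The Banach-space LLN then yields conditional almost sure convergence of $\frac{1}{N}\sum_{k\ne j} B_{k,j}$ to zero in $X$, and a Fubini argument upgrades this to unconditional almost sure (hence in-probability) convergence, proving \eqref{LNx1}.

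For the averaged statement \eqref{LNx1a}, the strategy is instead orthogonality. Writing $A_{N,j} := \frac{1}{N}\sum_k B_{k,j}$, I plan to expand $\|A_{N,j}\|_X^2$, take expectation, and verify that all cross terms $\E\langle B_{k,j}, B_{k',j}\rangle_{H^{s-1}}$ with $k \ne k'$ vanish: either both $k, k' \ne j$, in which case the pairs $(v_k, \Psi_k)$ and $(v_{k'}, \Psi_{k'})$ decouple and each factor has zero conditional mean given $\Psi_j$, or exactly one of $k, k'$ equals $j$, and the independence of the other from $\Psi_j$ combined with its vanishing mean kills the term. The surviving diagonal part yields $\E\big[\A_{N,j}\|A_{N,j}\|_X^2\big] \les \frac{1}{N} \E\|B_{1,2}\|_X^2 + \frac{1}{N^2} \E\|B_{1,1}\|_X^2$, after which Markov's inequality concludes. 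The principal obstacle will be bounding these second moments: the product estimate forces a fourth power of $v_k$, whereas Theorem~\ref{THM:GWP2}\,(ii) provides only the second moment of $v_k$ globally. To sidestep this, I would introduce a truncation, replacing $v_k$ by $v_k^R = v_k \mathbf{1}_{\|v_k\|_{C_T H^s_x} \le R}$ (which has all moments), apply the orthogonality argument to the truncated quantity to obtain convergence in probability for each fixed $R$, and control the truncation error via Chebyshev and the available $L^2(\O)$-bound on $v_k$ before sending $R \to \infty$.
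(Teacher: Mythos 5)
Your proposal is correct but follows a genuinely different route from the paper's. The paper never conditions on $\Psi_j$ or computes second moments of the full sum: after isolating $k=j$ exactly as you do, it introduces a \emph{frequency} truncation $\P_M$, gains $M^{-\eps}$ on the high-frequency piece from the regularity gap, and on the low-frequency piece applies the Banach-space strong LLN of \cite[Corollary~7.10]{LT} to the $\Psi_j$-free sequence $\P_M v_k\Psi_{k,M}-\E[\P_M v_k\Psi_{k,M}]$ in $L^2_TL^2_x$, paying only a harmless fixed factor $M^{\eps}$ when multiplying back by $\Psi_{j,M}$. Your conditional-LLN argument for \eqref{LNx1} and your truncation-in-$R$ of $v_k$ for \eqref{LNx1a} both achieve the same end; the second device is a clean way around the obstruction the authors flag in Section~\ref{SEC:1} (that a plain orthogonality argument, as in Lemma~\ref{LEM:LLN4}, needs fourth moments of $v_k$): the truncated part contributes $O(R^2/N)$ by orthogonality while the tail is $o_R(1)$ uniformly in $N$ by the $L^2(\O)$ bound, which is exactly enough for convergence in probability without a rate. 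What the paper's route buys is that it stays entirely unconditional and pathwise-explicit; what yours buys is brevity and no frequency decomposition. Two points deserve care in a write-up. First, the notation $B_{k,j}=(v_k\Psi_k-\E[v_k\Psi_k])\Psi_j$ suggests pairing $\Psi_j$ with the negative-regularity object $v_k\Psi_k$, which is not licensed by Lemma~\ref{LEM:prod}\,(ii); all estimates must use the grouping $v_k\cdot\wick{\Psi_k\Psi_j}$ and the probabilistic definition of $\E[v_k\Psi_k]\Psi_j$ as $\E_{\o'}[v'\Psi'\Psi_j]$ (as in \eqref{WN3}), which your displayed bound does. Second, freezing $\Psi_j$ requires verifying that $\wick{\Psi_k\Psi_j}$, defined as an a.s.\ limit of regularizations on the joint space, disintegrates into a jointly measurable function of $(\Psi_k,\Psi_j)$ with $\E[v_k\wick{\Psi_k\Psi_j}\,|\,\Psi_j]=\E_{\o'}[v'\Psi']\Psi_j$; this is true but is precisely the technicality the paper's $\P_M$-truncation sidesteps. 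Neither point is a gap, and for the averaged bound \eqref{LNx1a} the diagonal term needs the small extra observation $\frac1N\big(\frac1N\sum_j\|B_{j,j}\|^2\big)^{1/2}\le N^{-3/2}\sum_j\|B_{j,j}\|$, whose expectation is $O(N^{-1/2})$ using only first moments.
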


\begin{proof}
We only prove \eqref{LNx1a} in the following, since  \eqref{LNx1} 
follows in a similar manner.

We first study the contribution to \eqref{LNx1a} from  the case $k = j$.
Let $1 < p < 2$.
Then, 
proceeding as in  \eqref{LWP5}
with H\"older's inequality, 
 Lemma~\ref{LEM:sto1}\,(i), and \eqref{LNy1}, we have 
\begin{align}
\begin{split}
\Big\| \| v_j \! \wick{\Psi_j^2} \|_{L_T^2 H_x^{s-1}} \Big\|_{L^p_\o}
&\les T^\frac 12  \| v_j \|_{L^2_\o C_T  H_x^{s}}
 \| \wick{\Psi_j^2} \|_{L^\frac{2p}{2-p}_\o C_T  W_x^{- \eps, \infty}}\\
 & \les_p 
  T^\frac 12, 
\end{split}
\label{LNx2}
\end{align}

\noi
uniformly in $j \in \N$, 
provided that $0 < \eps \le \min(1-s,  s)$.
Let $(\Psi', v')$ be an independent copy of $(\Psi_j, v_j)$.
Then, given $1 < p < 2$, 
it follows from 
Minkowski's integral inequality, 
the independence of $(\Psi', v')$ from $\Psi_j$, 
and proceeding as in \eqref{LNx2} 
with \eqref{WN3a} that 
\begin{align}
\begin{split}
 \Big\| \| \E [ v_j \Psi_j ] \Psi_j \|_{L_T^2 H_x^{s-1}} \Big\|_{L^p_\o}
& = \Big\| \| \E_{\o'} [ v' (\o')  \Psi' (\o')] \Psi_j  (\o)\|_{L_T^2 H_x^{s-1}}\Big\|_{L^p_\o}\\
& \le
 \| v'  \Psi'   \Psi_j  \|_{L^p_\o L_T^2 H_x^{s-1}} \\
&\les T^\frac 12  \| v_j \|_{L^2_\o C_T  H_x^s}
 \|\Psi' \Psi_j  \|_{L^\frac {2p}{2-p}_\o C_T  W_x^{- \eps, \infty}}\\
 & \les 
  T^\frac 12, 
\end{split}
\label{LNx2a}
\end{align}

\noi
uniformly in $j \in \N$.
Hence, 
given $L \gg1$, it follows
from Chebyshev's inequality with \eqref{LNx2} and~\eqref{LNx2a}
that 
\begin{align}
 \sum_{N = 1}^\infty \PP 
\Bigg(\bigg\| \frac 1N  \Big(v_j \wick{\Psi_j^2} - \E [v_j  \Psi_j ] \Psi_j\Big)
\bigg\|_{\A_{N, j}L_T^2 H_x^{s-1}} > \frac 1 L \Bigg)
\les_T \sum_{N = 1}^\infty \frac {L^p}{N^p}
 < \infty, 
\label{LNx2b}
\end{align}

\noi
since $p > 1$.
Therefore, by the Borel-Cantelli lemma, we conclude that 
\begin{align*}
\bigg\| \frac 1N  \Big(v_j \wick{\Psi_j^2} - \E [ v_j \Psi_j] \Psi_j\Big)
\bigg\|_{\A_{N, j}L_T^2 H_x^{s-1}} 
\too 0, 
\end{align*}

\noi
almost surely (and thus in probability) as $N \to \infty$, 
which shows \eqref{LNx1a} 
for the case $k =  j$.

Next, we study the contribution to \eqref{LNx1a} from the case $k \ne j$.
By taking the Fourier transform in $x$ (suppressing the $t$-dependence), we have
\begin{align*}
& \F_x \Big(\big( v_k \Psi_k  - \E [v_k\Psi_k ] \big) \Psi_j\Big)(n)\\
& \quad = \sum_{n = n_1 + n_2 + n_3}
\Big(\ft v_k(n_1)
\ft \Psi_k(n_2)
- \E[ \ft v_k(n_1)
\ft \Psi_k(n_2)]
\Big) \ft \Psi_j(n_3).
%\label{LZ1}
\end{align*}

\noi
Then, given $M \gg 1$ (to be chosen later), we write
\begin{align}
\frac 1N \sum_{\substack{k = 1\\k \ne j}}^N \big( v_k \Psi_k  - \E [v_k\Psi_k ] \big) \Psi_j
= \1_j + \II_j
\label{LZ2}
\end{align}

\noi
where 
$\1_j$ denotes the contribution
from the case 
\begin{align}
\max(|n|, |n_1|,  |n_2|,  |n_3|) > M.
\label{LZ3}
\end{align}

\noi
Namely,  $\II_j$ denotes the contribution from the case:
$\max(|n|, |n_1|,  |n_2|,  |n_3|) \le M$, given by 
\begin{align}
\II_j = 
\frac 1N \sum_{\substack{k = 1\\k \ne j}}^N 
\P_M \Big(
\big( \P_M v_k \Psi_{k, M}  - \E [\P_M v_k\Psi_{k, M} ] \big) \Psi_{j, M}\Big), 
\label{LZ4}
\end{align}

\noi
where 
$\Psi_{j, M} = \P_M \Psi_j$.

We first treat the first term $\1_j$.
From a slight modification of \eqref{LWP5}, we have
\begin{align}
\begin{split}
\|v_k \Psi_k \Psi_j\|_{L^2_T H^{s-1+ \eps }_x}
& \les  T^\frac 12
\| v_k \Psi_k \Psi_j\|_{C_T  W_x^{-2\eps, \frac 2{2-s-3\eps}}}\\
& \les  T^\frac 12
\|v_k\|_{C_T
 W_x^{2\eps, \frac 2{2-s-3\eps}}}
\| \Psi_k \Psi_j \|_{C_T W^{-2\eps, \infty}_x}\\
& \les  T^\frac 12
\|v_k\|_{C_T  H_x^{s-\eps}}
\| \Psi_k \Psi_j \|_{C_T W^{-2\eps, \infty}_x}.
\end{split}
\label{LZ7}
\end{align} 

\noi
Note that, thanks to \eqref{LZ3}, we can gain $M^{-\eps}$
by losing $\eps$-regularity.
Thus, from 
\eqref{LZ7}, \eqref{LZ3}, 
 Lemma~\ref{LEM:sto1}\,(ii)
(see also \eqref{mom2} in Remark \ref{REM:sto1a}\,(ii)), 
 \eqref{LZ7},
Cauchy-Schwarz's inequality, 
and~\eqref{LNy1}, 
we have
\begin{align*}
%\begin{split}
\E \Big[
\|\1_j \|_{\A_{N, j}L^2_T H^{s-1}_x}\Big]
& \les_T M^{-\eps} 
R, 
%\end{split}
%\label{LZ8}
\end{align*}

\noi
uniformly in $N \in \N$.
Then, given small $\dl > 0$, 
it follows from Markov's inequality
that 
\begin{align}
\PP\Big(\|\1_j \|_{\A_{N, j}L^2_T H^{s-1}_x}
> \dl \Big)
\le
C_T \frac{ 
R}
{\dl M^{\eps} }
< \frac \dl 3,  
\label{LZ9}
\end{align}

\noi
uniformly in $N \in \N$, 
by taking 
$M = M(\dl, 
T, 
R)\gg1 $
sufficiently large.

Next, we consider the term $\II_j$ in \eqref{LZ4}, 
where all the frequencies are bounded by $M$.
We first write $\II_j$ as 
\begin{align}
\begin{split}
\II_j & = \II^{(1)}_j + \II^{(2)}_j\\
& \hspace{-0.3mm} \deff \frac 1N \sum_{k = 1}^N 
\P_M \Big(
\big( \P_M v_k \Psi_{k, M}  - \E [\P_M v_k\Psi_{k, M} ] \big) \Psi_{j, M}\Big)\\
& \quad \, - 
\frac 1N
\P_M \Big(
\big( \P_M v_j \Psi_{j, M}  - \E [\P_M v_j\Psi_{j, M} ] \big) \Psi_{j, M}\Big).
\end{split}
\label{LZZ1a}
\end{align}

Let us first consider $\II^{(2)}_j$.
By a slight modification of \eqref{LNx2}, 
for $1 < p < 2$, we have 
\begin{align}
\begin{split}
\Big\| \| \P_M v_j \Psi_{j, M}^2 \|_{L_T^2 H_x^{s-1}} \Big\|_{L^p_\o}
&\les T^\frac 12  M^{2\eps}\| v_j \|_{L^2_\o C_T  H_x^{s}}
 \|\Psi_j \|^2_{L^\frac{4p}{2-p}_\o C_T  W_x^{- \eps, \infty}}\\
 & \les_p 
  T^\frac 12 M^{2\eps}, 
\end{split}
\label{LZZ1b}
\end{align}

\noi
uniformly in $j \in \N$.
Then, from \eqref{LZZ1b} and \eqref{LNx2a}
(which implies
\eqref{LNx2b} with an extra factor $M^{2\eps p }$ on the right-hand side)
with   the Borel-Cantelli lemma, we conclude that
\begin{align*}
\big\| \II^{(2)}_j \big\|_{\A_{N, j}L_T^2 H_x^{s-1}} 
\too 0, 
\end{align*}

\noi
almost surely (and thus in probability) as $N \to \infty$.
Namely, given small $\dl > 0$, 
there exists $N_1  = N_1(\dl) \gg 1$ such that
\begin{align}
\PP\Big(\big\|\II^{(2)}_j \big\|_{\A_{N, j}L_T^2 H_x^{s-1}} > \dl  \Big) < \frac \dl 3
\label{LZZ1c}
\end{align}

\noi
for any $N \ge N_1$.

Next, we treat $\II^{(1)}_j$ in \eqref{LZZ1a}.
From~\eqref{LNy1}
and Lemma~\ref{LEM:sto1}\,(i), we have 
\begin{align*}
\E\Big[
\|\P_M v_k \Psi_{k, M} 
\|_{L^2_T L^2_x}\Big]
& \les
M^\eps T^\frac 12 
\| v_1 \|_{L^2_\o C_T L^2_x}
\|\Psi_1\|_{L^2_\o  C_T W^{-\eps, \infty}_x}\\
& < \infty, 
\end{align*}

\noi
uniformly in $k \in \N$.
Then, 
noting
that $\P_M v_k \Psi_{k, M}  - \E [\P_M v_k\Psi_{k, M}]$, $k \in \N$, 
are independent and identically distributed, 
it follows from \cite[Corollary 7.10 on p.\,189]{LT}
that 
\begin{align}
Y_{N, M} \deff
\bigg\|\frac 1N \sum_{k = 1}^N 
\big( \P_M v_k \Psi_{k, M}  - \E [\P_M v_k\Psi_{k, M} ] \big)
\bigg\|_{L^2_T L^2_x}
\too 0, 
\label{LZZ2}
\end{align}

\noi
almost surely, as $N \to \infty$, 
which in particular implies that the convergence \eqref{LZZ2}
also holds in probability.
Thus, given small $\dl > 0$ and $K \gg1$, there exists $N_2 = N_2 (\dl, M, K) \gg 1$ such that 
\begin{align}
\PP\Big(Y_{N, M} > \dl K^{-1} \Big) < \frac \dl 6
\label{LZZ2a}
\end{align}

\noi
for any $N \ge N_2$.
By a crude estimate, we have
\begin{align}
\big\|\II_j^{(1)} \big\|_{\A_{N, j}L^2_T H^{s-1}_x}
& \le C_0  
M^\eps
Y_{N, M} 
\|\Psi_{j}\|_{\A_{N, j}C_T W^{-\eps, \infty}_x}.
\label{LZZ3}
\end{align}

\noi
Given small $\dl > 0$, $T > 0$, and $M \gg 1$, 
it follows from 
 Lemma \ref{LEM:sto1}\,(ii)
that 
there exists $K = K(\dl, M, T) \gg1$ such that 
\begin{align}
\PP\Big(C_0  
M^\eps
\|\Psi_{j}\|_{\A_{N, j}C_T W^{-\eps, \infty}_x} > K\Big)
< \frac \dl 6, 
\label{LZZ3a}
\end{align}

\noi
uniformly in $N \in \N$.
Hence, given small $\dl > 0$, 
it follows from \eqref{LZZ3}
with \eqref{LZZ2a} and \eqref{LZZ3a}
that 
we have 
\begin{align}
\begin{split}
\PP & \Big( \big\|\II_j^{(1)} \big\|_{\A_{N, j}L^2_T H^{s-1}_x}> \dl \Big) \\
& \le 
\PP\Big(Y_{N, M} > \dl K^{-1} \Big)\\
& \quad  + 
\PP\Big(\{Y_{N, M} \le  \dl K^{-1}\}
\cap 
\{C_0  
M^\eps
\|\Psi_{j}\|_{\A_{N, j}C_T W^{-\eps, \infty}_x} > K\}\Big)\\
& < \frac \dl 6 + 
\PP\Big(C_0  
M^\eps
\|\Psi_{j}\|_{\A_{N, j}C_T W^{-\eps, \infty}_x} > K\Big)\\
& < \frac \dl 3
\end{split}
\label{LZZ4}
\end{align}

\noi
for any $N \ge N_2$.

Therefore,  from \eqref{LZ2}, \eqref{LZ9}, \eqref{LZZ1a}, \eqref{LZZ1c},  and \eqref{LZZ4}, 
we conclude
 \eqref{LNx1a} 
for the case $k \ne  j$.
\end{proof}

\subsection{Proof of Theorem~\ref{THM:conv1}\,(i)}
\label{SUBSEC:5.2}

In this subsection, 
we present a proof of Theorem~\ref{THM:conv1}\,(i).

Let $\frac 45 < s < 1$.
Let 
$\big\{ \big(u_{N, j}^{(0)}, u_{N, j}^{(1)}\big)\big\}_{j = 1}^N
\in \big(\H^s (\T^2)\big)^{\otimes N}$, $N \in \N$, 
and 
$\big\{\big(u_{j}^{(0)}, u_{ j}^{(1)}\big)\big\}_{j \in \N}
\in \big(\H^s (\T^2)\big)^{\otimes \N}$
be  sequences of random functions
as in
Theorem~\ref{THM:conv1}\,(i), 
where the former converges in probability to the latter in the sense
of \eqref{IV1} and \eqref{IV2}.
In particular, we assume that 
$\big(u_{j}^{(0)}, u_{ j}^{(1)}\big)
\in L^2(\O; \H^s (\T^2))$, $j \in \N$, 
 are independent and identically distributed.

Let 
$\{ (u_{N, j}, \dt  u_{N, j})\}_{j = 1}^N$
and $\{(u_j, \dt u_j)\}_{j \in \N}$
be the global-in-time solutions to \eqref{NLW4} 
and \eqref{MF2}
with initial data $\big\{ \big(u_{N, j}^{(0)}, u_{N, j}^{(1)}\big)\big\}_{j = 1}^N$
and $\big\{\big(u_{j}^{(0)}, u_{ j}^{(1)}\big)\big\}_{j \in \N}$
 constructed in Theorems \ref{THM:GWP1}
and \ref{THM:GWP2}, respectively.
Then, set 
$v_{N, j} = u_{N, j} - \Psi_{j}$
and 
$v_j = u_j - \Psi_j$
as in \eqref{exp}
and \eqref{expj}
such that 
$\{(v_{N, j}, \dt v_{N, j})\}_{j = 1}^N
\in \big(  C (\R_+; \H^s (\T^2) )\big)^{\otimes N}$
almost surely
and 
$(v_j, \dt v_j) \in 
L^2(\O;  C ([0, T]; \H^s (\T^2) ))$
for any $T > 0$.
Then, 
by writing \eqref{NLW3} and \eqref{NLW5} in the Duhamel formulation, we have 
\begin{align}
\begin{split}
v_{N, j} 
& = S(t) \big( u_{N, j}^{(0)}, u_{N, j}^{(1)} \big) 
- \frac{1}{N} \sum_{k = 1}^N  \I\Big( 
v_{N, k}^2 v_{N, j}
+ 2 \Psi_k v_{N, k} v_{N, j}
  + v_{N, k}^2 \Psi_j \\
& 
\hphantom{XXXXXXXXXXX}
 + \wick{\Psi_k^2} v_{N, j} + 2 v_{N, k} \wick{ \Psi_k \Psi_j } 
+ \wick{\Psi_k^2 \Psi_j} \Big)
\end{split}
\label{CN1}
\end{align}

\noi
and
\begin{align}
\begin{split}
v_j 
& = S (t) \big(u_{j}^{(0)}, u_{ j}^{(1)} \big)\\
& \quad - \I\Big(
 \E [ v_j^2 ] v_j
+ 2 \E [ \Psi_j v_j ] v_j 
+ \E [ v_j^2 ] \Psi_j 
+ 2 \E [v_j  \Psi_j  ] \Psi_j \Big), 
\end{split}
\label{CN2}
\end{align}

\noi
where $S (t)$ and $\I$ are as in  \eqref{D4} and  \eqref{D5}, 
respectively.
We set 
\begin{align}
V_{N, j} =  v_{N, j} - v_j 
\label{CN3}
\end{align}

\noi
and $\vec V_{N, j} = (V_{N, j}, \dt V_{N, j})$.
By taking the difference of \eqref{CN1} and \eqref{CN2}, we have
\begin{align}
\begin{split}
V_{N, j} (t)
& = S(t)
( \vec V_{N, j} (0)) 
 - \sum_{\l = 1}^6 \I(\A_{j, \l})(t)\\
& = S(t)
\big( u_{N, j}^{(0)} - u_{j}^{(0)}, u_{N, j}^{(1)} 
- u_{j}^{(1)}\big) 
 - \sum_{\l = 1}^6 \I(\A_{j, \l})(t), 
\end{split}
\label{CN4}
\end{align}

\noi
where
$\{ \A_{j, \l} = A_{j, \l}(N)\}_{j = 1}^N$, $ \l = 1, \dots, 6$, are given by 
\begin{align}
\begin{split}
\A_{j, 1} & =  
\frac{1}{N} \sum_{k = 1}^N  
v_{N, k}^2 v_{N, j}
-  \E [ v_j^2 ] v_j, \\
\A_{j, 2} & =  
 \frac{1}{N} \sum_{k = 1}^N 
2 \Psi_k v_{N, k} v_{N, j}
- 2 \E [ \Psi_j v_j ] v_j ,\\
\A_{j, 3} & =   \frac{1}{N} \sum_{k = 1}^N 
v_{N, k}^2 \Psi_j - 
\E [ v_j^2 ] \Psi_j , \\
\A_{j, 4} & =   \frac{1}{N} \sum_{k = 1}^N 
\wick{\Psi_k^2} v_{N, j},  \\
\A_{j, 5} & =   \frac{1}{N} \sum_{k = 1}^N 
 2 v_{N, k} \wick{ \Psi_k \Psi_j } 
 -  2 \E [ v_j \Psi_j  ] \Psi_j, \\
\A_{j, 6} & =   \frac{1}{N} \sum_{k = 1}^N 
 \wick{\Psi_k^2 \Psi_j} .
\end{split}
\label{CN5}
\end{align}

Fix a target time $T \gg 1$
and we work on the interval $[0, T]$.
We point out the following;
while 
we take $\frac 45 < s < 1$
to prove  global-in-time convergence
(namely, on $[0, T]$ for any given $T\gg1$),
 the convergence argument presented below works for 
$\frac 12 \le s < 1$, 
{\it assuming} that 
$v_j$ exists on $[0, T]$
(more precisely, $(v_j, \dt v_j) \in 
L^2(\O;  C ([0, T]; \H^s (\T^2) ))$).

Since 
the parameters $s, \eps, T$ are fixed, we often drop the dependence
on these parameters.
In the remaining part of this subsection, 
we fix small $\dl > 0$.

\medskip

\noi
$\bullet$ {\bf Step 1:}
In this first step, we  establish the  convergence \eqref{IV4}
for each $T \gg 1$:
\begin{align}
\| \vec V_{N, j} \|_{X^s_N(T)}
= \| \vec v_{N, j}  - \vec v_j \|_{X^s_N(T)}
\too 0  
\label{CN5a}
\end{align}

\noi
in probability as $N \to \infty$, 
where $X^s_N(T)$ is as in \eqref{XN1}.

Before we start estimating the terms on the right-hand side of  \eqref{CN4}, 
let us first  prepare 
data sets indexed by $N \gg1 $. 
As we noted in the proof of Lemma \ref{LEM:LLN2}, 
the solutions $\vec v_j = (v_j, \dt v) \in C([0, T]; \H^s(\T^2))$
 to~\eqref{NLW5}, $j \in \N$,  are 
 independent 
and identically distributed.
In particular, we have 
\begin{align}
\sup_{j \in \N} 
\| \vec v_j \|_{L^2_\o C_T \H^s_x}
= 
\| \vec v_1 \|_{L^2_\o C_T \H^s_x}
 < \infty.
\label{BD1}
\end{align}

\noi
Given $N \in \N$ and $R \ge 1$, define
the set $\O_{1, N} (R) \subset \O$
by setting
\begin{align}
\O_{1, N}(R) = \Big\{ \o \in \O: 
\| \vec v_j \|_{X_N^s(T)}\le R\Big\}.
\label{BD2}
\end{align}

\noi
Then, it follows from  Chebyshev's  inequality, 
\eqref{XN1}, 
and \eqref{BD1} that 
there exists $R = R(\dl) \gg1 $, independent of $N \in \N$,  such that 
\begin{align}
\PP\big( (\O_{1, N}(R))^c \big) < \frac \dl 8.
\label{BD4}
\end{align}

\noi
Given small $\eta_0 > 0$,  
define $\O_{2, N}(\eta_0)\subset \O$ by setting 
\begin{align}
\O_{2, N}(\eta_0) = \big\{ \o \in \O: 
\|\vec  V_{N, j} (0)\|_{\A_N \H^s_x} 
= \big\|\big( u_{N, j}^{(0)} - u_{j}^{(0)}, u_{N, j}^{(1)} 
- u_{j}^{(1)}\big) \big\|_{\A_N \H^s_x} \le 
\eta_0\Big\}.
\label{BD5}
\end{align}

\noi
Then, from 
the assumption \eqref{IV2} on the initial data, 
there exists  $N_1 = N_1(\eta_0, \dl) \in \N$ such that
\begin{align}
\PP\big( (\O_{2, N}(\eta_0))^c \big) < \frac \dl 8.
\label{BD5a}
\end{align}

\noi
for any $N \ge N_1$.

Next, we discuss controls on the stochastic terms.
Given $K \ge 1 $, define the set $\O_{3, N}(K) \subset \O$ by setting 
\begin{align}
\O_{3, N}(K) = \Big\{ \o \in \O: \| \Psi_j \|_{\A_{N} C_T W_x^{-\eps, \infty}}
+ \| \wick{\Psi_k\Psi_j} \|_{\A^{(2)}_NC_T W^{-\eps, \infty}_x}  \le K\Big\}.
\label{BD6}
\end{align}

\noi
Then, it follows from Lemma \ref{LEM:sto1}\,(ii) 
and  Chebyshev's  inequality
that 
there exists $K  = K(\dl) \gg1 $, independent of $N \in \N$,  such that 
\begin{align}
\PP\big( (\O_{3, N}(K))^c \big) < \frac \dl 8 .
\label{BD7}
\end{align}

\noi
Given small $\eta > 0$, 
define the set 
$\O_{4, N}(\eta) \subset \O$ by setting 
\begin{align}
\O_{4, N}(\eta)
= \Big\{ \o \in \O:  
\|(\vv, \pmb \Psi) \|_{\Y_N} \le \eta
 \Big\}, 
\label{BDX}
\end{align}

\noi
where the $\Y_N$-norm is defined by 
\begin{align}
\begin{split}
\|(\vv, \pmb \Psi) \|_{\Y_N} &  =  \bigg\| \frac{1}{N} \sum_{k = 1}^N 
\wick{\Psi_k^2} \bigg\|_{ L_T^2 W_x^{- \eps, \infty}}
+  \bigg\| \frac 1N \sum_{k = 1}^N 
\wick{\Psi_k^2 \Psi_j} \bigg\|_{ \A_{N, j}L_T^2 W_x^{- \eps, \infty}}\\
& \quad + \bigg\| \frac{1}{N} \sum_{k = 1}^N 
\Big( \Psi_k v_k - \E [\Psi_k v_k] \Big) \bigg\|_{L_T^2
W_x^{-\eps, \frac 4{2- s - \eps}}}\\
& \quad 
+ 
\bigg\| \frac{1}{N} \sum_{k = 1}^N
\Big( v_k^2 - \E [v_k^2] \Big) \bigg\|_{L_T^2 (L^\frac 3{2-s}_x
\cap W_x^{\eps, \frac 2{2-s-\eps}})} \\
& \quad
+ \bigg\| \frac 1N \sum_{k = 1}^N \Big( v_k \!\wick{\Psi_k \Psi_j} - \E [v_k\Psi_k ] \Psi_j \Big) 
\bigg\|_{\A_{N, j}L_T^2 H_x^{s-1}}.
 \end{split}
\label{BD8} 
\end{align}

\noi
Then, 
it follows from Lemmas \ref{LEM:LLN1}, 
 \ref{LEM:LLN2},  and \ref{LEM:LLN3}
 with Chebyshev's inequality
that there exists $N_2 = N_2(\eta, \dl) \in \N$ such that 
\begin{align}
\PP\big( (\O_{4, N}(\eta))^c \big) < \frac \dl 8
\label{BD9}
\end{align}

\noi
for $N \ge N_2$.

\medskip

We are now ready to estimate the terms on the right-hand side of \eqref{CN4}.
Let $J \subset [0, T]$ be an interval
with $|J| \le 1$.
With a slight abuse of notation, 
we define
 $X^s_N(J)$ by setting 
\begin{align}
\begin{split}
\|  \vec v_j\|_{X^s_N(J)}
& %= \|   v_{N, j}\|_{X^s_{N, j}(\tau)}
= 
\| ( v_j, \dt  v_j)\|_{\A_N C_J  \H^s_x}\\
& = 
\| ( v_{j}, \dt   v_{j})\|_{\A_{N, j} C_J  \H^s_x}, 
\end{split}
\label{CN5b}
\end{align}

\noi
where  the $\A_NB$-norm is as in \eqref{AN0}.
In the following, we work on 
$\O_N (R, K, \eta_0, \eta)$ defined by 
\begin{align}
\O_N  (R, K, \eta_0, \eta)= \O_{1, N}(R)
\cap \O_{2, N}(\eta_0)
 \cap \O_{3, N}(K) \cap \O_{4, N}(\eta), 
\label{BD10}
\end{align}

\noi
where 
$\O_{1, N}(R)$, $\O_{2, N}(\eta_0)$, $\O_{3, N}(K)$, and $\O_{4, N}(\eta)$
are as in \eqref{BD2}, \eqref{BD5}, \eqref{BD6}, and \eqref{BDX}, 
respectively.
Before proceeding further, we note that,  given small $\dl > 0$, 
by choosing 
 $R = R(\dl)$
 and $K = K(\dl)$ sufficiently large, 
 it follows from 
\eqref{BD10}
with  \eqref{BD4}, \eqref{BD5a}, \eqref{BD7}, and \eqref{BD9}
that 
\begin{align}
\PP\big((\O_N  (R, K, \eta_0, \eta))^c\big) < \frac \dl 2
\label{BD11}
\end{align}

\noi
for any $N \ge \max (N_1, N_2)$, 
where $N_1(\eta_0, \dl)$
and  $N_2(\eta, \dl)$
are as in \eqref{BD5a}
and \eqref{BD9}, respectively.

From \eqref{CN5} with \eqref{CN3}, we have 
\begin{align}
\begin{split}
\A_{j, 1} &=  \frac{1}{N} \sum_{k = 1}^N V_{N, k} v_{N, k} v_{N, j} 
+  \frac{1}{N} \sum_{k = 1}^N  v_k V_{N, k} v_{N, j} \\
& \quad +  \frac{1}{N} \sum_{k = 1}^N  v_k v_k V_{N, j} 
 + \frac{1}{N} \sum_{k = 1}^N \Big( v_k^2 - \E [ v_k^2 ] \Big)  v_j.
\end{split}
\label{CN6}
\end{align}

\noi
Using \eqref{CN3}, 
we replace each occurrence of $v_{N, \l}$ in \eqref{CN6} by $V_{N, \l} + v_\l$, $\l = j, k$.
Then, 
by proceeding as in \eqref{LWP2}
(while keeping the $k$-summation inside for the  last term
in \eqref{CN6})
with~\eqref{BD2} and \eqref{BDX}, 
we have 
\begin{align}
\begin{split}
\| \vec \I (\A_{j , 1})\|_{X^s_{N}(J)} 
&\les |J|
\Big( \| \vec V_{N, k}\|_{X^s_{N}(J)}^2 
+ \| \vec v_{k}\|_{X^s_{N}(J)}^2 \Big)
\|\vec  V_{N, j}\|_{X^s_{N}(J)}
\\
&\quad + |J|^\frac 12 
\bigg\| \frac{1}{N} \sum_{k = 1}^N 
\Big( v_k^2 - \E [ v_k^2 ] \Big) \bigg\|_{L_J^2 L^\frac 3{2- s}_x} 
\| \vec v_j \|_{X^s_{N}(J)}\\
&\le |J|
\Big( \| \vec V_{N, k}\|_{X^s_{N}(J)}^2 
+ R^2  \Big)
\|\vec  V_{N, j}\|_{X^s_{N}(J)}
+|J|^\frac 12 \eta  R, 
\end{split}
\label{CN7}
\end{align}

\noi
provided that $\frac 12 \le s$ ($< 1$).

From \eqref{CN5} with \eqref{CN3}, we have 
\begin{align}
\begin{split}
\A_{j, 2}  
& =  
 \frac{1}{N} \sum_{k = 1}^N 
2 \Psi_k V_{N, k} v_{N, j}
+  \frac{1}{N} \sum_{k = 1}^N 
2 \Psi_k v_{k} V_{N, j}\\
& \quad 
+ 
 \frac{2}{N} \sum_{k = 1}^N 
\Big( \Psi_k v_{k}
-  \E [ \Psi_k v_k ]\Big) v_j .
\end{split}
\label{CN7a}
\end{align}

\noi
Then, 
by proceeding as in \eqref{LWP3} (see also \eqref{WN6})
with Lemma \ref{LEM:prod}\,(ii), 
Sobolev's inequality, 
\eqref{BD2}, 
\eqref{BD6}, 
and  \eqref{BDX}, 
we have 
\begin{align}
\begin{split}
 \| & \vec \I(\A_{j, 2})\|_{X^s_{N}(J)}\\
& 
\les 
|J|
\| \Psi_k \|_{\A_{N} C_J W_x^{-\eps, \infty}}
\Big(\| \vec  V_{N, k} \|_{X^s_N(J)} 
+ \| \vec  v_k \|_{X^s_N(J)} \Big)
\| \vec  V_{N, j} \|_{X^s_{N}(J)} \\
& \quad + 
|J|^\frac 12  \bigg\| \frac{1}{N} \sum_{k = 1}^N 
\Big( \Psi_k v_{k}
-  \E [ \Psi_k v_k ]\Big)\bigg\|_{L^2_J W^{-\eps, \frac 4{2- s - \eps}}_x}
\| v_j \|_{\A_{N, j}C_J W^{\eps, \frac 4{2- s - \eps}}_x}\\
& 
\le 
|J|
K 
\Big(\| \vec  V_{N, k} \|_{X^s_N(J)} 
+ R \Big)
\| \vec  V_{N, j} \|_{X^s_{N}(J)}  + 
 |J|^\frac 12 \eta  R, 
\end{split}
\label{CN8}
\end{align}

\noi
provided that $3 \eps \le s$ ($< 1$).
Similarly, by 
writing
\begin{align}
\A_{j, 3} & =  
 \frac{1}{N} \sum_{k = 1}^N 
 V_{N, k} v_{N, k} \Psi_j
 +  \frac{1}{N} \sum_{k = 1}^N 
v_k  V_{N, k} \Psi_j
+ 
 \frac{1}{N} \sum_{k = 1}^N 
\Big(v_{k}^2 
- \E [ v_k^2 ] \Big) \Psi_j 
\label{CN8a}
\end{align}

\noi
and proceeding as in \eqref{CN8} (see also \eqref{WN7}) with 
\eqref{BD2}, 
\eqref{BD6}, 
and 
\eqref{BDX}, 
we have 
\begin{align}
\begin{split}
 \| & \vec \I(\A_{j, 3})\|_{X^s_{N}(J)}\\
& 
\les 
|J|
\Big(\| \vec  V_{N, k} \|_{X^s_N(J)} 
+ \| \vec  v_k \|_{X^s_N(J)} \Big)
\| \vec  V_{N, k} \|_{X^s_{N}(J)} 
\| \Psi_j \|_{\A_{N} C_J W_x^{-\eps, \infty}}
\\
& \quad + 
|J|^\frac 12
\bigg\| \frac{1}{N} \sum_{k = 1}^N 
\Big( v_k^2 - \E [ v_k^2 ] \Big) \bigg\|_{L_J^2 
 W_x^{\eps, \frac 2{2-s-\eps}}}
\| \Psi_j \|_{\A_{N} C_J W^{-\eps, \infty}_x}\\
& 
\le
|J|
K 
\Big(\| \vec  V_{N, k} \|_{X^s_N(J)} 
+ R \Big)
\| \vec  V_{N, j} \|_{X^s_{N}(J)}  + 
 |J|^\frac 12 \eta  K.
\end{split}
\label{CN9}
\end{align}

Proceeding as in \eqref{LWP5}
with 
\eqref{BD2}
and 
\eqref{BDX}, 
we have
\begin{align}
\begin{split}
 \|  \vec \I(\A_{j, 4})\|_{X^s_{N}(J)}
& \les |J|^{\frac 12} \bigg\|
\frac{1}{N} \sum_{k = 1}^N 
\wick{\Psi_k^2} 
\bigg\|_{L^2_T W^{-\eps, \infty}_x}  
\Big(\| \vec  V_{N, j} \|_{X^s_{N}(J)} 
+ 
\| \vec  v_j \|_{X^s_{N}(J)} 
\Big)\\
& \les 
 |J|^{\frac 12} 
\| \vec  V_{N, j} \|_{X^s_{N}(J)} 
+   |J|^{\frac 12}\eta  R, 
\end{split}
\label{CN10}
\end{align}

\noi
provided that $\eps \le s  \le 1-\eps$. 
By writing
\begin{align}
\A_{j, 5} & =   
\frac{2}{N} \sum_{k = 1}^N 
 V_{N, k} \wick{ \Psi_k \Psi_j } 
 + 
\frac{2}{N} \sum_{k = 1}^N 
\Big( v_{k} \wick{ \Psi_k \Psi_j } 
 -   \E [ v_k \Psi_k  ] \Psi_j\Big)
\label{CN11}
\end{align}

\noi
and 
proceeding as in \eqref{LWP5}
with 
\eqref{BD6}
and
\eqref{BDX}, 
we have
\begin{align}
\begin{split}
 \|  \vec \I(\A_{j, 5})\|_{X^s_{N}(J)}
& \les |J|
\| \wick{\Psi_k\Psi_j} \|_{\A^{(2)}_{N} C_J W^{-\eps, \infty}_x}  
\| \vec  V_{N, k} \|_{X^s_{N}(J)}  +   |J| ^\frac 12 \eta \\
& \le |J|
K 
\| \vec  V_{N, k} \|_{X^s_{N}(J)}  +   |J| ^\frac 12 \eta , 
\end{split}
\label{CN12}
\end{align}

\noi
where $A^{(2)}_N B$ is as in \eqref{AN2}.
Lastly, from \eqref{CN5}
and 
 \eqref{BDX}, 
we have 
\begin{align}
\begin{split}
 \|  \vec \I(\A_{j, 6})\|_{X^s_{N}(J)}
& \les |J|^{\frac 12} 
\bigg\| 
\frac{1}{N} \sum_{k = 1}^N 
 \wick{\Psi_k^2 \Psi_j} \bigg\|_{\A_{N, j} L^2_J H^{s-1}_x}\\
& \le  |J|^{\frac 12}  \eta .
\end{split}
\label{CN13}
\end{align}

Therefore, 
putting  \eqref{CN4}
(but starting at time $t_0$), 
\eqref{CN7}, 
\eqref{CN8}, \eqref{CN9},  \eqref{CN10}, 
\eqref{CN12}, and \eqref{CN13}
together
with $J = [t_0, t_1]\subset [0, T]$
such that $ |J|\le 1$, we obtain
\begin{align}
\begin{split}
\|\vec V_{N, j}\|_{X^s_{N}(J)}
& \le 
C_1 
\|\vec  V_{N, j} (t_0) \|_{\A_N \H^s_x} \\
& \quad 
+ C_2 |J|^\frac 12 
K 
\Big( \| \vec V_{N, k}\|_{X^s_{N}(J)}^2 
+ R^2  \Big)
\|\vec  V_{N, j}\|_{X^s_{N}(J)}\\
& \quad 
+C_3 |J|^\frac 12 \eta  (R + K).
\end{split}
\label{CN14}
\end{align}

\noi
Suppose that 
\begin{align}
\|\vec  V_{N, j}\|_{X^s_{N}(J)}
\le 1
\label{CN15}
\end{align}

\noi
for some interval $J \subset [0, T]$.
Then, 
it follows from \eqref{CN14} that 
there exists $\tau = \tau (R, K) \in (0, 1]$ such that 
\begin{align}
\begin{split}
\|\vec V_{N, j}\|_{X^s_{N}(J)}
& \le 
2 C_1 
\|\vec  V_{N, j} (t_0) \|_{\A_N \H^s_x} 
+2 C_3 \eta, 
\end{split}
\label{CN16}
\end{align}

\noi
provided that $|J| \le \tau$.

While the claimed convergence-in-probability
 \eqref{CN5a}
follows from 
\eqref{BD11}, 
the discussion above, 
\eqref{BD5} (with sufficiently small $\eta_0> 0$), 
and a standard continuity argument, 
we present some details for readers' convenience, 
especially similar iterative arguments
are needed in Step 2 and in Subsection \ref{SUBSEC:6.2}.

Fix small $\dl > 0$.
With $R = R(\dl)\gg 1$ and $K = K(\dl) \gg1$
as in \eqref{BD4} and \eqref{BD7}, 
let $\tau = \tau(R(\dl), K(\dl)) = \tau(\dl)> 0$
as in \eqref{CN16}.
Then, write $[0, T]$ as
\begin{align*}
[0, T] = \bigcup_{\l = 0}^{[T/\tau]} J_\l,
\end{align*}

\noi 
where $J_\l = [\l \tau, (\l+1)\tau] \cap [0, T]$.
Let us work on the first interval $J_0 = [0, \tau]$.
Note  from \eqref{BD5} that 
\begin{align}
\|\vec  V_{N, j} (0) \|_{\A_N \H^s_x} 
\le \eta_0 \ll 1.
\label{CN17}
\end{align}

\noi
Then, in view of  the continuity 
of $\|V_{N, j}\|_{X^s_{N}([0, t])}$
in $t$, 
it follows from a standard continuity argument 
with \eqref{CN15}, \eqref{CN16}, and \eqref{CN17}
that 
\begin{align*}
\|\vec V_{N, j}\|_{X^s_{N}(J_0)}
\le  2 C_1 \eta_0 + 2C_3 \eta, 
\end{align*}

\noi
which in turn implies
\begin{align}
\|\vec  V_{N, j} (\tau ) \|_{\A_N \H^s_x} 
\le  2 C_1 \eta_0 + 2C_3 \eta \ll 1.
 \label{CN18}
\end{align}

\noi
As a consequence, 
it follows from \eqref{CN18},  \eqref{CN16}, and a continuity argument 
 that 
\begin{align}
\|\vec  V_{N, j} (2\tau ) \|_{\A_N \H^s_x} 
\le 
\|\vec V_{N, j}\|_{X^s_{N}(J_1)}
\le 
 (2C_1)^2 \eta_0  + 2C_3 ( 2C_1 +1)  \eta \ll1, 
 \label{CN18a}
\end{align}

\noi
which allows us to iterate the argument on the third interval $J_2$.

After $\l +1$ steps, we obtain
\begin{align}
\begin{split}
\|\vec  V_{N, j} ((\l +1) \tau ) \|_{\A_N \H^s_x} 
\le 
\|\vec V_{N, j}\|_{X^s_{N}(J_\l )}
& \le 
 (2C_1)^{\l+1} \eta_0 +  
2C_3  \sum_{\l' = 0}^{\l} (2C_1)^{\l'} \cdot \eta\\
& \le  (2C_1)^{\l+1} \eta_0 +  
2C_3  
\frac { (2C_1)^{\l+1}}{2C_1 - 1}
\eta.
\end{split}
\label{CN19}
\end{align}

\noi
In order to cover the entire interval $[0, T]$, 
recalling that $\tau$ depends only on $\dl$, 
it suffices to choose 
 $\eta_0 = \eta_0(\dl)$ and  $\eta = \eta(\dl) $ sufficiently small
such that 
\begin{align}
\bigg(\Big[\frac T\tau\Big]+1\bigg)\bigg( (2C_1)^{[\frac T\tau]+1} \eta_0 +  
2C_3  
\frac { (2C_1)^{[\frac T\tau]+1}}{2C_1 - 1}
 \eta\bigg) \le \dl \ll 1.
\label{CN20}
\end{align}

\noi
Then, it follows from 
\eqref{CN19} and \eqref{CN20} that 
\begin{align}
%\|\vec V_{N, j}\|_{\A_N C_T \H^s_x}
\|\vec V_{N, j}\|_{X^s_N([0, T])}
\le
\sum_{\l = 0}^{[T/\tau]} 
\|\vec V_{N, j}\|_{X^s_{N}(J_\l )}
\le \dl, 
\label{CN21}
\end{align}

\noi
on $\O_N  (R, K, \eta_0, \eta)$
defined in \eqref{BD10}, 
for any 
 $N \ge \max \big(N_1(\eta_0(\dl), \dl), N_2(\eta(\dl), \dl)\big)$, 
 where $N_1$ and $N_2$ are as in 
 \eqref{BD5a} and  \eqref{BD9}, respectively.
In view of 
\eqref{BD11}, 
this proves
the 
convergence-in-probability 
\eqref{CN5a}.

\medskip

\noi
$\bullet$ {\bf Step 2:}
In this part, we establish the  convergence
\eqref{IV3}
for each {\it fixed}  $j \in \N$:
\begin{align}
\| \vec V_{N, j} \|_{C_T \H_x^s}
= \| \vec v_{N, j}  - \vec v_j \|_{C_T \H_x^s}
\too 0  
\label{DN1}
\end{align}

\noi
in probability
as $N \to \infty$.

We first go over the construction of data sets indexed by $N \gg 1$.
Fix $j \in \N$.
\noi
Given  $R \ge 1$, define
the set $\O_{5, j} (R) \subset \O$
by setting
\begin{align}
\O_{5, j}(R) = \Big\{ \o \in \O: 
\| \vec v_j \|_{C_T\H^s_x}\le R\Big\}.
\label{DN1a}
\end{align}

\noi
Then, it follows from  Chebyshev's  inequality
and \eqref{BD1} that 
there exists $R = R(\dl) \gg1 $, independent of $j \in \N$,  such that 
\begin{align}
\PP\big( (\O_{5, j}(R))^c \big) < \frac \dl {10}.
\label{DN1b}
\end{align}

\noi
Given small $\eta_0 > 0$,  
define $\O_{6, j, N}(\eta_0)\subset \O$ by setting 
\begin{align}
\O_{6, j, N}(\eta_0)
= \Big\{ \o \in \O: 
\| \vec V_{N, j} (0)\|_{ \H^s_x} 
\le
\eta_0\Big\}.
\label{BDX0}
\end{align}

\noi
Then, from 
the assumption \eqref{IV1} on the initial data, 
there exists  $N_3 = N_3(j, \eta_0, \dl) \in \N$ such that
\begin{align}
\PP\big( (\O_{6, j, N}(\eta_0))^c \big) < \frac \dl {10}.
\label{DN1c}
\end{align}

\noi
for any $N \ge N_3$.
Given small $\eta > 0$, 
define the $\O_{7, N}(\eta)\subset \O$
by setting
\begin{align}
\O_{7, N} (\eta)= \Big\{\o \in \O: \| \vec V_{N, k}\|_{X^s_{N}(J)} \le \eta\Big\}
\label{BDX3}
\end{align}

\noi
Then, it follows 
from the convergence-in-probability \eqref{CN5a}
established in 
Step 1 that, given $\dl > 0$, 
there exists $N_4 = N_4(\eta, \dl) \in \N$ such that 
\begin{align}
\PP\big( (\O_{7, N}(\eta))^c \big) < \frac \dl {10}
\label{BDX4}
\end{align}

\noi
for any $N \ge N_4$.
Given $j \in \N$ and $K \ge 1 $, define the set $\O_{8, j, N}(K) \subset \O$ by setting 
\begin{align}
\O_{8, j, N}(K) = \Big\{ \o \in \O: 
 \| \Psi_j \|_{C_T W^{-\eps, \infty}_x} 
 + 
 \| \wick{\Psi_k\Psi_j} \|_{\A_{N, k}C_T W^{-\eps, \infty}_x}  \le K\Big\}.
\label{BDX5}
\end{align}

\noi
Then, it follows from Lemma \ref{LEM:sto1}\,(i) 
(for the uniform (in $j, k$) moment bound
on the $C_T W^{-\eps, \infty}_x$-norm of $\wick{\Psi_k\Psi_j}$)
and  Chebyshev's  inequality
that 
there exists $K  = K(\dl) \gg1 $, independent of $j, N \in \N$,  such that 
\begin{align}
\PP\big( (\O_{8, j, N}(K))^c \big) < \frac \dl {10} .
\label{BDX6}
\end{align}

\noi
Given small $\eta > 0$, 
define the set 
$\O_{9, j, N}(\eta) \subset \O$ by setting 
\begin{align}
\O_{9, j, N}(\eta)
= \Big\{ \o \in \O:  
\|(\vv, \pmb \Psi) \|_{ \Y_{j, N}} \le \eta
 \Big\}, 
\label{BDX7}
\end{align}

\noi
where the $\Y_{j, N}$-norm is defined by 
\begin{align}
\begin{split}
\|(\vv, \pmb \Psi) \|_{ \Y_{j, N}}
& =  \bigg\| \frac 1N \sum_{k = 1}^N 
\wick{\Psi_k^2 \Psi_j} \bigg\|_{ L_T^2 W_x^{- \eps, \infty}}\\
& \quad 
+ \bigg\| \frac 1N \sum_{k = 1}^N \Big( v_k\! \wick{\Psi_k \Psi_j} 
- \E [ v_k\Psi_k ] \Psi_j \Big) \bigg\|_{L_T^2 H_x^{s-1}} .
\end{split}
\label{BDX2}
\end{align}

\noi
Then, 
it follows from Lemmas \ref{LEM:LLN1} and \ref{LEM:LLN3}
that there exists $N_5 = N_5(j, \eta, \dl) \in \N$ such that 
\begin{align}
\PP\big( (\O_{9, j, N}(\eta))^c \big) < \frac \dl {10}
\label{BDX9}
\end{align}

\noi
for $N \ge N_5$.

\medskip

We now turn to estimating the terms
on the right-hand side of \eqref{CN4}
for {\it fixed} $j \in \N$, 
which follows from 
a straightforward modification of the estimates in Step 1.
In the following, we work on 
$\O_{j, N} (R, K, \eta_0,\eta)$ defined by 
\begin{align}
\begin{split}
\O_{j, N}  (R, K, \eta_0, \eta)
& = \O_N  (R, K, \eta_0, \eta)\cap 
\O_{5, j}(R)\cap 
\O_{6, j, N}(\eta_0)\\
& \quad \cap 
 \O_{7, N}(\eta) \cap \O_{8, j, N}(K) \cap \O_{9, j, N}(\eta).
\end{split}
\label{BDX10}
\end{align}

\noi
where 
$\O_N  (R, K, \eta_0, \eta)$, 
$\O_{5, j}(R)$, 
$\O_{6, j, N}(\eta_0)$, 
$ \O_{7, N}(\eta)$, $\O_{8, j, N}(K)$, and  $\O_{9, j, N}(\eta)$
are 
 as in \eqref{BD10}, 
 \eqref{DN1a}, 
 \eqref{BDX0}, 
 \eqref{BDX3}, \eqref{BDX5}, and \eqref{BDX7}, 
 respectively.
Then,   given small $\dl > 0$, 
by choosing 
 $R = R(\dl)$
 and $K = K(\dl)$ sufficiently large, 
 it follows from~\eqref{BDX10}
with \eqref{BD11},  
\eqref{DN1b}, \eqref{DN1c}, 
 \eqref{BDX4}, \eqref{BDX6}, and \eqref{BDX9}
that 
\begin{align}
\PP\big((\O_{j, N}  (R, K, \eta))^c\big) < \dl
\label{BDX11}
\end{align}

\noi
for any $N \ge N_*$, where
\begin{align}
N_* = \max 
\Big(
N_1(\eta_0, \dl), 
 N_2(\eta, \dl), 
N_3(j, \eta_0, \dl), 
N_4(\eta, \dl), N_5(j, \eta, \dl)\big)\Big).
\label{BDX12}
\end{align}

\noi
Here, $N_\l$, $\l = 1, \dots, 5$, 
are as in 
 \eqref{BD5a}
\eqref{BD9}, 
\eqref{DN1c}, 
\eqref{BDX4}, 
and 
\eqref{BDX9}, 
respectively.

Let $J \subset [0, T]$ be an interval
with $|J| \le 1$.
Proceeding as in \eqref{CN7}
with 
\eqref{CN6}, 
\eqref{BD2}, 
\eqref{BDX}, \eqref{BD8}, 
\eqref{DN1a}, 
and
\eqref{BDX3}, 
we have 
\begin{align}
\begin{split}
\| \vec \I (\A_{j , 1})\|_{C_J \H^s_x}
&\les 
|J|
\Big( \| \vec V_{N, k}\|_{X^s_{N}(J)}^2 
+ \| \vec v_{k}\|_{X^s_{N}(J)}^2 \Big)
\|\vec  V_{N, j}\|_{C_J \H^s_x}
\\
&\quad + 
|J|
\| \vec V_{N, k}\|_{X^s_{N}(J)}
\Big( \| \vec V_{N, k}\|_{X^s_{N}(J)}
+ \| \vec v_{k}\|_{X^s_{N}(J)} \Big)
\|\vec  v_j \|_{C_J \H^s_x}
\\
&\quad + |J|^\frac 12 
\bigg\| \frac{1}{N} \sum_{k = 1}^N 
\Big( v_k^2 - \E [ v_k^2 ] \Big) \bigg\|_{L_J^2 L^\frac 3{2- s}_x} 
\| \vec v_j \|_{C_J \H^s_x}\\
&\les |J|
 R^2 
\|\vec  V_{N, j}\|_{C_J \H^s_x}
+|J|^\frac 12 
\eta R^2  , 
\end{split}
\label{DN2}
\end{align}

\noi
provided that $\frac 12 \le s$ ($< 1$).
Proceeding as in \eqref{CN8}
with 
\eqref{CN7a}, 
\eqref{BD2}, 
\eqref{BD6}, 
\eqref{BDX}, 
\eqref{BD8}, 
\eqref{DN1a}, 
and 
\eqref{BDX3}, 
we have
\begin{align}
\begin{split}
 \| & \vec \I(\A_{j, 2})\|_{C_J \H^s_x}\\
& 
\les 
|J|
\| \Psi_k \|_{\A_{N} C_J W_x^{-\eps, \infty}}
\Big(\| \vec  V_{N, k} \|_{X^s_N(J)} 
+ \| \vec  v_k \|_{X^s_N(J)} \Big)
\| \vec  V_{N, j} \|_{C_J \H^s_x}\\
& \quad + 
|J|
\| \Psi_k \|_{\A_{N} C_J W_x^{-\eps, \infty}}
\| \vec  V_{N, k} \|_{X^s_N(J)} 
\| \vec  v_j \|_{C_J \H^s_x}\\
& \quad + 
|J|^\frac 12  \bigg\| \frac{1}{N} \sum_{k = 1}^N 
\Big( \Psi_k v_{k}
-  \E [ \Psi_k v_k ]\Big)\bigg\|_{L^2_J W^{-\eps, \frac 4{2- s - \eps}}_x}
\| \vec v_j \|_{C_J \H^s_x}\\
& 
\les 
|J|
K R
\| \vec  V_{N, j} \|_{C_J \H^s_x}
+  |J|^\frac 12 
\eta  K R, 
\end{split}
\label{DN3}
\end{align}

\noi
provided that $3 \eps \le s$ ($< 1$).
Similarly, 
proceeding as in \eqref{CN9}
with 
\eqref{CN8a}, 
we have 
\begin{align}
\begin{split}
 \| & \vec \I(\A_{j, 3})\|_{C_J \H^s_x}\\
& 
\les 
|J|
\Big(\| \vec  V_{N, k} \|_{X^s_N(J)} 
+ \| \vec  v_k \|_{X^s_N(J)} \Big)
\| \vec  V_{N, k} \|_{X^s_{N}(J)} 
\| \Psi_j \|_{ C_J W_x^{-\eps, \infty}}
\\
& \quad + 
|J|^\frac 12
\bigg\| \frac{1}{N} \sum_{k = 1}^N 
\Big( v_k^2 - \E [ v_k^2 ] \Big) \bigg\|_{L_J^2 
 W_x^{\eps, \frac 2{2-s-\eps}}}
\| \Psi_j \|_{ C_J W^{-\eps, \infty}_x}\\
& 
\les
 |J|^\frac 12 \eta K R.
\end{split}
\label{DN4}
\end{align}

\noi
Proceeding as in \eqref{CN10}
with \eqref{CN5}, 
we have
\begin{align}
\begin{split}
 \|  \vec \I(\A_{j, 4})\|_{C_J \H^s_x}
& \les |J|^{\frac 12} \bigg\|
\frac{1}{N} \sum_{k = 1}^N 
\wick{\Psi_k^2} 
\bigg\|_{L^2_T W^{-\eps, \infty}_x}  
\Big(\| \vec  V_{N, j} \|_{C_J \H^s_x}
+ 
\| \vec  v_j \|_{C_J \H^s_x}
\Big)\\
& \les 
 |J|^{\frac 12} 
\| \vec  V_{N, j} \|_{C_J \H^s_x}
+   |J|^{\frac 12}\eta  R, 
\end{split}
\label{DN5}
\end{align}

\noi
provided that $\eps \le s  \le 1-\eps$. 
Proceeding as in \eqref{CN12} with \eqref{CN11}
and \eqref{BDX5}, 
we have
\begin{align}
\begin{split}
 \|  \vec \I(\A_{j, 5})\|_{C_J \H^s_x}
& \les |J|
\| \wick{\Psi_k\Psi_j} \|_{\A_{N, k} C_J W^{-\eps, \infty}_x}  
\| \vec  V_{N, k} \|_{X^s_{N}(J)} 
+   |J| ^\frac 12 \eta \\
& \les   |J| ^\frac 12 \eta K. 
\end{split}
\label{DN6}
\end{align}

\noi
Lastly, from \eqref{CN5}
and 
\eqref{BDX7}, 
we have 
\begin{align}
\begin{split}
 \|  \vec \I(\A_{j, 6})\|_{C_J \H^s_x}
& \les |J|^{\frac 12} 
\bigg\| 
\frac{1}{N} \sum_{k = 1}^N 
 \wick{\Psi_k^2 \Psi_j} \bigg\|_{ L^2_J H^{s-1}_x}\\
& \le  |J|^{\frac 12}  \eta .
\end{split}
\label{DN7}
\end{align}

Therefore, 
putting  \eqref{CN4}
(but starting at time $t_0$), 
\eqref{DN2}, 
\eqref{DN3}, 
\eqref{DN4}, 
\eqref{DN5},
\eqref{DN6},  
and \eqref{DN7}
together
with $J = [t_0, t_1]\subset [0, T]$
such that $ |J|\le 1$, we obtain
\begin{align}
\begin{split}
\|\vec V_{N, j}\|_{C_J \H^s_x}
& \le 
C_4 
\|\vec V_{N, j} (t_0) \|_{ \H^s_x} 
+ C_5 
 |J|^\frac 12 
 R (K+R)
\|\vec  V_{N, j}\|_{C_J \H^s_x}\\
& \quad 
+C_6 |J|^\frac 12  \eta R(K + R).
\end{split}
\label{DN8}
\end{align}

\noi
Suppose that 
\begin{align*}
\|\vec  V_{N, j}\|_{C_J \H^s_x}
\le 1
\end{align*}

\noi
for some interval $J \subset [0, T]$.
Then, 
it follows from \eqref{DN8} that 
there exists $\tau = \tau (R, K) \in (0, 1]$ such that 
\begin{align*}
\|\vec V_{N, j}\|_{C_J \H^s_x}
& \le 
2 C_4 
\|\vec  V_{N, j} (t_0) \|_{\H^s_x} 
+2 C_6 \eta, 
%\label{DN8a}
\end{align*}

\noi
provided that $|J| \le \tau$.
Then, 
by recalling that $\tau$ depends only on $\dl$
and  by choosing
 $\eta_0 = \eta_0(\dl)$ and  $\eta = \eta(\dl) $ sufficiently small, 
it follows from 
a slight modification of the iterative argument in Step~1
with 
\eqref{BDX0}
that 
\begin{align*}
\|\vec V_{N, j}\|_{C_T \H^s_x}
\le \dl
\end{align*}

\noi
on $\O_{j, N}  (R, K, \eta_0, \eta)$, 
satisfying 
\eqref{BDX11}, 
for any 
 $N \ge N_*$, 
 where 
$N_*$ is as in  
\eqref{BDX12}.
 This proves
the 
convergence-in-probability 
\eqref{DN1}, 
and therefore
 concludes the proof of  Theorem \ref{THM:conv1}\,(i).

\subsection{On the convergence rate
under a higher moment assumption}
\label{SUBSEC:5.3}

In this subsection, 
we present a proof of Theorem \ref{THM:conv1}\,(ii)
on a convergence rate under the higher moment assumption~\eqref{IV6a}:
\begin{align}
v_j \in L^6 (\Omega; C([0, T]; H^{s} (\T^2))), \qquad j \in \N
\label{PN1}
\end{align}

\noi
for some $T > 0$ and $\frac 12 \le s < 1$.
Given  $\big(v_j^{(0)}, v_j^{(1)}\big) \in L^6 (\Omega; \H^s (\T^2) )$, 
Proposition \ref{PROP:LWP2}
guarantees~\eqref{PN1}
only for a short time.
We, however,  point out
that, 
as in Subsection \ref{SUBSEC:5.2}, 
the convergence argument presented below
works on $[0, T]$ for any given $T > 0$, 
{\it assuming}  that~\eqref{PN1} 
(together with \eqref{IV5} and \eqref{IV6}) holds.

As compared to the presentation
in Subsection \ref{SUBSEC:5.2}, 
the only difference appears on 
how we handle the terms in \eqref{BD8}
and \eqref{BDX2}.
In the following, we first 
 establish 
suitable replacements
of 
Lemmas~\ref{LEM:LLN2} and \ref{LEM:LLN3}, 
providing the convergence rate of $N^{-\frac 12}$.

The following lemma establishes a replacement of 
 Lemma \ref{LEM:LLN3} 
 (in the spirit of Lemma~\ref{LEM:LLN1}) under the higher moment assumption on $v_j$.

\begin{lemma}\label{LEM:LLN4}
Let $\frac 12  \le s < 1$ and $T > 0$.
Let $\{(\Psi_j, v_j)\}_{j \in \N}$ be as in Lemma \ref{LEM:LLN2}.
Moreover, assume that 
\begin{align}
v_j \in L^p (\Omega; C([0, T]; H^{s} (\T^2)))
\label{PN3a}
\end{align}

\noi
for some $p > 2$.
Then, 
we have
\begin{align}
\sup_{j \in \N} \bigg\| \frac 1N \sum_{k = 1}^N \Big( v_k\! \wick{\Psi_k \Psi_j} 
- \E [ v_k\Psi_k ] \Psi_j \Big) \bigg\|_{L^2_\o L_T^2 H_x^{s-1}} 
& \les_T \frac 1{N^\frac 12}, 
\label{PN2} \\
\bigg\| \frac 1N \sum_{k = 1}^N \Big( v_k \!\wick{\Psi_k \Psi_j} - \E [v_k\Psi_k ] \Psi_j \Big) 
\bigg\|_{L^2_\o\A_{N, j}L_T^2 H_x^{s-1}} 
& \les_T \frac 1{N^\frac 12}, 
\label{PN3}
\end{align}

\noi
uniformly in  $N \in \N$, 
where $\A_N B$ is as in \eqref{AN0}.
As a consequence,
for each $T > 0$, we have 
\begin{align}
\begin{split}
\text{for each fixed $j \in \N:$}
\quad 
\bigg\| \frac 1N \sum_{k = 1}^N \Big( v_k\! \wick{\Psi_k \Psi_j} 
- \E [ v_k\Psi_k ] \Psi_j \Big) \bigg\|_{ L_T^2 H_x^{s-1}} 
& \too 0 , \\
\bigg\| \frac 1N \sum_{k = 1}^N \Big( v_k \!\wick{\Psi_k \Psi_j} - \E [v_k\Psi_k ] \Psi_j \Big) 
\bigg\|_{\A_{N, j}L_T^2 H_x^{s-1}} 
& \too 0 
\end{split}
\label{PN4}
\end{align}

\noi
in probability
at the rate $N^{-\frac 12 }$
as $N \to \infty$
in the sense of Definition \ref{DEF:conv1}.

\end{lemma}

\begin{proof}
As in the proof of Lemma \ref{LEM:LLN1}, 
we focus on proving \eqref{PN2}
and \eqref{PN3}, since \eqref{PN4}
follows from 
\eqref{PN2}
and \eqref{PN3} with Chebyshev's inequality.

In the following, we only consider \eqref{PN3} since \eqref{PN2} follows
from a similar consideration.
From a slight modification of the proof of Lemma \ref{LEM:LLN3}, 
it is easy to see that 
the contribution from the case $k = j$
satisfies  \eqref{PN3}. 
Thus, we only consider the case $k \ne j$ in the following.

Set
\begin{align*}
X_{k, j} = v_k \Psi_k \Psi_j -  \E [v_k\Psi_k ] \Psi_j
\end{align*}

\noi
Then, 
from the mutual independence of $\{(\Psi_k, v_k)\}_{k \in \N}$, % from $\Psi_j$ for $k \ne j$, 
we have
\begin{align}
\E[X_{k, j}]
 = 
\E\big[v_k  \Psi_k  -  \E [\Psi_k v_k]\big] \cdot \E[ \Psi_j]
 = 0
\label{PN5}
\end{align}

\noi
and 
\begin{align}
\E[X_{k, j}\cdot X_{k', j}]
 = 
\E\big[(v_k  \Psi_k  -  \E [v_k \Psi_k ])(v_{k'}  \Psi_{k'}  -  \E [v_{k'}\Psi_{k'} ])\big] \cdot \E[ \Psi_j^2]
 = 0
\label{PN6}
\end{align}

\noi
for any $k \ne k'$ with $k, k' \ne j$.

A slight modification of \eqref{LNx2} 
with \eqref{PN3a} yields
\begin{align}
\begin{split}
\Big\| \| v_k \Psi_k \Psi_j \|_{L_T^2 H_x^{s-1}} \Big\|_{L^2_\o}
&\les_T  \| v_k \|_{L^p_\o C_T  H_x^{s}}
 \|  \Psi_k \Psi_j \|_{L^\frac{2p}{p-2}_\o C_T  W_x^{- \eps, \infty}}\\
 & \les 1,  
\end{split}
\label{PN7}
\end{align}

\noi
uniformly in $j, k \in \N$.
Similarly, 
a minor modification of \eqref{LNx2a} yields
\begin{align}
\begin{split}
 \Big\| \| \E [ v_k \Psi_k ] \Psi_j \|_{L_T^2 H_x^{s-1}} \Big\|_{L^2_\o}
 & \les_T 1, 
\end{split}
\label{PN8}
\end{align}

\noi
uniformly in $j, k \in \N$.
Hence, proceeding as in \eqref{LN6} with 
\eqref{PN5}, \eqref{PN6}, 
\eqref{PN7},  and \eqref{PN8}, 
we have 
\begin{align*}
\bigg\| \frac 1N \sum_{\substack{k = 1\\k \ne j}}^N X_{k, j}
\bigg\|_{L^2_\o\A_{N, j}L_T^2 H_x^{s-1}}
\le \frac 1{N^\frac 12}
\|  X_{k, j}\|_{\A_{N}^{(2)}L^2_\o L_T^2 H_x^{s-1}}
\les_T \frac 1{N^\frac 12}, 
%\label{PN9}
\end{align*}

\noi
where $\A_{N}^{(2)}B$ is as in \eqref{AN2}.
\end{proof}

In the proof of Lemma \ref{LEM:LLN4}, 
we crucially used the fact that the space-time norm
is $L^2$-based.
Since the terms treated in Lemma \ref{LEM:LLN2} 
involve the $L^r_x$-base Sobolev spaces with $r > 2$, 
we can not proceed as in the proof of 
Lemma \ref{LEM:LLN4}. 
We instead make the observation that 
the terms 
treated in Lemma \ref{LEM:LLN2} 
only appear in 
\eqref{CN6}, \eqref{CN7a}, 
and \eqref{CN8a}:
\begin{align}
\begin{split}
& \frac{1}{N} \sum_{k = 1}^N \Big( v_k^2 - \E [ v_k^2 ] \Big)  v_j, \qquad 
 \frac{2}{N} \sum_{k = 1}^N 
\Big( \Psi_k v_{k}
-  \E [ \Psi_k v_k ]\Big) v_j \\
& \text{and}\qquad 
 \frac{1}{N} \sum_{k = 1}^N 
\Big(v_{k}^2 
- \E [ v_k^2 ] \Big) \Psi_j 
\end{split}
\label{PPN0}
\end{align}

\noi
which do not involve 
the difference $V_{N, j}$ defined in \eqref{CN3}.
The following lemma
directly bounds
the $L^2(\O)$-norms
of the terms in \eqref{PPN0}, 
replacing Lemma \ref{LEM:LLN2}.

\begin{lemma}\label{LEM:LLN5}
Let $\frac 12  \le s < 1$
and $T > 0$.
Let  $\{(\Psi_j, v_j)\}_{j \in \N}$ be as in Lemma \ref{LEM:LLN2}.
Moreover, assume~\eqref{PN1}\textup{:} 
\begin{align}
v_j \in L^6 (\Omega; C([0, T]; H^{s} (\T^2))).
\label{PPN0x}
\end{align}

\noi
Then, 
we have
\begin{align}
\sup_{j \in \N}\bigg\| \frac{1}{N} \sum_{k = 1}^N 
\Big( \Psi_k v_k - \E [\Psi_k v_k] \Big) v_j
\bigg\|_{L^2_\o L_T^2 H^{s-1}_x} 
& \les_T \frac 1{N^\frac 12}, 
\label{PPN1} \\
\sup_{j \in \N}\bigg\| \frac{1}{N} \sum_{k = 1}^N
\Big( v_k^2 - \E [v_k^2] \Big) v_j
\bigg\|_{L^2_\o L_T^2 H^{s-1}_x} &\les_T \frac 1{N^\frac 12}, 
\label{PPN2} \\
\sup_{j \in \N} \bigg\| \frac{1}{N} \sum_{k = 1}^N
\Big( v_k^2 - \E [v_k^2] \Big) \Psi_j
\bigg\|_{L^2_\o L_T^2 H^{s-1}_x} & 
\les_T \frac 1{N^\frac 12}, 
\label{PPN2a} \\
\bigg\| \frac{1}{N} \sum_{k = 1}^N 
\Big( \Psi_k v_k - \E [\Psi_k v_k] \Big) v_j
\bigg\|_{L^2_\o \A_{N, j} L_T^2 H^{s-1}_x} & \les_T \frac 1{N^\frac 12}, 
\label{PPN3} \\
 \bigg\| \frac{1}{N} \sum_{k = 1}^N
\Big( v_k^2 - \E [v_k^2] \Big) v_j
\bigg\|_{L^2_\o \A_{N, j}L_T^2 H^{s-1}_x} & \les_T \frac 1{N^\frac 12}, 
\label{PPN4} \\
\bigg\| \frac{1}{N} \sum_{k = 1}^N
\Big( v_k^2 - \E [v_k^2] \Big) \Psi_j
\bigg\|_{L^2_\o \A_{N, j}L_T^2 H^{s-1}_x} & \les_T \frac 1{N^\frac 12}, 
\label{PPN4a} 
\end{align}

\noi
uniformly in  $N \in \N$.
As a consequence,
for each $T > 0$, we have, 
for each fixed $j \in \N$, 
\begin{align*}
& \bigg\| \frac{1}{N} \sum_{k = 1}^N 
\Big( \Psi_k v_k - \E [\Psi_k v_k] \Big) v_j
\bigg\|_{ L_T^2 H^{s-1}_x} 
+ \bigg\| \frac{1}{N} \sum_{k = 1}^N
\Big( v_k^2 - \E [v_k^2] \Big) v_j
\bigg\|_{ L_T^2 H^{s-1}_x}\\
& 
\quad +   \bigg\| \frac{1}{N} \sum_{k = 1}^N
\Big( v_k^2 - \E [v_k^2] \Big) \Psi_j
\bigg\|_{L_T^2 H^{s-1}_x}
\too 0
\end{align*}

\noi
in probability
at the rate $N^{-\frac 12 }$
as $N \to \infty$
in the sense of Definition \ref{DEF:conv1}, 
and 
\begin{align*}
& \bigg\| \frac{1}{N} \sum_{k = 1}^N 
\Big( \Psi_k v_k - \E [\Psi_k v_k] \Big) v_j
\bigg\|_{ \A_{N, j} L_T^2 H^{s-1}_x} 
+ 
 \bigg\| \frac{1}{N} \sum_{k = 1}^N
\Big( v_k^2 - \E [v_k^2] \Big) v_j
\bigg\|_{ \A_{N, j}L_T^2 H^{s-1}_x} \\
& \quad
+ 
\bigg\| \frac{1}{N} \sum_{k = 1}^N
\Big( v_k^2 - \E [v_k^2] \Big) \Psi_j
\bigg\|_{ \A_{N, j}L_T^2 H^{s-1}_x} \too0
\end{align*}

\noi
in probability
at the rate $N^{-\frac 12 }$
as $N \to \infty$.

\end{lemma}

\begin{proof}
As in the proof of Lemma \ref{LEM:LLN1}, 
we focus on proving \eqref{PPN1}-\eqref{PPN4a}
 since the second claim on convergence in probability at the rate $N^{-\frac 12 }$
follows from 
\eqref{PPN1}-\eqref{PPN4a}
 with Chebyshev's inequality.

Since the claim follows from a slight modification of the proof of Lemma \ref{LEM:LLN4}, 
we keep the discussion brief.
We only consider \eqref{PPN4}  since \eqref{PPN1}, \eqref{PPN2}, 
\eqref{PPN2a}, \eqref{PPN3}, 
and \eqref{PPN4a} follow
from a similar consideration.
We first note that, by arguing as in  
 the proof of Lemma~\ref{LEM:LLN3}, 
the contributions from the case $k = j$
can be treated easily and thus we omit details.
In the following, 
we assume $k \ne j$.

Set
\begin{align}
Y_{k, j}= \Big( v_k^2 - \E [v_k^2] \Big) v_j
\label{PPN5}
\end{align}

\noi
Then, 
from the mutual independence of $\{(\Psi_k, v_k)\}_{k \in \N}$, 
we have
\begin{align}
\E[Y_{k, j}] = 
\E[Y_{k, j}\cdot Y_{k', j}]= 0
\label{PPN6}
\end{align}

\noi
for any $k \ne k'$ with $k, k' \ne j$;
see \eqref{PN5} and \eqref{PN6}.

Proceeding as in \eqref{LWP2} with %Lemma \ref{LEM:sto1}\,(i)
 \eqref{PPN0x},  we have
\begin{align}
\begin{split}
\Big\| \|  v_k^2 v_j \|_{L_T^2 H_x^{s-1}} \Big\|_{L^2_\o}
&\les_T 
  \| v_k \|^2_{L^6_\o C_T  H_x^{s}}
   \| v_j \|_{L^6_\o C_T  H_x^{s}}\\
 & \les 1,  
\end{split}
\label{PPN7}
\end{align}

\noi
uniformly in $j, k \in \N$.
Similarly, 
a minor modification of \eqref{LNx2a} yields
\begin{align}
\begin{split}
\Big\| \|\E[  v_k^2 ]v_j \|_{L_T^2 H_x^{s-1}} \Big\|_{L^2_\o}
 & \les 1,  
\end{split}
\label{PPN8}
\end{align}

\noi
uniformly in $j, k \in \N$.
Then, the claim \eqref{PPN4} follows 
from proceeding as in the proof of Lemma~\ref{LEM:LLN4}
with \eqref{PPN5}, \eqref{PPN6}, \eqref{PPN7}, and \eqref{PPN8}.
We omit details.
\end{proof}

With Lemmas \ref{LEM:LLN4} and 
\ref{LEM:LLN5}, 
Theorem \ref{THM:conv1}\,(ii)
follows from 
a  modification of the argument presented in Subsection \ref{SUBSEC:5.2}.
More precisely, 
we need to prove the following two convergence results under the assumption \eqref{PN1}
together with 
the assumed convergence
in probability at the rate of $N^{-\frac 12 }$ for
initial data \eqref{IV5} and \eqref{IV6}:

\smallskip

\begin{itemize}
\item[(i)]

in Step 1, instead of \eqref{CN5a}, we
prove 
\begin{align}
\| \vec V_{N, j} \|_{X^s_N(T)}
=  \| \vec v_{N, j}  - \vec v_j \|_{X^s_N(T)}
\too 0  
\label{BZ0a}
\end{align}

\noi
in probability at the rate $N^{-\frac 12 }$ as $N \to \infty$
in the sense of Definition \ref{DEF:conv1}.

\medskip
\item[(ii)]
 in Step 2, 
instead  \eqref{DN1}, 
we prove, for each {\it fixed}  $j \in \N$:
\begin{align}
\| \vec V_{N, j} \|_{C_T \H_x^s}
=  \| \vec v_{N, j}  - \vec v_j \|_{C_T \H_x^s}
\too 0  
\label{BZ0b}
\end{align}

\noi
in probability  at the rate $N^{-\frac 12 }$
as $N \to \infty$ 
in the sense of Definition \ref{DEF:conv1}.
\end{itemize}

\smallskip

In Step 1, 
we first need to replace
$\O_{2, N}(\eta)$ 
and 
$\O_{4, N}(\eta)$ in 
 \eqref{BD5}
and \eqref{BDX}, respectively, as follows.
Given  $K_1 \ge 1$,  
define $\wt \O_{2, N}(K_1)\subset \O$ by setting 
\begin{align}
\wt \O_{2, N}(K_1) = \big\{ \o \in \O: 
N^{\frac 12 } \|\vec  V_{N, j} (0)\|_{\A_N \H^s_x} 
\le 
K_1 \Big\}.
\label{BZ1a}
\end{align}

\noi
Then, it follows from 
\eqref{IV6}
that, given $\dl > 0$,  there exist 
$N_6 = N_6(\dl) \in \N$ and  $K_1  =K_1(\dl) \gg1$, 
 such that 
\begin{align}
\PP\big( (\wt \O_{2, N}(K_1))^c \big) < \frac \dl 4
\label{BZ1b}
\end{align}

\noi
for $N \ge N_6$, 
which replaces    
 \eqref{BD5a}.
Given  $K_2 \ge 1$,  
define $\wt \O_{4, N}(K_2)\subset \O$ by setting 
\begin{align}
\wt \O_{4, N}(K_2)
= \Big\{ \o \in \O:  
N^\frac 12 \|(\vv, \pmb \Psi) \|_{\wt \Y_N} \le K_2 
 \Big\}
\label{BZ1}
\end{align}

\noi
for $K_2 \ge 1$, where the $\wt\Y_N$-norm is defined by 
\begin{align*}
\|(\vv, \pmb \Psi) \|_{\wt \Y_N} 
&  =  \bigg\| \frac{1}{N} \sum_{k = 1}^N 
\wick{\Psi_k^2} \bigg\|_{ L_T^2 W_x^{- \eps, \infty}}
+  \bigg\| \frac 1N \sum_{k = 1}^N 
\wick{\Psi_k^2 \Psi_j} \bigg\|_{ \A_{N, j}L_T^2 W_x^{- \eps, \infty}}\\
& \quad + \bigg\| \frac{1}{N} \sum_{k = 1}^N 
\Big( \Psi_k v_k - \E [\Psi_k v_k] \Big)v_j  \bigg\|_{\A_{N, j}L_T^2
H_x^{s-1}}\\
& \quad 
+ 
\bigg\| \frac{1}{N} \sum_{k = 1}^N
\Big( v_k^2 - \E [v_k^2] \Big) v_j \bigg\|_{\A_{N, j}L_T^2H_x^{s-1}}\\
& \quad 
+ 
\bigg\| \frac{1}{N} \sum_{k = 1}^N
\Big( v_k^2 - \E [v_k^2] \Big) \Psi_j \bigg\|_{\A_{N, j}L_T^2H_x^{s-1}}\\
& \quad
+ \bigg\| \frac 1N \sum_{k = 1}^N \Big( v_k \!\wick{\Psi_k \Psi_j} - \E [v_k\Psi_k ] \Psi_j \Big) 
\bigg\|_{\A_{N, j}L_T^2 H_x^{s-1}}.
%\label{BZ2} 
\end{align*}

\noi
Then, 
it follows  from Lemmas \ref{LEM:LLN1}, 
 \ref{LEM:LLN4}, and  \ref{LEM:LLN5}
 that, given $\dl > 0$,  there exists $K_2  =K_2(\dl) \gg1$, 
 independent of $N \in \N$, 
 such that 
\begin{align}
\PP\big( (\wt \O_{4, N}(K_2))^c \big) < \frac \dl 4, 
\label{BZ3}
\end{align}

\noi
for any $N \in \N$, 
which replaces    
 \eqref{BD9}.
Here, 
we used the fact that, 
in  
 Lemmas \ref{LEM:LLN1}, 
 \ref{LEM:LLN4}, and  \ref{LEM:LLN5}, 
 we proved the $L^2(\O)$-bounds,
 which allows us to obtain \eqref{BZ3}
for any $N \in \N$ 
(not only for $N \ge N_\dl$ for some $N_\dl \in \N$)
via Chebyshev's inequality.

In the following, we work on 
\begin{align*}
\wt \O_N  (R, K, \eta_0, \eta, K_1, K_2)=
\O_N  (R, K, \eta_0, \eta)
\cap \wt \O_{2, N}(K_1)
\cap \wt \O_{4, N}(K_2), 
%\label{BZ4}
\end{align*}

\noi
where 
$ \O_N  (R, K, \eta_0, \eta)$
is as  in 
\eqref{BD10}.
It follows from 
\eqref{BD11}, 
\eqref{BZ1b}, 
and  \eqref{BZ3}
that there exists $N_* = N_*(\eta_0, \eta, \dl) \in \N$ such that 
\begin{align}
\PP\big((\wt \O_N  (R, K, \eta_0, \eta, K_1, K_2))^c\big) <  \dl
\label{BZ5}
\end{align}

\noi
for any $N \ge N_*$.
Moreover, 
from a slight modification of \eqref{CN7}, \eqref{CN8}, 
\eqref{CN9}, \eqref{CN10}, 
\eqref{CN12}, and \eqref{CN13} with \eqref{BZ1}, 
we have 
\begin{align}
\begin{split}
N^\frac 12 \|\vec V_{N, j}\|_{X^s_{N}(J)}
& \le 
C_1 
N^\frac 12 \|\vec  V_{N, j} (t_0) \|_{\A_N \H^s_x} \\
& \quad 
+ C_2 |J|^\frac 12 
K 
\Big( \| \vec V_{N, k}\|_{X^s_{N}(J)}^2 
+ R^2  \Big)\cdot 
N^\frac 12  \|\vec  V_{N, j}\|_{X^s_{N}(J)}\\
& \quad 
+C_3 |J|^\frac 12 R  K_2
\end{split}
\label{BZ6}
\end{align}

\noi
for any $N \ge N_*$, 
where  $J = [t_0, t_1]\subset [0, T]$
such that $ |J|\le 1$, 
which replaces
\eqref{CN14}.

From 
 \eqref{CN21}, we have 
\begin{align*}
\|\vec  V_{N, j}\|_{X^s_{N}(J)}
\le \|\vec  V_{N, j}\|_{X^s_{N}([0, T])}
\le 1
%\label{BZ7}
\end{align*}

\noi
for any interval $J \subset [0, T]$.
Then, 
it follows from \eqref{BZ6} that 
there exists $\tau = \tau (R(\dl), K(\dl), K_2(\dl)) = \tau(\dl)\in (0, 1]$ such that 
\begin{align}
N^\frac 12 \|\vec V_{N, j}\|_{X^s_{N}(J)}
& \le 
2 C_1 
N^\frac 12 
\|\vec  V_{N, j} (t_0) \|_{\A_N \H^s_x} 
+2 C_3 , 
\label{BZ8}
\end{align}

\noi
provided that $|J| \le \tau$.
Compare \eqref{BZ8} with \eqref{CN16}.
Hence, by a slight modification of the iterative argument in 
\eqref{CN18a}  and 
\eqref{CN19} with  \eqref{CN20}, 
it follows from  \eqref{BZ8} with \eqref{BZ1a} that 
\begin{align}
\begin{split}
N^\frac 12 \|\vec V_{N, j}\|_{X^s_N([0, T])}
& \le
\sum_{\l = 0}^{[T/\tau]} 
N^{\frac 12} \|\vec V_{N, j}\|_{X^s_{N}(J_\l )}\\
& \le 
\bigg(\Big[\frac T\tau\Big]+1\bigg)\bigg( (2C_1)^{[\frac T\tau]+1} K_1 +  
2C_3  
\frac { (2C_1)^{[\frac T\tau]+1}}{2C_1 - 1}
\bigg). 
\end{split}
\label{BZ9}
\end{align}

\noi
Recalling that 
$\tau \in (0, 1]$ depends only on 
$R(\dl)$, $K(\dl)$, and $K_2(\dl)$, 
we conclude from \eqref{BZ9} with~\eqref{BZ5}
that the desired convergence 
\eqref{BZ0a} in probability at the rate $N^{-\frac 12}$
holds.

In Step 2, we need to show \eqref{BZ0b}
for given  $j \in \N$.
Using 
Lemmas \ref{LEM:LLN1}, 
\ref{LEM:LLN4},  and
\ref{LEM:LLN5}
for fixed $j \in \N$
together with the assumed convergence 
\eqref{IV5} in probability at the rate $N^{-\frac 12}$ of initial data, 
we can easily implement required modifications
as in Step 1 presented above
and thus we omit details.
See Subsection~\ref{SUBSEC:6.2}
for an example of an argument where we obtain a convergence  rate
(in a slightly different setting).
This concludes the proof of Theorem~\ref{THM:conv1}\,(ii).

\begin{remark}
\rm 
Let $T \gg 1$.
In Theorem \ref{THM:conv1}\,(ii), 
we only assume the existence of a solution 
$\{(u_j, \dt u_j)\}_{j \in \N}$
to the mean-field SdNLW \eqref{MF2}
on the time interval $[0, T]$ along with the higher moment assumption 
\eqref{IV6a}.
In particular, 
we did not assume existence
of a solution
$\{ (u_{N, j}, \dt  u_{N, j})\}_{j = 1}^N$
 to \HLS \eqref{NLW4} 
on the time interval $[0, T]$
(whose local-in-time existence
is guaranteed by Theorem \ref{THM:GWP1}\,(i)).
We point out that an iterative application
of the local-in-time convergence argument
together with 
the blowup alternative
\eqref{blow1}
guarantees existence of 
the solution $\{ (u_{N, j}, \dt  u_{N, j})\}_{j = 1}^N$
 to \HLS \eqref{NLW4} 
on the time interval $[0, T]$
for sufficiently large $N = N(\o, T)\gg 1$.
See
\cite{LLOT}
for  an argument, 
where 
existence 
(with a control on the norm)
of a solution to a limiting equation, 
say on a time interval $[0, T]$, 
guarantees existence
(and a control on the norm)
of converging solutions on the same interval.

\end{remark}

\section{Mean-field limit of invariant Gibbs dynamics} 
\label{SEC:6}

In this section, 
we present a proof of 
Theorem~\ref{THM:conv2}\,(iii)
on convergence of the invariant Gibbs dynamics for 
\HLS
\eqref{SdNLW1}
to the limiting invariant dynamics 
for the mean-field SdNLW \eqref{MFx}, 
given by \eqref{phi1}.
In Subsection \ref{SUBSEC:6.1}, 
we go over the construction of the initial data set
on an appropriate probability space $(\O', \F', \PP')$.
In Subsection \ref{SUBSEC:6.2}, 
we then establish convergence of the 
invariant Gibbs dynamics for \HLS 
\eqref{SdNLW1}.

\subsection{On the Gibbsian initial data}
\label{SUBSEC:6.1}

In this subsection, 
we go over the construction of the initial data set
stated in 
Theorem \ref{THM:conv2}\,(iii), 
which follows from the following proposition.

\begin{proposition}
\label{PROP:rand}
Let $m > 0$.
 Then, there exist a probability space $(\O_1, \F_1, \PP_1)$ and random variables 
 $\big\{ \phi^{(0)}_{j} \big\}_{j \in \N}$ and 
 $\vec \psi_N = \{\psi_{N, j}\}_{j = 1}^N$, $N \in \N$,  on 
 $(\O_1, \F_1, \PP_1)$ such that 
 the family $\big\{ \phi^{(0)}_j \big\}_{j \in \N}$ is independent
 and 
\begin{align}
&\Law \big(\phi^{(0)}_j\big) = \mu_1 \ \  \text{for all } j \in \N 
\quad \text{and}\quad 
\Law (\vec \psi_N )= \rho_N ,
\label{rand1}
\end{align}

\noi
where $\mu_1$
is the massive Gaussian free field with mass $m > 0$
defined  in \eqref{gauss0} 
and $\rho_N$
is  the $O(N)$ linear sigma model %coupled $\Phi^4_2$-measure 
defined in \eqref{Gibbsx}.
Furthermore, we have
\begin{align}
\sup_{j =1, \dots, N}\Big\|  \big\| \psi_{N, j} - \phi^{(0)}_j \big\|_{H^1_x}\Big\|_{L^2(\O_1)}
 \les N^{-\frac 12 } .
\label{rand2}
\end{align}

\end{proposition}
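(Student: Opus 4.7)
The plan is to invoke the Talagrand-type transport estimate from \cite{SSZZ, DS} (Lemma \ref{LEM:DS}) to produce a good coupling between $\rho_N$ and the product measure $\mu_1^{\otimes N}$, and then to stitch these couplings together across $N \in \N$ on a single probability space.

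First, Lemma \ref{LEM:DS} is expected to furnish a bound of the form
\begin{align*}
 W_2^2(\rho_N, \mu_1^{\otimes N}) \;\lesssim\; 1,
\end{align*}
where the Wasserstein distance is measured with respect to the $\l^2$-averaged norm
\[
 \textup{dist}(\vec u, \vec v)^2 = \frac{1}{N} \sum_{j = 1}^N \| u_j - v_j \|_{H^1}^2,
\]
which is the natural geometry attached to the interaction potential in \eqref{Gibbsx}. Let $\pi_N$ be an optimal coupling realizing this bound. Because both $\rho_N$ and $\mu_1^{\otimes N}$ are invariant under permutations of the $N$ components (the former by $O(N)$-invariance, in particular by $S_N$-invariance, of the interaction in \eqref{Gibbsx}), we may average $\pi_N$ over $S_N$ (applied diagonally on both marginals) to obtain a new coupling with the same marginals, the same total transport cost, and which is moreover exchangeable. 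Relabelling, we still call this coupling $\pi_N$. Exchangeability implies that the quantity $\int \|u_j - v_j\|_{H^1}^2 \, d\pi_N(\vec u, \vec v)$ is independent of $j$, so that
\begin{align*}
\int \| u_j - v_j \|_{H^1}^2 \, d\pi_N(\vec u, \vec v) \;=\; \frac{1}{N} \sum_{k = 1}^N \int \| u_k - v_k \|_{H^1}^2 \, d\pi_N \;\lesssim\; \frac{1}{N},
\end{align*}
which is exactly the per-component $L^2(\O_1; H^1_x)$ bound \eqref{rand2}.

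Next, I would assemble everything on a single probability space. Let $(\O_1, \F_1, \PP_1)$ be the product of the canonical space carrying an i.i.d.\ sequence $\big\{ \phi_j^{(0)} \big\}_{j \in \N}$ with $\Law(\phi_j^{(0)}) = \mu_1$, together with an independent $[0,1]^\N$-valued uniform random variable $\vec U = (U_1, U_2, \ldots)$ used as auxiliary randomness. For each $N$, disintegrate $\pi_N$ with respect to its second marginal $\mu_1^{\otimes N}$ to obtain a regular conditional kernel $\pi_N(d\vec\psi \mid \vec\phi)$; then define $\vec \psi_N$ as a measurable function of $(\phi_1^{(0)}, \dots, \phi_N^{(0)}, U_N)$ so that its conditional law given $(\phi_1^{(0)}, \dots, \phi_N^{(0)})$ is $\pi_N(\,\cdot \mid \phi_1^{(0)}, \dots, \phi_N^{(0)})$ (this uses the standard Skorohod-type representation, measurable with respect to an auxiliary uniform variable). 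By construction, $\Law(\vec\psi_N) = \rho_N$ and the joint law of $(\vec\psi_N, (\phi_1^{(0)}, \dots, \phi_N^{(0)}))$ equals $\pi_N$, so \eqref{rand2} is inherited from the Wasserstein bound.

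The main obstacle is obtaining the $S_N$-symmetric optimal coupling (so that the per-component bound is uniform in $j$); this is handled by the averaging argument above, which exploits joint permutation invariance of both marginals. A subtler potential obstruction is measurability in the disintegration when $H^1(\T^2)$ is infinite-dimensional, but since $H^1(\T^2)$ is a Polish space, standard results on regular conditional probabilities apply, and no additional difficulty arises. The remaining input is of course Lemma \ref{LEM:DS} itself, which is imported from \cite{SSZZ, DS}.
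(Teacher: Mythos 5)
Your overall strategy is the same as the paper's: invoke Lemma \ref{LEM:DS} to get a uniformly (in $N$) bounded coupling of $\rho_N$ and $\mu_1^{\otimes N}$, use permutation symmetry to convert the total transport cost into a per-component bound, and then transfer all the couplings onto a single probability space carrying one fixed i.i.d.\ sequence $\{\phi_j^{(0)}\}_{j\in\N}$. Your transfer step (disintegrating $\pi_N$ over its $\mu_1^{\otimes N}$-marginal and realizing the kernel with auxiliary uniform randomness) is a concrete implementation of what the paper does abstractly via an infinite gluing lemma (Lemma \ref{LEM:glue}, proved by disintegration plus the Kolmogorov extension theorem); the two are interchangeable here since all spaces involved are Polish. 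Your explicit $S_N$-symmetrization of the coupling is in fact \emph{more} careful than the paper's one-line appeal to the symmetry of \eqref{Gibbsx}, and it is the right way to justify that $\int \|u_j - v_j\|_{H^1}^2\, d\pi_N$ is independent of $j$.

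There is, however, one concrete error in how you state the key input, and it is exactly where the $N^{-1/2}$ rate in \eqref{rand2} comes from. You formulate Lemma \ref{LEM:DS} as $W_2^2(\rho_N,\mu_1^{\otimes N}) \les 1$ with respect to the \emph{$\l^2$-averaged} metric $\frac1N\sum_{j=1}^N\|u_j-v_j\|_{H^1}^2$. From that, exchangeability gives only
\begin{align*}
\int \|u_j - v_j\|_{H^1}^2\, d\pi_N = \frac1N\sum_{k=1}^N\int \|u_k - v_k\|_{H^1}^2\, d\pi_N \les 1,
\end{align*}
i.e.\ an $O(1)$ per-component bound, which loses a factor of $N^{1/2}$ against \eqref{rand2}. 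Your displayed chain ending in ``$\les \frac1N$'' tacitly uses the \emph{unnormalized} bound $\sum_{k=1}^N \int\|u_k-v_k\|_{H^1}^2\,d\pi_N \les 1$, which is inconsistent with the averaged metric you wrote down. The correct statement of Lemma \ref{LEM:DS} (and the one the paper uses) is that the Wasserstein distance is taken with respect to the Cameron--Martin norm of $\mu_1^{\otimes N}$, namely $\|\vec f\|^2 = \sum_{j=1}^N\|f_j\|_{H^1}^2$ with no $1/N$, and that this quantity is bounded uniformly in $N$. With that correction your argument closes; without it, the stated inputs do not imply \eqref{rand2}.
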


By assuming 
 Proposition \ref{PROP:rand}, 
we first carry out the construction of 
the initial data sets stated in Theorem \ref{THM:conv2}\,(iii.a) and (iii.b).

By extending the probability space
$(\O_1, \F_1, \PP_1)$ constructed in Proposition \ref{PROP:rand}, 
let $\{ \phi^{(1)}_j \}_{j \in \N}$ be a family of independent random variables 
on $\O_1$ 
such that 
$\Law \big(\phi^{(1)}_j\big) = \mu_0$ for each $j \in \N$, 
where $\mu_0$ is  the white noise measure on $\T^2$ defined in \eqref{gauss1}
with $s = 0$.
Given $N \in \N$, we set
 \[\vec \phi^{(0)}_N =
\{ \phi^{(0)}_{N, j}\}_{j = 1}^N
\deff  
\{ \phi^{(0)}_{j}\}_{j = 1}^N 
\qquad \text{and}\qquad
\vec \phi^{(1)}_N =
\{ \phi^{(1)}_{N, j}\}_{j = 1}^N
\deff  
\{ \phi^{(1)}_{j}\}_{j = 1}^N , 
\]

\noi
where  $\{ \phi^{(0)}_{j} \}_{j \in \N}$ is as in Proposition \ref{PROP:rand}.
For each $N \in \N$, we  define $\vec v^{(0)}_N= \{v^{(0)}_{N, j}\}_{j = 1}^N$ by 
setting 
\begin{align}
v^{(0)}_{N, j} = \psi_{N, j}- \phi^{(0)}_{N, j}
= \psi_{N, j}- \phi^{(0)}_{j}, \qquad j = 1, \dots, N, 
\label{rand3}
\end{align}

\noi
where  $\vec \psi_N = \{\psi_{N, j}\}_{j = 1}^N$
is as in Proposition \ref{PROP:rand}.
Then, from \eqref{rand2}, we have
\begin{align*}
\sup_{j =1, \dots, N}\Big\|  \big\|v^{(0)}_{N, j} \big\|_{H^1_x}\Big\|_{L^2(\O_1)}
 \les N^{-\frac 12 } , 
%\label{rand4}
\end{align*}

\noi
yielding \eqref{randx}.

Let $\{\xi_j \}_{j \in \N}$
be a family of independent space-time white noise
on $\R_+\times \T^2$
which is also independent 
from 
$\{ \phi^{(0)}_j \}_{j \in \N}$, 
$\{ \phi^{(1)}_j \}_{j \in \N}$, 
and $\{\vec \psi_N\}_{N \in \N}$.
Namely, we may assume that 
$\xi_j$
is a $\D'(\R_+\times \T)$-valued random variable
defined on another probability space
$(\O_2, \F_2, \PP_2)$.
Then, 
by setting $\PP' = \PP_1 \otimes \PP_2$
and $\O' = \O_1 \times \O_2$, 
we complete
the construction of 
the initial data sets stated in Theorem \ref{THM:conv2}\,(iii).

\medskip

We now turn to  a proof of Proposition \ref{PROP:rand}.
Let us first introduce some notations
and preliminary lemmas.
 Let $\nu_1$ and $\nu_2$ be two probability measures on a Banach space $B$ and let $H \subset B$ be a closed subspace. We define the Wasserstein-2 distance of $\nu_1$ and $\nu_2$ with respect to $H$ 
 by setting
 \begin{align}
\Big(W_H (\nu_1, \nu_2)\Big)^2 = \inf_{X \sim \nu_1, Y \sim \nu_2} \E \Big[ \| X - Y \|_H^2 \Big],
\label{Ruo1}
\end{align}

\noi
where $X \sim \nu_1$ and $Y \sim \nu_2$ mean that $X$ and $Y$ are $B$-valued random variables 
with $\Law(X) = \nu_1$ and $\Law(Y) = \nu_2$, 
and the expectation is taken with respect 
to the probability measure associated with the coupling of $X$ and $Y$.
Equivalently, it can be written as
\begin{align}
\Big(W_H (\nu_1, \nu_2)\Big)^2
= \inf_{\plan\in\Pi(\nu_1, \nu_2)} 
\int_{B \times B}  \|x-y\|_H^2 \,  d\plan (x, y), 
\label{Ruo2}
\end{align}

\noi
where  $\Pi(\nu_1, \nu_2)$ is the set of probability measures $\plan$
on $B \times B$
whose first and second marginals are given by $\nu_1$ and $\nu_2$, 
namely, 
\begin{align}
 \int_{y \in B}  d\plan (x, y) = d\nu_1(x) \qquad \text{and}
\qquad 
 \int_{x \in B}  d\plan (x, y) = d\nu_2(y).
\label{Ruo2a}
\end{align}

\noi
See, for example, 
\cite{FU, FSS}, 
where 
they take $(B, H, \mu)$ to be an abstract Wiener space (for a Gaussian measure
$\mu$ with its Cameron-Martin space given by $H$)
and $\nu_j$, $j = 1, 2$, to be probability measures on $B$.
In this case, the infimum in \eqref{Ruo1} and \eqref{Ruo2} is indeed attained.

In our setting, we take
$B = \big(H^{-\eps} (\T^2)\big)^{\otimes N}$
and consider two probability measures:
the Gaussian measure $\muN$, where $\mu_1$
is the massive Gaussian free field with mass $m > 0$
defined in \eqref{gauss0}, 
and
the coupled $\Phi^4_2$-measure
$\rho_N$ in \eqref{Gibbsx}.
Then, we take 
 $H$ to be its Cameron-Martin space
 of $\muN$, 
 namely, 
$H = \big(H^1 (\T^2)\big)^{\otimes N}$
endowed with the norm:
\begin{align*}
\| \vec f \|_{H^1 (\T^2)^{\otimes N}}^2 =  \sum_{j = 1}^N \| f_j \|_{H^1 (\T^2)}^2
\end{align*}

\noi
for $\vec f = (f_1, \dots, f_N)$.

We now state a crucial lemma
from a recent work by 
Delgadino and Smith 
\cite[Theorem 2.6]{DS}.

\begin{lemma}
\label{LEM:DS}
Let $m > 0$.
Given $N \in \N$, let $\muN$ and $\rho_N$ be as above.
Then, we have 
\begin{align*}
\sup_{N \in \N} W_{(H^1 (\T^2))^{\otimes N}} \big(\muN, \rho_N\big) < \infty.
\end{align*}
\end{lemma}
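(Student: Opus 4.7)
The plan is to invoke Talagrand's $T_2$ transport inequality \cite{Tala, OV, FU}: for any Gaussian measure $\mu$ on a separable Banach space $B$ with Cameron--Martin space $H$, and any probability measure $\nu \ll \mu$, one has
\[W_H(\nu, \mu)^2 \leq 2\, H(\nu \,\|\, \mu),\]
where $H(\nu\|\mu) = \int \log(d\nu/d\mu)\, d\nu$ denotes the relative entropy. Applied with $\mu = \muN$ --- a Gaussian measure whose Cameron--Martin space is precisely $(H^1(\T^2))^{\otimes N}$ endowed with the squared-sum norm --- and $\nu = \rho_N$, this reduces the lemma to the relative-entropy bound
\[\sup_{N \in \N} H(\rho_N \,\|\, \muN) < \infty.\]

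From the explicit Radon--Nikodym derivative in \eqref{Gibbsx}, the relative entropy splits as
\[H(\rho_N \,\|\, \muN) = -\log Z_N - \E_{\rho_N}[V_N], \qquad V_N \deff \frac{1}{4N}\int_{\T^2} \wick{\Big(\sum_{j=1}^N u_{N,j}^2\Big)^2}\, dx,\]
and I would handle the two summands separately. The partition-function term is controlled by Jensen's inequality applied to the convex function $-\log$:
\[-\log Z_N \,=\, -\log \E_{\muN}[e^{-V_N}] \,\leq\, \E_{\muN}[V_N] \,=\, 0,\]
where the final equality follows by expanding the Wick square and observing that, under the product measure $\muN$, each summand $\int_{\T^2}\wick{u_{N,j}^2 u_{N,k}^2}\, dx$ has vanishing expectation (for $j \ne k$ by independence of $\{u_{N,j}\}$ and $\E_{\mu_1}[\wick{u^2}] = 0$; for $j = k$ by $\E_{\mu_1}\big[\int\wick{u^4}\,dx\big] = 0$). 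Hence $-\log Z_N \leq 0$ uniformly in $N$.

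What remains --- and what I expect to be the main obstacle --- is a uniform-in-$N$ upper bound on $-\E_{\rho_N}[V_N]$, equivalently a uniform \emph{lower} bound on the Wick-renormalized energy in expectation under the Gibbs measure itself. The standard log-convexity of $t \mapsto \log \E_{\muN}[e^{-t V_N}]$ only yields the wrong-direction inequality $\E_{\rho_N}[V_N] \leq \E_{\muN}[V_N] = 0$, and a naïve pointwise estimate fails because $V_N$ contains negative Wick counterterms whose magnitude diverges logarithmically with the regularization. The substantive input is the argument of Delgadino--Smith \cite{DS}, which --- via a Bou\'e--Dupuis / Barashkov--Gubinelli-type variational representation combined with the $O(N)$-symmetry --- exploits the $\tfrac{1}{N}$ prefactor in front of $(\sum_j u_{N,j}^2)^2$ as a decoupling device to keep the negative counterterms uniformly bounded in $N$. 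Once this lower bound is in place, Talagrand's inequality delivers the uniform Wasserstein bound claimed in Lemma~\ref{LEM:DS} without further work.
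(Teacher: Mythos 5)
The paper does not actually prove Lemma~\ref{LEM:DS}: it is imported as a black box from \cite{SSZZ} (large mass) and \cite{DS} (all $m>0$), with only the remark that \cite{DS} proceeds via Talagrand's inequality. Your reduction is therefore not in conflict with anything in the paper, and the parts you do carry out are correct: the Feyel--\"Ust\"unel form of Talagrand's $T_2$ inequality reduces the claim to $\sup_N H(\rho_N\|\muN)<\infty$; the identity $H(\rho_N\|\muN)=-\log Z_N-\E_{\rho_N}[V_N]$ is right; and $-\log Z_N\le \E_{\muN}[V_N]=0$ follows from Jensen together with the fact that each $\wick{u_{N,j}^2u_{N,k}^2}$ lies in a homogeneous Wiener chaos of positive order (hence has mean zero under the product Gaussian measure).

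The one step you do not establish is the uniform-in-$N$ bound $-\E_{\rho_N}[V_N]\lesssim 1$, and you are right that this is where all the difficulty sits: the soft convexity and Jensen arguments only give the inequality in the useless direction, and the Wick counterterms preclude a pointwise lower bound on $V_N$. You defer this to the variational/decoupling argument of \cite{DS} --- which is legitimate in the sense that the paper itself cites the entire lemma to that reference, so your proposal actually supplies strictly more detail than the source text. But be aware that, as a self-contained proof, the proposal is incomplete precisely at the point that constitutes the mathematical content of \cite{DS}; if the goal were to prove the lemma from scratch rather than to reconstruct the cited argument's skeleton, the Bou\'e--Dupuis-type lower bound on the renormalized interaction (uniform in $N$, exploiting the $\frac1N$ normalization) would still have to be written out.
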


Lemma \ref{LEM:DS} plays a key role
in studying 
the limiting behavior of  the coupled $\Phi^4_2$-measure
$\rho_N$ in \eqref{Gibbsx}
as $N \to \infty$.
It was first proven by Shen, Smith, Zhu, and Zhu
\cite[Theorem 5.11; see also (5.14)]{SSZZ}
for large masses $m \gg 1$.
In a recent work \cite{DS}, 
Delgadino and Smith 
extended this result to 
arbitrarily small masses $m > 0$
by making use of 
Talagrand's inequality
\cite{Tala, OV, FU}.

It follows from 
Lemma~\ref{LEM:DS}
with \eqref{Ruo1}
that, for each $N \in \N$,  there exist
  random vectors 
$\vec \phi_N =  \{\phi_{N, j}\}_{j = 1}^N \sim \muN$ 
and   $\vec \psi_N = \{\psi_{N, j}\}_{j = 1}^N \sim \rho_N$ such that
\begin{align}
\E \Big[ \| \vec \phi_N - \vec \psi_N \|_{(H^1 (\T^2))^{\otimes N}}^2 \Big] \les 1, 
\label{Ruo3}
\end{align}

\noi
uniformly in $N \in \N$.
We point out that 
Proposition~\ref{PROP:rand}
does not immediately follow
from \eqref{Ruo3}
since $\vec \phi_N$ depends on $N \in \N$.
In order to amend this issue, we need the following gluing lemma, which is an
infinite-dimensional generalization of \cite[Lemma~7.6]{Vill}. Given a measurable space $E$, we denote by $\mathcal{P} (E)$ the set of probability measures on $E$.

\begin{lemma}
\label{LEM:glue}
Let $I$ be an index set. Let $E$ and $E_j$,  $j \in I$,  be Polish spaces. 
Given
 $\mu \in \mathcal{P} (E)$, 
 suppose that for each $j \in I$,  
 $\nu_j$ is a probability measure on $E \times E_j$ with marginal $\mu$ on $E$
 in the sense of \eqref{Ruo2a}.
Then, there exists a probability measure $\pi \in \mathcal{P} \big( E \times \prod_{j \in I} E_j \big)$ with marginal $\nu_j$ on $E \times E_j$ for each $j \in I$.
\end{lemma}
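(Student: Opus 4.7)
The plan is to prove the gluing lemma by combining the disintegration theorem with Kolmogorov's extension theorem to construct a suitable product of regular conditional distributions.

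First, since $E$ is Polish and each $\nu_j$ is a probability measure on $E \times E_j$ with $E$-marginal $\mu$, I would apply the disintegration theorem (which holds in the Polish setting) to write
\[
d\nu_j(x, y_j) \,=\, d\mu(x) \, d\kappa_j^x(y_j),
\]
where, for $\mu$-a.e.\ $x \in E$, $\kappa_j^x$ is a probability measure on $E_j$, and the map $x \mapsto \kappa_j^x$ is measurable from $E$ into $\mathcal{P}(E_j)$ (equipped with its Borel $\sigma$-algebra). This reduces the problem to constructing, for each fixed $x$, a joint distribution on $\prod_{j \in I} E_j$ with $E_j$-marginal equal to $\kappa_j^x$.

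Second, for each fixed $x \in E$, I would form the product measure $\kappa^x := \bigotimes_{j \in I} \kappa_j^x$ on $\prod_{j \in I} E_j$ (equipped with the product $\sigma$-algebra). For countable $I$ this is classical; for arbitrary $I$, the family of finite-dimensional marginals $\{\bigotimes_{j \in F} \kappa_j^x : F \subset I \text{ finite}\}$ is automatically consistent, so Kolmogorov's extension theorem (using that each $E_j$ is Polish, hence a standard Borel space) provides a unique probability measure $\kappa^x$ with these marginals. Next, I would verify the measurability of $x \mapsto \kappa^x(B)$ for each measurable $B$: by the monotone class theorem it suffices to check this on the algebra of cylindrical sets, where it reduces to joint measurability of finite products of measurable kernels, which is standard.

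Third, I would define $\pi$ on $E \times \prod_{j \in I} E_j$ by
\[
\pi(C) \,=\, \int_E \int_{\prod_j E_j} \mathbf{1}_C(x, \mathbf{y}) \, d\kappa^x(\mathbf{y}) \, d\mu(x)
\]
for measurable $C$. The marginal identity then follows from a direct computation: for any fixed $j_0 \in I$ and measurable $A \subset E$, $B \subset E_{j_0}$,
\[
\pi\Big(A \times B \times \prod_{j \neq j_0} E_j\Big) \,=\, \int_A \kappa_{j_0}^x(B) \, d\mu(x) \,=\, \nu_{j_0}(A \times B),
\]
where the last equality is exactly the disintegration formula for $\nu_{j_0}$, and the claim on general measurable subsets of $E \times E_{j_0}$ follows by another monotone class argument.

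The main (mild) obstacle is purely measure-theoretic: handling the uncountable product $\prod_{j \in I} E_j$ correctly, in particular setting up Kolmogorov's extension on the product $\sigma$-algebra and verifying measurability of $x \mapsto \kappa^x(B)$ for non-cylindrical $B$. For the applications in this paper, one only needs $I$ countable (indeed $I = \N$), in which case the product measure $\bigotimes_{j \in I} \kappa_j^x$ is constructed by the standard Ionescu--Tulcea procedure and all measurability questions become routine.
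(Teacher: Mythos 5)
Your proposal is correct and follows essentially the same route as the paper: disintegrate each $\nu_j$ over the common marginal $\mu$, take products of the resulting conditional kernels, and invoke the Kolmogorov extension theorem to pass to the full index set. The only difference is the order of operations --- you apply Kolmogorov extension fiberwise to build $\kappa^x = \bigotimes_{j} \kappa_j^x$ and then integrate against $\mu$, whereas the paper first integrates the finite products to obtain a consistent family of joint laws $\{\pi_J\}$ and then extends --- a cosmetic reordering.
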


\begin{proof}
Let $J \subset I$ be a finite set. By the disintegration-of-measure theorem 
(\cite[Theorem on p.\,148]{GM} and \cite[Proposition 452O]{Fre}), for each $j \in J$, there exists a measurable map $x \in E \mapsto \pi_j^x\in \mathcal{P} (E_j)$, uniquely determined $\mu$-almost everywhere, such that
\begin{align*}
\nu_j (A_j) = \int_E \pi_j^x (A_j^x) d \mu (x)
\end{align*}

\noi
for any measurable set $A_j \subset E \times E_j$, where
\begin{align*}
A_j^x = \{ y \in E_j : (x, y) \in A_j \}.
\end{align*}

\noi
We then define  $\pi_J \in \mathcal{P} (E \times \prod_{j \in J} E_j)$ by setting
\begin{align*}
\pi_J (A_J) =  \int_E \Big( \bigotimes_{j \in J} \pi_j^x \Big) (A_J^x) d \mu (x)
\end{align*}

\noi
for any measurable set $A_J \subset E \times \prod_{j \in J} E_j$, where
\begin{align*}
A_J^x = \Big\{ y \in \prod_{j \in J} E_j : (x, y) \in A_J \Big\}.
\end{align*}

\noi
It is not difficult to check that $\pi_J$ has marginal $\nu_j$ on $E \times E_j$ for each $j \in I$.
Furthermore, 
one can  easily see that the collection of the probability measures $\{ \pi_J : J \subset I \text{ is finite} \}$ is consistent. Then, by the Kolmogorov extension theorem
(\cite[Appendix D]{Bass}), there exists a measure $\pi \in \mathcal{P} (E \times \prod_{j \in I} E_j)$ that satisfies the desired properties.
\end{proof}

We also need the following lemma; see \cite[(v) on p.\,52]{Sri}.

\begin{lemma}
\label{LEM:polish}
A countable product of Polish spaces is Polish.
\end{lemma}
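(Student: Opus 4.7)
The plan is to build an explicit compatible metric on the countable product and verify the two Polish properties (completeness and separability) one at a time. Let $\{(X_n, d_n)\}_{n \in \N}$ be a countable family of Polish spaces and set $X = \prod_{n \in \N} X_n$ equipped with the product topology.

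First, I would replace each $d_n$ with an equivalent bounded metric $\widetilde d_n = \min(d_n, 1)$ (this is a genuine metric that induces the same topology and preserves completeness, since Cauchy sequences with respect to $d_n$ and $\widetilde d_n$ coincide once one restricts to tails of diameter $< 1$). Then I would define
\begin{equation*}
d(x, y) = \sum_{n = 1}^\infty 2^{-n} \widetilde d_n(x_n, y_n), \qquad x = (x_n), \, y = (y_n) \in X.
\end{equation*}
The series converges uniformly and $d$ is easily checked to be a metric.

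Next, I would verify that $d$ generates the product topology on $X$. Convergence in $d$ plainly implies coordinate-wise convergence (bound $\widetilde d_n(x_n^{(k)}, x_n) \le 2^n d(x^{(k)}, x)$). Conversely, given $\eps > 0$, choose $N$ with $\sum_{n > N} 2^{-n} < \eps/2$; then coordinate-wise convergence in the first $N$ slots forces $d(x^{(k)}, x) < \eps$ for large $k$. This identifies the $d$-topology with the product topology.

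For completeness, if $\{x^{(k)}\}_{k \in \N}$ is $d$-Cauchy, then each coordinate sequence $\{x_n^{(k)}\}_{k \in \N}$ is $\widetilde d_n$-Cauchy, hence converges to some $x_n \in X_n$ by completeness of $(X_n, \widetilde d_n)$; the uniform tail bound on the defining series shows $x^{(k)} \to x = (x_n)$ in $(X, d)$. For separability, pick a countable dense subset $D_n \subset X_n$ and a basepoint $x_n^\star \in X_n$ for each $n$, and let
\begin{equation*}
D = \bigcup_{N = 1}^\infty \Big\{ x \in X : x_n \in D_n \text{ for } n \le N \text{ and } x_n = x_n^\star \text{ for } n > N \Big\}.
\end{equation*}
This is a countable union of countable sets, hence countable, and the tail-truncation argument above shows it is $d$-dense in $X$.

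There is no genuine obstacle here; the only mildly delicate point is checking that replacing $d_n$ by $\min(d_n, 1)$ preserves completeness (needed because the displayed weighted series requires bounded summands), but this is immediate from the observation that a sequence is $d_n$-Cauchy if and only if it is $\widetilde d_n$-Cauchy once its eventual diameter is less than $1$.
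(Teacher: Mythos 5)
Your proof is correct and complete: it is the standard construction of a compatible complete metric on the product via the bounded weighted sum $\sum_n 2^{-n}\min(d_n,1)$, followed by the routine verifications of the topology identification, completeness, and separability. The paper does not actually prove this lemma --- it simply cites \cite[(v) on p.\,52]{Sri} --- so there is no argument to compare against; yours is precisely the textbook proof behind that citation. The only (entirely harmless) point left implicit is that choosing the basepoints $x_n^\star$ requires each factor to be nonempty; if some factor is empty the product is empty and trivially Polish.
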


We are now ready to present a proof of Proposition~\ref{PROP:rand}.

\begin{proof}[Proof of Proposition~\ref{PROP:rand}]

As observed above, 
it follows from 
Lemma~\ref{LEM:DS}
that, for each $N \in \N$, 
there exist
$\vec \phi_N =  \{\phi_{N, j}\}_{j = 1}^N \sim \muN$ 
and   $\vec \psi_N = \{\psi_{N, j}\}_{j = 1}^N \sim \rho_N$
such that \eqref{Ruo3} holds.
For each $N \in \N$, we extend 
$\vec \phi_N$ to 
a sequence of independent random variables
$\vec \phi^\infty_{N} = \{\phi_{N, j}^\infty\}_{j \in \N}$
by setting
$\phi_{N, j}^\infty = \phi_{N, j}$ for $j = 1, \dots, N$
and $\Law(\phi_{N, j}) =  \mu_1$
for $j \ge  N+1$.
Then, 
$\vec \phi^\infty_N$
is an $\big(H^{- \eps} (\T^2)\big)^{\otimes \N}$-valued random variable
with the law $\mu_1^{\otimes \N}$. 
Note 
from  Lemma~\ref{LEM:polish}
that the space $\big(H^{- \eps} (\T^2)\big)^{\otimes \N}$ is a Polish space.

We now apply Lemma~\ref{LEM:glue} by taking $I = \N$, 
$E = \big(H^{-\eps} (\T^2)\big)^{\otimes \N}$, 
$E_N = \big(H^{- \eps} (\T^2)\big)^{\otimes N}$, 
  $\mu = \mu_1^{\otimes \N}$, 
and  $\nu_N = \Law (\vec \phi_N^\infty, \vec \psi_N)$ for each $N \in \N$.
By Lemma \ref{LEM:glue}, 
there exists 
 a probability measure $\pi$ on $E \times \prod_{N \in \N} E_N$ with marginal $\nu_N$ on $E \times E_N$ for each $N \in \N$.

 The probability measure $\pi$ then gives rise to random variables 
 $\vec \phi^{(0)} =  \big\{\phi^{(0)}_j\big\}_{j \in \N}$ on $E =\big( H^{-\eps} (\T^2)\big)^{\otimes \N}$ 
 and $\vec \psi_N = \{\psi_{N, j}\}_{j = 1}^N$ on $E_N = \big(H^{- \eps} (\T^2)\big)^{\otimes N}$ for each $N \in \N$, 
 satisfying \eqref{rand1}, 
 such that,  for each $N \in \N$, we have
\begin{align}
\sup_{N \in \N}\E \Big[ \big\| \big\{\phi^{(0)}_j\big\}_{j = 1}^N - \vec \psi_N \big\|_{H^1 (\T^2)^{\otimes N}}^2 \Big] 
< \infty.
\label{Ruo4}
\end{align}

\noi
It follows  from the symmetry in \eqref{Gibbsx}
that the components $\psi_{N, j}$,  $j = 1, \dots, N$,  have the same distribution. 
Hence  the desired bound \eqref{rand2} follows from  \eqref{Ruo4}.
\end{proof}

\subsection{Convergence of invariant Gibbs dynamics}
\label{SUBSEC:6.2}

In this subsection, we prove  convergence
of the invariant Gibbs dynamics for \HLS \eqref{SdNLW1}
claimed
in Theorem~\ref{THM:conv2}\,(iii.c).
Since the proof follows closely to that of
a combination of the local well-posedness argument
(the proof of Proposition \ref{PROP:LWP1})
presented 
in Subsection \ref{SUBSEC:3.1}
and the iterative argument
presented in Subsection~\ref{SUBSEC:5.2}, 
we keep our discussion brief here, 
indicating necessary modifications.
In the following, we fix
 $\frac 12 \le s < 1$
 and $T > 0$
 and often drop the dependence
on these parameters.

Let 
 $\big\{ \big(\phi^{(0)}_{j}\, \phi^{(1)}_{j}\big)\big \}_{j \in \N}$ and 
 $\vec \psi_N = \{\psi_{N, j}\}_{j = 1}^N$, $N \in \N$,  
be as in  
 Theorem~\ref{THM:conv2}\,(iii)
 constructed in Subsection \ref{SUBSEC:6.1}, 
 and let 
 $\vec v^{(0)}_N= \big\{v^{(0)}_{N, j}\big\}_{j = 1}^N$
be as in \eqref{rand3}.
Let $\Phi_j$, $j \in \N$, be the solution to \eqref{phi1}.

By writing \eqref{NLW6} in the Duhamel formulation, we have 
\begin{align}
\begin{split}
v_{N, j} 
& = S(t) \big( v_{N, j}^{(0)}, 0 \big) 
- \frac{1}{N} \sum_{k = 1}^N  \I\Big( 
v_{N, k}^2 v_{N, j}
+ 2 \Phi_k v_{N, k} v_{N, j}
  + v_{N, k}^2 \Phi_j \\
& 
\hphantom{XXXXXXXXXXX}
 + \wick{\Phi_k^2} v_{N, j} + 2 v_{N, k} \wick{ \Phi_k \Phi_j } 
+ \wick{\Phi_k^2 \Phi_j} \Big), 
\end{split}
\label{KN1}
\end{align}

\noi
where $S (t)$ and $\I$ are as in  \eqref{D4} and  \eqref{D5}, 
respectively.
Denote by $u_j$ the limit
of $u_{N, j} = \Phi_j + v_{N, j}$.
As mentioned in Subsection \ref{SUBSEC:1.3}, 
the (formal) limit $u_j$ solves \eqref{MFx} 
with  initial data $\big(\phi^{(0)}_j, \phi^{(1)}_j\big)$.
Hence, recalling  
 that 
 $u_j \equiv \Phi_j$
is a solution to \eqref{MFx}
with  $(u_j, \dt u_j)|_{t = 0} = \big(\phi^{(0)}_j, \phi^{(1)}_j\big)$
and 
that the uniqueness
of $(u_j, \dt u_j)$ holds  in the class \eqref{class4}, 
we conclude that 
$(u_j, \dt u_j)  = (\Phi_j, \dt \Phi_j)$.
Namely, we have $v_j = u_j  - \Phi_j\equiv 0$ for any $j \in \N$, 
which in particular (and trivially) implies
\begin{align*}
(v_j, \dt v_j) = (0, 0)  \in 
L^p(\O; C([0, T]; \H^s(\T^2)))
\end{align*}

\noi
for any $p \ge 1$
and $T > 0$.
This  is the main difference and the simplification
as compared to Subsection \ref{SUBSEC:5.2}.

\medskip

\noi
$\bullet$ {\bf Step 1:}
In this first step, we  establish the following convergence
for each $T \gg 1$:
\begin{align}
\| \vec v_{N, j} \|_{X^s_N(T)}
\too 0  
\label{KN1a}
\end{align}

\noi
in probability (with respect to $\PP'$) 
at the rate $N^{-\frac 12 }$ 
as $N \to \infty$
in the sense of Definition \ref{DEF:conv1}, 
where $X^s_N(T)$ is as in \eqref{XN1}.

Fix a target time $T \gg 1$
and small $\dl > 0$.
Let $\frac 12 \le s < 1$.
In the following, we suppress dependence
on $T$,  $s$, and  small $\eps > 0$.
Given  $K_1 \ge 1$, define the set $\O'_{1, N}(K_1)\subset \O'$
by setting
\begin{align}
\O'_{1, N} (K_1)
= \Big\{\o' \in \O':
N^{\frac 12} \|  v^{(0)}_{N, j} \|_{\A_N H_x^s} \le K_1 \Big\}, 
\label{KN2}
\end{align}

\noi
where $\A_N B$ is as in \eqref{AN0}.
Then, it follows from  \eqref{randx} and Chebyshev's  inequality
 that 
there exists $K_1 = K_1(\dl) \gg1 $,
independent of $N \in \N$,  such that 
\begin{align}
\PP'\big( (\O'_{1, N}(K_1))^c \big) < \frac \dl 6
\label{KN3}
\end{align}

\noi
for any $N \in \N$.
Next, given $K_2 \ge1$ define 
the sets $\O'_{2, N}(K_2)\subset \O'$
by setting
\begin{align}
\O'_{2, N} (K_2)
= \Big\{\o' \in \O':
 \|  \pmb \Phi_N \|_{\ZZ^\eps_N (T)} \le K_2\Big\}, 
\label{KN4}
\end{align}

\noi
where 
the 
$\ZZ^\eps_N(T)$-norm  is as in  \eqref{X1}
and \eqref{X1a}, 
and $\pmb \Phi_N$ is given by 
\begin{align*}
\pmb \Phi_N = \big\{
\Phi_j,  \, \wick{\Phi_k \Phi_j} \, , 
 \, %\wick{\Phi_k^2 \Phi_j} 
\vec 0_{j, k}  \big\}_{j, k = 1}^N
\end{align*}

\noi
with $\vec 0_{j, k} = 0$ for any $j, k = 1, \dots, N$.
\noi
Then, it follows from Lemma \ref{LEM:sto1}\,(iii)
that 
there exists $K_2 = K_2(\dl) \gg1 $, independent of $N \in \N$,  such that 
\begin{align}
\PP'\big((\O'_{2, N}(K_2))^c \big) < \frac \dl 6
\label{KN5}
\end{align}

\noi
for any $N \in \N$.
Given  $K_3 \ge 1$,   define 
the set $\O'_{3, N}(K_3)\subset \O'$
by setting
\begin{align}
\O'_{3, N}(K_3)
& = \Bigg\{\o' \in \O': 
N^{\frac 12}
\bigg\| \frac{1}{N} \sum_{k = 1}^N \wick{\Phi_k^2 \Phi_j} 
\bigg\|_{\A_{N, j} L_T^2 W_x^{- \eps, \infty}} \leq K_3 \Bigg\}.
\label{KN6}
\end{align}

\noi
Then, it follows from 
 Lemma \ref{LEM:LLN1}
that 
 there exists $K_3  = K_3(\dl) \gg1$, 
 independent of $N \in \N$, 
 such that 
\begin{align}
\PP'\big( (\O'_{3, N}(K_3))^c \big) < \frac \dl 6
\label{KN7}
\end{align}

\noi
for any $N \in \N$.
In the following, we work on 
$\O'_N ( K_1, K_2, K_3)$ defined by 
\begin{align}
\O_N'  ( K_1, K_2, K_3)= \bigcap_{m = 1}^3\O_{m, N}'(K_m).
\label{KN8}
\end{align}

\noi
Given small $\dl > 0$, 
by choosing 
$K_m = K_m(\dl)$ sufficiently large, $m = 1, 2, 3$, 
 it follows from 
\eqref{KN8}
with  \eqref{KN3}, \eqref{KN5},  and \eqref{KN7}
that 
\begin{align}
\PP'\big((\O_N'  ( K_1, K_2, K_3))^c\big) < \frac \dl 2
\label{KN9}
\end{align}

\noi
for any $N \in \N$.

\medskip

We now consider \eqref{KN1}
on an interval $J = [t_0, t_1] \subset [0, T]$ with $|J| \le 1$, 
 starting at time $t_0$.
By  applying a slight modification of 
\eqref{LWP2}-\eqref{LWP7}, 
where $\pmb Z_N = \big\{Z_j^{(1)},  Z_{k, j}^{(2)}, Z_{k, j}^{(3)}\big\}_{j, k = 1}^N$
are as in~\eqref{X3}, 
with  \eqref{KN4} and \eqref{KN6}, 
 we obtain
\begin{align}
\begin{split}
 N^{\frac 12 }  \|\vec v_{N, j}\|_{X^s_{N}(J)}
& 
\le 
C_1 
N^{\frac 12} \|\vec  v_{N, j} (t_0) \|_{\A_N \H^s_x} \\
& \quad 
+ C_2 |J|
\Big( \| \vec v_{N, k}\|_{X^s_{N}(J)}^2
+ K_2^2  \Big)
N^{\frac 12 } \|\vec  v_{N, j}\|_{X^s_{N}(J)}\\
& \quad 
+C_3 |J|^\frac 12 K_3, 
\end{split}
\label{KN10}
\end{align}

\noi
where
the $X^s_{N}(J)$-norm is as in 
\eqref{CN5b}.
Suppose that 
\begin{align}
\|\vec  v_{N, j}\|_{X^s_{N}(J)}
\le 1
\label{KN11}
\end{align}

\noi
for some interval $J \subset [0, T]$.
Then, 
it follows from \eqref{KN10} that 
there exists $\tau = \tau (K_2(\dl), K_3(\dl) ) = \tau(\dl) \in (0, 1]$ such that 
\begin{align}
\begin{split}
N^{\frac 12}\|\vec v_{N, j}\|_{X^s_{N}(J)}
& \le 
2 C_1 
N^{\frac 12}\|\vec  v_{N, j} (t_0) \|_{\A_N \H^s_x} 
+2 C_3 , 
\end{split}
\label{KN12}
\end{align}

\noi
provided that $|J| \le \tau$.

Given small $\eta_0, \eta > 0$, 
it follows from 
\eqref{KN2}
that there exists $N_1 = N_1(K_1, \eta_0, \eta) \in \N $
such that 
\begin{align}
\|\vec  v_{N, j} (0) \|_{\A_N \H^s_x} 
= \| (v_{N, j}^{(0)}, 0 ) \|_{\A_N \H^s_x} 
\le N^{-\frac 12} K_1 \le \eta_0
\quad \text{and}
\quad 
N^{-\frac 12} \le \eta
\label{KN13}
\end{align}

\noi
for any $N \ge N_1$.
In particular, 
from \eqref{KN12} and \eqref{KN13}, we have
\begin{align}
\begin{split}
\|\vec v_{N, j}\|_{X^s_{N}(J)}
& \le 
2 C_1 
\|\vec  v_{N, j} (t_0) \|_{\A_N \H^s_x} 
+2 C_3 \eta , 
\end{split}
\label{KN14}
\end{align}

\noi
provided that $|J| \le \tau$.
Compare \eqref{KN14} with \eqref{CN16}.
Hence, the iterative argument
at the end of Step 1
of the proof of Theorem \ref{THM:conv1}\,(i)
presented in Subsection \ref{SUBSEC:5.2}
(see 
\eqref{CN18a},
\eqref{CN19}, and \eqref{CN21})
with \eqref{KN13}
yields
\begin{align*}
\|\vec  v_{N, j}\|_{X^s_{N}(J)}
\le \|\vec  v_{N, j}\|_{X^s_{N}([0, T])}
\le 1
\end{align*}

\noi
for any interval $J \subset [0, T]$
with  $|J| \le \tau$, 
guaranteeing 
\eqref{KN11}
for any such interval $J$.
Hence, by 
using \eqref{KN12} with \eqref{KN2} 
and applying 
a slight modification of the iterative argument in 
\eqref{CN18a}  and 
\eqref{CN19} with  \eqref{CN20}
as in \eqref{BZ9}, 
we conclude that 
there exists 
$L = L(T, K_1, \tau) = L(T, \dl) \ge 1$
such that 
\begin{align}
\begin{split}
N^\frac 12 \|\vec v_{N, j}\|_{X^s_N([0, T])}
& \le L
\end{split}
\label{KN15}
\end{align}

\noi
for any $N \ge N_1$.
Finally, 
 the desired convergence 
\eqref{KN1a} in probability at the rate $N^{-\frac 12}$
follows from \eqref{KN9} and \eqref{KN15}.

\medskip

\noi
$\bullet$ {\bf Step 2:}
In the following, we briefly discuss
how to obtain
 the following convergence
in probability
at the rate $N^{-\frac 12}$ for each {\it fixed}  $j \in \N$:
\begin{align}
 \| \vec v_{N, j} \|_{C_T \H_x^s}
\too 0  
\label{JN1}
\end{align}

\noi
as $N \to \infty$.

We first go over the construction of data sets indexed by $N \gg 1$.
Fix $j \in \N$.
Given $K_4 \ge 1$, define the set $\O'_{4, j, N}(K_4)\subset \O'$
by setting
\begin{align}
\O'_{4, j, N} (K_4)
= \Big\{\o' \in \O':
N^{\frac 12} \|  v^{(0)}_{N, j} \|_{ H_x^s} \le K_4\Big\}, 
\label{JN2}
\end{align}

\noi
where $\A_N B$ is as in \eqref{AN0}.
Then, it follows from  \eqref{randx} and Chebyshev's  inequality
 that 
 there exists $K_4 = K_4(\dl) \gg1 $,
independent of $j, N \in \N$,  such that 
\begin{align}
\PP'\big( (\O'_{4,j,  N}(K_4))^c \big) < \frac \dl 8
\label{JN3}
\end{align}

\noi
for any $N \in \N$.
Given $K_5 \ge 1$, 
define the $\O'_{5, N}(K_5)\subset \O'$
by setting
\begin{align}
\O'_{5, N} (K_5)= \Big\{\o' \in \O': 
N^{\frac 12}
\| \vec v_{N, k}\|_{X^s_{N}(T)} \le K_5\Big\}.
\label{JN4}
\end{align}

\noi
Then, it follows 
from the convergence-in-probability \eqref{KN1a}
established in
Step 1 that, given $\dl > 0$, 
 there exists $K_5 = K_5(\dl) \gg1 $
 such that 
\begin{align}
\PP'\big( (\O'_{5, N}(K_5))^c \big) < \frac \dl 8
\label{JN5}
\end{align}

\noi
for any $N \ge N_1 = N_1(T, \dl)$, 
where $N_1$ is as in \eqref{KN13}.
Given  $K_6 \ge 1 $, define the set $\O'_{6, j, N}(K_6) \subset \O'$ by setting 
\begin{align}
\O'_{6, j, N}(K_6) = \Big\{ \o' \in \O': 
 \| \Phi_j \|_{C_T W^{-\eps, \infty}_x} 
 + 
 \| \wick{\Phi_k\Phi_j} \|_{\A_{N, k}C_T W^{-\eps, \infty}_x}  \le K_6\Big\}.
\label{JN6}
\end{align}

\noi
Then, it follows from Lemma \ref{LEM:sto1}\,(i) 
(for the uniform (in $j, k$) moment bound
on the $C_T W^{-\eps, \infty}_x$-norm of $\wick{\Phi_k\Phi_j}$)
and  Chebyshev's  inequality
that 
there exists $K_6  = K_6(\dl) \gg1 $, independent of $j, N \in \N$,  such that 
\begin{align}
\PP'\big( (\O'_{6, j, N}(K_6))^c \big) < \frac \dl {8} 
\label{JN7}
\end{align}

\noi
for any $N \in \N$.
Lastly, 
given $K_7 \ge 1$, define 
the set $\O'_{7, j, N}(K_7)\subset \O'$
by setting
\begin{align}
\O'_{7, j, N}(K_7)
& = \Bigg\{\o' \in \O': 
N^{\frac 12 }
\bigg\| \frac{1}{N} \sum_{k = 1}^N \wick{\Phi_k^2 \Phi_j} 
\bigg\|_{ L_T^2 W_x^{- \eps, \infty}} \leq K_7 \Bigg\}.
\label{JN8}
\end{align}

\noi
Then, it follows from 
 Lemma \ref{LEM:LLN1}
that 
 there exists $K_7 = K_7(\dl) \gg1 $,
independent of $j, N \in \N$,  such that 
\begin{align}
\PP'\big( (\O'_{7, j, N}(K_7))^c \big) < \frac \dl 8
\label{JN9}
\end{align}

\noi
for any $N \in \N$.
In the following, we work on 
$\O'_{j, N} (K_1, \dots, K_7)$ defined by 
\begin{align}
\begin{split}
\O_{j, N}'  (K_1,\dots,  K_7)
& = 
\O_{N}'   (K_1, K_2, K_3)\cap 
\O_{4, j, N}'(K_4)\\
& \quad \cap  \O_{5, N}' (K_5) 
\cap \O_{6, j, N}' (K_6)
 \cap \O_{7, j,  N}' (K_7) 
\end{split}
\label{JN10}
\end{align}

\noi
where 
$\O_{ N}'  (K_1, K_2, K_3)$  is as in 
\eqref{KN8}.
Given small $\dl > 0$, 
by choosing 
  $K_m = K_m(\dl)$ sufficiently large, $m = 1, \dots,  7$, 
 it follows from 
\eqref{JN10}
with  \eqref{KN9}, \eqref{JN3}, 
\eqref{JN5}, \eqref{JN7},
and \eqref{JN9}
that 
\begin{align}
\PP'\big((\O_{j, N}'  ( K_1, \dots, K_7))^c\big) <  \dl 
\label{JN11}
\end{align}

\noi
for any $N \ge N_1$, 
where $N_1$ is as in \eqref{KN13}.

For fixed $j \in \N$, 
we now consider \eqref{KN1}
on an interval $J = [t_0, t_1]\subset [0, T]$ with $|J| \le 1$, 
 starting at time $t_0$.
By  applying a slight modification of 
\eqref{LWP2}-\eqref{LWP7}
(without taking the $\l^2$-average in $j$;
see also the estimates
in  Step~2
of the proof of Theorem \ref{THM:conv1}\,(i) presented in Subsection \ref{SUBSEC:5.2})
with  \eqref{KN4}, \eqref{JN4}, 
\eqref{JN6},
 and \eqref{JN8},
 we have 
\begin{align*}
\begin{split}
N^{\frac 12} \|\vec v_{N, j}\|_{C_J \H_x^s}
& \le 
C_4 
N^{\frac 12} \|\vec  v_{N, j} (t_0) \|_{\H^s}\\
& \quad 
+ C_5 |J|
\Big( 
 K^2_2  + K_5^2 \Big)
\cdot N^{\frac 12} \|\vec  v_{N, j}\|_{C_J \H_x^s}\\
& \quad 
+C_6 |J|^\frac 12 \Big(K_5^2 K_6 + K_7\Big).
\end{split}
%\label{JN12}
\end{align*}

\noi
Then, 
by choosing small $\tau = \tau (K_2(\dl), K_5(\dl), K_6(\dl), K_7(\dl) ) = \tau(\dl) > 0$, 
we obtain
\begin{align}
\begin{split}
N^{\frac 12}\|\vec v_{N, j}\|_{C_J\H^s_x}
& \le 
2 C_4
N^{\frac 12}\|\vec  v_{N, j} (t_0) \|_{ \H^s_x} 
+2 C_6 , 
\end{split}
\label{JN14}
\end{align}

\noi
provided that $|J| \le \tau$.
Hence, by iteratively applying \eqref{JN14}
with \eqref{JN2}
as in \eqref{BZ9}, 
we conclude that 
there exists 
$L = L(T,  \tau)  \ge 1$
such that 
\begin{align}
\begin{split}
N^\frac 12 \|\vec v_{N, j}\|_{C_T\H^s_x}
& \le L
\end{split}
\label{JN15}
\end{align}

\noi
for any $N \ge N_1$.
Therefore, 
 the desired convergence 
\eqref{JN1} in probability at the rate $N^{-\frac 12}$
follows from \eqref{JN11} and \eqref{JN15}.

This concludes the proof of Theorem \ref{THM:conv2}\,(iii).

\begin{ackno} \rm
The authors would like to thank Leonardo Tolomeo and Jiawei Li for helpful discussions. 
They are also grateful to Scot Smith
for telling them about the work~\cite{DS}
and 
 would also like to express their sincere gratitude
to Rongchan Zhu for asking a question on the convergence rate,
which led to an improvement 
of the main results.
S.L.~would like to thank the School of Mathematics at the University of Edinburgh for its hospitality, where this
manuscript was prepared.
R.L.~was funded by the Deutsche Forschungsgemeinschaft (DFG, German Research Foundation) -- Project-ID
211504053 -- SFB 1060.
R.L. and S.L. acknowledge support from the Deutsche Forschungsgemeinschaft (DFG, German Research Foundation) under Germany's Excellence Strategy -- EXC-2047/1 -- 390685813.
R.L. and S.L. were also funded by the Deutsche Forschungsgemeinschaft (DFG, German Research 
Foundation) -- Project-ID 539309657 -- SFB 1720.
T.O.~was supported by the European Research Council (grant no.~864138 ``SingStochDispDyn")
and  by the EPSRC 
Mathematical Sciences
Small Grant (grant no.~EP/Y033507/1).
T.O.~also
acknowledges support from  
the NSFC (grant no.~W2531005).

\end{ackno}


\begin{thebibliography}{99}









%
%
%\bibitem{Baldi}
%P.~Baldi, 
%{\it Stochastic calculus. An introduction through theory and exercises.} Universitext. Springer, Cham, 2017. xiv+627 pp.


\bibitem{Bass}
R.~Bass, 
{\it Stochastic processes.}
Cambridge Series in Statistical and Probabilistic Mathematics, 33. Cambridge University Press, Cambridge, 2011. xvi+390 pp. 


\bibitem{BOP1}
\'A.~B\'enyi, T.~Oh, O.~Pocovnicu, 
{\it Wiener randomization on unbounded domains and an application to almost sure well-posedness of NLS}, Excursions in harmonic analysis. Vol. 4, 3--25, Appl. Numer. Harmon. Anal., Birkh\"auser/Springer, Cham, 2015. 

\bibitem{BOZ}
\'A.~B\'enyi, T.~Oh, T.~Zhao,
{\it Fractional Leibniz rule on the torus}, 
Proc. Amer. Math. Soc. 153 (2025), no. 1, 207--221.


\bibitem{BO94}
J.~Bourgain,
{\it Periodic nonlinear Schr\"odinger equation and invariant measures}, 
Comm. Math. Phys. 166 (1994), no. 1, 1--26.


\bibitem{BO96}
J.~Bourgain,
{\it Invariant measures for the 2D-defocusing nonlinear Schr\"odinger equation}, 
Comm. Math. Phys. 176 (1996), no. 2, 421--445.


\bibitem{Bring2}
B.~Bringmann,
{\it Invariant Gibbs measures for the three-dimensional wave equation with a Hartree nonlinearity II: dynamics}, 
J. Eur. Math. Soc. (JEMS) 26 (2024), no. 6, 1933--2089.


\bibitem{BDNY}
B.~Bringmann, Y.~Deng, A.~Nahmod, H.~Yue,
{\it Invariant Gibbs measures for the three dimensional cubic nonlinear wave equation}, 
Invent. Math. 236 (2024), no. 3, 1133--1411.



\bibitem{BS}
D.~Brydges, G.~Slade,
{\it 
Statistical mechanics of the 2-dimensional focusing nonlinear Schr\"odinger equation}, 
Comm. Math. Phys.
182 (1996), no. 2, 485--504.


\bibitem{BLL}
E.~Brun, G.~Li, R.~Liu,
{\it Global well-posedness of the energy-critical stochastic nonlinear wave equations}, 
J. Differential Equations 397 (2024), 316--348.


\bibitem{BT2}
N.~Burq, N.~Tzvetkov, 
{\it Probabilistic well-posedness for the cubic wave equation,} J. Eur. Math. Soc. 16 (2014), no. 1, 1--30. 






\bibitem{CKSTT1}
J.~Colliander, M.~Keel, G.~Staffilani, H.~Takaoka, T.~Tao, 
{\it  Almost conservation laws and global rough solutions to a nonlinear Schr\"odinger equation,} Math. Res. Lett. 9 (2002), no. 5-6, 659--682.




%
%\bibitem{CdS1}
%C.~ Collot, A.-S.~de Suzzoni, 
%{\it Stability of equilibria for a Hartree equation for random fields}, 
% J. Math. Pures Appl. 137 (2020), 70--100.
%
%
%\bibitem{CdS2}
%C.~ Collot, A.-S.~de Suzzoni, 
%{\it Stability of steady states for Hartree and Schr\"odinger equations for infinitely many particles}, 
% Ann. H. Lebesgue 5 (2022), 429--490. 






\bibitem{DPD}
G.~Da Prato, A.~Debussche,
{\it Strong solutions to the stochastic quantization equations}, 
Ann. Probab. 31 (2003), no. 4, 1900--1916.


\bibitem{DPT}
G.~Da Prato, L.~Tubaro,
{\it Wick powers in stochastic PDEs: an introduction}, 
Quantum and stochastic mathematical physics, 1--15.
Springer Proc. Math. Stat., 377.


\bibitem{DS}
M.~Delgadino, S.A.~Smith,
{\it Mass generation
for the two dimensional $O(N)$
linear sigma model in the large $N$ limit}, 
arXiv:2601.19630 [math.PR].


%{\it 
%An application of
%Talagrand's inequality to the two dimensional linear sigma model}, 
%in preparation.


%
%\bibitem{DNY3}
%Y.~Deng, A.~Nahmod, H.~Yue,
%{\it Random tensors, propagation of randomness, and nonlinear dispersive equations}, 
%Invent. Math. 228 (2022), no. 2, 539--686.
%
%
%\bibitem{DNY2}
%Y.~Deng, A.~Nahmod, H.~Yue, 
%{\it Invariant Gibbs measures and global strong solutions for nonlinear Schr\"odinger equations in dimension two}, Ann. of Math. 200 (2024), no. 2, 399--486.


\bibitem{Deya}
A.~Deya, 
{\it A nonlinear wave equation with fractional perturbation},
 Ann. Probab. 47 (2019), no. 3, 1775--1810.

%\bibitem{dS1}
%A.-S.~de Suzzoni, 
%{\it An equation on random variables and systems of fermions}, 
%arXiv:1507.06180 [math.AP].
%
%
%
%\bibitem{dS2}
%A.-S.~de Suzzoni, 
%{\it Sur les syst\`emes de fermions \`a grand nombre de particules: un point de vue probabiliste}, 
% S\'eminaire Laurent Schwartz--\'Equations aux d\'eriv\'ees partielles et applications. Ann\'ee 2015-2016, Exp. No. XII, 12 pp., Ed. \'Ec. Polytech., Palaiseau, 2017.

%
%
%\bibitem{Eng}
%R.~Engelking,
%{\it General topology}.
%Translated from the Polish by the author. Second edition
%Sigma Ser. Pure Math., 6
%Heldermann Verlag, Berlin, 1989. viii+529 pp.
%

\bibitem{FSS}
S.~Fang, J.~Shao, K.-T.~Sturm, 
{\it Wasserstein space over the Wiener space}, 
Probab. Theory Related Fields 146 (2010), no. 3-4, 535--565.


\bibitem{FU}
D.~Feyel, A.S.~\"Ust\"unel, 
{\it Monge-Kantorovitch measure transportation and Monge-Amp\`ere equation on Wiener space},
Probab. Theory Related Fields 128 (2004), no. 3, 347--385. 



\bibitem{Fre}
D.H.~Fremlin, 
{\it Measure theory. Vol. 4. Topological measure spaces.}
Measure theory. Vol. 4. Topological measure spaces. Part I, II. Corrected second printing of the 2003 original. Torres Fremlin, Colchester, 2006. Part I: 528 pp.; Part II: 439+19 pp. (errata).





\bibitem{GM}
S.~Graf, R.~D.~Mauldin,
{\it A classification of disintegrations of measures}, 
Measure and measurable dynamics (Rochester, NY, 1987), 147--158.
Contemp. Math., 94
American Mathematical Society, Providence, RI, 1989.

\bibitem{GOTT}
D.~Greco, T.~Oh, L.~Tao, L.~Tolomeo,
{\it  Critical threshold for weakly interacting log-correlated focusing Gibbs measures}, Proc. Amer. Math. Soc. Ser. B 12 (2025), 150--165. 

\bibitem{GKO}
M.~Gubinelli, H.~Koch, T.~Oh,
{\it Renormalization of the two-dimensional stochastic nonlinear wave equations}, 
Trans. Amer. Math. Soc. 370 (2018), no. 10, 7335--7359.


\bibitem{GKO2}
M.~Gubinelli, H.~Koch, T.~Oh,
{\it Paracontrolled approach to the three-dimensional stochastic nonlinear wave equation with quadratic nonlinearity}, 
J. Eur. Math. Soc. (JEMS) 26 (2024), no. 3, 817--874.


\bibitem{GKOT}
M.~Gubinelli, H.~Koch, T.~Oh, L.~Tolomeo,
{\it Global dynamics for the two-dimensional stochastic nonlinear wave equations}, 
Int. Math. Res. Not. IMRN. (2022), no. 21, 16954--16999. 


%
%\bibitem{HLP}
%H.~Huang, J.-G.~Liu, P.~Pickl,
%{\it On the mean-field limit for the Vlasov-Poisson-Fokker-Planck system},
%J. Stat. Phys. 181 (2020), no. 5, 1915--1965.

%\bibitem{Jabin}
%P.-E.~Jabin, 
%{\it A review of the mean field limits for Vlasov equations},
%Kinet. Relat. Models 7 (2014), no. 4, 661--711. 


\bibitem{Kuo}
H.~Kuo, 
{\it Introduction to stochastic integration,} Universitext. Springer, New York, 2006. xiv+278 pp.


%
%\bibitem{Kup1}
%A.~Kupiainen, 
%{\it $1/n$ expansion-some rigorous results.} Mathematical problems in theoretical physics (Proc. Internat. Conf. Math. Phys., Lausanne, 1979), pp. 208--210, Lecture Notes in Phys., 116, Springer, Berlin-New York, 1980.
%
%\bibitem{Kup2}
%A.~Kupiainen, 
%{\it $1/n$ expansion for a quantum field model}, Comm. Math. Phys. 74 (1980), no. 3, 199--222. 
%
%
%\bibitem{Kup3}
%A.~Kupiainen, 
%{On the $1/n$ expansion}, Comm. Math. Phys. 73 (1980), no. 3, 273--294. 


%\bibitem{LP}
%D.~Lazarovici, P.~Pickl,
%{\it A mean field limit for the Vlasov-Poisson system,}
%Arch. Ration. Mech. Anal. 225 (2017), no. 3, 1201--1231.

\bibitem{LRS}
J.~Lebowitz, H.~Rose, E.~Speer, 
{\it Statistical mechanics of the nonlinear Schr\"odinger equation}, J. Statist. Phys. 50 (1988), no. 3-4, 657--687.


\bibitem{LT}
M.~Ledoux, M.~Talagrand,
{\it Probability in Banach spaces.
Isoperimetry and processes}, 
Reprint of the 1991 edition. Classics in Mathematics. Springer-Verlag, Berlin, 2011. xii+480 pp. 

%Ergeb. Math. Grenzgeb. (3), 23 %[Results in Mathematics and Related Areas (3)].
%Springer-Verlag, Berlin, 1991. xii+480 pp.


\bibitem{LLOT}
G.~Li, J.~Li, T.~Oh, N.~Tzvetkov, 
{\it Probabilistic well-posedness of dispersive PDEs beyond variance blowup I: Benjamin-Bona-Mahony equation}, 
arXiv:2509.02344 [math.AP].








%
%
%\bibitem{Mal}
%C.~Mal\'ez\'e,
%{\it A Scattering result around a non-localised equilibria for the quintic Hartree equation for random fields}, 
%arXiv:2309.10500 [math.AP].
%
%
%\bibitem{Mal2}
%C.~Mal\'ez\'e,
%{\it Scattering for the one dimensional Hartree Fock equation}, 
%arXiv:2406.17715v3 [math.AP].
%

\bibitem{McK}
H.~P.~McKean,
{\it Statistical mechanics of nonlinear wave equations. IV. Cubic Schr\"odinger}, 
Comm. Math. Phys. 168 (1995), no. 3, 479--491.
{\it Erratum: ``Statistical mechanics of nonlinear wave equations. IV. Cubic Schr\"odinger''}, 
Comm. Math. Phys. 173 (1995), no. 3, 675.


%\bibitem{MW}
%J.-C.~Mourrat, H.~Weber,
%{\it Global well-posedness of the dynamic $\Phi^4$ model in the plane}, 
%Ann. Probab. 45 (2017), no. 4, 2398--2476.
%
%
%\bibitem{MWX}
%J.-C.~Mourrat, H.~Weber, W.~Xu,
%{\it Construction of $\Phi^4_3$ diagrams for pedestrians,}
%From particle systems to partial differential equations, 1--46.
%Springer Proc. Math. Stat., 209.
%Springer, Cham, 2017.



\bibitem{Nelson2}
E.~Nelson, 
{\it A quartic interaction in two dimensions}, 
 1966 Mathematical Theory of Elementary Particles (Proc. Conf., Dedham, Mass., 1965), pp. 69--73, M.I.T. Press, Cambridge, Mass.
%


\bibitem{Nua}
D.~Nualart,
{\it The Malliavin calculus and related topics}, 
Second edition.
Probab. Appl. (N. Y.)
Springer-Verlag, Berlin, 2006. xiv+382 pp.


\bibitem{OO}
T.~Oh, M.~Okamoto,
{\it Comparing the stochastic nonlinear wave and heat equations: a case study}, 
Electron. J. Probab. 26 (2021), Paper No. 9, 44 pp.



\bibitem{OOR}
T.~Oh, M.~Okamoto, T.~Robert,
{\it A remark on triviality for the two-dimensional stochastic nonlinear wave equation},
 Stochastic Process. Appl. 130 (2020), no. 9, 5838--5864. 


\bibitem{OOT1}
T.~Oh, M.~Okamoto, L.~Tolomeo,
{\it Focusing $\Phi^4_3$-model with a Hartree-type nonlinearity}, 
Mem. Amer. Math. Soc. 304 (2024), no. 1529, vi+143 pp. 

\bibitem{OOT}
T.~Oh, M.~Okamoto, L.~Tolomeo,
{\it Stochastic quantization of the $\Phi^3_3$-model},  
Mem. Eur. Math. Soc., 16,
EMS Press, Berlin, 2025. viii+145 pp. 



\bibitem{OOTz}
T.~Oh, M.~Okamoto, N.~Tzvetkov,
{\it Uniqueness and non-uniqueness of the Gaussian free field evolution under the two-dimensional Wick ordered cubic wave equation}, 
Ann. Inst. Henri Poincar\'e Probab. Stat. 60 (2024), no. 3, 1684--1728.


\bibitem{OPTz}
T.~Oh, O.~Pocovnicu, N.~Tzvetkov,
{\it Probabilistic local well-posedness of the cubic nonlinear wave equation in negative Sobolev spaces},
Ann. Inst. Fourier (Grenoble) 72 (2022) no. 2, 771--830. 


\bibitem{ORTz}
T.~Oh, T.~Robert, N.~Tzvetkov,
{\it Stochastic nonlinear wave dynamics on compact surfaces}, 
Ann. H. Lebesgue 6 (2023), 161--223.





\bibitem{OST2}
T.~Oh, K.~Seong, L.~Tolomeo, 
{\it 
A remark on Gibbs measures with log-correlated Gaussian fields},
  Forum Math. Sigma.
12 (2024), e50,  40 pp.



\bibitem{OST1}
T.~Oh, P.~Sosoe, L.~Tolomeo,
{\it  Optimal integrability threshold for Gibbs measures associated with focusing NLS on the torus,}
 Invent. Math. 227 (2022), no. 3, 1323--1429. 


\bibitem{OTh}
T.~Oh, L.~Thomann,
{\it A pedestrian approach to the invariant Gibbs measures for the 2-$d$ defocusing nonlinear Schr\"odinger equations}, 
Stoch. Partial Differ. Equ. Anal. Comput. 6 (2018), no. 3, 397--445.


\bibitem{OTh2}
T.~Oh, L.~Thomann,
{\it Invariant Gibbs measures for the 2-$d$ defocusing nonlinear wave equations}, 
Ann. Fac. Sci. Toulouse Math.  29 (2020), no. 1, 1--26.


\bibitem{OTWZ}
T.~Oh, L.~Tolomeo, Y.~Wang, G.~Zheng,
{\it Hyperbolic $P(\Phi)_2$-model on the plane.}
Comm. Math. Phys.
 407 (2026), no. 2, Paper No. 34. 
%arXiv:2211.03735v3 [math.AP].


\bibitem{OWZ}
T.~Oh, Y.~Wang, Y.~Zine,
{\it Three-dimensional stochastic cubic nonlinear wave equation with almost space-time white noise}, 
Stoch. Partial Differ. Equ. Anal. Comput. 10 (2022), no. 3, 898--963. 


\bibitem{OV}
F.~Otto, C.~Villani, 
{\it Generalization of an inequality by Talagrand and links with the logarithmic Sobolev inequality},
J. Funct. Anal. 173 (2000), no. 2, 361--400. 

\bibitem{RSS}
S.~Ryang, T.~Saito, K.~Shigemoto,
{\it Canonical stochastic quantization}, 
Progr. Theoret. Phys. 73 (1985), no. 5, 1295--1298.


\bibitem{Shen}
H.~Shen,  
{\it A stochastic PDE approach to large $N$ problems in quantum field theory: A survey}, 
J. Math. Phys. 63 (2022), no. 8, 081103-1--081103-21. 


\bibitem{SSZZ}
H.~Shen, S.~A.~Smith, R.~Zhu, X.~Zhu,
{\it Large $N$ limit of the $O(N)$ linear sigma model via stochastic quantization}, 
Ann. Probab. 50 (2022), no. 1, 131--202.


\bibitem{SZZ1}
H.~Shen, R.~Zhu, X.~Zhu, 
{\it Large N limit of the $O(N)$ linear sigma model in 3D}, 
Comm. Math. Phys. 394 (2022), no. 3, 953--1009. 


\bibitem{SZZ2}
H.~Shen, R.~Zhu, X.~Zhu, 
{\it Large $N$ limit and $1/N$ expansion of invariant observables in $O(N)$ linear $\s$-model via SPDE}, 
arXiv:2306.05166 [math.PR].


\bibitem{Simon}
B.~Simon,
{\it The $P (\phi)_2$ Euclidean (quantum) field theory}, 
Princeton Ser. Phys.
Princeton University Press, Princeton, NJ, 1974. xx+392 pp.

\bibitem{Sznit}
A.-S.~Sznitman, 
{\it Topics in propagation of chaos}. \'Ecole d'\'Et\'e de Probabilit\'es de Saint-Flour XIX--1989, 165--251, Lecture Notes in Math., 1464, Springer, Berlin, 1991. 

\bibitem{Sri}
S.M.~Srivastava, 
{\it A course on Borel sets.}
Graduate Texts in Mathematics, 180. Springer-Verlag, New York, 1998. xvi+261 pp.



\bibitem{STzX}
C.~Sun, N.~Tzvetkov, W.~Xu,
{\it Weak universality results for a class of nonlinear wave equations}, 
%arXiv:2206.05945 [math.AP], 
to appear in Ann. Inst. Fourier.  (Grenoble).


\bibitem{Tala}
M.~Talagrand, 
{\it Transportation cost for Gaussian and other product measures}, 
Geom. Funct. Anal. 6 (1996), no. 3, 587--600.


\bibitem{TTz}
L.~Thomann, N.~Tzvetkov, 
{\it Gibbs measure for the periodic derivative nonlinear Schr\"odinger equation},
Nonlinearity 23 (2010), no. 11, 2771--2791.


\bibitem{Tolo1}  
L.~Tolomeo, 
{\it Global well posedness of the two-dimensional stochastic nonlinear wave equation on an unbounded domain}, 
Ann. Probab. 49 (2021), no. 3, 1402--1426. 


\bibitem{Tolo2}
L.~Tolomeo,
{\it Ergodicity for the hyperbolic $P(\Phi)_2$-model},
arXiv:2310.02190 [math.PR].


\bibitem{Tz10}
N.~Tzvetkov,
{\it Construction of a Gibbs measure associated
to the periodic Benjamin-Ono equation}, 
Probab. Theory Related Fields 146 (2010), no. 3-4, 481--514.


\bibitem{Vaart}
A.W.~van der Vaart,
{\it Asymptotic statistics}, 
Camb. Ser. Stat. Probab. Math., 3.
Cambridge University Press, Cambridge, 1998. xvi+443 pp.




\bibitem{Vill}
C.~Villani,
{\it Topics in optimal transportation}.
Grad. Stud. Math., 58
American Mathematical Society, Providence, RI, 2003. xvi+370 pp.


\bibitem{Wilson}
K.~Wilson, 
{\it Quantum field-theory models in less than 4 dimensions}, 
Phys. Rev. D (3) 7 (1973), 2911--2926. 

\bibitem{Zine}
Y.~Zine,
{\it Smoluchowski-Kramers approximation for singular stochastic wave equations in two dimensions,}
 Electron. J. Probab. 30 (2025), Paper No. 88, 49 pp.






\end{thebibliography}
\end{document}